\numberwithin{equation}{section}
\theoremstyle{plain}
\newtheorem{theorem}{Theorem}[section]
\newtheorem{lemma}[theorem]{Lemma}
\newtheorem{claim}[theorem]{Claim}
\newtheorem{proposition}[theorem]{Proposition}
\newtheorem{corollary}[theorem]{Corollary}
\theoremstyle{remark}
\newtheorem{remark}[theorem]{Remark}
\theoremstyle{definition}
\newtheorem{definition}[theorem]{Definition}
\newtheorem{notationconv}[theorem]{Notation/Convention}
\newtheorem{example}[theorem]{Example}
\def\Pic{\operatorname{Pic}}
\def\Spec{\operatorname{Spec}}
\def\Hom{\operatorname{Hom}}
\def\Sym{\operatorname{Sym}}
\newcommand{\bP}{\mathbb{P}}
\newcommand{\bQ}{\mathbb{Q}}
\newcommand{\bZ}{\mathbb{Z}}
\newcommand{\cF}{\mathscr{F}}
\newcommand{\bC}{\mathbb{C}}
\newcommand{\gp}{\mathfrak{p}}
\newcommand{\sH}{\mathscr{H}}
\newcommand{\calO}{\mathcal{O}}
\newcommand{\calM}{\mathcal{M}}
\newcommand{\Aut}{\mathrm{Aut}}
\newcommand{\Stab}{\mathrm{Stab}}
\newcommand{\PGL}{\mathrm{PGL}}
\newcommand{\SL}{\mathrm{SL}}
\newcommand{\sX}{\mathscr{X}}
\newcommand{\sD}{\mathscr{D}}
\newcommand{\Proj}{\mathrm{Proj}}
\newcommand{\Sing}{\mathrm{Sing}}
\newcommand{\im}{\operatorname{Im}}
\newcommand{\Gr}{\mathrm{Gr}}
\newcommand{\SO}{\mathrm{SO}}
\newcommand{\gquot}{/\!\!/}
\newcommand{\II}{\textrm{II}}
\DeclareMathOperator{\Bl}{Bl}
\DeclareMathOperator{\Ort}{O}
\DeclareMathOperator{\PO}{PO}
\newcommand\aff{{\mathbb A}}
\newcommand{\CC}{{\mathbb C}}
\newcommand{\cA}{{\mathscr A}}
\newcommand{\cD}{{\mathscr D}}
\newcommand{\cH}{{\mathscr H}}
\newcommand{\cI}{{\mathscr I}}
\newcommand{\cJ}{{\mathscr J}}
\newcommand{\cK}{{\mathscr K}}
\newcommand{\cL}{{\mathscr L}}
\newcommand{\cO}{{\mathscr O}}
\newcommand{\cU}{{\mathscr U}}
\newcommand{\cX}{{\mathscr X}} 
\newcommand{\cY}{{\mathscr Y}}
\newcommand{\dra}{\dashrightarrow}
\newcommand{\es}{\varnothing}
\newcommand{\gM}{\mathfrak{M}}
\newcommand{\hra}{\hookrightarrow}
\newcommand{\la}{\langle}
\newcommand{\lra}{\longrightarrow}
\newcommand{\NN}{{\mathbb N}}
\newcommand{\ov}{\overline}
\newcommand{\PP}{{\mathbb P}}
\newcommand{\QQ}{{\mathbb Q}}
\newcommand{\ra}{\rangle}
\newcommand{\RR}{{\mathbb R}}
\newcommand{\wh}{\widehat}
\newcommand{\wt}{\widetilde}
\newcommand{\ZZ}{{\mathbb Z}}
\DeclareMathOperator{\Cl}{Cl}
\DeclareMathOperator{\diag}{diag}
\DeclareMathOperator{\divisore}{div}
\DeclareMathOperator{\Ind}{Ind}
\DeclareMathOperator{\mult}{mult}
\DeclareMathOperator{\PSL}{PSL}
\DeclareMathOperator{\sing}{sing}
\DeclareMathOperator{\supp}{supp}
\DeclareMathOperator{\Symm}{S}
\newcommand{\cit}[1]{{\rm \textbf{#1}}}
\newcommand{\Ref}[2]{\cit{%
\ifthenelse{\equal{#1}{thm}}{Theorem}{}%
\ifthenelse{\equal{#1}{ass}}{Assumption}{}%
\ifthenelse{\equal{#1}{chp}}{Chapter}{}%
\ifthenelse{\equal{#1}{prp}}{Proposition}{}%
\ifthenelse{\equal{#1}{lmm}}{Lemma}{}%
\ifthenelse{\equal{#1}{cnj}}{Conjecture}{}%
\ifthenelse{\equal{#1}{crl}}{Corollary}{}%
\ifthenelse{\equal{#1}{dfn}}{Definition}{}%
\ifthenelse{\equal{#1}{expl}}{Example}{}%
\ifthenelse{\equal{#1}{hyp}}{Hypothesis}{}%
\ifthenelse{\equal{#1}{rmk}}{Remark}{}%
\ifthenelse{\equal{#1}{clm}}{Claim}{}%
\ifthenelse{\equal{#1}{notconv}}{Notation/Convention}{}%
\ifthenelse{\equal{#1}{pred}}{Prediction}{}%
\ifthenelse{\equal{#1}{prb}}{Problem}{}%
\ifthenelse{\equal{#1}{table}}{Table}{}%
\ifthenelse{\equal{#1}{exe}}{Exercise}{}%
\ifthenelse{\equal{#1}{qst}}{Question}{}%
\ifthenelse{\equal{#1}{rcp}}{Recipe}{}%
\ifthenelse{\equal{#1}{sec}}{Section}{}%
\ifthenelse{\equal{#1}{subsec}}{Subsection}{}%
\ifthenelse{\equal{#1}{subsubsec}}{Subsubsection}{}%
\ifthenelse{\equal{#1}{univ}}{Universal Property}{}%
\ifthenelse{\equal{#1}{trm}}{Terminology}{}%
\ifthenelse{\equal{#1}{tbl}}{Table}{}%
\  \ref{#1:#2}%
}}
\begin{document}
 \title[Moduli of quartic $K3$s (2)]{GIT versus Baily-Borel compactification for quartic $K3$ surfaces}
 
 \author[R. Laza]{Radu Laza}
\address{Stony Brook University,  Stony Brook, NY 11794, USA}
\email{radu.laza@stonybrook.edu}

 \author[K. O'Grady]{Kieran O'Grady}
\address{``Sapienza'' Universit\'a di Roma, Rome, Italy}
\email{ogrady@mat.uniroma1.it}
\date{\today}

\begin{abstract}
Looijenga has  introduced new compactifications of locally symmetric varieties that give a complete understanding  
of the period map from the GIT moduli space of plane sextics to the Baily-Borel compactification   of the moduli space  polarized $K3$'s of degree $2$, and also  of the period map of cubic fourfolds. On the other hand, the period map of the GIT moduli space of quartic surfaces  is significantly more subtle.  In our paper~\cite{log1} we introduced a Hassett-Keel--Looijenga program for certain locally symmetric varieties of Type IV.  As a consequence, we gave a complete conjectural decomposition  into a product of elementary birational modifications of  the period map  for  the GIT moduli spaces of quartic surfaces. 
 The purpose of this note is to provide compelling evidence in favor of our program. 
Specifically, we propose a matching between the arithmetic strata in the period space and suitable  strata of the GIT moduli spaces of quartic surfaces.   We then partially verify that the proposed matching actually holds.
\end{abstract}

\thanks{Research of the first author is supported in part by NSF grants DMS-125481 and DMS-1361143. Research of the second author is supported in part by PRIN 2013.}
  \maketitle
  
\bibliographystyle{amsalpha}



\section{Introduction}
The general context of our paper is the search for a geometrically meaningful compactification of  moduli spaces of polarized $K3$ surfaces, and similar varieties (with Hodge structure of $K3$ type). While  there exist well-known geometrically meaningful compactifications of  moduli spaces of smooth curves and of (polarized) abelian varieties, the situation for $K3$'s is much murkier. The basic fact about the moduli space of degree-$d$ polarized $K3$ surfaces $\cK_d$ is that, as a consequence of Torelli and properness of the period map, it is isomorphic to a locally symmetric variety $\cF_d= \Gamma_d\backslash\cD$, where  $\cD$ is a $19$-dimensional Type IV Hermitian symmetric domain, and $\Gamma_d$ is an arithmetic group. As such, $\cF_d$ has many known compactifications (Baily-Borel, toroidal, etc.), but the  question is whether some of these are modular  (by way of comparison, we recall that  the second Voronoi toroidal compactification of  $\cA_g$ is modular, cf.~Alexeev \cite{alexeevag}). The most natural approach to this question is to compare  birational models of $\cK_d$ (e.g.~those given by GIT moduli spaces of plane sextic curves,  quartic surfaces, complete intersections of a quadric and a cubic in $\PP^4$) and the known compactifications of $\cF_d$      via the period map. 
The  most basic compactification  of $\cF_d$ is the one introduced by Baily-Borel; we   denote it by  $\cF_d^*$.   In ground-breaking work, Looijenga \cite{looijenga1,looijengacompact} gave a framework for the  comparison of GIT and Baily-Borel compactifications of moduli spaces of low degree $K3$ surfaces and  similar examples (e.g.~cubic fourfolds). Roughly speaking, Looijenga proved that, under suitable hypotheses, natural GIT birational models of a moduli space of polarized $K3$ surfaces can be obtained by arithmetic modifications from the Baily-Borel compactification. In particular, Looijenga and others have given a complete, and unexpectedly nice, picture of the period map  for the GIT moduli space of plane sextics (which is birational to the moduli space of polarized $K3$'s of degree $2$), 
 see \cite{shah,looijengavancouver,friedmanannals}, and for the GIT moduli space of cubic fourfolds (which is birational to the moduli space of polarized hyperk\"ahler varieties of Type $K3^{[2]}$ with a polarization of degree $6$ and divisibility $2$), see \cite{lcubic,gitcubic,cubic4fold}. By contrast,  at  first glance,  Looijenga's framework appears not to apply to the GIT moduli space of quartic surfaces (and their cousins, double EPW sextics): \cite{shah4} and~\cite{epw} showed that the GIT stratification of moduli spaces of quartic surfaces and EPW sextics, respectively,  is  much more complicated than the analogous stratification of the GIT moduli spaces of plane sextics or cubic fourfolds, and there is no  decomposition of the (birational) period map to the Baily-Borel compactification into a product of elementary modifications  as simple as that of the period map of degree $2$ $K3$'s or cubic fourfolds. In our 
paper~\cite{log1}, we refined Looijenga's work and we proved that, morally speaking, Looijenga's framework can be successfully applied to the period map of quartic surfaces and EPW sextics. In fact, we have noted that Looijenga's work should be viewed as an instance of the study of variation of (log canonical) models for moduli spaces (a concept that matured more recently, starting with the work of Thaddeus \cite{thaddeus}, and continued, for example,  with the so-called Hassett--Keel program). This led to the introduction, in~\cite{log1}, of a  program, which might be dubbed  \emph{Hassett--Keel--Looijenga program}, whose aim is to study the log-canonical  models of locally symmetric varieties of Type IV equipped with a collection of Heegner divisors (in that paper we concentrated on a specific series of locally symmetric varieties and Heegner divisors, but the program makes sense in complete generality). In particular,  in~\cite{log1} we made very specific predictions for the decomposition into  products of elementary birational modifications  of the period maps  for the GIT moduli spaces of quartic $K3$ surfaces. 

Our predictions  are in the spirit of Looijenga \cite{looijengacompact}, i.e.~the elementary birational modifications are dictated by arithmetic. There are two related issues arising here: First, the various strata in the period space  should correspond to geometric strata in the GIT compactification. Secondly, our work in \cite{log1} is only predictive, i.e.~there is no guarantee that the given list of birational modifications is complete, or even that all these modifications occur. The purpose of this note is to partially address these two issues. Namely, we give what we believe to be a complete matching between the geometric and arithmetic strata, thus  addressing the first issue. We view this result as strong evidence towards the completeness and accuracy of our predictions.  While our previous paper~\cite{log1} looks at the period map from the point of view of the target (the Baily-Borel compactification of the period space), the present paper's vantage point is that of the GIT moduli spaces of quartic surfaces: we get what appears to be a  snapshot of  the predicted decomposition of the period map into a product of simple birational modifications.

Let us discuss more concretely the content of this note, and its relationship to \cite{log1}. To start with, we recall that in~\cite{log1} we have introduced, for each $N\ge 3$, an $N$-dimensional locally symmetric variety $\cF(N)$ associated to the $D$ lattice  $U^2\oplus D_{N-2}$. The space $\cF(19)$ is the period space of degree-$4$ polarized $K3$ surfaces, and also $\cF(18)$, $\cF(20)$ are period spaces for natural polarized varieties (see~\Ref{subsec}{predictq} for details). The main goal of that paper is to predict the behavior of the schemes 
\begin{equation*}
 \cF(N,\beta)=\Proj R(\cF(N),\lambda(N)+\beta\Delta(N)),\qquad \beta\in[0,1]\cap\QQ,
\end{equation*}
where $\lambda(N)$ is the Hodge (automorphic) divisor class on $\cF(N)$,  $\Delta(N)$ is a \lq\lq boundary\rq\rq\ divisor, with  a clear geometric meaning for $N\in\{18,19,20\}$, and $R(\cF(N),\lambda(N)+\beta\Delta(N))$ is the graded ring associatde to the $\QQ$-Cartier divisor class $\lambda(N)+\beta\Delta(N)$. 
  
For all $N$, the scheme $\cF(N,0)$ is  the Baily-Borel compactification $\cF(N)^{*}$. 
 At the other extreme, for $N=19,18$, the scheme  $\cF(N,1)$ is isomorphic to a natural GIT moduli space $\gM(N)$ (and we are confident that the same remains true for $N=20$).  From now on, we will concentrate our attention on   $\cF:=\cF(19)$. The relevant GIT moduli space is that of quartic surfaces, i.e.
\begin{equation*}
\gM:=|\cO_{\PP^3}(4)|\gquot\PGL(4).
\end{equation*}
 The period map 
\begin{equation*}
\gp\colon\gM\dra \cF^{*}  
\end{equation*}
is birational by Global Torelli.  We expect (following Looijenga) that the inverse   $\gp^{-1}$  decomposes as the product of a $\QQ$-factorialization,  a series of flips, and, at the last step,  a divisorial contraction. 

In order to be more specific, we need to describe the boundary divisor  $\Delta$ for $\cF$. First, let $H_h,H_u\subset\cF$ be  the (prime) divisors parametrizing periods of hyperelliptic degree $4$ polarized $K3$'s,  and unigonal degree $4$ polarized $K3$'s respectively - they are both Heegner (i.e.~Noether-Lefschetz) divisors. 
The boundary divisor is given by 
\begin{equation*}
\Delta:=(H_h+H_u)/2.
\end{equation*}
The  birational transformations mentioned above are obtained by considering $\cF(\beta):=\cF(19,\beta)$ for $\beta\in[0,1]\cap\QQ$. 

The main result of our previous paper is the prediction of  the critical values of $\beta$ corresponding to the flips, together with the description of (the candidates for) the centers of the flips on the $\cF$  side.  
In fact in~\cite{log1} we have defined  towers of closed subsets (see~\eqref{zetaquart})
\begin{equation}\label{zedtower}
Z^{9}\subset Z^{8} \subset Z^{7}\subset Z^{5}\subset Z^{4}\subset Z^{3}\subset Z^{2}\subset Z^{1}=\supp\Delta\subset \cF,
\end{equation}
where  $k$ denotes the codimension ($Z^6$ is  missing, \emph{no typo}). Our prediction is that the critical values of $\beta$ are
\begin{equation}\label{betacrit}
0,\frac{1}{9},\frac{1}{7},\frac{1}{6},\frac{1}{5},\frac{1}{4},\frac{1}{3},\frac{1}{2},1,
\end{equation}
and that the center of the $n$-th flip (corresponding to the $i$-th critical value) is the closure of the strict transform of the $n$-th term in the relevant tower (the $\QQ$-factorialization corresponds to small $0<\beta$, hence the corresponding $0$-th critical $\beta$ is $0$). The last  critical value of $\beta$, i.e.~$\beta=1$ corresponds to the contraction of the strict transform of the boundary divisor.

On the GIT moduli space side,   Shah~\cite{shah4} has defined a closed locus  $\gM^{IV}\subset\gM$  containing the indeterminacy locus of the period maps 
(we predict that it  coincides with the indeterminacy locus), which has a natural stratification (see~\Ref{dfn}{doubleuq})
\begin{equation*}
\scriptstyle
\gM^{IV}= (W_8\sqcup\{\upsilon\})\supset (W_7\sqcup\{\upsilon\})\supset (W_6\sqcup\{\upsilon\})\supset (W_4\sqcup\{\upsilon\})\supset 
(W_3\sqcup\{\upsilon\})\supset (W_2\sqcup\{\upsilon\})\supset (W_1\sqcup\{\upsilon\})\supset (W_0\sqcup\{\upsilon\}),
\end{equation*}
where $\upsilon$ is the point corresponding to the tangent developable of a twisted cubic curve, and the index denote dimension. As predicted by Looijenga, and refined by us, we expect that the center in $\gM$ corresponding to the center $Z^k$ (respectively $Z^k_h$) is $W_{k-1}\sqcup\{\upsilon\}$. The purpose of this note is to give evidence in favor of the above matching. We prove that the described matching holds for $Z^1$ and $Z^2$ (equivalently, for $(W_1\sqcup\{\upsilon\})$ and $(W_0\sqcup\{\upsilon\})$), and we provide evidence for the matching between $Z^9,Z^8,Z^7$ and $(W_8\sqcup\{\upsilon\})$, 
$(W_7\sqcup\{\upsilon\})$, $(W_6\sqcup\{\upsilon\})$ respectively.

In \Ref{sec}{overview} we give a very brief overview of the  framework developed by Looijenga in order to compare the GIT and Baily-Borel compactifications of moduli spaces of polarized $K3$ surfaces, or similar varieties, and we will illustrate it by giving a bird's-eye-view of the period map for degree-$2$ $K3$'s and cubic fourfolds. We then introduce the point of view developed in~\cite{log1}, and we describe in detail the predicted decomposition of the inverse of the period map for quartic surfaces as  product of elementary  birational maps (i.e.~flips or contractions), see~\eqref{diagramflip}. 

We continue in \Ref{sec}{shahreview}, by revisiting the work of Shah \cite{shah4} on the GIT for quartic surfaces.  Usually, in a GIT analysis, by boundary one understands the locus (in the GIT quotient) parameterizing strictly semistable objects, which then can be stratified in terms of stabilizers of the polystable points (see Kirwan \cite{kirwan}). In his works on periods of quartic surfaces, Shah (see~\cite{shahinsignificant,shah}) noted that a more refined stratification emerges when studying the period map, resulting into four Types of quartic surfaces, labeled I--IV, with corresponding locally closed subsets of $\gM$ denoted $\gM^{I},\ldots,\gM^{IV}$. A quartic is of Type I--III if it is cohomologically insignificant (or from a more modern point of view, it is  semi-log-canonical), and thus the period map extends  over the open subset of the moduli space parametrizing such surfaces; moreover the Type determines whether the period point belongs to the period space (Type I), or it belongs to one of the Type II or Type III boundary components of the Baily-Borel compactification. The remaining surfaces are of Type IV, in particular the  indeterminacy locus of $\gp\colon\gM\dra\cF(19)^{*}$ is contained in $\gM^{IV}$ (we predict that it coincides with $\gM^{IV}$). In the analogous case of the period map from the GIT moduli space of plane sextics to the period space for polarized $K3$'s of degree $2$, the Type IV locus consists of a single point (corresponding to the triple conic). On the other hand, for quartic surfaces the Type IV locus is of big dimension and it has a complicated structure. 
In our revision of Shah's work, we shed some light on the structure of Type IV (and Type II and III) loci. While arguably everything that we do here is contained in Shah, we believe that the structure becomes transparent only after one knows the predicted arithmetic behavior. In some sense, the main point of Looijenga is to bring order to the world of GIT quotients of varieties of $K3$ type, by relating it to the orderly world of  hyperplane arrangements.

In~\Ref{sec}{refshah}, we define partitions of $\gM^{II}$ and $\gM^{III}$ into locally closed subsets (our partitions are slightly finer than partitions which have already been defined by Shah in~\cite{shah4}), and we define the stratification of $\gM^{IV}$  discussed above.

In \Ref{sec}{hklquartics} and~\Ref{sec}{dolgachev} we provide evidence in favor of the predictions of~\cite{log1} for $\gp\colon\gM\dra\cF^{*}$. We start (\Ref{sec}{hklquartics}) by showing that the period map behaves as predicted in neighborhoods of the points $\upsilon,\omega\in\gM$ corresponding to the   tangent developable of a twisted cubic curve and a double (smooth) quadric respectively. 
By blowing up those points one \lq\lq improves\rq\rq\ the behavior of the period map; the exceptional divisor over $\upsilon$ maps regularly to the (closure) of the unigonal divisor in $\cF^{*}$,  the exceptional divisor over $\omega$ maps  to the (closure) of the hyperelliptic divisor $H_h$  in $\cF^{*}$, and the image of the set of regular points for the map in $H_h$  is precisely the complement of $Z^{2}$. 
This result is essentially present in~\cite{shah4} (and belongs to \lq\lq folk\rq\rq\ tradition); we take care in specifying the weighted  blow up that one needs to perform around 
$\upsilon$ in order to make the map regular above $\upsilon$. In the language that we introduced previously, the above results match $Z^1$ with $W_0\sqcup\{\upsilon\}$. Next, we match $Z^2$ and $W_1\sqcup\{\upsilon\}$. This is the first flip in the chain of birational modifications transforming the GIT into the Baily-Borel compactification, and it is more involved than the blow-ups of $\upsilon$ and $\omega$. It suffices here   to mention that $W_1$ parametrizes quartics $Q_1+Q_2$, where $Q_1,Q_2$ are quadrics tangent along a smooth conic. (Warning: we do not provide full details of some of the proofs.) We note that while some similar arguments and computations occur previously in the literature (esp.~in work of Shah \cite{shah,shah4}), to our knowledge, the discussion here is the most complete and detailed analysis of an explicit (partial) resolution of a period map for $K3$ surfaces (esp.~the discussion of the flip is mostly new).

In~\Ref{sec}{dolgachev}, we provide evidence in favor of the matching of $Z^9,Z^8,Z^7$ and
 $W_8\sqcup\{\upsilon\},W_7\sqcup\{\upsilon\},W_6\sqcup\{\upsilon\}$. It is interesting to note that the flips of $Z^9,Z^8,Z^7$  
 are associated to the so-called Dolgachev singularities (aka triangle singularities or exceptional unimodular singularties) $E_{12}$, $E_{13}$, and $E_{14}$ respectively. These are the simplest non-log canonical singularities, essentially analogous to  cusp for curves. The geometric behavior of variation of $\cF(\beta)$ at the corresponding critical values is analogous to the behavior of the Hassett-Keel space $\gM_g(\alpha)$ around $\alpha=\frac{9}{11}$ (when stable curves with an elliptic tail are replaced by curves with cusps, see \cite{hh1}). While hints of this behavior exist in the literature (see Hassett \cite{hassett}, Looijenga \cite{ltriangle1}, Shepherd-Barron, and Gallardo \cite{gallardo}), our $\cF(\beta)$ example is the first genuine analogue of a Hassett-Keel behavior for surfaces (the existence of this is well-known speculation among experts in the field).

In the final section (\Ref{sec}{LooijengaQ}), we  discuss Looijenga's $\QQ$-factorialization of $\cF^{*}$, that we denote $\wh{\cF}$, and the matching between the irreducible components of $\gM^{II}$ (i.e.~the elements of the partition of $\gM^{II}$ defined in~\Ref{sec}{refshah}) and the irreducible components of $\cF^{II}$ (i.e.~the Type II boundary components of $\cF^{*}$).  From our point of view, Looijenga's $\QQ$-factorialization of $\cF^{*}$ is nothing else but $\cF(\epsilon)$ for $\epsilon>0$ small  (the prediction of~\cite{log1} is that 
$0<\epsilon<1/9$ will do). We compute the dimensions of the inverse images in $\wh{\cF}$ of the Type II boundary components of $\cF^{*}$. Lastly, we match the irreducible components of $\gM^{II}$ and the Type II boundary components of $\cF^{*}$. This matching deserves a more detailed discussion elsewhere. On the GIT side, $\gM^{II}$ has  $8$  components  (of varying dimension), while $\cF^{*}$   has  $9$  Type II boundary components  (as computed in~\cite{scattone}), each of them is a modular curve.  By adapting arguments  of Friedman in~\cite{friedmanannals}, we can match each of the $8$  components of $\gM^{II}$ to one  of the $9$ Type II boundary  components of $\cF^{*}$, and hence exactly one Type II boundary component is left out.   The discrepancy of dimensions between GIT and Baily-Borel strata (for the $8$ matching strata)  is  explained by  Looijenga's $\bQ$-factorialization of the Baily-Borel compactification (one of the main results of \cite{looijengacompact}). A mystery, at least for us, was the presence of a \lq\lq missing\rq\rq\ Type II  boundary of $\cF^{*}$. 
This has to do with what we call the second order corrections to Looijenga's predictions (one of the main  discoveries of~\cite{log1}). 

\smallskip

To conclude, we believe that while further work is needed (and small adjustments might occur), there is very strong evidence that our predictions from \cite{log1} are 
 accurate. In any case, Looijenga's visionary idea that the natural (or ``tautological'') birational models (such as GIT) of the moduli space of polarized $K3$s are controlled by the arithmetic of the period space is validated in the highly non-trivial case of quartic surfaces (by contrast, in the previous known examples \cite{shah,looijengavancouver,lcubic,cubic4fold,ls,act} only first order phenomena were visible, and thus a bit misleading). As possible applications of our program,  starting from the period domain side, one can bring structure and order to the (a priori) wild side of GIT. Conversely, starting from GIT and the work of Kirwan \cite{kirwan,kirwanhyp}, one can follow our factorization of the period map (and do \lq\lq wall crossing\rq\rq\ computations) and compute, say, the Betti numbers of $\cF$.

\begin{notationconv}
A \emph{$K3$ surface} is a complex projective surface $X$ with DuVal singularities,  trivial dualizing sheaf  $\omega_X$, and $H^1(\cO_X)=0$. 

We let $U$ be the hyperbolic plane, and root lattices are always negative definite. Let $\Lambda$ be a lattice, and $v,w\in\Lambda$. We let $(v,w)$ be the value of the bilinear symmetric form on the couple $v,w$, and we let $q(v):=(v,v)$.
The \emph{divisibility} of $v$ is the positive integer $\divisore(v)$ such that $(v,\Lambda)=\divisore(v)\ZZ$. Let $v$ be \emph{primitive} (i.e.~$v=mw$ implies that $m=\pm 1$); if $\Lambda$ is unimodular, then 
$\divisore(v)=1$, in general it might be greater than $1$.
\end{notationconv}

\subsection*{Acknowledgements} 

While the same acknowledgements as in \cite{log1} apply here, we repeat our thanks to E. Looijenga for his inspiring work and some helpful feedback, and to IAS for hosting us during the Fall of 2014, which represents the start of our investigations. Finally, much of this paper was written while the first author was on sabbatical and visiting \'Ecole Normale Sup\'erieur. He thanks ENS for hospitality and  {\it Simons Foundation} and {\it Fondation Sciences Math\'ematiques de Paris} for partially supporting this stay.


\section{GIT vs.~Baily-Borel  for locally symmetric varieties of Type IV}\label{sec:overview}
\setcounter{equation}{0}
The purpose of this section is to give a very brief account of Looijenga's framework and our enhancement from \cite{log1} (with a focus on quartic surfaces). We start with the simplest non-trivial example that fits into Looijenga's framework -  degree-$2$ $K3$ surfaces (see \cite{shah}, \cite{looijengavancouver}, \cite[\S5]{friedmanannals}, and \cite[\S1]{k3pairs} for a concise account). We then briefly touch on the general case,  and we recall how it applies   to the moduli space of cubic fourfolds. Lastly, we describe in detail our (conjectural) decomposition of the period map  for  quartic surfaces into a product of elementary birational modifications, see~\cite{log1}. 
\begin{remark}
To the best of our knowledge, the first instance of Looijenga's framework is in Igusa's celebrated paper~\cite{igusag2} on modular forms of genus $2$. The paper by Igusa analyzes the (birational) period map between the  compactification of the moduli space of (smooth) genus $2$ curves provided by the GIT quotient of binary sextics and the Satake compactification of $\cA_2$ (notice that $\cA_2$ is a locally symmetric variety of Type IV). Igusa describes explicitly the blow-up of a non-reduced point in the GIT moduli space needed to resolve the period map. See~\cite{hassg2} for a more recent version of this story.
\end{remark}
\subsection{Degree-$2$ $K3$ surfaces}\label{subsec:degtwo}
Let $\cF_2$ be the period space of  degree-$2$ polarized $K3$ surfaces, i.e.~$\cF_2=\Gamma_2\backslash\cD$, where   $\Gamma_2$ and $\cD$ are defined as follows. Let $\Lambda:=U^2\oplus E_8^2\oplus A_1$. Thus  $\Lambda$ is isomorphic to the primitive integral cohomology of a polarized $K3$ of degree $2$. Then
\begin{equation}\label{howmanytimes}
\cD:=\{[\sigma]\in\PP(\Lambda\otimes\CC) \mid q(\sigma)=0,\quad q(\sigma+\ov{\sigma})>0\}^{+},\qquad \Gamma_2:= O^{+}(\Lambda).
\end{equation}
Here the first superscript $+$ means that we choose one connected component (there are two, interchanged by complex conjugation), the second one means that $\Gamma_2$ is the index-$2$ subgroup of $O(\Lambda)$ which maps $\cD$ to itself.
Let $\cF_2\subset\cF_2^{*}$ be the Baily-Borel compactification. Let $\gM_2:=|\cO_{\PP^2}(6)|\gquot\PGL(3)$ be the GIT moduli space of plane sextics. 
We let
\begin{equation*}
\gp\colon\gM_2\dra\cF_2^{*},\qquad \gp^{-1}\colon\cF_2^{*}\dra\gM_2
\end{equation*}
be the (birational) period map and its inverse, respectively. By Shah~\cite{shah} the period map $\gp$ is regular away from the point $q\in\gM_2$  parametrizing the  $\PGL(3)$-orbit of 
 $3C$, where $C\subset\PP^2$ is a smooth conic (a closed orbit in $|\cO_{\PP^2}(6)|^{ss}$). Let $\gM_2^{I}\subset\gM_2$ be the open dense subset of orbits of curves with simple singularities, and let $H_u\subset\cF_2$ be the unigonal divisor, i.e.~the divisor parametrizing periods of unigonal degree-$2$ $K3$'s. Thus $H_u$ is a Heegner divisor; it is the image in $\cF_2$ of a hyperplane $v^{\bot}\cap\cD$, where $v\in\Lambda$ is such that $q(v)=-2$ and $\divisore(v)=2$  (any two such elements of $\Lambda$ are $\Gamma_2$-equivalent). Then 
  the period map defines an isomorphism $\gM_2^{I}\overset{\sim}{\lra}(\cF_2\setminus H_u)$. Let $L\in\Pic(\gM_2)_{\QQ}$ be the class induced by the hyperplane class on 
 $|\cO_{\PP^2}(6)|$, let
  $\lambda$ be the Hodge divisor class on $\cF_2$, and $\Delta:=H_u/2$;  a computation similar (but simpler) to those carried out in Sect.~4 of~\cite{log1} gives that 
 \begin{equation}\label{keypull}
\gp^{-1}L|_{\cF_2}=\lambda+\frac{1}{2}H_u=\lambda+\Delta
\end{equation}
(the $\frac{1}{2}$ factor indicates that $H_u$ is a ramification divisor of the quotient map $\cD\to\cF_2$). 
 Arguing as in Sect.~4.2 of~\cite{log1}, one shows that $\gp^{-1}$ is regular on all of $\cF_2$ (one key point is that $\cF_2$ is $\QQ$-factorial). On the other hand $\gp^{-1}$ is \emph{not} regular on all of $\cF_2^{*}$. In order to describe $\gp^{-1}$ on the boundary of $\cF_2$, let 
  $\widehat \cF_2\subset\cF_2^{*}\times\gM_2$ be the graph of  $\gp^{-1}$, and 
let $\Pi\colon  \widehat \cF_2\to\cF_2^{*}$, $\Phi\colon  \widehat \cF_2\to\gM_2$  be the projections:
\begin{equation}\label{triangolo}
\xymatrix{
& \widehat \cF_2\ar@{->}[ld]_{\Pi}\ar@{->}[rd]^{\Phi}&\\ 
\cF_2^*\ar@{-->}^{\gp^{-1}}[rr]&&\gM_2&
}
\end{equation}
 Thus $\Pi$ is an isomorphism over $\cF_2$ (because $\gp^{-1}$ is regular on $\cF_2$). On the other hand, it follows from Shah's description of semistable orbits in  $|\cO_{\PP^2}(6)|$, that the fibers of $\Pi$ over two of the four $1$-dimensional boundary components of $\cF_2^{*}$ are $1$-dimensional (namely those labeled by $E_8^2\oplus A_1$ and $D_{16}\oplus A_1$; see Remark 5.6 of~\cite{friedmanannals} for the notation), and they are $0$-dimensional over the remaining two boundary components. From this it follows that $\cF_2^{*}$ is not $\QQ$-factorial, because if it were $\QQ$-factorial, the exceptional set of $\Pi$ would have pure codimension $1$. Moreover, it follows that $\Pi$ is a $\QQ$-factorialization of $\cF_2^{*}$. In fact, since $\cF_2$ is $\QQ$-factorial, with rational Picard group freely generated by $\lambda$ and $H_u$,  the rational class group $\Cl(\cF_2^{*})_{\QQ}$ is   freely generated by $\lambda^{*}$ and $H^{*}_u$ (obvious notation). Since $\lambda^{*}$ is the class of a $\QQ$-Cartier divisor, it follows that $H^{*}_u$ is not $\QQ$-Cartier. Let $\widehat{H}_u\subset  \widehat \cF_2$ be the strict transform of $H_u$. Then $\widehat{H}_u$ is $\QQ$-Cartier, because by~\eqref{keypull},  there exists $m\gg 0$ such that $m\widehat{H}_u$  is the divisor of a section of the  line-bundle $\Phi^{*}L^{2m}\otimes\Pi^{*}(\lambda^{*})^{-2m}$. Moreover we can identify  $\widehat \cF_2$ with $\Proj(\bigoplus_{n\ge 0}\cO_{\cF_2^{*}}(nH^{*}_u))$, because $\widehat{H}_u$ is $\Pi$-ample (clearly $a\pi^{*}(\lambda^{*})+b\Phi^{*}L$ is ample for any $a,b\in\QQ_{+}$, using~\eqref{keypull} and the triviality of $\pi^{*}(\lambda^{*})$ on fibers of $\Pi$, it follows that  $\widehat{H}_u$ is $\Pi$-ample). Thus (as 
 in~\cite{looijengavancouver})  we have decomposed $\gp^{-1}$ as follows: first we construct  the $\QQ$-factorialization of $\cF_2^{*}$ given by  $\Proj(\bigoplus_{n\ge 0}\cO_{\cF_2^{*}}(nH^{*}_u))$, then we blow down the strict transform of $H_u^{*}$, i.e.~$\widehat{H}_u$. In this case the Mori chamber decomposition of the quadrant  $\{\lambda+\beta \Delta \mid \beta\in[0,1]\cap\QQ\}$ is very simple; there are exactly two  walls, corresponding to $\beta=0$ and $\beta=1$.   
\subsection{A quick overview of Looijenga's framework}\label{subsec:looijframe}
Let  $\gM^0$ be  a moduli space of  (polarized) varieties which are smooth or \lq\lq almost\rq\rq\ smooth (e.g.~surfaces with ADE singularities), with Hodge structure of $K3$ type. In particular the corresponding period space is $\cF=\Gamma\backslash \cD$, where $\cD$ is a Type IV domain or a complex ball, and $\Gamma$ is an arithmetic group. An example of $\gM^0$ is provided by the moduli space of degree-$d$ polarized $K3$ surfaces, embedded by a suitable multiple of the polarization (one also has to specify the linearized ample line-bundle on the relevant Hilbert scheme), and $\cF=\cF_d$ - in particular the example
 discussed in~\Ref{subsec}{degtwo}. 
We let  $\gM^0\subset \gM$ be a GIT compactification, and we let $\cF\subset\cF^{*}$ be the Baily-Borel compactification. Let
$$\gp\colon \gM\dra \cF^{*}$$
be  the period map, and assume that it  is \emph{birational}. Looijenga~\cite{looijenga1,looijengacompact} tackled the problem of resolving   $\gp$. 
 First, he  observed that in many instances   $\gp(\gM^0)=\cF\setminus \supp\Delta$, where $\Delta$ is an effective linear combination of Heegner divisors - in the example of~\Ref{subsec}{degtwo}, one chooses $\Delta=H_u/2$. It is reasonable to expect that 
\begin{equation}\label{gitdel}
\gM\cong \Proj R(\cF,\lambda+\Delta),
\end{equation}
where $\lambda$ is the Hodge (automorphic) $\QQ$-line bundle on $\cF$ (of course here the choice of coefficients for $\Delta$ is crucial), and for a $\QQ$-line bundle $\cL$ on $\cF$ we let $R(\cF,\cL)$ be the graded ring of sections associated to $\cL$.
In  the example of~\Ref{subsec}{degtwo}, Equation~\eqref{gitdel} holds by~\eqref{keypull}.
On the other hand, Baily-Borel's compactification is characterized as 
$$\cF^{*}=\Proj R(\cF,\lambda).$$
Thus, in order to analyze the period map, we must examine $\Proj R(\cF,\lambda+\beta\Delta)$ for $\beta\in(0,1)\cap\QQ$ (we assume throughout that $R(\cF,\lambda+\beta\Delta)$ is finitely generated). Let us first consider the two extreme cases: $\beta$ close to $0$ or to $1$, that we denote $\beta=\epsilon$ and $\beta=(1-\epsilon)$, respectively.
The space 
$$\widehat \cF:=\Proj R(\cF,\lambda+\epsilon \Delta)$$
constructed by Looijenga \cite{looijengacompact} as a semi-toric compactification,  has the effect of making $\Delta$ $\bQ$-Cartier (notice that the period space $\cF$ is $\QQ$-factorial, the problems occur only at the Baily-Borel boundary). The map $\widehat \cF\to \cF^*$ 
is a small map - in  the example of~\Ref{subsec}{degtwo} this is the map $\Pi\colon\wh{\cF}_2\to \cF_2^{*}$. 
At the other extreme, we expect that $\widetilde \gM:=\Proj R(\cF,\lambda+ (1-\epsilon) \Delta)$ is 
 a Kirwan type blow-up of the GIT quotient $\gM$ with exceptional divisor  the strict transform of $\Delta$ - in  the example of~\Ref{subsec}{degtwo} this is the map $\Phi\colon\wh{\cF}_2\to \gM_2$. 

In between, we expect  a series of flips, dictated  by the structure of the preimage of $\Delta$ under the quotient map $\pi\colon \cD\to\cF$. More precisely, let $\cH:=\pi^{-1}(\supp\Delta)$; then $\cH$  is a union of hyperplane sections of $\cD$, and hence is stratified by closed subsets, where a stratum is determined by the number of independent sheets (\lq\lq independent sheets\rq\rq\ means that their defining  equations have linearly independent  differentials) of
 $\cH$ containing the general point of the stratum. The stratification of $\cH$ induces a stratification of $\supp\Delta$, where the strata of $\supp\Delta$ are indexed by the \lq\lq number of sheets\rq\rq\ (in $\cD$, \emph{not} in $\cF=\Gamma\backslash\cD$). Roughly speaking, Looijenga predicts that a stratum of $\supp\Delta$ corresponding to $k$ (at least) sheets meeting (in $\cD$) is flipped to a dimension $k-1$ locus on the GIT side. In  the example of~\Ref{subsec}{degtwo}, the divisor $\cH:=\pi^{-1}H_u$ is smooth, and this is the reason why  no flips appear in the resolution of $\gp$ given by~\eqref{triangolo}. In~\Ref{subsec}{quattrocubica} we give an example in which one flip occurs.

Summarizing, Looijenga predicts that in order to resolve the inverse of the period map $\gp$ one has to follow the steps below:
\begin{enumerate}
\item $\bQ$-factorialize $\Delta$.
\item Flip the strata of $\Delta$ defined above, starting from the lower dimensional strata, 
\item Contract the strict transform of $\Delta$.
\end{enumerate}
All these operations have arithmetic origin, and thus, when applicable, give a meaningful stratification of the GIT moduli space. 
\subsection{Cubic fourfolds}\label{subsec:quattrocubica}
The period space is  similar to that of degree-$2$ polarized $K3$ surfaces (see~\eqref{howmanytimes}). Specifically,  $\Lambda$ is replaced by 
$\Lambda':=U^2\oplus E_8^2\oplus A_2$, and the arithmetic group is $\wt{O}^{+}(\Lambda'):=\wt{O}(\Lambda')\cap O^{+}(\Lambda')$, where $\wt{O}(\Lambda')$ is the stable orthogonal group. The divisor $\Delta$ is $H_u/2$, where this time $H_u$ is the image in $\cF$ of $v^{\bot}\cap\cD$ for $v\in\Lambda$ such that $q(v)=-6$ and $\divisore(v)=3$. 
In this case at most two sheets of $\cH:=\pi^{-1}H_u$ meet, and correspondingly 
 there is exactly one flip $f$, fitting into the diagram
$$
\xymatrix{
\widehat{\cF}\ar@{-->}[rr]^{f}\ar@{->}[d]_{\Pi}&&\widetilde\gM \ar@{->}[d]^{\Phi}&\\ 
\cF^*\ar@{-->}^{\gp^{-1}}[rr]&& \gM&
}
$$
Here, $\Phi$ is the blow-up of the polystable point corresponding to the secant variety of a Veronese surface. The map $f$ is the flip of the codimension $2$ locus  where two sheets of $\cH:=\pi^{-1}H_u$ meet, and the corresponding locus in $\gM$ is  the curve parametrizing cubic fourfolds singular along a rational normal curve. For a detailed treatment, see~\cite{lcubic,gitcubic,cubic4fold}.   
\subsection{Periods of polarized $K3$'s of degree $4$ according  to~\cite{log1}} \label{subsec:predictq}
We start by recalling notation and constructions from~\cite{log1}.   For $N\ge 3$, let $\Lambda_N:=U^2\oplus D_{N-2}$. In~\cite{log1} we defined a group $\wt{O}^{+}(\Lambda_N)<\Gamma_N< O^{+}(\Lambda_N)$ which is equal to $O^{+}(\Lambda_N)$ if $N\not\equiv 6\pmod{8}$,  and is of index $3$ in  $O^{+}(\Lambda_N)$ if $N\equiv 6\pmod{8}$, see
Proposition~1.2.3 of~\cite{log1}. Next, we let
\begin{eqnarray}\label{ancora}
\cD_N & := & \{[\sigma]\in\PP(\Lambda_{N}\otimes\CC) \mid q(\sigma)=0,\quad q(\sigma+\ov{\sigma})>0\}^{+},\\
\cF(N)  & := & \Gamma_N\backslash \cD_N.
\end{eqnarray}
(The meaning of the superscript $+$ is as in~\eqref{howmanytimes}.) Then $\cF:=\cF(19)$ is the period space for polarized $K3$'s of degree $4$ - we will explain the relevance of the other $\cF(N)$ at the end of the present subsection. Let  $(X,L)$ be a polarized $K3$ surface of degree $4$; we let and $\gp(X,L)\in\cF$ be its period point.

The \emph{hyperelliptic}   divisor $H_h\subset\cF$ is  the image  of $v^{\bot}\cap\cD_{19}$ for $v\in\Lambda_{19}$ such that $q(v)=-4$, and $\divisore(v)=2$  
(any two such $v$'s are $O^{+}(\Lambda_{19}$)-equivalent).  Let  $(X,L)$ be a polarized $K3$ surface of degree $4$;  then $\gp(X,L)\in H_h$ if and only if $(X,L)$ is hyperelliptic, i.e.~$\varphi_L\colon X\dra |L|^{\vee}$ is  a regular map of degree $2$ onto a quadric - this explains our terminology. 

 The  \emph{unigonal}  divisors   $H_u\subset \cF$,  is  the image  of $v^{\bot}\cap\cD_{19}$ for $v\in\Lambda$ such that $q(v)=-4$, and $\divisore(v)=4$ (any two such $v$'s are $O^{+}(\Lambda_{19}$)-equivalent). 
 If  $(X,L)$ is a polarized $K3$ surface of degree $4$,  then  $\gp(X,L)\in H_u$ if and only if $(X,L)$ is unigonal, i.e.~$L\cong \cO_X(A+3B)$, where $B$ is an elliptic curve and $A$ is a section of the elliptic fibration $|B|$.

  We let $\Delta:=(H_h+H_u)/2$. 
For $k\ge 1$, let $\Delta^{(k)}\subset\supp\Delta$ be the $k$-th stratum of the stratification defined in~\Ref{subsec}{looijframe}, i.e.~the closure of the image of the locus in $\cH:=\pi^{-1}(\supp\Delta)$ where $k$ (at least) independent sheets of $\cH$ meet. One has $\Delta^{(19)}\not=\es$, and there is a strictly increasing ladder $\Delta^{(19)}\subsetneq\Delta^{(18)}\subsetneq\ldots \subsetneq\Delta^{(1)}=(H_h\sqcup H_u)$. This is in stark contrast with the cases discussed above: in fact (with analogous notation)  in the case   of degree $2$ $K3$ surfaces one has $\Delta^{(k)}=\es$ for $k\ge 2$, and  in the case  of cubic fourfolds  one has $\Delta^{(k)}=\es$ for $k\ge 3$. In fact, since for quartic surfaces there are $0$-dimensional strata of $\Delta$, 
strictly speaking Looijenga's theory  does not apply (see Lemma 8.1 in~\cite{looijengacompact}). Our refinement in~\cite{log1} takes care of this issue and, at least to first order, Looijenga's framework still applies, as we proceed to explain. For the rest of the paper, the GIT moduli space $\gM$ is that of quartic surfaces:
\begin{equation*}
\gM:=|\cO_{\PP^3}(4)|\gquot\PGL(4).
\end{equation*}
 Of course, we do not \lq\lq see\rq\rq\ hyperelliptic polarized $K3$'s of degree $4$  among quartic surfaces, nor do we see unigonal polarized $K3$'s of degree $4$ - and that is where all the action takes place.
 Let $\lambda$ be the Hodge $\QQ$-Cartier divisor class on $\cF$.    The period map $\gp\colon\gM\dra \cF^{*}$ (denoted $\gp_{19}$ in~\cite{log1}) is birational by Global Torelli, and it defines an isomorphism
\begin{equation*}
 \gM\cong\Proj R(\cF,\lambda+\Delta)
\end{equation*}
 by Proposition~4.1.2 of~\cite{log1}.  On the other hand, the Baily-Borel compactification $\cF^{*}$ is identified with $\Proj R(\cF,\lambda)$. 
For $\beta\in[0,1]\cap\QQ$, we let
 $$\cF(\beta)=\Proj R(\cF,\lambda+\beta \Delta).$$
\begin{equation}\label{diagramflip}
\xymatrix @R=.07in @C=.07in{
\scriptstyle \widehat\cF\cong \cF(0,\frac{1}{9})\ar[dddd]_{\scriptstyle \Pi}\ar[rdd] & & \scriptstyle \cF ( \frac{1}{9} , \frac{1}{7} ) \ar@{<--}[ll]\ar@{-->}[rr]\ar[ldd] \ar[rdd] & & \scriptstyle \cdots \ar[ldd] & \scriptstyle  \cF ( \frac{1}{m+1} , \frac{1}{m} ) \ar[rdd]\ar@{-->}[rr] & &  
\scriptstyle  \cF ( \frac{1}{m} , \frac{1}{m-1} ) \ar[ldd] &\scriptstyle \cdots  & \scriptstyle  \cF( \frac{1}{3} , \frac{1}{2} )  \ar[rdd] \ar@{-->}[rr] & & \scriptstyle \cF (\frac{1}{2},1)\cong\widetilde \gM\ar[ldd] \ar[dddd]^{\scriptstyle \Phi} \\
&&&&&&\\
& \scriptstyle \cF(\frac{1}{9}) & & \scriptstyle \cF( \frac{1}{7} ) & & & \scriptstyle \cF( \frac{1}{m} )  & & &  & \scriptstyle \cF(\frac{1}{2})&&\\
&&&&&&\\ 
\scriptstyle \cF^*\cong \cF(0) & & & & & & & & & & & \scriptstyle \cF(1)\cong \gM   
 }
 \end{equation}
The predictions of~\cite{log1} are as follows. First, we expect that $R(\cF,\lambda+\beta \Delta)$ is finitely generated for all $\beta\in[0,1]\cap\QQ$, and that the critical values of $\beta\in[0,1]\cap\QQ$ are given by
\begin{equation}
\beta\in\left\{0,\frac{1}{9},\frac{1}{7},\frac{1}{6},\frac{1}{5},\frac{1}{4},\frac{1}{3},\frac{1}{2},1\right\}.
\end{equation}
(Note: $\beta=1/8$ is missing, \emph{no typo}.) This means that for $\beta_i<\beta\le \beta'<\beta_{i+1}$, where  $\beta_i,\beta_{i+1}$ are consecutive critical values, the birational map $\cF(\beta)\dra\cF(\beta')$ is an isomorphism. We let 
\begin{equation}\label{intmodel}
\cF(\beta_i,\beta_{i+1}):=\cF(\beta),\qquad \beta\in[\beta_i,\beta_{i+1}]\cap\QQ.
\end{equation}
As we have already mentioned, 
$\cF(\epsilon)$ is expected to be the $\QQ$-factorialization of $\cF^{*}$. On the other hand, $\cF(1-\epsilon)$ is the blow-up of $\gM$ 
with center a scheme supported on  the two points representing the tangent developable of a twisted cubic curve, and a double (smooth) quadric. For later reference  
 we denote by $\upsilon$ and $\omega$ the corresponding points of $\gM$; explicitly
 \begin{eqnarray}
\upsilon & := & [V(4(x_1 x_3-x_2^2)(x_0 x_2-x_1^2)-(x_1 x_2-x_0 x_3)^2)], \label{tangsvil} \\
\omega & := & [V((x_0^2+x_1^2+x_2^2+x_3^2)^2)]. \label{quadop}
\end{eqnarray}
We predict that one goes from $\cF(\epsilon)$ to $\cF(1-\epsilon)$ via a stratified flip, summarized in~\eqref{diagramflip}. More precisely, in~\cite{log1} we have defined a tower of closed subsets
\begin{equation}\label{zetaquart}
Z^{9}\subset Z^{8} \subset Z^{7}\subset Z^{5}\subset Z^{4}\subset Z^{3}\subset Z^{2}\subset Z^{1}=H_u\cup H_h\subset \cF,
\end{equation}
where  $k$ denotes the codimension ($Z^6$ is missing, \emph{no typo}). In fact, with the notation of~\cite{log1}, 
\begin{itemize}
\item for $k\le 5$, $Z^k=\Delta^{(k)}$,
\item $Z^7=\im(f_{13,19}\circ q_{13}\colon \cF(\II_{2,10}\oplus A_2)\hra\cF)$, 
\item $Z^8=\im(f_{12,19}\circ m_{12}\colon \cF(\II_{2,10}\oplus A_1)\hra\cF)$, and
\item $Z^9=\im(f_{11,19}\circ l_{11}\colon \cF(\II_{2,10})\hra\cF)$ ($Z^9$ is one of the two components of $\Delta^{(9)}$). 
\end{itemize}
Let $m\in\{2,3,\ldots,7,9\}$; we predict that the birational map $\cF (a(m) , \frac{1}{m} )\dra \cF ( \frac{1}{m} , \frac{1}{m-1} )$ (here $a(m)= \frac{1}{m+1}$ if $m\not=7,9$, $a(7)=1/9$, and $a(9)=0$) is a flip with center the strict transform of (the closure) of $Z^k$, where $k=m$, except  for $m=7,6$, in which case $k=m+1$. 
Thus we expect that $Z^k$ is replaced by a closed $W_{k-1}\subset\gM$ of dimension $k-1$. Correspondingly, we should have a stratification of the indeterminacy locus $\Ind(\gp)$  of the period map.
Now, according to Shah, the indeterminacy locus $\Ind(\gp)$ is contained in the locus $\gM^{IV}$ parametrizing polystable quadrics of Type IV (i.e.~those which do not have slc singularities, see~\Ref{subsec}{hodgestrata}) - and it is natural to expect that $\Ind(\gp)=\gM^{IV}$. The first evidence in favor of our predictions  is  that, as we will show, $\gM^{IV}$  has a natural stratification 
\begin{equation}\label{wquart}
\scriptstyle
\gM^{IV}= (W_8\sqcup\{\upsilon\})\supset (W_7\sqcup\{\upsilon\})\supset (W_6\sqcup\{\upsilon\})\supset (W_4\sqcup\{\upsilon\})\supset 
(W_3\sqcup\{\upsilon\})\supset (W_2\sqcup\{\upsilon\})\supset (W_1\sqcup\{\upsilon\})\supset (W_0\sqcup\{\upsilon\}),
\end{equation}
where $W_0=\{\omega\}$, and each $W_i$ is (closed) irreducible of dimension $i$. It is well known that  the period map \lq\lq improves\rq\rq\ on the blow-up $\wt{\gM}$ of a certain subscheme of $\gM$ supported on 
$\{\upsilon,\omega\}$. More precisely,  it is regular on the exceptional divisor over $\upsilon$, with image the closure of the unigonal divisor $H_u$, it is regular on the dense open subset of the exceptional divisor over $\omega$ parametrizing double covers of $\PP^1\times\PP^1$ ramified over a  curve with ADE singularities (the exceptional divisor  over $\omega$ is the GIT quotient of $|\cO_{\PP^1\times\PP^1}(4,4)|$ modulo $\Aut(\PP^1\times\PP^1)$),  mapping it to $H_h\setminus\Delta^{(2)}$. This is discussed with much more detail than previously available in the literature (e.g. \cite{shah4}) in~\Ref{subsec}{sblowupsilon} and~\Ref{subsec}{sblowomega} respectively. In~\Ref{subsec}{tropforest} we identify $\wt{\gM}$ with $\cF(1-\epsilon)$, for small $\epsilon>0$. 
\Ref{subsec}{rescodim2} is devoted to a  proof (without full details) that the blow up of  a suitable scheme supported on  the strict transform of $W_1$ in $\wt{\gM}$ can be contracted to produce $\cF(1/2)$.
Lastly, in~\Ref{sec}{dolgachev}, we give evidence that $W_{k-1}$ is related to $Z^k$ as predicted, for $k\in\{7,8,9\}$. Namely,  $Z^9, Z^8, Z^7$ correspond precisely to  $T_{2,3,7}, T_{2,4,5}, T_{3,3,4}$ marked $K3$ surfaces respectively, while $W_6, W_7, W_8$ correspond to the equisingular loci of quartics with $E_{14},E_{13}, E_{12}$  singularities respectively (on the GIT side). The flips replacing $W_{k-1}$ with $Z^k$ (in this range) are analogous to the semi-stable replacement that occurs for curves in the Hassett--Keel program (e.g. curves with cusps are replaced by stable curves with elliptic tails).

We end the subsection by going back to $\cF(N)$ for arbitrary $N\ge 3$. First, there are other values of $N$ for which $\cF(N)$ is the period space for geometrically meaningful varieties of $K3$ type. In fact, $\cF(18)$ is the period space  for hyperelliptic polarized $K3$'s of degree $4$, and $\cF(20)$ is the period space for double EPW sextics~\cite{epwperiods} (modulo the duality involution), and of EPW cubes~\cite{ikkr}. Secondly, there is a \lq\lq hyperelliptic divisor\rq\rq\ on $\cF(N)$ for arbitrary $N$ (and a \lq\lq unigonal\rq\rq\ divisor on $\cF(N)$ for $N\equiv 3\pmod{8}$). More precisely, if $N\not\equiv 6\pmod{8}$  the \emph{hyperelliptic}   divisor $H_h(N)\subset\cF(N)$ is  the image  of $v^{\bot}\cap\cD$ for $v\in\Lambda_N$ such that $q(v)=-4$, and $\divisore(v)=2$,   if   $N\equiv 6\pmod{8}$ the definition of the hyperelliptic divisor is subtler (there  is a link with the fact that  $[O^{+}(\Lambda_N):\Gamma_N]=3$). The key aspect of our analysis in~\cite{log1} is that we have a tower of locally symmetric spaces
\begin{equation}\label{ditorre}
\ldots\hra\cF(18)\overset{f_{19}}{\hra} \cF(19)\overset{f_{20}}{\hra} \cF(20)\hra\ldots \hra\cF(N-1)\overset{f_{N}}{\hra} \cF(N)\hra\ldots
\end{equation}
where $\cF(N-1)$ is embedded into $\cF(N)$ as the hyperelliptic divisor $H_h(N)$.  Our paper~\cite{log1} contains analogous predictions  for the behaviour  of 
$\Proj R(\cF(N),\lambda(N)+\Delta(N))$, where $\lambda(N)$ is the Hodge $\QQ$-Cartier divisor class, and $\Delta(N)$ is a $\QQ$-Cartier boundary divisor class (equal to $\Delta$ for $N=19$), which are compatible with the tower~\eqref{ditorre}.  Thus the period map and birational geometry of $\cF=\cF(19)$  sits between $\cF(18)$, i.e.~the period space of hyperelliptic quartic surfaces,  and $\cF(20)$, i.e.~the period space of double EPW sextics (modulo the duality involution), or equivalently that of EPW cubes.
 

\section{GIT and Hodge-theoretic stratifications of $\gM$}\label{sec:shahreview}
\setcounter{equation}{0}
\subsection{Summary}
The analysis of GIT (semi)stability for quartic surfaces was carried out by Shah in~\cite{shah4}. In this section we will review some of his results. In particular we will go over the GIT stratification (N.B. as usual, a \emph{stratification} of a topological space $X$ is   a partition of $X$ into locally closed subsets such that   the closure of a stratum  is a union of  strata) determined by the stabilizer groups of polystable quartics. After that, we will review Shah's  Hodge-theoretic 
stratification~\cite{shah4,shahinsignificant}  
\begin{equation}\label{unduetrequattro}
\gM=\gM^{I}\sqcup \gM^{II}\sqcup \gM^{III}\sqcup \gM^{IV}
\end{equation}
 from a modern perspective (due to Steenbrink \cite{steenbrinkinsignificant}, Koll\'ar, Shepherd-Barron and others \cite{ksb}, \cite{sbnef}, \cite{kk}). 
The  period map $\gp\colon\gM\dra\cF^{*}$ extends regularly away from $\gM^{IV}$, and it maps $\gM^I$, $\gM^{II}$ and $\gM^{III}$  to the interior $\cF$, to the union of the  Type II boundary components, and to the  Type III locus (a single point) respectively. 

A large part of this paper is concerned with the behaviour of the period map for quartic surfaces along $\gM^{IV}$. 
\begin{notationconv}
Shah also defined a refinement of the stratification in~\eqref{unduetrequattro}, see Theorem 2.4 of \cite{shah4} (and \Ref{sec}{refshah} below). We will follow the notation of Theorem 2.4 of \cite{shah4}, with an S prefix, and with the symbol IV replacing \lq\lq Surfaces with significant limit singularities\rq\rq. Thus the strata
will be denoted by S-I, S-II(A,i), S-II(A,ii) S-III(B,ii), S-IV(A,i), etc. We recall that  the roman numerals I, II, III, IV refer to the stratum of~\eqref{unduetrequattro} to which a stratum belongs, 
and the letter A (B)  indicates whether the stratum is contained in the stable locus or in the properly  semistable locus. We will refer to Shah's stratification before discussing the stratification in~\eqref{unduetrequattro}; this is not an issue, because the strata are defined explicitly    by Shah in terms of singularities, see  Theorem 2.4 of \cite{shah4}.
\end{notationconv}

\subsection{The GIT (or Kirwan) stratification for quartic surfaces}
Shah~\cite{shah4} essentially established a relation between GIT (semi)stability  of a quartic surface and the nature of its singularities. In particular he proved that a quartic with ADE singularities is stable, and hence there is an open dense subset $\gM^I\subset\gM$ parametrizing isomorphism classes of polarized $K3$ surfaces $(X,L)$ such that $L$ is very ample, i.e.~$(X,L)$ is neither hyperelliptic, nor unigonal - see~\Ref{thm}{circeo}. In the present subsection the focus  is on stabilizers (in $\SL(4)$) of properly semistable polystable quartics (a quartic is \emph{properly semistable} if it semistable and not stable, it it \emph{polystable} if its $\PGL(4)$-orbit is closed in the semistable locus $|\cO_{\PP^3}(4)|^{ss}$), and the associated stratification of $\gM$. The point of view is essentially due to Kirwan \cite{kirwan,kirwanhyp}.
Let $\gM^s\subset\gM$ be the open dense subset parametrizing isomorphism classes of GIT stable quartics. Points of the \emph{GIT boundary} $\gM\setminus \gM^s$ parametrize isomorphism classes of semistable polystable quartics. The stabilizer of such an orbit is a positive dimensional reductive subgroup.
  The classification of $1$-dimensional stabilizers leads to the decomposition of the GIT boundary    into irreducible components. 
\begin{lemma}\label{lmm:lem1ps}
Let $X=V(f)$ be a strictly semistable polystable quartic. Then $f$ is stabilized by one of the following four  $1$-PS's of  $\SL(4)$ (up to conjugation): 
\begin{equation}\label{1234}
\lambda_1=(3,1,-1,-3),\ \lambda_2=(1,0,0,-1), \ \lambda_3=(1,1,-1,-1), \ \lambda_4=(3,-1,-1,-1). 
\end{equation}
For $i=1,\dots,4$, let $\sigma_i\subset\gM$ be the closed subset parametrizing polystable points stabilized by $\lambda_i$. Then the following hold:
\begin{enumerate}
\item
 $\sigma_1,\ldots,\sigma_4$ are the irreducible components of the GIT boundary $ \gM\setminus \gM^s$.
\item
 The  $\sigma_i$'s are related to  Shah's stratification as follows: 
 \begin{enumerate}
 \item $\sigma_1$ is the closure of the $\widetilde E_8$ component  (see  B, Type II, (i)  in Theorem 2.4 in~\cite{shah4}) of S-II(B,i).  
 \item 
 $\sigma_2$ is the closure of the   $\widetilde E_7$ component  (see  B, Type II, (i)  in Theorem 2.4 in~\cite{shah4})  of S-II(B,i). 
 \item $\sigma_3=\overline{\textrm{S-II(B,ii)}}$.
 \item $\sigma_4=\overline{\textrm{S-II(B,iii)}}$.
 \end{enumerate}
\item 
$\dim \sigma_1=2$, $\dim \sigma_2=4$, $\dim \sigma_3=2$, and $\dim \sigma_4=1$.
\end{enumerate}
\end{lemma}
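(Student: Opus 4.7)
The plan is to apply the Hilbert--Mumford numerical criterion to classify, up to $\SL(4)$-conjugacy, all 1-parameter subgroups that can stabilize a strictly semistable polystable quartic, and then to read off from the corresponding zero-weight eigenspaces both the irreducible structure of each $\sigma_i$ and its dimension. Since $f$ is properly semistable polystable, $\Stab(f)\subset\SL(4)$ is a positive-dimensional reductive subgroup and hence contains a non-trivial 1-PS $\lambda$. After conjugation we may assume $\lambda(t)=\diag(t^{a_0},\ldots,t^{a_3})$ with primitive integer weights $a_0\geq a_1\geq a_2\geq a_3$ summing to zero, and then $f$ lies in the zero-weight eigenspace $V^{\lambda}\subset H^0(\scrO_{\PP^3}(4))$. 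Running through primitive $4$-tuples and keeping only the \emph{maximal} 1-PS's for which $V^\lambda$ admits a polystable quartic is a finite combinatorial check---essentially the content of case~B of Type II in~\cite[Theorem~2.4]{shah4}---and yields precisely the four $\lambda_i$ in~\eqref{1234}.

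For each $\lambda_i$ one writes down $V^{\lambda_i}$ explicitly in the monomial basis: $V^{\lambda_1}=\Span(x_0^2 x_3^2,\,x_0 x_2^3,\,x_1^3 x_3,\,x_1^2 x_2^2,\,x_0 x_1 x_2 x_3)$ is $5$-dimensional; $V^{\lambda_2}=\bigoplus_{k=0}^{2}(x_0 x_3)^k\cdot\Sym^{4-2k}\langle x_1,x_2\rangle$ is $9$-dimensional; $V^{\lambda_3}$ is the $9$-dimensional span of monomials $x_0^{i_0}x_1^{i_1}x_2^{i_2}x_3^{i_3}$ with $i_0+i_1=i_2+i_3=2$; and $V^{\lambda_4}=x_0\cdot\Sym^3\langle x_1,x_2,x_3\rangle$ is $10$-dimensional. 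Each $V^{\lambda_i}$ is an irreducible linear subspace, and the natural map $|V^{\lambda_i}|^{ss}\gquot C_{\lambda_i}\to\gM$ is surjective onto $\sigma_i$, so each $\sigma_i$ is irreducible. The enumeration in the first paragraph then shows that the $\sigma_i$ exhaust $\gM\setminus\gM^s$, proving~(1).

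The dimension assertion~(3) follows from the formula $\dim\sigma_i=\dim V^{\lambda_i}-\dim C_{\lambda_i}$, valid because a generic element of $V^{\lambda_i}$ has stabilizer exactly $\langle\lambda_i\rangle$ and hence the $C_{\lambda_i}$-action on $V^{\lambda_i}$ has generic stabilizer of dimension $1$. The centralizer $C_{\lambda_i}\subset\SL(4)$ is read off from the weight multiplicity pattern of $\lambda_i$: $C_{\lambda_1}$ is the diagonal maximal torus ($\dim 3$), $C_{\lambda_2}$ is the $(1,2,1)$-block-diagonal $\SL$-type subgroup ($\dim 5$), $C_{\lambda_3}$ is the $(2,2)$-block-diagonal subgroup ($\dim 7$), and $C_{\lambda_4}$ is the $(1,3)$-block-diagonal subgroup ($\dim 9$). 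Subtraction yields the claimed values $\dim\sigma_i=2,4,2,1$. The Shah-type identification~(2) is then verified by inspecting the singularities of a generic element of each $V^{\lambda_i}$ and matching them with Shah's catalog in~\cite[Theorem~2.4]{shah4}: a generic member of $V^{\lambda_1}$ (respectively $V^{\lambda_2}$) has two $\widetilde E_8$ (respectively $\widetilde E_7$) singularities at the coordinate points $[1{:}0{:}0{:}0]$ and $[0{:}0{:}0{:}1]$; a generic member of $V^{\lambda_3}$ lies in Shah's stratum S-II(B,ii); and a generic member of $V^{\lambda_4}$ is the union of the plane $x_0=0$ and a cubic, matching S-II(B,iii).

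The main obstacle is the completeness of the classification of stabilizing 1-PS's in the first paragraph: one must rule out every other primitive integer 4-tuple $(a_0,a_1,a_2,a_3)$ by showing that its zero-weight space either contains no semistable quartic or is conjugate into one of the $V^{\lambda_i}$ already listed. This is a finite but tedious case-by-case analysis, for which we would defer to Shah's explicit computation in~\cite[\S 2]{shah4}.
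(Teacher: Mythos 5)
Your proposal is correct and follows essentially the same strategy as the paper's proof: both defer to Shah's classification (\cite[Prop.~2.2 / Thm.~2.4]{shah4}) and to Kirwan's analysis for the complete list of stabilizing $1$-PS's, and both compute dimensions via the centralizer acting on the zero-weight eigenspace. Your version is a bit more systematic in making the formula $\dim\sigma_i=\dim V^{\lambda_i}-\dim C_{\lambda_i}$ explicit (and checks out: $5-3$, $9-5$, $9-7$, $10-9$), where the paper instead illustrates the same count by writing out a couple of normal forms and the monomial bases; the two computations coincide since $\dim\PP(V^{\lambda_i})-\dim C_{\lambda_i}+\dim\langle\lambda_i\rangle=\dim V^{\lambda_i}-\dim C_{\lambda_i}$.
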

\begin{proof}
This follows from Proposition 2.2 of~\cite{shah4} (see also Kirwan \cite[\S4]{kirwanhyp} for a discussion focused on stabilizers). Specifically, the first 3 cases correspond to $1$-PS subgroups of type $(n,m,-m,-n)$ (i.e. Case (1) in loc. cit.). Thus $\lambda_1,\lambda_2,\lambda_3$ correspond to (1.1), (1.2), and (1.3) respectively in Shah's analysis. The last case, $\lambda_4$ corresponds to the cases (2.1) or (4.1) of Shah (N.B. the two cases are dual, so they result in a single case in our lemma; the previous case (1) is self-dual). It is easy to see that the other cases in Shah's analysis can be excluded (i.e.~either they lead to unstable points, or to cases that are already covered by one of $\lambda_1,\dots,\lambda_4$ -- it is possible to have a polystable orbit stabilized by another $1$-PS $\lambda$, but then the stabilizer contains a higher dimensional torus, which in turn contains a conjugate of one of $\lambda_1,\dots,\lambda_4$). In conclusion, the GIT boundary consists of the $4$ boundary components $\sigma_i$ as stated (they intersect, but none is included in another). 

Item (B) of Theorem 2.4 of Shah \cite{shah4} describes the strictly polystable locus in the GIT compactification. It is clear (from the geometric description and proofs) that the strictly semistable locus in $\gM$ is the closure of the Type II strata, i.e. 
$$ \gM\setminus \gM^s=\cup_{i=1,4} \sigma_i=\overline{\textrm{S-II(B,i)}}\cup \overline{\textrm{S-II(B,ii)}}\cup \overline{\textrm{S-II(B,iii)}}.$$
Finally, the stratum S-II(B,i) has two components corresponding to quartics with two $\widetilde E_8$ singularities and two $\widetilde E_7$ singularities respectively (see \cite[Thm. 2.4 (B, II(i))]{shah4} for precise definitions of the two cases). 

In order to compute the dimensions, one can write down normal forms for the quartics stabilized by the $1$-PS $\lambda_i$. For instance, it is immediate to see that a quartic stabilized by $\lambda_4=(3,-1,-1,-1)$ is of the form $x_0f_3(x_1,x_2,x_3)$ (i.e. the union of the cone over a cubic curve with a transversal hyperplane, or same as S-II(B,iii)). Furthermore, we can still act on this equation with the centralizer of $\lambda_4$ in $\SL(4)$. In particular, with $\SL(3)$ acting on the variables $(x_1,x_2,x_3)$. It follows that the dimension in $\gM$ of the locus of polystable points with stabilizer $\lambda_4$ (i.e. $\sigma_4$) is $1$. At the other extreme, we have the case $\lambda_1=(3,1,-1,-3)$. In this case, the centralizer is the maximal torus in $\SL(4)$. There are five degree $4$ monomials  stabilized by $\lambda_1$, namely $x_0x_2^3,x_1^3x_3, (x_0x_3)^a(x_1x_2)^b$ with $a+b=2$.  It follows that $\dim \sigma_1=2$.  The other cases are similar. 
\end{proof}
Note that $\sigma_1,\ldots,\sigma_4$ are closed subsets of $\gM$. As a general rule, subsets of $\gM$ denoted by Greek letters are closed.

The intersections of the components of the GIT boundary are determined by considering stabilizers that are tori of dimension larger than $1$. More in general, special strata inside the $\sigma_i$ are determined by other reductive (non-tori) stabilizers. The stratification of GIT quotients in terms of stabilizer subgroups plays an essential role in the work of Kirwan \cite{kirwan}, and the case of hypersurfaces of low degree  was analyzed in \cite{kirwanhyp}.  Given a quartic $X$, we let $\Stab(X)<\SL(4)$ be the stabilizer of $X$, and we let 
$\Stab^0(X)<\Stab(X)$ be the connected component of the identity.

We start by noting that we have already defines two points which are GIT strata, namely $\upsilon$ and $\omega$, see~\eqref{tangsvil} and~\eqref{quadop}. 
\begin{remark}
Let $X\subset\PP^3$ be the tangent developable of a twisted cubic curve, thus in suitable homogeneous coordinates the equation of $X$ is given in the right hand side of~\eqref{tangsvil}. Then $X$ is a properly semistable polystable quartic, and the corresponding point in $\gM$ is denoted by $\upsilon$. The group $\Aut^0(X)$ is conjugated to $\SL(2)$ embedded in 
$\SL(4)$ via the $\Sym^3$ representation. 
\end{remark}
\begin{remark}
Let $X\subset\PP^3$ be twice a smooth quadric, thus in suitable homogeneous coordinates the equation of $X$ is given in the right hand side of~\eqref{quadop}. Then $X$ is a properly semistable polystable quartic, and the corresponding point in $\gM$ is denoted by $\omega$. The group $\Aut^0(X)$ is conjugated to $\SO(4)$. 
\end{remark}
The following result is due to Kirwan (and essentially contained also in \cite{shah4}). 
\begin{proposition}[{Kirwan~\cite[\S6]{kirwanhyp}}]\label{prp:classifystab}
Let $X$ be a properly semistable  polystable quartic. Then  $\Aut^0(X)$ is one of the following (up to conjugation):
\begin{enumerate}
\item 
The trivial group $\{1\}$ (i.e.~$X$ is stable).
\item 
One of the $1$-PS's $\lambda_1,\ldots,\lambda_4$ in~\eqref{1234}.
\item 
The two-dimensional torus $\diag(s,t,t^{-1},s^{-1})\subset\SL(4,\bC)$. Equivalently, $X=Q_1+Q_2$ where $Q_1,Q_2$ are smooth quadrics meeting along $2$ pairs of skew lines (special case of S-III(B,ii)). 
Let $\tau\subset\gM$  be the \emph{closure} of the set of points representing such quartics.  Then $\tau$ is a curve, and
$$\tau=\sigma_1\cap\sigma_2\cap\sigma_3$$
(in fact $\tau$ is the intersection of any two of the $\sigma_1,\sigma_2,\sigma_3$). 
\item 
The maximal torus in $\SL(4,\bC)$. Equivalently, $X$ is a tetrahedron (S-III(B,i)). We let $\zeta\in \gM$ be the corresponding point.  Then 
$$\{\zeta\}=\sigma_1\cap\sigma_2\cap\sigma_3\cap\sigma_4.$$
\item 
$\SO(3,\bC)$, or equivalently   $X=Q_1+Q_2$ where $Q_1,Q_2$ are  quadrics  tangent along a smooth conic (S-IV(B,ii)).  This defines a curve $\chi\subset \sigma_2\subset \gM$. The only incidence with the other strata is $\chi\cap \tau=\{\omega\}$. 

\item
 $\SL(2,\bC)$ embedded in $\SL(4)$ via the $\Sym^3$-representation. 
Then $X$ is the tangent developable of a twisted cubic curve (special case of S-IV (B,i)), and $\upsilon$ is  be the corresponding point  in $\gM$. 
One has $\upsilon\in \sigma_1$, and $\upsilon\notin \sigma_i$ for $i\in\{2,3,4\}$. 
\item 
$\SO(4,\bC)$. Then $X=2Q$, where $Q$ is a smooth quadric (S-IV(B,iii)), and  $\omega$ is   the corresponding point in  $\gM$. Then $\omega\in \tau$, and thus $\omega\in \sigma_1\cap\sigma_2\cap\sigma_3$ (and $\omega\not\in \sigma_4$).

\end{enumerate}
\end{proposition}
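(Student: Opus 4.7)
The plan is to run a centralizer/invariant analysis starting from \Ref{lmm}{lem1ps}. Every properly semistable polystable quartic has connected stabilizer $\Aut^0(X) \subseteq \SL(4,\CC)$ that is a positive-dimensional reductive subgroup, and by \Ref{lmm}{lem1ps} it must contain (up to conjugation) at least one of the $1$-PS's $\lambda_1,\ldots,\lambda_4$ of~\eqref{1234}. So I would loop over $i\in\{1,\ldots,4\}$ and, inside the centralizer $Z_i := Z_{\SL(4)}(\lambda_i)$, identify the polystable quartics on which some extra reductive subgroup of $Z_i$ acts trivially. Concretely: the $\lambda_i$-weight-zero subspace of $H^0(\cO_{\PP^3}(4))$ is an explicit linear subspace of monomials on which $Z_i$ acts, and I would decompose its polystable locus under $Z_i$.

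For $i=4$, $Z_4$ has Levi factor $\Gm\times\SL(3)$, and the invariant space is $x_0\cdot H^0(\cO_{\PP^2}(3))$; its $\SL(3)$-polystable quartics with strictly larger stabilizer are, in order of specialization, smooth cubic plus transverse hyperplane (no new stabilizer), three lines through a point (giving a tetrahedron case in the boundary), and triangle of lines. So $\lambda_4$ gives case (2) generically and specializes onto $\zeta$ (case (4)). For $i=1$, $Z_1$ is the maximal torus $T\cong\Gm^3$, and a direct enumeration of monomial quartics with extra diagonal symmetry yields the maximal torus stabilizer (case (4), i.e.\ the tetrahedron $\zeta$) together with the rank $2$ torus of case (3) obtained by forcing $s\mapsto t$ or $t\mapsto s^{-1}$. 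For $i=2$, $Z_2$ has Levi $\SL(2)\times\Gm$ acting on $(x_1,x_2)$, and the $\Gm$-weight decomposition reveals the $\SO(3)$-stabilizer (case (5), two quadrics tangent along a smooth conic) as exactly the stratum where the quartic specializes to $(x_1^2+x_2^2+\alpha x_0 x_3)(x_1^2+x_2^2+\beta x_0x_3)$ up to coordinates, and also produces the $\SL(2)$-embedded-via-$\Sym^3$ case (6) (tangent developable of a twisted cubic) as the unique $\SL(2)$-invariant polystable quartic in its orbit closure. The torus $(s,t,t^{-1},s^{-1})$ appears inside $Z_2$ and inside $Z_3$ as the centralizer of the generic such deformation, giving case (3). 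For $i=3$, $Z_3$ has Levi $S(\GL(2)\times\GL(2))$, and an analogous analysis recovers case (3) generically and case (7), i.e.\ the double quadric $\omega$, as the $\SO(4)$-invariant. The reductive subgroups $\SL(3), \SL(4), \SL(2)\times\SL(2)$ are eliminated because their invariant degree-$4$ loci consist of non-polystable (or unstable) forms apart from those already listed.

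Once the seven cases are produced, I would verify the stated incidence relations. The identifications $\tau = \sigma_1\cap\sigma_2\cap\sigma_3$ and $\{\zeta\} = \sigma_1\cap\sigma_2\cap\sigma_3\cap\sigma_4$ follow from the fact that the torus $\diag(s,t,t^{-1},s^{-1})$ contains (conjugates of) $\lambda_1,\lambda_2,\lambda_3$ but not $\lambda_4$, while the full maximal torus contains all four; one then checks the reverse inclusions on normal forms. The position of $\upsilon$ and $\omega$ follows by noting that the $\Sym^3$-embedded $\SL(2)$ contains a conjugate of $\lambda_1=(3,1,-1,-3)$ (the image of $\diag(t,t^{-1})$) but of no other $\lambda_i$, while $\SO(4)\subset\SL(4)$ contains a maximal torus conjugate to the $(s,t,t^{-1},s^{-1})$-torus, so $\omega\in\tau$.

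The main obstacle is making sure the enumeration of reductive subgroups occurring as $\Aut^0(X)$ is \emph{exhaustive}: there could a priori be other intermediate reductive subgroups (e.g.\ further $\SL(2)$-embeddings, or non-standard tori) whose invariant quartics are polystable. To handle this cleanly I would invoke Kirwan's general analysis in \cite[\S6]{kirwanhyp}, which reduces the enumeration to a finite combinatorial problem (normalizers of maximal tori acting on weight diagrams), and cross-reference with Shah's case-by-case verification in \cite[Prop.~2.2 and Thm.~2.4]{shah4}; the remaining geometric identifications (tangent developable, two quadrics tangent along a conic, double quadric, tetrahedron) are classical normal-form computations.
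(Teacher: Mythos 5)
Your overall strategy---loop over the four $1$-PS's of \Ref{lmm}{lem1ps}, analyze the $\lambda_i$-weight-zero subspace under the centralizer $Z_i$, and then delegate exhaustiveness to Kirwan \cite[\S 6]{kirwanhyp}---is essentially the same as the paper's. The paper's ``Elements of the proof'' also cites Kirwan for the classification and only supplies explicit normal forms for $\tau$ and $\chi$, so you are not taking a genuinely different route; you are just attempting to flesh out more of the casework.

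However, several of those added details are wrong. The normal form you give for the $\SO(3)$ case, $(x_1^2+x_2^2+\alpha x_0x_3)(x_1^2+x_2^2+\beta x_0x_3)$, is \emph{not} a pair of quadrics tangent along a conic: the difference of the two factors is $(\alpha-\beta)x_0x_3$, a rank-$2$ form, so the quadrics meet in the four lines $\{x_1=\pm i x_2\}\cap\{x_0x_3=0\}$. After $y_1=x_1+ix_2,\ y_2=x_1-ix_2$ this is literally $(y_1y_2+\alpha x_0x_3)(y_1y_2+\beta x_0x_3)$, i.e.\ the paper's normal form for $\tau$, with stabilizer the rank-$2$ torus $\diag(s,t,t^{-1},s^{-1})$, not $\SO(3)$. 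The correct $\chi$-form is $(q(x_0,x_1,x_2)+ax_3^2)(q(x_0,x_1,x_2)+bx_3^2)$, whose factors differ by the rank-$1$ form $(a-b)x_3^2$. You also place the tangent developable in the $i=2$ analysis, but it is $\lambda_1$-fixed, not $\lambda_2$-fixed (expand: $f=6x_0x_1x_2x_3-4x_1^3x_3-4x_0x_2^3+3x_1^2x_2^2-x_0^2x_3^2$, all five $\lambda_1$-invariant monomials, and it contains $\lambda_2$-weight $\pm1$ terms); your own later sentence that $\Sym^3$-$\SL(2)$ contains a conjugate of $\lambda_1$ contradicts this. Finally, in the $i=4$ case, ``three lines through a point'' is the \emph{unstable} plane cubic; the polystable degeneration producing the tetrahedron is the triangle of non-concurrent lines. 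None of this changes the statement, but as written your case analysis does not establish the proposition; you would need to correct these normal forms before the argument is sound.
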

\begin{proof}[Elements of the proof.]
We refer to Kirwan~\cite[\S6]{kirwanhyp} for the complete proof. Here we only describe polystable quartics parametrizd by $\tau$ and $\chi$. 
First we consider $\tau$. If $X$ consists  of two quadrics meeting in two pairs of skew lines, then (in suitable homogeneous coordinates)   
it has equation 
$$(a_1 x_0x_3+b_1 x_1x_2)(a_2 x_0x_3+b_2 x_1x_2)=0.$$
Clearly this is a pencil, and we have the following two special cases:
\begin{itemize}
\item  the tetrahedron (case $\zeta$) if any of the $a_i$ or $b_i$ vanish;
\item the double quadric (case $\omega$) if $[a_1,b_1]=[a_2,b_2]\in\PP^1$.
\end{itemize}
If both $a_i$ or $b_i$ vanish simultaneously, the associated quartic is unstable and thus the two cases above are distinct. 

Next we consider $\chi$. If $X$ consists  of two quadrics  tangent along a conic,  then (in suitable homogeneous coordinates)   
it has equation 
$$f_{a,b}:=(q(x_0,x_1,x_2)+a x_3^2)(q(x_0,x_1,x_2)+b x_3^2)=0,$$
for $[a,b]\in \bP^1$ (N.B. if $a=b=0$, one gets the double quadric cone, which is unstable; similarly, if $a=\infty$ or $b=\infty$, one gets an unstable quadric). Note that $f_{a,a}=0$  is the equation of the double (smooth) quadric (case $\omega$). 
\end{proof}
\subsection{The stratification  by Type}\label{subsec:hodgestrata}
Shah \cite{shahinsignificant},  influenced by Mumford, defined the  concept  of \lq\lq {\it insignificant limit singularity}\rq\rq, and  used it to study the period map for degree $2$ and degree $4$ $K3$ surfaces (see \cite{shah,shah4}). 
 One defines $\gM^{IV}$ as  the subset of $\gM$ parametrizing quartics with  \emph{significant} limit singularities.  
 The main point is that the restriction of the period map to $(\gM\setminus\gM^{IV})$  is regular. Next,  
 \begin{equation}\label{romani}
 \cF^{*}=\cF\sqcup\cF^{II}\sqcup\cF^{III},
\end{equation}
   where $\cF^{II}$ is the union of the Type II boundary components, and  $\cF^{III}$ is the (unique) Type III boundary component; this is a  stratification of $\cF^{*}$. Then~\eqref{romani} 
   defines, by pull-back via $\gp$, strata $\gM^I$, $\gM^{II}$ and $\gM^{III}$ (of course  $\gM^I$ coincides with the set that we have already defined).  (Literally speaking, we will not define 
   $\gM^I$, $\gM^{II}$ and $\gM^{III}$ this way.)

We will  give an updated view of the concept of  insignificant limit singularity. Briefly, Steenbrink \cite{steenbrinkinsignificant} noticed that an insignificant limit singularity
is du Bois. On a different track, from the  perspective of moduli, Shepherd-Barron \cite{sbnef} and then Koll\'ar--Shepherd-Barron \cite{ksb} noticed that the right notion of singularities is that of semi-log-canonical (slc) singularities. More recently (with \cite{kk} as the last step), it was proved that an slc singularity is du Bois. Lastly, one can check by direct inspection that Shah's list of insignificant singularities coincides with the list of Gorenstein slc surface singularities (which are then du Bois). Of course, in the situation studied here, this is just a long-winded highbrow reproof of Shah's results from 1979, but what is gained is a conceptual understanding of the situation. 

We should also point out the connection between slc singularities and GIT. On one hand, an easy observation (\cite{hacking},\cite{kimlee}) shows that a quartic with slc singularities is GIT semistable. A much deeper result (due to Odaka \cite{odakacy,odaka}), which can be viewed as some sort of converse of this, is giving a close connection between slc singularities and $K$-stability. Finally, $K$-stability should be viewed as a refined notion of asymptotic stability. We caution however that the precise connection between asymptotic stability and K-stability/slc for $K3$ surfaces is not known. More precisely, an example of Shepherd-Barron \cite{sbnef,sbpolarization} shows that for $K3$s of big enough degree there is no (usual) asymptotic GIT stability. The results of \cite{wangxu} strengthen the meaning of this failure of asymptotic stability. Nonetheless, it is still possible that a certain (weaker) asymptotic stabilization exists. We  hope that  our HKL program will  eventually address this issue. 
\subsubsection{ADE singularities}
We recall that  $\gM^{I}\subset\gM$ is (by definition)  the subset parametrizing isomorphism classes of quartics with  ADE singularities.  
The following identification of $\gM^{I}$ (as a quasi-projective variety) with an open  subset of the projective variety $\cF^*$ is well known:
\begin{theorem}\label{thm:circeo}
The period map defines an isomorphism
$$\gM^I\overset{\sim}{\lra} (\cF\setminus H_h\setminus H_u).$$
\end{theorem}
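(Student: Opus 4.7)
The plan is to define the period map on $\gM^I$ as a regular morphism, check that it lands in $\cF\setminus H_h\setminus H_u$ (not merely in $\cF^*$), and produce a set-theoretic inverse using the classical theory of linear systems on $K3$ surfaces; the final upgrade from bijection to isomorphism of varieties is a general consequence of the local Torelli theorem.

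First I would set up the morphism. For $[X]\in\gM^I$, the minimal resolution $\widetilde X\to X$ is a smooth $K3$ surface because $X$ has only du Val singularities (so $K_X$ is trivial and canonical singularities), and the pullback $L$ of the hyperplane class is big and nef with $L^2=4$. Thus $(\widetilde X,L)$ is a polarized $K3$ of degree $4$, and its period is a well-defined point of $\cF$. Injectivity modulo $\PGL(4)$ follows from Global Torelli: if two such quartics have the same period, the associated polarized $K3$ surfaces are isomorphic, hence so are the contracted models embedded in $|L|^{\vee}\cong\PP^3$, and any such projective isomorphism is induced by an element of $\PGL(4)$.

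Next I would check that the image avoids $H_h\cup H_u$. This is built into the definitions reviewed in \Ref{subsec}{predictq}: a hyperelliptic polarized degree-$4$ $K3$ has $\varphi_L$ of degree $2$ onto a quadric, and a unigonal one has $\varphi_L$ with image a rational normal cubic scroll (or equivalently $L\cong\cO(A+3B)$ with $B$ elliptic); in neither case is $\varphi_L(\widetilde X)$ a quartic surface. Conversely, for surjectivity I would invoke the classical theorem of Saint-Donat (and Mayer) on linear systems on $K3$ surfaces: if $(Y,L)$ is a polarized $K3$ of degree $4$ that is neither hyperelliptic nor unigonal, then $L$ is base-point-free and the image of $\varphi_L$ is a quartic surface whose only singularities come from the contraction of the $(-2)$-curves orthogonal to $L$, hence are of ADE type. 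This produces a quartic $X\in\gM^I$ realizing any prescribed period in $\cF\setminus H_h\setminus H_u$, and uniqueness up to $\PGL(4)$ is given by the injectivity argument above.

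Finally, the bijective morphism $\gp|_{\gM^I}\colon\gM^I\to\cF\setminus H_h\setminus H_u$ is an isomorphism: both sides are normal quasi-projective varieties of dimension $19$, and local Torelli for $K3$ surfaces shows $\gp$ is étale (at smooth points, with the appropriate orbifold interpretation at points with extra automorphisms), so by Zariski's Main Theorem a bijective étale morphism between such varieties is an isomorphism. The one substantive step is the surjectivity appeal to Saint-Donat; everything else is a routine combination of Torelli, the structure of ADE singularities, and general principles. I would expect the main subtlety, if any, to be keeping track of the orbifold/stacky structure so that the claimed isomorphism is truly an isomorphism of the GIT quotient with the arithmetic quotient and not just a bijection of coarse spaces.
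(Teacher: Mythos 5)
The paper itself gives no proof of this theorem; it is stated as ``well known,'' so there is nothing in the source to compare your argument against line by line. That said, your proposal is the standard argument and is essentially correct: regularity via passage to the minimal resolution (a smooth $K3$ with a degree-$4$ quasi-polarization), injectivity from Global Torelli, avoidance of $H_h\cup H_u$ built into the definitions, surjectivity from Saint-Donat's classification of the map $\varphi_L$, and the upgrade from bijection to isomorphism.

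Two small corrections are worth flagging. First, your description of the unigonal case is off: for $L\cong\cO_X(A+3B)$ with $B$ elliptic and $A$ a section, the curve $A$ is a fixed component of $|L|$, so $\varphi_L=\varphi_{|3B|}$ maps $X$ onto a \emph{twisted cubic curve} in $\PP^3$, not onto a cubic scroll (a scroll is a surface in $\PP^4$). The conclusion you want --- that the image is not a quartic surface --- is of course still true. Second, the final step should not appeal to \'etaleness, since the period map is not literally \'etale at points with extra automorphisms (on either side there are finite-quotient singularities, and the local Torelli statement is an orbifold one). The cleaner route, which you already have all the ingredients for, is: a bijective morphism between normal irreducible varieties in characteristic zero is automatically birational, and a bijective birational morphism onto a normal variety is an isomorphism by Zariski's Main Theorem; no \'etaleness is needed. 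With these two touch-ups, your proof is a correct account of the standard argument the authors are implicitly invoking.
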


\subsubsection{Insignificant Limit Singularities} 
We recall the following important result about slc singularities.

\begin{theorem}[{Koll\'ar--Kov\'acs~\cite{kk}, Shah~\cite{shahinsignificant} (for dimension 2)}]\label{thm:dubois}
Let $X_0$ be a projective reduced variety (not necessarily irreducible)  with slc singularities. Then $X$ has du Bois singularities. In particular, if $\sX/B$ is a smoothing of $X_0$ over a pointed smooth curve $(B,0)$, then the natural map $H^n(X_0)\to H^n_{\lim}$
induces an isomorphism
$$I^{p,q}(X_0)\cong I^{p,q}_{\lim}$$
on the $I^{p,q}$ components of the MHS with $p\cdot q=0$. 
\end{theorem}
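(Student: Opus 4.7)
The statement has two parts: (a) slc singularities are du Bois, and (b) the specialization $H^n(X_0)\to H^n_{\lim}$ induces an isomorphism on those $I^{p,q}$ pieces of the limit MHS with $pq=0$. I would treat them in order, with (a) as the heavy lifting.

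For (a), my plan is to follow the Koll\'ar--Kov\'acs strategy. The claim is local, so it suffices to show that the canonical map $\mathcal{O}_{X_0}\to\underline{\Omega}^0_{X_0}$ in the Deligne--Du Bois filtered de Rham complex is a quasi-isomorphism. I would argue by induction on $\dim X_0$. Let $\pi\colon \bar X_0\to X_0$ be the normalization, and let $D\subset\bar X_0$ and $C\subset X_0$ cut out the conductor. The slc hypothesis says exactly that the pair $(\bar X_0,D)$ is log canonical. By the Koll\'ar--Kov\'acs/Kov\'acs--Schwede theorem that lc implies du Bois (in the relative sense for pairs), $\bar X_0$ is du Bois; by the inductive hypothesis applied to the slc varieties $D,C$ of strictly smaller dimension, they are du Bois as well. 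The conductor square is both Cartesian and co-Cartesian and yields
\begin{equation*}
0\to \mathcal{O}_{X_0}\to \pi_*\mathcal{O}_{\bar X_0}\oplus \mathcal{O}_C\to \mathcal{O}_D\to 0.
\end{equation*}
Combined with the Mayer--Vietoris triangle $\underline{\Omega}^0_{X_0}\to \underline{\Omega}^0_{\bar X_0}\oplus \underline{\Omega}^0_C \to \underline{\Omega}^0_D \xrightarrow{+1}$ for the Deligne--Du Bois complex, the five lemma then forces $\mathcal{O}_{X_0}\xrightarrow{\sim}\underline{\Omega}^0_{X_0}$.

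For (b), now that $X_0$ is du Bois, I would invoke Steenbrink's comparison. The specialization map $\mathrm{sp}\colon H^n(X_0)\to H^n_{\lim}$ is a morphism of MHS. Du Bois ensures that the Hodge-to-de Rham spectral sequence
\begin{equation*}
E_1^{p,q}=H^q(X_0,\underline{\Omega}^p_{X_0})\ \Longrightarrow\ H^{p+q}(X_0,\mathbb{C})
\end{equation*}
degenerates at $E_1$ and computes the Hodge filtration on $H^n(X_0)$. On the row $p=0$, du Bois gives $\underline{\Omega}^0_{X_0}\simeq\mathcal{O}_{X_0}$, so $\mathrm{Gr}_F^0 H^n(X_0)=H^n(X_0,\mathcal{O}_{X_0})$; upper semicontinuity, together with Hodge--de Rham degeneration on the generic fibre and Steenbrink's spectral sequence for $H^n_{\lim}$, identifies this with $\mathrm{Gr}_F^0 H^n_{\lim}$. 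Tracking the weight filtration through this identification yields the isomorphism on $I^{0,q}$ pieces; conjugation under the real structure then gives the $I^{p,0}$ part.

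The main obstacle is Step (a), and specifically the fact that the lc input requires the strong vanishing $R^i \pi'_{*}\mathcal{O}_Y(-\lceil (\pi')^{*}D\rceil)=0$ on a log resolution $\pi'\colon Y\to \bar X_0$ of $(\bar X_0,D)$, which is strictly stronger than Grauert--Riemenschneider. Without this refined vanishing, one cannot upgrade "$\bar X_0$ du Bois'' to the pair statement needed to close the Mayer--Vietoris square. In the two-dimensional case treated by Shah one bypasses the obstacle by directly classifying the Gorenstein slc surface singularities (ADE, simple elliptic, cusps, degenerate cusps) and checking du Bois by hand; in general one needs the full Koll\'ar--Kov\'acs machinery.
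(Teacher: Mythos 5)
The paper does not prove this statement: it is quoted as a theorem of Koll\'ar--Kov\'acs (and of Shah for surfaces), with the surrounding text only explaining why it is relevant (Shah's ``insignificant limit singularities'' are exactly the Gorenstein slc ones, and Steenbrink linked cohomological insignificance to the du~Bois condition). There is therefore no internal proof to compare against. Your sketch is a fair pr\'ecis of the cited sources, so I will only flag where the details need care.

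For (a), the reduction to the normal lc case via the conductor square, the Mayer--Vietoris triangle for the Deligne--Du~Bois complex, and the five lemma is indeed the Koll\'ar--Kov\'acs skeleton, and you are right that the heavy lifting is the vanishing that feeds into the lc case. Two caveats. First, to run the induction you need $D$ and $C$ to be slc; this is not free, it comes from adjunction for the lc pair $(\bar X_0,D)$ together with the observation that $\pi|_D\colon D\to C$ is the normalization of $C$, and it should be recorded as a step. Second, the vanishing you wrote, $R^i\pi'_*\mathcal O_Y\bigl(-\lceil(\pi')^*D\rceil\bigr)=0$, is not the one actually used: for an lc pair the multiplicities of $(\pi')^*D$ can exceed $1$, and what Koll\'ar--Kov\'acs need is a Koll\'ar-type injectivity/torsion-freeness theorem (an Ambro--Fujino style statement for lc pairs) applied to $\mathcal O_Y(-E)$ with $E$ the reduced preimage of the locus where the pair is not klt, combined with the formalism of Du~Bois pairs. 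Your diagnosis that the obstruction lies strictly beyond Grauert--Riemenschneider is nevertheless correct, and it is exactly why the surface case (where Shah could classify Gorenstein slc singularities explicitly and check du~Bois by hand) is so much easier than the general one.

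For (b), this is Steenbrink's theorem on cohomologically insignificant degenerations, invoked correctly. One small repair to the last line: in Deligne's canonical bigrading $I^{p,0}$ is only congruent to $\overline{I^{0,p}}$ modulo lower-indexed pieces, so ``conjugation under the real structure'' should be replaced by ``apply the same argument to the conjugate Hodge filtration $\bar F$.'' Since the specialization map is a morphism of $\mathbb{Q}$-MHS (hence strict for $W$, $F$ and $\bar F$), an isomorphism on $\mathrm{Gr}_F^0$ forces isomorphisms on each $I^{0,q}$, and the identical argument with $\bar F$ gives the $I^{p,0}$ pieces; the conclusion is the same, only the justification changes.
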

The key point (for us) of the above result  is that, if the generic fiber of $\sX/B$ is a (smooth) $K3$ surface, then the MHS of the central fiber $X_0$  essentially determines the limit MHS associated to $\sX^*/(B\setminus\{0\})$. This is a result due to Shah~\cite{shahinsignificant} in dimension $2$ and Gorenstein singularities (the case relevant for us). Steenbrink~\cite{steenbrinkinsignificant}
connected this result to the notion of du Bois singularities. 
 \begin{definition}
A reduced  (not necessarily irreducible)  projective surface $X_0$ is a \emph{degeneration of $K3$ surfaces} if it is the central fiber of a flat proper family $\sX/B$ over a pointed smooth curve $(B,0)$  such that $\omega_{\sX/B}\equiv 0$ and the general fiber $X_b$ is a smooth $K3$ surface. 
 We say that $X_0$ has \emph{insignificant limit singularities} if $X_0$ has semi-log-canonical singularities.
 \end{definition}
 
 \begin{remark}
 The list of singularities baptized as \emph{insignificant limit singularities} by Shah \cite{shahinsignificant} coincides with the list of Gorenstein slc singularities (see \cite{sbnef}, \cite{ksb}). For a degeneration of $K3$ surfaces, the Gorenstein assumption is automatic. 
 \end{remark}

 Let $X_0$ be a degeneration of $K3$ surfaces with insignificant singularities. On $H^2(X_0)$ we have a MHS of weight $2$. Denote by $h^{p,q}$ the associated Hodge numbers ($h^{p,q}=\dim_\bC I^{p,q}$). \Ref{thm}{dubois} gives that  one, and only one, of the following $3$ equalities holds:
\begin{itemize}
\item 
$h^{2,0}(X_0)=1$.
\item 
$h^{1,0} (X_0)=1$.
\item 
$h^{0,0} (X_0)=1$.
\end{itemize}
In fact this follows from the isomorphism of the theorem, and the fact that $h^{2,0}_{\lim}+h^{1,0}_{\lim}+h^{0,0}_{\lim}=1$ for a degeneration of $K3$'s.
\begin{definition} 
Let $X_0$ be a degeneration of $K3$s.
\begin{enumerate}
\item
 $X_0$ has \emph{Type I} if it has  insignificant limit singularities, and $h^{2,0}(X_0)=1$. 
\item
 $X_0$ has \emph{Type II} if it has  insignificant  limit singularities,   and $h^{1,0} (X_0)=1$.
\item
 $X_0$ has \emph{Type III} if it has  insignificant  limit singularities, and $h^{0,0} (X_0)=1$.
\item
 $X_0$ has \emph{Type IV} if it has  significant  limit  singularities.
\end{enumerate}
\end{definition}

We are interested in the case of Gorenstein slc surfaces. These are classified by Koll\'ar-Shepherd-Barron \cite{ksb} and Shepherd-Barron. They are
\begin{itemize}
\item[(A)] ADE singularities (canonical case)
\item[(B)] simple elliptic singularities (for hypersurfaces the relevant cases are $\widetilde E_r$ with $r=6,7,8$), surfaces singular along a curve,  generically normal crossings (or equivalently $A_\infty$ singularities) and possibly ordinary pinch points (aka $D_\infty$).
\item[(C)] cusp and degenerate cusp singularities. 
\end{itemize}
\begin{remark}
We note that a normal crossing degeneration without triple points is a Type II degeneration, while a normal crossing degeneration with triple points is a Type III degeneration (a triple point is a particular  degenerate cusp singularity). 
\end{remark}

By applying results  of Shah \cite{shahinsignificant} and Kulikov-Persson-Pinkham's Theorem (see also Shepherd-Barron \cite{sbnef}), one obtains the following.
\begin{theorem}\label{thm:cavalier}
Let $X_0$ be a degeneration of $K3$ surfaces with insignificant singularities. Then the following hold: 
\begin{itemize}
\item[i)]  $X_0$ is of Type I if and only if it has ADE singularities,
\item[ii)]  if $X_0$ is of Type II then, with the exception of rational double  (i.e.~ADE) points, $X_0$ has a simple elliptic singularity or it is singular along a curve which is either smooth elliptic (and has no pinch points), or rational with $4$ pinch points. (N.B.~all non-ADE singularities are of this type, and at least one occurs.)	
\item[iii)] if $X_0$ is of Type III then, with the exception of   ADE  and $A_\infty$ singularities, all singularities of $X_0$ are either cusp or degenerate cusps, and at least one of these occurs. 
\end{itemize}
\end{theorem}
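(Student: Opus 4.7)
The plan is to detect the Shah type of $X_0$ by computing the MHS on $H^2(X_0)$ via the Koll\'ar--Shepherd-Barron classification of Gorenstein slc surface singularities recalled just before the statement, and then to match this against the Kulikov--Persson--Pinkham (KPP) classification of semistable models $\sX'/B'$ obtained after a finite base change of $B$. By \Ref{thm}{dubois}, the fact that $X_0$ is slc (hence du Bois) gives that $H^{2}(X_0)\to H^{2}_{\lim}$ is an isomorphism on the $I^{p,q}$ pieces with $p\cdot q=0$; thus the Shah type of $X_0$ coincides with the Kulikov type of $X'_0$, for which Type I means $h^{2,0}_{\lim}=1$ (smooth $K3$), Type II means $h^{1,0}_{\lim}=1$ (chain of elliptic ruled surfaces glued along elliptic curves, with rational caps at the ends), and Type III means $h^{0,0}_{\lim}=1$ (configuration of rational surfaces meeting at triple points whose dual complex is a triangulation of $S^2$).

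For each class in the Koll\'ar--Shepherd-Barron list, the plan is to compute the local Hodge contribution to $H^2(X_0)$. ADE singularities contribute nothing, since the minimal resolution is smooth and has the same weight-$2$ Hodge structure outside a finite set. Simple elliptic points, or crossings along a smooth curve (possibly with pinch points), contribute a weight-$1$ piece with $h^{1,0}=1$ via Mayer--Vietoris applied to the normalization $\widetilde{X}_0\to X_0$ and its singular locus. Cusps, degenerate cusps, and triple-point configurations contribute a weight-$0$ piece with $h^{0,0}=1$. Since the generic fiber is a $K3$ and $\omega_{\sX/B}\equiv 0$, one has $h^{2,0}_{\lim}+h^{1,0}_{\lim}+h^{0,0}_{\lim}=1$; combined with the isomorphism above, this forces exactly one among the weight-$2$, weight-$1$, or weight-$0$ contributions to be nonzero.

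Part (i) is then immediate: ADE singularities keep $H^2(X_0)$ pure of weight $2$, hence force Type I; conversely, Type I rules out any singularity carrying a weight-$0$ or weight-$1$ contribution, leaving only ADE. Part (ii) is the case where a single weight-$1$ contribution appears, so only simple elliptic points or singular curves of type $A_\infty$/$D_\infty$ are allowed, while cusps and degenerate cusps are excluded. Part (iii) is the case where a weight-$0$ class appears, i.e.~a cusp, degenerate cusp, or triple-point configuration is present; $A_\infty$ curves may coexist provided they do not create an extra $H^{1,0}$ class, which restricts them to rational chains.

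The main obstacle will be the combinatorial refinement in part (ii): the assertion that a singular curve in Type II is either smooth elliptic with no pinch points, or rational with exactly four pinch points. This reduces to adjunction on the normalization of the singular locus --- each $D_\infty$ (pinch) point contributes ramification of degree $2$ to the normalization map, and the requirement that the connected component contribute exactly $h^{1,0}=1$ forces the arithmetic genus of the image curve to be $1$. The dichotomy (smooth elliptic, no pinches) versus (rational $\PP^1$, four pinches) then follows from Riemann--Hurwitz, and the compatibility with a KPP Type II semistable reduction is ensured by the analyses of Shah~\cite{shahinsignificant} and Shepherd-Barron~\cite{sbnef}.
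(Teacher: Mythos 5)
Your plan — reduce to the MHS on $H^2(X_0)$ via the slc$\Rightarrow$du Bois theorem (\Ref{thm}{dubois}), feed in the Koll\'ar--Shepherd-Barron classification of Gorenstein slc surface singularities, compute the local weight contributions singularity-by-singularity, and match against Kulikov--Persson--Pinkham — is precisely the route the paper indicates when it says the result follows from Shah \cite{shahinsignificant}, KPP, and Shepherd-Barron \cite{sbnef}; the paper itself supplies no further detail. Your sketch is correct in substance, and the Riemann--Hurwitz dichotomy in part (ii) (together with the observation that the conductor curve, not the image curve, must have genus one) is the right way to pin down ``smooth elliptic with no pinches'' versus ``rational with four pinches''; the only place I'd tighten the write-up is the final remark about $A_\infty$ curves in Type III, where you assert rather than derive that they contribute no weight-one class — one should say explicitly that the relevant double curve there is rational (so $H^1$ vanishes), which is forced by the presence of a weight-zero class and the total Hodge number constraint $h^{2,0}+h^{1,0}+h^{0,0}=1$.
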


\begin{remark}
We recall that the Type (I, II, III) of a $K3$ degeneration is nothing else than the nilpotency index ($1$, $2$, $3$) for the monodromy action $N(=\log T_s)$ on a general fiber of a degeneration $\sX/B$. \Ref{thm}{dubois} allows us to read the Type in terms of the central fiber $X_0$ (as long as $X_0$ has slc singularities). The theorem above says that furthermore the Type of the degeneration can be determined simply by the combinatorics of $X_0$.  We point out that this fact holds much more generally - for $K$-trivial varieties (see esp. \cite[Section 2]{klsv} and \cite[Theorem 3.3.3]{hn}). 
\end{remark}
\subsubsection{The stratification and the period map}
\begin{proposition}
Let $X_0$ be a quartic surface with insignificant singularities. Then $X_0$ is GIT semistable. 
\end{proposition}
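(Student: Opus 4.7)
The approach is to apply the Hilbert--Mumford numerical criterion and check, case by case, that each destabilizing $1$-parameter subgroup for quartic surfaces in $\PP^3$ forces $X_0$ to have a singularity worse than slc. Suppose for contradiction that $X_0=V(f)$ is not GIT semistable. Then there is a $1$-PS $\lambda$ of $\SL(4)$ which, after diagonalization, acts with integer weights $w_0\ge w_1\ge w_2\ge w_3$ summing to zero (not all zero), such that every monomial of $f$ has strictly negative $\lambda$-weight. The set of such destabilizing weight vectors (up to scaling) is a rational polytope with finitely many extremal rays, and the corresponding destabilizing types for quartics were enumerated by Mumford and Shah~\cite{shah4} (see also Kirwan~\cite{kirwanhyp}). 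Standard weighted-degree bookkeeping converts the weight condition into a lower bound on the multiplicity of $f$ along the $\lambda$-invariant flag of coordinate subspaces of $\PP^3$.

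\medskip

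\noindent For each destabilizing type one reads off that $f$ falls into one of three regimes: (i) $f$ has a repeated linear factor, so $X_0$ is non-reduced; (ii) $f$ has multiplicity $\ge 3$ at the flag vertex $[0:0:0:1]$ with a tangent cone incompatible with the simple-elliptic type; (iii) $f$ vanishes to order $\ge 2$ along a coordinate line with transverse multiplicity of the wrong type. One then confronts each regime with the list of Gorenstein slc surface singularities recalled in the paragraph preceding \Ref{thm}{cavalier}. Regime (i) is excluded because slc surfaces are reduced. Regime (ii) is excluded because the only Gorenstein slc surface singularities of multiplicity $\ge 3$ are the simple elliptic $\widetilde E_r$ for $r\in\{6,7,8\}$, the cusps, and the degenerate cusps, whose tangent cones are explicit and do not match those produced by the destabilizing weight patterns. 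Regime (iii) is excluded because the only non-normal Gorenstein slc singularities are $A_\infty$ (normal crossings along a smooth curve) and $D_\infty$ (pinch points), both of transverse multiplicity $2$.

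\medskip

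\noindent The main (albeit routine) obstacle is tracking tangent-cone shapes in the borderline instances of regime (ii), where a destabilizing weight produces a singularity superficially resembling $\widetilde E_r$ or a degenerate cusp but failing one of the defining conditions on closer inspection; here one matches Shah's normal forms directly against the equations of the slc list. Morally, the statement is the case $n=3$ of the principle that for a hypersurface $X\subset\PP^n$ of degree $n+1$, so that $(\PP^n,X)$ is log Calabi--Yau, log canonicity of $(\PP^n,X)$ --- equivalently, slc of $X$ --- implies GIT semistability of $X$.
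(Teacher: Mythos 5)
Your argument is valid, but it takes a genuinely different route from the paper's primary proof. You run the contrapositive (a destabilizing $1$-PS forces a non-slc singularity) and propose to sort the destabilizing weight patterns from Mumford's and Shah's enumeration into three regimes, discharging each by comparison with the list of Gorenstein slc surface singularities. The paper mentions this inspection route only in passing (``The result also follows by inspection from Shah \cite{shah4}''). Its actual proof is the one-line conceptual observation of Hacking and Kim--Lee (\cite{hacking}, \cite{kimlee}): the Hilbert--Mumford numerical criterion and the log canonical threshold are computed by the \emph{same} weighted-multiplicity recipe, the sole difference being that GIT considers only linear changes of coordinates whereas log canonicity is tested against all analytic reparametrizations. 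Since log canonicity is thus the more demanding requirement, it automatically implies the GIT inequality, with no case analysis at all. Your closing ``morally'' sentence about slc degree-$(n+1)$ hypersurfaces in $\PP^n$ being semistable is in fact precisely the argument the paper runs, not a heuristic aside --- you had the cleaner proof in hand and relegated it to a remark. The inspection route you develop does work, but it is considerably more labor: as you yourself acknowledge, the borderline tangent-cone matches in your regime (ii) require genuine normal-form bookkeeping against Shah's tables, and that is exactly the work the Hacking--Kim--Lee comparison renders unnecessary.
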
 
\begin{proof}
This follows from the general fact observed by Hacking and Kim--Lee~\cite{kimlee} (see esp.~the proof of Proposition 10.2 in~\cite{hacking}):  GIT (semi)stability (via the numerical criterion) and the log canonical threshold are computed via the same recipe, with the difference that in the case of GIT  (semi)stability one allows only linear changes of coordinates (vs.~analytic in the other case).  Thus, the inequality needed for log canonicity implies the inequality needed for semistability. 
The result  also follows by inspection from Shah \cite{shah4} (i.e.~an unstable quartic does not have slc singularities). 
\end{proof}
\begin{definition}\label{dfn:limhod}
We let $\gM^I,\gM^{II},\gM^{III}\subset\gM$ be the subsets of points represented by polystable quartics with insignificant limit singularities of Type I, Type II and Type III respectively (note that $\gM^I$ is the same subset as the  previously defined $\gM^I$, by~\Ref{thm}{cavalier}). We let  
$\gM^{IV}\subset\gM$ be the subset of points represented by polystable quartics with significant limit singularities.  
\end{definition} 
Below is the result that was described at the beginning of the present section.
 \begin{proposition}\label{prp:est3}
$\gM^I,\gM^{II},\gM^{III},\gM^{IV}$ define a stratification of $\gM$. The  period map $\gp\colon\gM\to\cF^{*}$  is regular away from $\gM^{IV}$, and 
\begin{equation*}
\gp(\gM^I)\subset\cF,\qquad \gp(\gM^{II})\subset\cF^{II},\qquad  \gp(\gM^{III})\subset\cF^{III}.
\end{equation*}
(Recall that $\cF^{II}$ is the union of the Type II boundary components of $\cF^{*}$, and  $\cF^{III}$ is the (unique) Type III boundary component.)
 \end{proposition}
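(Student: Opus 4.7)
My plan is to handle the three assertions (stratification, regularity on $\gM\setminus\gM^{IV}$, and image of each stratum) in turn, using \Ref{thm}{dubois} and \Ref{thm}{cavalier} as the main inputs.

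\textbf{Stratification.} The four subsets cover $\gM$ by construction: a polystable quartic either has significant limit singularities (placing it in $\gM^{IV}$) or has slc singularities, in which case exactly one of $h^{2,0}(X_0),h^{1,0}(X_0),h^{0,0}(X_0)$ equals $1$ by \Ref{thm}{dubois}, assigning it to $\gM^{I},\gM^{II}$ or $\gM^{III}$ respectively. For local closedness and the closure relations, I would argue as follows. First, $\gM^{IV}$ is closed: by Shah's Theorem 2.4 of~\cite{shah4}, the locus of quartics failing to be slc is identified with explicit closed substrata (and already the four components $\sigma_i$ only partly meet this locus), so $\gM^{I}\cup\gM^{II}\cup\gM^{III}$ is open. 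By \Ref{thm}{cavalier}, Type is detected from the combinatorics of $X_0$ (ADE only; a simple elliptic/non-normal curve of singularities appears; a cusp or degenerate cusp appears), and each of these conditions is either open or locally closed within the slc locus by standard semicontinuity. Since the Type of a degeneration equals the nilpotency index of the monodromy, limits of lower Type specialize to higher Type, which gives exactly the expected closure $\overline{\gM^{I}}\supset\gM^{II}\supset\gM^{III}$ inside $\gM\setminus\gM^{IV}$.

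\textbf{Regularity away from $\gM^{IV}$.} Fix $[X_0]\in\gM^{I}\cup\gM^{II}\cup\gM^{III}$, and let $B\to\gM$ be a germ of a curve through $[X_0]$ coming from a one-parameter family $\sX/B$ of quartics with $\sX_0=X_0$ (up to replacing $B$ by a finite cover, we may assume this). The family $\sX^{*}/(B\setminus\{0\})$ gives a period map $B\setminus\{0\}\to\cF$, and by Borel's extension theorem it extends to $B\to\cF^{*}$. What we must check is that the value at $0$ depends only on $[X_0]$, not on the chosen family or coordinate. This is precisely what \Ref{thm}{dubois} provides: since $X_0$ has du Bois singularities, $H^2(X_0)\to H^2_{\lim}$ identifies the $(p,q)$-pieces with $p\cdot q=0$ of the two mixed Hodge structures, and in particular pins down the Type, the weight filtration, and the filtered pieces that determine the $\cF^{*}$-point. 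Hence the extended map is canonically defined on all of $\gM\setminus\gM^{IV}$. Descending this construction $\PGL(4)$-equivariantly (the period point is $\PGL(4)$-invariant) yields a regular map $\gM\setminus\gM^{IV}\to\cF^{*}$.

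\textbf{Image of each stratum.} The Baily-Borel stratification~\eqref{romani} of $\cF^{*}$ records exactly the nilpotency index of the limit monodromy on the primitive $H^{2}$: interior points correspond to pure Hodge structures of $K3$ type (index $1$), Type II boundary points to index $2$, and the Type III point to index $3$. By \Ref{thm}{dubois} this index coincides with the Type of $X_0$ as labeled via $h^{2,0},h^{1,0},h^{0,0}$, so the extended period map automatically sends $\gM^{I}$ into $\cF$, $\gM^{II}$ into $\cF^{II}$, and $\gM^{III}$ into $\cF^{III}$.

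\textbf{Main obstacle.} The only non-formal point is the verification that the locally closed sets defined by the Hodge-type condition really are locally closed and satisfy the stated closure relations, i.e.\ that one cannot specialize a Type II degeneration to a Type I one, and similarly for II/III. This reduces to the combinatorial characterization in \Ref{thm}{cavalier}, together with semicontinuity of the appearance of simple elliptic, non-normal, or (degenerate) cusp singularities; a careful but case-by-case check against Shah's list~\cite{shah4} suffices.
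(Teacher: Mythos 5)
Your proof uses the same Hodge-theoretic inputs as the paper (the du Bois comparison Theorem~\ref{thm:dubois} and the combinatorial characterization Theorem~\ref{thm:cavalier}), and reaches the correct conclusion, but the route is genuinely different in two respects.

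For regularity, the paper does \emph{not} argue with curve germs in $\gM$ directly. It first proves the corresponding statement on the parameter space $|\cO_{\PP^3}(4)|$ (Lemma~\ref{lmm:est3}): there, any two arcs through a fixed $X_0\notin|\cO_{\PP^3}(4)|^{IV}$ give the same limit period by Theorem~\ref{thm:dubois}, and since $|\cO_{\PP^3}(4)|$ is smooth this forces regularity; the statement for $\gM$ then follows by $\PGL(4)$-equivariant descent along the good quotient $\pi\colon|\cO_{\PP^3}(4)|^{ss}\to\gM$, using the inclusion $\pi^{-1}(\gM\setminus\gM^{IV})\subset|\cO_{\PP^3}(4)|\setminus|\cO_{\PP^3}(4)|^{IV}$. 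Your version works directly on $\gM$ with arcs lifted to one-parameter families. That can be made to work (after finite base change a curve germ through a point of a GIT quotient does lift to a family with polystable central fiber, via Luna's slice theorem and semistable reduction, and $\gM$ is normal), but it is exactly the step you wave past with ``we may assume this''; you also implicitly use that the arc can be chosen with generically smooth fibers so that the period map is defined on $B\setminus\{0\}$. The paper's device of passing through the ambient smooth linear system sidesteps both lifting and normality subtleties, which is why it is the cleaner argument.

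For the stratification claim, the paper does not check semicontinuity of singularity types against Shah's list. Once Lemma~\ref{lmm:est3} is in place, $|\cO_{\PP^3}(4)|^{I},\dots,|\cO_{\PP^3}(4)|^{IV}$ are (away from Type~IV, which is closed) the preimages under $\wt\gp$ of $\cF,\cF^{II},\cF^{III}$, so the stratification of $|\cO_{\PP^3}(4)|$ --- and then of $\gM$ --- is simply pulled back from the Baily--Borel stratification. Your direct argument via adjacency of Dolgachev/slc singularity classes and semicontinuity is plausible and gives some geometric intuition, but it carries a burden (``a careful case-by-case check'') that the pullback argument avoids entirely, and it is not what the paper does. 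I would flag the pullback observation as the one idea you are missing: once regularity away from Type~IV is established, the stratification statement is formal rather than geometric.
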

Before proving~\Ref{prp}{est3}, we prove a result on the period map $\wt{\gp}\colon |\cO_{\PP^3}(4)|\dra \cF^{*}$. Define subsets 
$|\cO_{\PP^3}(4)|^{I},|\cO_{\PP^3}(4)|^{II},|\cO_{\PP^3}(4)|^{III},|\cO_{\PP^3}(4)|^{IV}$ of $ |\cO_{\PP^3}(4)|$ by mimicking~\Ref{dfn}{limhod}.
 \begin{lemma}\label{lmm:est3}
The  period map $\wt{\gp}$  is regular away from $ |\cO_{\PP^3}(4)|^{IV}$, and 
\begin{equation}\label{doveva}
\wt{\gp}( |\cO_{\PP^3}(4)|^I)\subset\cF,\qquad \wt{\gp}( |\cO_{\PP^3}(4)|^{II})\subset\cF^{II},\qquad  \wt{\gp}( |\cO_{\PP^3}(4)|^{III})\subset\cF^{III}.
\end{equation}
 \end{lemma}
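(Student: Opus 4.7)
The plan is to prove the extension statement locally in the analytic topology, by reducing to a one-parameter situation where Borel's extension theorem (equivalently, the nilpotent orbit theorem) gives a limit in $\cF^{*}$, and then invoking \Ref{thm}{dubois} and \Ref{thm}{cavalier} to identify that limit in terms of $X_0$ alone (independent of the chosen one-parameter smoothing).

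First, I would dispose of $|\cO_{\PP^3}(4)|^{I}$. For $X_0$ in this locus the minimal resolution $\widetilde{X}_0\to X_0$ is a smooth $K3$ with its degree-$4$ polarization pulled back, so its Hodge structure gives a well-defined point of $\cF$. A standard argument (using simultaneous resolution in analytic families and the holomorphicity of the Hodge bundle on the locus of $K3$s with ADE singularities) shows that the assignment $X_0\mapsto \gp(\widetilde{X}_0)$ is holomorphic in a neighborhood of $X_0$; this gives the desired regular extension of $\wt{\gp}$ into $\cF$.

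Next, for $X_0\in |\cO_{\PP^3}(4)|^{II}\cup|\cO_{\PP^3}(4)|^{III}$, pick any one-parameter smoothing $\sX\to B$ over a small disk $(B,0)$. By Borel's extension theorem applied to the period map $B^{*}\to\cF$, we obtain a holomorphic extension $B\to\cF^{*}$. By \Ref{thm}{cavalier} the type of the degeneration (equivalently, the nilpotency index of the monodromy on $H^2$) is determined by $X_0$: nilpotency $2$ in the Type II case and nilpotency $3$ in the Type III case. This is already enough to force the limit point to land in $\cF^{II}$ (union of Type II boundary components) or in the single Type III cusp $\cF^{III}$. Independence of the limit from the chosen smoothing is then automatic for Type III (there is only one point to land in), and for Type II it follows from \Ref{thm}{dubois}: the Kollár–Kovács isomorphism $I^{p,q}(X_0)\cong I^{p,q}_{\lim}$ for $p\cdot q=0$ encodes, together with the integral structure of $H^2(X_0,\ZZ)$, the rank-$2$ rational isotropic sublattice that pins down the Type II boundary component. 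Hence the limit point depends only on $X_0$.

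Once well-definedness at the set-theoretic level is in place, regularity of the extension across the strata is obtained by letting $X_0$ vary: the part of the Hodge filtration controlled by the $pq=0$ invariants varies holomorphically in $X_0$ (a direct consequence of the Du Bois property of slc singularities, \Ref{thm}{dubois}), and this pulls back to regular data on $\cF^{*}$ through the Baily–Borel description. The main obstacle—and the step that deserves the most care—is precisely the Type II independence statement: showing that the piece of limit MHS responsible for the choice of Baily–Borel boundary component can be recovered from $H^2(X_0)$ rather than from any particular smoothing. This is where the passage from Shah's original \emph{insignificant limit singularity} framework to the modern slc/Du Bois formalism, via \Ref{thm}{dubois}, provides the conceptual input that makes the argument go through uniformly in $X_0$.
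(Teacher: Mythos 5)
Your core idea coincides with the paper's: use one\mbox{-}parameter smoothings, the extension property of the period map to $\cF^{*}$, and \Ref{thm}{dubois} (Koll\'ar--Kov\'acs/Shah, i.e.\ the Du~Bois property of slc singularities) to conclude that the limit period point depends only on $X_0$ and lands in the expected stratum. Where you diverge is the final regularity step, and that is exactly where your argument is soft. You assert that ``the part of the Hodge filtration controlled by the $pq=0$ invariants varies holomorphically in $X_0$ \ldots\ and this pulls back to regular data on $\cF^{*}$ through the Baily--Borel description.'' This is not a statement you can cite, and it is genuinely delicate: near the boundary of $\cF^{*}$ the period map has no holomorphic-Hodge-bundle description, the target is highly singular, and the discrete datum (which Type~II cusp) must also be shown to vary correctly. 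The paper avoids all of this by a much more economical and rigorous argument: $|\cO_{\PP^3}(4)|$ is smooth (normal would suffice) and $\cF^{*}$ is proper, so if $\wt{\gp}$ failed to be regular at some $X_0\notin|\cO_{\PP^3}(4)|^{IV}$ there would exist two arcs through $X_0$, generically in the smooth-quartic locus, whose limit period points differ; but by the Du~Bois argument the limit is independent of the arc --- contradiction. You should replace your holomorphic-variation heuristic with this normality-plus-arcs argument.

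Two smaller remarks. Your Type~III independence shortcut (``there is only one point to land in'') is correct for quartics because $\cF^{*}$ has a unique Type~III cusp (Scattone), but the paper does not need this special fact, since \Ref{thm}{dubois} handles all types uniformly. And your separate treatment of $|\cO_{\PP^3}(4)|^{I}$ by simultaneous resolution is fine, but again unnecessary: the uniform argument already covers it, since in that case the one\mbox{-}parameter limit lies in $\cF$ by Item~(1) of the paper's proof.
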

\begin{proof}
Let $X_0\in  (|\cO_{\PP^3}(4)|\setminus  |\cO_{\PP^3}(4)|^{IV})$ be a quartic surface. Suppose that $f\colon (B,0)\to  (|\cO_{\PP^3}(4)|,X_0)$ is a map from a smooth pointed curve, and that $f(B\setminus\{0\})$ is contained in the locus of \emph{smooth} quartics. Let $p_f^0\colon (B\setminus\{0\})\to\cF^{*}$ be the composition $\wt{\gp}\circ(f|_{B\setminus\{0\}})$, and let $p_f\colon B\to\cF^{*}$ be the extension to $B$. Then $p_f(0)$ is independent of  $f$. In fact, this follows from~\Ref{thm}{dubois}. In addition, we see that 
\begin{enumerate}
\item
if $X_0\in  |\cO_{\PP^3}(4)|^I$, then $p_f(0)\in\cF$,
\item
if $X_0\in  |\cO_{\PP^3}(4)|^{II}$, then $p_f(0)\in\cF^{II}$,
\item
and   if $X_0\in  |\cO_{\PP^3}(4)|^{III}$, then $p_f(0)\in\cF^{III}$. 
\end{enumerate}
Now suppose that $X_0\in  (|\cO_{\PP^3}(4)|\setminus  |\cO_{\PP^3}(4)|^{IV})$, and that $X_0$ is in the indeterminacy locus of $\wt{\gp}$. Then, since  $|\cO_{\PP^3}(4)|$ is smooth (normality would suffice), there exist 
 smooth pointed curves $(B_i,0_i)$ for $i=1,2$, and  maps $f_i\colon (B_i,0)\to  (|\cO_{\PP^3}(4)|,X_0)$ such that  $f(B_i\setminus\{0_i\})$  is contained in the locus of \emph{smooth} quartics, and the points $p_{f_i}(0_i)$ (defined as above) are different, contradicting what was just stated. This proves that  $\wt{\gp}$  is regular away from $ |\cO_{\PP^3}(4)|^{IV}$. Equation~\eqref{doveva} follows from Items~(1), (2), (3) above.
\end{proof}
\begin{proof}[Proof of~\Ref{prp}{est3}]
First we notice that $|\cO_{\PP^3}(4)|^{I},|\cO_{\PP^3}(4)|^{II},|\cO_{\PP^3}(4)|^{III},|\cO_{\PP^3}(4)|^{IV}$ define a stratification of $ |\cO_{\PP^3}(4)|$, because $\cF,\cF^{II},\cF^{III}$ define a stratification of $\cF^{*}$. 
Let $ |\cO_{\PP^3}(4)|^{ss}\subset  |\cO_{\PP^3}(4)|$ be the open subset of GIT semistable quartics, and let $\pi\colon |\cO_{\PP^3}(4)|^{ss}\to\gM$ be the quotient map.  By definition (and the remark about 
$|\cO_{\PP^3}(4)|^{I},\ldots,|\cO_{\PP^3}(4)|^{IV}$ defining a stratification of $ |\cO_{\PP^3}(4)|$)
$\pi^{-1}(\gM\setminus\gM^{IV})\subset ( |\cO_{\PP^3}(4)|\setminus  |\cO_{\PP^3}(4)|^{IV})$. Hence $\gp$ is regular away from $\gM^{IV}$ because of~\Ref{lmm}{est3}. Lastly,  $\gM^I,\gM^{II},\gM^{III},\gM^{IV}$ define a stratification of $\gM$ because $\cF,\cF^{II},\cF^{III}$ define a stratification of $\cF^{*}$. 
\end{proof}
%

\section{Shah's explicit description of the Hodge theoretic stratification of $\gM$}\label{sec:refshah}
\setcounter{equation}{0}
In the present section, we briefly review Shah's explicit description  (\cite[Theorem 2.4]{shah4}) of the strata in the  Hodge theoretic  stratification of  
$\gM$ defined in the previous section (essentially the strata are the intersections between   Hodge theoretic  and  GIT strata). Then, we will slightly refine Shah's stratification of $\gM^{IV}$,  so that the refined strata  match (in \lq\lq reverse order\rq\rq) the strata $Z^m$  in~\eqref{zetaquart}.  In many instances the refined strata are   connected components of one of Shah's Hodge theoretic strata. 
\subsection{Type II strata for $\gM$} 
The period map extends regularly away from $\gM^{IV}$, and maps $\gM^{II}$ to $\cF^{II}$.  The matching of the irreducible components of $\gM^{II}$ and the Type II boundary components  will be given in the following section (together with an explanation of the discrepancies in dimensions). For the moment being, we  note that Shah identified $8$  irreducible components of $\gM^{II}$,  and that each polystable quartic $X$ parametrized by a point of $\gM^{II}$ 
has a \lq\lq $j$-invariant\rq\rq. More precisely, either $X$ has  a simple elliptic singularity (of type $\widetilde E_6$, $\widetilde E_7$, or $\widetilde E_8$), or $\sing X$ contains  an elliptic curve, or a rational curve with $4$ pinch points. Hodge theoretically, this corresponds to the fact that $\Gr_1^WH^2(X_0)\neq 0$ (N.B.: simple Hodge theoretic considerations  show that if there is more than one source of $j$-invariant, e.g.~two simple elliptic singularities, then the $j$-invariants  coincide). 

 \begin{proposition}\label{prp:gittype2}
 The Type II GIT boundary $\gM^{II}$ consists of $8$ irreducible boundary components. We label these components by II(1)--II(8). Let $X$ be a quartic surface with closed orbit corresponding to the generic point of a Type II components. Then, $X$ has the following description:
 \begin{itemize}
 \item[II(1)] (cf. S-II(B,i, $\widetilde E_8$), also the generic locus in $\sigma_1$) -- $\Sing(X)$ consists of two double points of type $\widetilde E_8$.
  \item[II(2)] (cf. S-II(B,i, $\widetilde E_7$), also the generic locus in $\sigma_2$) -- $\Sing(X)$ consists of two double points of type $\widetilde E_7$ and some rational double points.
 \item[II(3)] (cf. S-II(B,ii), also the generic locus in $\sigma_3$) -- $\Sing(X)$ consists of two skew lines, each of which is an ordinary nodal curve with four simple pinch points.

   \item[II(4)] (cf. S-II(B,iii), also the generic locus in $\sigma_4$) -- $X$ consists of a plane and a cone over a nonsingular cubic curve in the plane (triple point of type $\widetilde E_6$).
     \item[II(5)] (cf. S-II(A,i)) -- $\Sing(X)$ consists of a double point $p$ of type $\widetilde E_8$ and some rational double points such that no line in $X$ passes through $p$.
       \item[II(6)] (cf. S-II(A,ii, $\deg$ $2$)) -- $\Sing(X)$  consists of a smooth conic $C$ and possibly some rational double points. $C$ is an ordinary nodal curve with $4$ pinch points.
 \item[II(7)] (cf. S-II(A,ii, $\deg$ $3$)) --  $\Sing(X)$  consists of a twisted cubic $C$ and possibly some rational double points. $C$ is an ordinary nodal curve with $4$ pinch points.

   \item[II(8)] (cf. S-II(A,ii, $\deg$ $4$)) -- $\Sing(X)$ consists of an elliptic normal curve of degree $4$ and possibly some rational double points (equivalently $X$ is the union of two quadric surfaces that meet transversally).  
  \end{itemize}
 Furthermore, the cases II(5)--II(8) correspond to stable quartics, while the cases II(1)--II(4) to strictly semistable quartics with generic stabilizer the $1$-PSs $\lambda_1,\dots,\lambda_4$ respectively (N.B. $\overline{II(i)}=\sigma_i$ cf.~\Ref{lmm}{lem1ps}). 
 \end{proposition}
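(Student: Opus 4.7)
The plan is to combine the intrinsic characterization of Type II quartics furnished by \Ref{thm}{cavalier} with an explicit orbit-by-orbit enumeration, separating the strictly semistable locus (handled by \Ref{lmm}{lem1ps}) from the stable part (where we must walk through Shah's normal forms in \cite[Thm.~2.4]{shah4}).

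First I would apply \Ref{thm}{cavalier}(ii): a polystable quartic $X$ lies in $\gM^{II}$ precisely when $X$ has slc singularities and, apart from rational double points, its non-ADE singularities are either simple elliptic ($\widetilde E_r$ with $r=6,7,8$) or a one-dimensional singular locus which is a smooth elliptic curve, or a rational curve with exactly four pinch points (equivalently $A_\infty$ away from four $D_\infty$ points). This dichotomy organizes the discussion into two classes: \emph{isolated} non-ADE slc singularities vs.\ quartics \emph{singular along a curve}.

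Next I would dispense with the strictly semistable contributions. By \Ref{lmm}{lem1ps}, $\gM \setminus \gM^s$ has exactly four irreducible components $\sigma_1,\dots,\sigma_4$, corresponding (up to conjugation) to the four $1$-PS $\lambda_1,\dots,\lambda_4$. Writing normal forms for the monomials stabilized by each $\lambda_i$ and applying the Luna slice theorem to the closed orbit exhibits the generic member of each $\sigma_i$ as: two $\widetilde E_8$ points (II(1)), two $\widetilde E_7$'s plus rational double points (II(2)), a double conic configuration degenerating to two skew double lines with four pinch points each (II(3)), and a plane-plus-cubic-cone with one $\widetilde E_6$ triple point (II(4)). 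Each of these has $\Gr_1^W H^2\neq 0$, hence lies in $\gM^{II}$, and each $\sigma_i$ is irreducible of the dimension computed in \Ref{lmm}{lem1ps}, giving four components of $\gM^{II}$.

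For the stable Type II strata one scans Shah's stable list in \cite[Thm.~2.4, part (A)]{shah4} and keeps only the non-ADE cases. The isolated-singularity branch yields exactly one new component, II(5): a quartic with an $\widetilde E_8$ singularity $p$ and \emph{no} line through $p$ (the no-line condition is what ensures GIT stability and cuts this locus out of $\sigma_1$). The remaining stable Type II quartics are those singular along a curve $C\subset X$; since $C$ must lie on a quartic and be rational with four pinch points (or smooth elliptic), one classifies $C$ by its degree in $\PP^3$: $\deg C = 2$ gives II(6) (smooth conic), $\deg C = 3$ gives II(7) (twisted cubic), and $\deg C = 4$ gives II(8), where $X$ is forced to be the union of two quadrics meeting transversally along a smooth elliptic quartic curve. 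Irreducibility of each of these loci is clear from the parameter count: fix a curve $C$ of the given type in $\PP^3$ (an irreducible family) and then describe the locus of quartics singular along $C$ as a linear system, intersected with an open condition cutting out the pinch-point behavior.

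The main obstacle is ensuring the list is both \emph{complete} and \emph{non-redundant}. Completeness follows by checking that any other polystable configuration allowed by \Ref{thm}{cavalier}(ii) either violates semistability (e.g.\ a simple elliptic singularity of type $\widetilde E_8$ with a line through it specializes to $\sigma_1$, reducing to II(1) or II(5); an $\widetilde E_7$ singularity forces strict semistability, placing one in II(2); an $\widetilde E_6$ forces the cone configuration II(4)) or coincides with one of II(1)-II(8). Non-redundancy is verified by comparing dimensions and generic singularity types, together with the observation that for cases II(1)-II(4) the closure of the stratum equals the corresponding $\sigma_i$ by \Ref{lmm}{lem1ps}(2). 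This completes the classification of the eight irreducible components.
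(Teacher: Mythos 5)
The paper's proof of this proposition is a one-line citation to Shah's Theorem 2.4, and your proof ultimately rests on the same foundation: the step where you ``scan Shah's stable list,'' together with your completeness checks, is precisely what Shah proves, so the two approaches coincide in substance. Your additional organizational framework (filtering through \Ref{thm}{cavalier} for the intrinsic Type~II characterization, then invoking \Ref{lmm}{lem1ps} for the strictly semistable components) is a useful gloss, but note that several assertions you offer as easy verifications are in fact substantive parts of Shah's analysis: e.g.\ that an isolated $\widetilde E_7$ or $\widetilde E_6$ simple elliptic singularity forces strict semistability, or that the loci II(5)--II(8) are irreducible (for II(5) this requires the nontrivial discussion of \Ref{rmk}{e8tildecase}, which distinguishes two deformation classes of quartics with an $\widetilde E_8$ point by the presence or absence of a line through it). These are not errors, but they are being cited rather than proved, exactly as in the paper.
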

 \begin{proof}
 This is precisely Shah \cite[Thm. 2.4]{shah4}. The corresponding cases in Shah's Theorem are labeled by S-II(A/B, Case). Some of Shah's cases (e.g. Theorem 2.4 II.A.ii) have  several geometric sub-cases that are labeled in an obvious way (e.g. S-II(A,ii, $\deg$ $3$) corresponding to the case when $\Sing(X)$ is a twisted cubic). 
 \end{proof}
 
 \begin{remark}[{Quartics with $\widetilde E_8$ singularities, cf.~Urabe \cite{urabee12}}]\label{rmk:e8tildecase}
 Let us note that there are two deformation classes of quartic surfaces with $\widetilde E_8$ singularities. The generic quartic $S$  in each of these two strata  has a unique singular point $p$, of type $\widetilde E_8$. The minimal resolution $\widetilde S\to S$ has the following properties:
 \begin{itemize}
 \item[i)] $\widetilde S$ is a rational surface (a consequence of iii) below);
 \item[ii)] the exceptional divisor $D$ of $\widetilde S\to S$ is a smooth elliptic curve of self-intersection $-1$ (this is the condition of having $\widetilde E_8$ singularities);
 \item[iii)] $(\widetilde S, D)$ is an anticanonical pair (i.e.~$D\in |-K_{\widetilde S}|$) (this is a consequence of $S$ being a degeneration of $K3$ surfaces);
 \item[iv)] $\widetilde S$ comes equipped with a nef and big class $h$ s.t.~$h^2=4$ and $h.D=0$ (i.e.~$S$ is a quartic). 
 \item[v)] Furthermore, we can assume that the linear system associated to $h$ contracts only $D$.
 \end{itemize}
 It is not hard to see (e.g.~\cite[Prop.~1.5]{urabee12}) that $\widetilde S$ is the blow-up of $\bP^2$ at $10$ points on a smooth cubic curve $C$ in $\bP^2$ (and  $D\subset \widetilde S$ is the strict transform of $C$). Thus, $\Pic \widetilde S=\langle \ell, e_1,\dots, e_{10}\rangle$, where $\ell$ is the pull-back of $\calO_{\bP^2}(1)$ and $e_1,\dots, e_{10}$ are the $10$ exceptional divisors. The classification of the possible divisor classes $h$ as above was done by Urabe \cite[Prop.~4.3]{urabee12}. Up to natural symmetries, there are two distinct possibilities:
 \begin{eqnarray*}
 (a)\ \ h&=&9l-3(e_1+\dots+e_8)-2e_9-e_{10}\\
(b)\ \  h&=&7l-3e_1-2(e_2+\dots+e_{10}).
 \end{eqnarray*}
 In other words, if $\widetilde S$ is the blow-up  of $\bP^2$ at $10$ (general) points on a cubic curve with a divisor class $h$ as above, then  $S=\phi_{|h|}(\widetilde S)$ is a quartic in $\bP^3$ with one $\widetilde E_{8}$ singularity $p$. The two cases are distinguished geometrically by the fact that  case (a), $S$ contains a line passing through $p$ (with class $e_{10}$), while in case (b) there is no such line. By construction, it is easy to see that $S$ depends on $10$ moduli in each of the cases (a) and (b) -- in particular, neither of the case is a specialization of the other one. 
 Finally,  Shah's analysis (\cite[Theorem 2.4]{shah}) shows that the generic surface of type (a) is strictly semistable with associated minimal orbit of Type II(1) (cf. the proposition above). In  case (b), the surface S is stable (Type II(5) above). 
 \end{remark}

 \begin{remark}[{Arithmetic of Quartics with $\widetilde E_8$ singularities}]\label{rmk:arithmetice8} Let us note that the two cases of the \Ref{rmk}{e8tildecase} are distinguished also from an arithmetic perspective. The arguments here are standard and are contained (with full details) in Urabe \cite{urabee12}. First note that since $\widetilde S$ is the blow-up of $\bP^2$ at $10$ points, $\left(H^2(\widetilde S), \langle,\rangle\right)$ is isometric as lattice to $I_{1,10}$. Since $K^2=-1$, it follows that the lattice $K_{H^2(\widetilde S)}^\perp$ (notation $\Gamma$ in  \cite{urabee12}) is an even unimodular lattice of signature $(1,9)$ (and thus isometric to $E_8\oplus U$). The polarization class $h$ has norm $4$ and belongs to $K^\perp\cong E_8\oplus U$. It is not hard to see that there are exactly (up to isometries) two choices for $h$ that are distinguished by the isometry class of the negative definite lattice $h^{\perp}_{K^\perp}$ (notation $\Lambda$ in  \cite{urabee12}) . Namely, $h^{\perp}_{K^\perp}$ is either $E_8\oplus D_1$ (recall $D_1=\langle -4\rangle$) or $D_9$. The case (a) corresponds to $E_8\oplus D_1$, while the case (b) corresponds to $D_9$ (e.g. see \cite[p.~1231]{urabee12}). 
 \end{remark}
\subsection{Type III strata for $\gM$}
 For completeness, we list Shah's strata contained in $\gM^{III}$. By~\cite{scattone}, there is  unique Type III boundary point in $\cF^{*}$, hence the period map sends  all these strata to the same point of $\cF^{*}$. 

\begin{proposition}
A polystable quartic $X$ corresponds to a point of $\gM^{III}$ if and only if one of the following holds:
\begin{itemize}
\item[III(1)] (cf. S-III(B,iii), also case $\zeta$) -- $X$ consists of four planes with normal crossings (the tetrahedron). This is a single point $\zeta\in \gM$ (cf. \ref{prp:classifystab} (i)). 
\item[III(2)] (cf. S-III(B,ii, 4 lines), also generic locus in $\tau$) -- $X$ consists of two, nonsingular, quadric surfaces which intersect in a reduced curve $C$ which consists of four lines, and whose singular locus consist of $4$ double points. This gives a curve $\tau^\circ\subset  \gM$ (cf. \ref{prp:classifystab} (ii)), where $\tau^\circ=\tau\setminus\{\omega,\zeta\}$.  
\item[III(3)]  (cf. S-III(B,ii, 2 conics)) -- $X$ consists of two, nonsingular, quadric surfaces which intersect in a reduced curve, $C$, of arithmetic genus $1$. C consists of two  conics such and its singular locus consist of $2$ double points; the dual graph of $C$ is homeomorphic to a circle. This case is a specialization of the case II(8) above. Stabilizer $\lambda_4=(1,0,0,-1)$. 
\item[III(4)] (cf. S-III(B,i, $\deg$ $3$)) --  $\Sing(X)$ consists of a nonsingular, rational curve of degree $3$, and some rational double points. $C$ is a strictly quasi-ordinary, nodal curve and its set of pinch points consists of two double pinch points. Each double pinch point lies on a line in $X$. Stabilizer $\lambda_3=(3,1,-1,-3)$. Also a specialization of the case II(7). 
\item[III(5)]  (cf. S-III(B,i, $\deg$ $2$)) -- $\Sing(X)$ consists of a nonsingular, rational curve of degree $2$, and some rational double points. $C$ is a strictly quasi-ordinary, nodal curve and the set of its pinch points consists of two double pinch points. Each double pinch point lies on a line in $X$. Stabilized by $\lambda_4=(1,0,0,-1)$. Specialization of the case II(6). 
\item[III(6)]  (cf. S-III(A,ii)) -- $\Sing(X)$ consists of a strictly quasi-ordinary nodal curve, $C$, and some rational double points such that no line in $X$ passes through a double pinch point. $C$ is a nonsingular, rational curve of degree $2$. $X$ has either two double pinch points on $C$ or one double
pinch point and two simple pinch points on $C$. Specialization of the case II(6).

\item[III(7)]  (cf. S-III(A,i)) -- $\Sing(X)$ consists of a double point, $p$, of type $T_{2,3,r}$ and some rational double points such that no line in $X$ passes through $p$. Specialization of the case II(5). 
\end{itemize}
If III(1)--III(5) holds, then $X$ is strictly semistable, if  III(6) or III(7) holds, then $X$ is stable. 
\end{proposition}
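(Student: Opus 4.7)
The plan is to combine the Hodge-theoretic definition of Type III with the classification of Gorenstein slc surface singularities, and then to verify stability case-by-case using the 1-PS analysis already carried out. By \Ref{dfn}{limhod}, a polystable $X$ lies in $\gM^{III}$ iff it has slc singularities and the limit MHS has $h^{0,0}=1$. By \Ref{thm}{cavalier}(iii), such an $X$ must have only ADE, $A_\infty$, cusp, or degenerate cusp singularities, with at least one cusp or degenerate cusp (the latter includes triple points, tetrahedral points, and the various non-normal combinations appearing in the list). The first step is therefore to enumerate all polystable quartics realizing at least one of these distinguished singularity types.

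For the enumeration I would run through Shah's Theorem 2.4 in \cite{shah4} and keep exactly those items whose singular set contains a genuine cusp or degenerate cusp (i.e.~excluding the purely Type I and Type II cases). The non-normal cases give III(1)--III(5): III(1) is the tetrahedron (whose four vertices are degenerate cusps), III(2) and III(3) are unions of two smooth quadrics meeting along a nodal curve of arithmetic genus $1$ degenerated to four lines or to two conics, and III(4), III(5) are reducible configurations whose singular curve is a rational nodal curve carrying double pinch points. The normal cases give III(6) and III(7): either a normal quartic with a quasi-ordinary degenerate cusp along a conic with two (double) pinch points, or a normal quartic with an isolated $T_{2,3,r}$ cusp. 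In each case one checks by direct local computation that the singularity is slc and has $h^{0,0}=1$ in the limit MHS, matching the Type III box of \Ref{thm}{cavalier}. The local form of $T_{2,3,r}$ (for $r$ finite) and of the degenerate cusps of embedding dimension $3$ is explicit enough that these verifications are essentially bookkeeping.

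For the stability statement, the strictly semistable cases III(1)--III(5) are handled by identifying each one inside the GIT boundary $\sigma_1\cup\cdots\cup\sigma_4$ analyzed in \Ref{lmm}{lem1ps} and \Ref{prp}{classifystab}: III(1) is the distinguished point $\zeta$ (full-torus stabilizer), III(2) is the generic locus of the curve $\tau$ (two-torus stabilizer), and III(3), III(4), III(5) are the claimed specializations of II(8), II(7), II(6), whose 1-PS stabilizers $\lambda_4$, $\lambda_3$, $\lambda_4$ can be read off from the normal forms given in \Ref{prp}{classifystab}. Cases III(6) and III(7) are stable because the geometric hypothesis (no line through the pinch points, respectively no line through the $T_{2,3,r}$ point) rules out any destabilizing 1-PS: indeed, any nontrivial 1-PS fixing the singular point and giving nonpositive weights to the defining equation would force a line of $X$ through the singular point, contradicting the hypothesis.

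The main obstacle is not the enumeration itself but the stability verification for III(6) and III(7). For these one needs a Hilbert--Mumford analysis ruling out every possible 1-PS simultaneously; Shah carries this out by normalizing the singular point at a coordinate vertex and computing the weighted filtration of the quartic polynomial in each of the cases $\lambda_1,\ldots,\lambda_4$ of \eqref{1234} (these are the only candidates up to conjugation by \Ref{lmm}{lem1ps}). I would follow the same strategy, using the absence of lines through the distinguished point to guarantee that every $\lambda_i$-weight vector in the polynomial is strictly positive, thereby certifying stability.
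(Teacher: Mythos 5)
Your approach matches the paper's, which for this proposition (as for the Type~II analogue, see the proof of \Ref{prp}{gittype2}) is simply to read the result off from Shah's Theorem~2.4 in~\cite{shah4} after the preparatory work of~\Ref{sec}{shahreview}: the Hodge-theoretic characterization via~\Ref{dfn}{limhod} and~\Ref{thm}{cavalier}(iii) identifies which of Shah's strata are Type~III, and Shah's own Hilbert--Mumford analysis supplies the stability assertions. The paper does not supply an independent proof, and your proposal is a correct unpacking of the implicit one, deferring to Shah exactly where the paper does.

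One inaccuracy worth correcting: you organize the list by ``non-normal cases give III(1)--III(5)'' versus ``normal cases give III(6), III(7),'' but III(6) has $\Sing(X)$ a curve, so $X$ is non-normal in that case too. The dichotomy that actually governs the split into III(1)--(5) versus III(6)--(7) is Shah's (B) versus (A), i.e.\ strictly semistable versus stable, which is precisely what the final sentence of the proposition asserts and what your subsequent stabilizer analysis captures. This mislabel does not affect the logic of your argument since you treat the seven cases individually, but if you intend the normal/non-normal split to do real enumerative work, it will miscount.

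Also note a small point in the stability argument for III(6)--(7): it is not that ``every $\lambda_i$-weight vector in the polynomial is strictly positive,'' but rather that for each candidate destabilizing 1-PS the Mumford index is strictly positive, i.e.\ there is no 1-PS under which all monomials of the (suitably normalized) quartic have non-positive weight. The geometric hypothesis about lines through the distinguished singular point is used, as in Shah, to rule out the borderline cases; your phrasing of the conclusion should be adjusted accordingly, but the underlying strategy is the right one.
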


\subsection{Type IV strata for $\gM$} 
The period map is regular away from $\gM^{IV}$, hence in order to  decompose $\gp\colon \gM\dashrightarrow \cF^*$
into a composition of simple birational maps, we must study $\gM^{IV}$. The following is  a slight refinement of Shah \cite[Theorem 2.4]{shah4}:

\begin{proposition}\label{prp:proptype4}
The Type IV locus $\gM^{IV}$ decomposes in the following strata:
\begin{itemize}
 \item[IV(0a)] (cf. S-IV(B,iii)) -- $X$ consists of a non-singular quadric surface with multiplicity $2$.  (Case \ref{prp:classifystab}(iii)). The point $\omega\in  \gM$ corresponding to (generic) hyperelliptic quartics. 
   \item[IV(0b)] (cf. S-IV(B,i, $\deg$ 3)) --  $\Sing(X)$ consists of a nonsingular, rational curve, $C$, of degree $3$; $C$ is a simple cuspidal curve. The normalization of $X$ is nonsingular. This is the tangent developable to the twisted cubic (Case \ref{prp:classifystab}(iv)). The corresponding point $\upsilon\in  \gM$  corresponds to unigonal $K3$s.

\item[IV(1)] (cf. S-IV(B,ii)) -- $X$ consists of two quadric surfaces, $V_1$, $V_2$ tangent along a nonsingular conic $C$ such that $V_1\cap V_2=2C$. (Case \ref{prp:classifystab}(iii)). It corresponds to a curve inside $ \gM$. 
 \item[IV(2)]  (cf. S-IV(B,i, $\deg$ 2)) -- $\Sing(X)$ consists of a nonsingular, rational
curve, $C$, of degree $2$; $C$ is a simple cuspidal curve. The normalization of $X$ has exactly two rational double points. Stabilized by $\lambda_4=(1,0,0,-1)$
 \item[IV(3)] $\Sing(X)$  consists of a nodal curve, $C$, and rational double points such that no line in $X$ passes through a non-simple pinch point. $C$ is a nonsingular, rational curve of degree $2$. Every point of $X$ on $C$ is a double point and the set of pinch points consists of  a point of type $E_{4,\infty}$.
  \item[IV(4)] $\Sing(X)$  consists of a nodal curve, $C$, and rational double points such that no line in $X$ passes through a non-simple pinch point. $C$ is a nonsingular, rational curve of degree $2$. Every point of $X$ on $C$ is a double point and the set of pinch points consists of either a point of type $E_{3,\infty}$ and a simple pinch point or a point of type $E_{4,\infty}$.
 \item[IV(5)] $\Sing(X)$ consists of a double point, $p$, of
type  $E_{3,r}$ and some RDPs such that no line in $X$ passes through $p$. This case is a specialization of Case III(7) (and then II(8)).   
 \item[IV(6)] (cf. S-IV(A,i, $E_{14}$)) -- $\Sing(X)$ consists of a double point of
type $E_{14}$.
 \item[IV(7)] (cf. S-IV(A,i, $E_{13}$)) -- $\Sing(X)$ consists of a double point of
type $E_{13}$.
 \item[IV(8)]  (cf. S-IV(A,i, $E_{12}$)) --  $\Sing(X)$ consists of a double point of
type $E_{12}$.
\end{itemize}
 \end{proposition}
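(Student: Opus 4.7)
The proposition is essentially a restatement of Shah's Theorem~2.4 in \cite{shah4}, restricted to the Type~IV locus and slightly refined in order to match the arithmetic ladder $Z^9\subset Z^8\subset \cdots\subset Z^1$ on the period side. My plan is therefore to proceed case by case, following Shah's original classification and inserting the necessary subdivisions.

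First, I would record the direct identifications. Strata IV(0a), IV(0b), IV(1), IV(2), IV(6), IV(7) and IV(8) each coincide with a single stratum in Shah's theorem, as indicated by the parenthetical ``cf.\ S\nobreakdash-IV$(\dots)$'' references. For the Dolgachev cases IV(6), IV(7), IV(8), the equisingular loci of quartics with an isolated $E_{14}$, $E_{13}$, $E_{12}$ singularity are locally closed and irreducible; their dimensions are computed using the modality of these singularities, and they are of Type~IV because these singularities are the simplest non log-canonical hypersurface singularities (as discussed in Section~1). For IV(0a), IV(0b), IV(1), IV(2) the existence and irreducibility follow from the analysis of stabilizers in \Ref{prp}{classifystab} together with Shah's explicit normal forms.

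The actual refinement of Shah's partition occurs in cases IV(3), IV(4), IV(5): here Shah grouped together configurations along a singular conic and configurations with an isolated $E_{3,r}$ singularity, while in our statement they are separated according to the precise type and number of pinch points ($E_{4,\infty}$ alone, $E_{3,\infty}$ plus a simple pinch point, or an $E_{3,r}$ double point with no line through it). For each of these I would write down a normal form adapted from Shah's computations (a pencil of quadrics tangent to a reference conic with prescribed pinch configuration), verify that each sub-stratum is locally closed by an equisingularity argument, and check non-slcness by observing that at least one of the singularities is either a cusp whose minimal log resolution has too large a discrepancy or has an exceptional locus failing the nodal/pinch criterion of~\Ref{thm}{cavalier}(ii)--(iii).

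Finally, I would verify that the listed strata exhaust $\gM^{IV}$. By the classification of Gorenstein slc surface singularities recalled in Section~3.3, if a polystable quartic $X$ is not in the union of IV(0a)--IV(8), then its singularities are all in the slc list (ADE, simple elliptic $\widetilde E_{6,7,8}$, $A_\infty/D_\infty$, cusps and degenerate cusps), so $X\in \gM^{I}\cup\gM^{II}\cup\gM^{III}$ by \Ref{prp}{est3}; hence nothing new lives in $\gM^{IV}$. The main obstacle I expect is the bookkeeping for IV(3) and IV(4): one must show that the ``number of simple versus non-simple pinch points'' on a double conic truly determines distinct locally closed subsets (and not just a further specialization inside Shah's single stratum), and that these subdivisions are the ones whose closures eventually match $Z^{5}$ and $Z^{4}$ under the predicted flips of~\cite{log1}.
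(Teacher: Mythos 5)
The paper in fact states this proposition with no proof environment at all; the lead-in to Section~4 and the phrase ``The following is a slight refinement of Shah~[Theorem~2.4]'' are the paper's entire justification, and the ``cf.\ S-IV$(\dots)$'' tags in each item are doing the work of a proof by reference. Your plan --- case-by-case matching with Shah's classification, with an extra refinement step where the correspondence is not one-to-one --- is therefore exactly the kind of argument the paper implicitly appeals to, so the overall route is the same, just spelled out in detail where the paper defers to Shah.

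A few details in your proposal deserve adjustment. For strata IV(3) and IV(4) your proposed normal form (``a pencil of quadrics tangent to a reference conic'') does not match the geometry: those strata parametrize \emph{irreducible} quartics singular along a nonsingular conic with prescribed pinch-point data, whereas the ``two quadrics tangent along a conic'' picture is IV(1). The right local models for IV(3)/IV(4) are closer to the $B_\xi = V(x_3^4 + f x_3^2 + g x_3 + h)$ normal forms used later in \Ref{subsubsec}{doppiocono}, where the pinch types ($E_{3,\infty}$, $E_{4,\infty}$, simple) are governed by the vanishing orders of $f,g,h$ along the conic. Your non-slcness check should not rely on cusps (those are Type~III by \Ref{thm}{cavalier}); for IV(3)--IV(5) the failure of slc is precisely that the generic transverse singularity along $C$, or the isolated singularity at $p$, is of type $E_{k,\infty}$ or $E_{3,r}$, which fall outside the Gorenstein slc list recalled in \Ref{subsec}{hodgestrata}. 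Also be careful with the final sentence: by \Ref{dfn}{doubleuq} and Table~\ref{tablemainresult}, $W_3=\overline{\mathrm{IV}(3)}$ should match $Z^4$ and $W_4=\overline{\mathrm{IV}(4)}$ should match $Z^5$ (and IV(5) corresponds to the ``missing'' $Z^6$), so the pairing you wrote is reversed. Finally, I would not assert without checking Shah's original statement that his Theorem~2.4 merges the conic-singular configurations with the isolated $E_{3,r}$ case --- the paper only commits to the refinement being ``in many instances'' a separation into connected components, and the exact bookkeeping of which of Shah's cases got split versus relabeled should be verified against his Theorem~2.4 directly.
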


\begin{remark}\label{rmk:reminclusion}
There are natural inclusions $\mathrm{IV}(k)\subset \overline{\mathrm{IV}(k+1)}$ with the exception $k=4$ (N.B.  $\mathrm{IV}(4)\subset \overline{\mathrm{IV}(6)}$). For instance, we have the following adjacencies  for  the exceptional unimodal singularities (aka Dolgachev singularities): $E_{14}\longrightarrow E_{13}\longrightarrow E_{12}$ (see \cite[p. 159]{arnold}). 
\end{remark}

\begin{definition}\label{dfn:doubleuq}
We define
$$W_k=\overline{\mathrm{IV}(k)},$$
with the following two exceptions: $W_0=\overline{\mathrm{IV}(0a)}$, and we skip the case $k=5$.
\end{definition}

\begin{remark}
For quartics singular along a twisted cubic,  we have the inclusions
$$ IV(0b) \subset III(4) \subset II(7).$$
\end{remark}

\begin{remark}
Clearly, II(1), III(1), and IV(1) form a single stratum. The degeneracy condition is that there is a line passing through $p$, cf. \cite[Cor. 2.3 (i)]{shah4}: {\it an isolated, non rational, double point of Type 1 through which passes a line contained in $X$}.
\end{remark}

\begin{remark}
Cases II(5) and its specializations III(7) and IV(6--8) were studied by Urabe \cite{urabee12}. 
\end{remark}

Our predictions regarding the matching of strata in  $\gM$ and strata in $\cF$ is summarized in the Table~\ref{tablemainresult} below. 

\begin{table}[htb!]
\renewcommand{\arraystretch}{1.60}
\begin{tabular}{|c|c|l|l|}
\hline
Codim ($i$) & Critical $\beta$ & (Compn't of) corresponding $Z^{i}\subset\cF$ & (Compn't of) corresponding  $W_{i-1}\subset\gM$\\
\hline
1& 1 &$H_h$& IV(0a): double quadric\\
1& 1 &$H_u$& IV(0b): tangent developable\\

\hline
2& $\frac{1}{2}$ & $\Delta^{(2)}$& IV(1): 2 quadrics tangent along a conic\\
3& $\frac{1}{3}$ & $\Delta^{(3)}$& IV(2): double conic, cuspidal type\\
4&$\frac{1}{4}$ & $\Delta^{(4)}$&IV(3): $E_{4,\infty}$-locus\\
\hline
5&$\frac{1}{5}$ &$\Delta^{(5)}$ & IV(4):  $E_{3,0}$ \\
6&$\frac{1}{5}$ &  $\Delta^{(6)}$ & IV(5): $E_{3,\infty}$ and $E_{3,r}$\\
\hline
7&$\frac{1}{6}$ &unigonal in $\Delta^{(6)}$ ($T_{3,3,4}$-polarized K3)& IV(6): $E_{14}$-locus\\
8&$\frac{1}{7}$ &unigonal in $\Delta^{(7)}$ ($T_{2,4,5}$-polarized K3)& IV(6): $E_{13}$-locus\\
9&$\frac{1}{9}$ &unigonal in $\Delta^{(8)}$ ($T_{2,3,7}$-polarized K3)& IV(8): $E_{12}$-locus \\
\hline
\end{tabular}
\vspace{0.2cm}
\caption{The geometry of the variation of models $\cF(\beta)$}\label{tablemainresult}
\end{table}

\begin{remark}
The points IV(0a) and IV(0b)correspond to $H_h$ and $H_u$  respectively; this  is  discussed in  Sections 4 and 3 of~\cite{shah4}. We revisit the proof 
in~\Ref{subsec}{sblowomega} and~\Ref{subsec}{sblowupsilon} respectively. The  matching for $\beta=\frac{1}{2}$ is discussed in \Ref{subsec}{rescodim2}. Finally, in \Ref{sec}{dolgachev} we give some evidence for the matching corresponding to the case $\beta\in \left\{ \frac{1}{6}, \frac{1}{7},\frac{1}{9}\right\}$. We don't say much about the remaining cases. 
\end{remark}

 \begin{remark}
 We recall that the  locus $Z^9\subset\cF$ (described as the unigonal divisor inside $\Delta^{(8)}\cong \cF(11)$) is one of the two components of 
 $\Delta^{(9)}$. With this description, the jump from $\frac{1}{7}$ to $\frac{1}{9}$ is less surprising: the critical $\beta=\frac{1}{9}$ comes from having $9$ independent sheets of $\Delta$ meeting along the $Z^{9}$ locus. 
\end{remark}

\begin{remark}
While the entire framework of the paper is similar to the Hassett-Keel program for curves, the geometric analogy with Hassett-Keel  is particularly striking in the case of flips occurring for $\beta\in\left\{\frac{1}{6},\frac{1}{7},\frac{1}{9}\right\}$. Namely, to pass from the $E_l$ ($l=12,13,14$) locus on the GIT side to the periods side, one needs to perform a KSBA semistable replacement. This is completely analogous to the stable reduction for cuspidal curves, which leads to the elliptic tail replacement (or globally to the first birational modification: $ \gM_g\to  \gM_g^{ps}\cong \gM_g(\frac{9}{11})$). This part is closely related to the work of Hassett \cite{hassett} (stable replacement for curves). This is expanded on in \Ref{sec}{dolgachev}. 
\end{remark}

\section{The critical values $\beta=1$ and   $\beta=1/2$}\label{sec:hklquartics}
\setcounter{equation}{0}
The point of view of this paper is somewhat dual to that of \cite{log1}. Namely, while in \cite{log1} we have given a (conjectural) decomposition of the inverse of the period map $\gp^{-1}:\cF^*\dashrightarrow \gM$ based on arithmetic considerations, here we start from the other end and attempt to resolve the period map $\gp: \gM\dashrightarrow \cF^*$. As  is familiar  to those who have studied  the analogous period maps with domains the GIT moduli spaces of plane sextics ~\cite{shah} and cubic fourfolds~\cite{lcubic,gitcubic,cubic4fold}, the first step towards resolving the period map $\gp$ is to blow-up the most singular points, i.e.~those parametrizing polystable quartics with the largest (non virtually abelian) stabilizers (see~\Ref{prp}{classifystab}). There are two such points, namely $\upsilon$ corresponding to the tangent developable of a twisted cubic curve and  $\omega$  corresponding to a smooth quadric with multiplicity $2$. In~\Ref{subsec}{sblowupsilon} 
and~\Ref{subsec}{sblowomega} we discuss a suitable blow-up  $\wt{\gM}\lra \gM$ with center a subscheme whose support is $\{\upsilon, \omega\}$. 
\Ref{thm}{firstblowup} and~\Ref{thm}{secondblowup} give the main results regarding $\wt{\gp}$, the pull-back of the period map to $\wt{\gM}$. In short,  the component of the exceptional divisor mapping to $\upsilon$ is identified with $\gM_u$, a projective GIT compactification of the moduli space of unigonal $K3$ surface 
 (see~\eqref{gitunig}), and the  component of the exceptional divisor mapping to $\omega$ is identified with $\gM_h$, the GIT moduli space of $(4,4)$ curves on $\PP^1\times\PP^1$. Moreover, the lifted period map $\wt{\gp}$ is regular in a neighborhood of the exceptional divisor  $\gM_u$, but it is definitely not regular at all points of  the exceptional divisor  $\gM_h$, in fact the restriction to $\gM_h$ is almost as complex as $\gp$ is, there is an analogous tower of closed subsets of the relevant period space, only it has $7$ terms instead of $8$. It is worth remarking that the image of the restriction of $\wt{\gp}$ to  the regular locus is the complement of $\Delta^{(2)}$, while the image  of the restriction of $\gp$ to  the regular locus is the complement of $H_h\cup H_{u}$. In this sense, in going from $\gp$ to $\wt{\gp}$ we have improved the behaviour of the period map, and moreover $\widetilde \gp$ is an isomorphism in codimension $1$, while $\gp$ is not.  Lastly, we have an identification $\widetilde \gM\cong \cF(1-\epsilon)$ (see~\Ref{crl}{cor1minuseps}). 

We continue in \Ref{subsec}{rescodim2} with the analysis of the ``first flip'' that occurs when one tries to resolve the birational map $\widetilde \gp:\widetilde \gM\dashrightarrow \cF^*$  Briefly, we show that a blow-up of the curve $W_1$ (case IV(1) in \Ref{prp}{proptype4}) followed by a contraction, accounts for double covers of the quadric cone (stratum $Z^2\subset \cF^*$ in our notation). In other words, we essentially  verify\footnote{Some technical issues regarding the global construction of the flip still remain, but our analysis is fairly complete.} the predicted behavior of the variation of models $\cF(\beta)$ for $\beta\in(1/2-\epsilon,1]\cap\QQ$.

\subsection{Blow up of the point $\upsilon$}\label{subsec:sblowupsilon}
The point $\upsilon$ (see IV(0b) in \Ref{prp}{proptype4}) is an isolated point of the indeterminacy locus of the period map $\gp$.  The behavior of $\gp$ in a neighborhood of  $\upsilon$ is  analogous to that of the period map of the moduli space of plane sextics in a neighborhood of the orbit of $3C$  (see \cite{shah}, \cite{looijengavancouver}, \cite[Thm.~1.9]{k3pairs}), where $C\subset\PP^2$ is a smooth conic, and is treated  in Section 3 of Shah~\cite{shah4}.  Shah's results imply that by blowing up a subscheme of $\gM$ supported at $\upsilon$, one resolves the indeterminacy of $\gp$ in $\upsilon$; the main result is stated
 in~\Ref{subsubsec}{unknownideal}. 
\subsubsection{The germ of $\gM$ at  $\upsilon$ in the analytic topology}
We will apply Luna's \'etale slice Theorem in order to describe an analytic neighborhood of $\upsilon$ in the GIT quotient $\gM$. 
Let $T\subset\PP^3$ be the twisted cubic $\{[\lambda^3,\lambda^2\mu,\lambda\mu^2,\mu^3] \mid [\lambda,\mu]\in\PP^1\}$, and let $X$ be the tangent developable of $T$, i.e.~the union of lines tangent to $T$. A generator of the homogeneous ideal of $X$ is given by
\begin{equation}\label{sviluppabile}
f:=4(x_1 x_3-x_2^2)(x_0 x_2-x_1^2)-(x_1 x_2-x_0 x_3)^2.
\end{equation}
Thus $X$ is a polystable quartic representing the point $\upsilon$. The group $\PGL(2)$ acts on $T$ and hence on $X$; it is clear that  $\PGL(2)=\Aut(X)$. 
In order to describe an \'etale slice for the orbit $\PGL(4) X$ at $X$ we must  decompose  $H^0(\PP^3,\cO_{\PP^3}(4))$ into irreducible $\SL_2$-submodules.  For $d\in\NN$, let $V(d)$ be the irreducible $\SL_2$-representation with highest weight $d$ i.e.~$\Sym^d V(1)$ where $V(1)$ is the standard $2$-dimensional $\SL_2$-representation. A straightforward  computation gives the decomposition
\begin{equation}\label{decompquar}
H^0( \cO_{\PP^3}(4))  \cong  V(0)\oplus V(4)\oplus V(6)\oplus V(8)\oplus V({12}).
\end{equation}
The trivial summand $V(0)$ is spanned by $f$, and  the projective tangent space at $V(f)$ to the orbit $\PGL(4) V(f)\subset |\cO_{\PP^3}(4)| $ is equal to  $\PP(V(0)\oplus V(4)\oplus V(6))$. 
We have a natural  map
\begin{equation}\label{fettaluna}
\begin{matrix}
V(8)\oplus V({12})\gquot\SL(2) & \lra & \gM, \\
[g] & \mapsto & [V(f+g)]
\end{matrix}
\end{equation}
mapping $[0]$ to $\upsilon$. By Luna's \'etale slice Theorem, the map is \'etale at $[0]$. In particular we have an isomorphism of analytic germs
\begin{equation}\label{upgerme}
(V(8)\oplus V({12})\gquot\SL(2),[0]) \overset{\sim}{\lra} (\gM,\upsilon).
\end{equation}
\subsubsection{Moduli and periods of unigonal $K3$ surfaces}\label{subsubsec:modunig}
Let 
\begin{equation}\label{eccomega}
\Omega:=\Symm^{\bullet}(V(8)^{\vee}\oplus V(12)^{\vee}), 
\end{equation}
and define a grading of $\Omega$ as follows: non zero elemnts of $V(8)^{\vee}$ have degree $2$, non zero elements of $V(12)^{\vee}$ have degree $3$. Then $\SL(2)$ acts on $\Proj\,\Omega$, and $\cO_{\Proj\Omega}(1)$ is naturally linearized; let 
\begin{equation}\label{gitunig}
\gM_u:=\Proj\,\Omega\gquot\SL(2).
\end{equation}
Shah (see Theorem 4.3 in~\cite{shah}) proved that $\gM_u$ is a compactification of the moduli space for unigonal $K3$ surfaces, i.e.~there is an open dense subset  $\gM_u^{I}\subset\gM_u$ which is the moduli space for such $K3$'s. Moreover, the period map is regular
\begin{equation}\label{perunig}
\gM_u \overset{\gp_u}{\lra} \cF_{\II_{2,18}}(O^{+}(\II_{2,18}))^{*},
\end{equation}
and  it  defines  an isomorphism $\gM_u^{I}\overset{\sim}{\lra} \cF_{\II_{2,18}}(O^{+}(\II_{2,18}))$. We recall that we have a natural regular map 
\begin{equation}\label{unigbb}
 \cF_{\II_{2,18}}(O^{+}(\II_{2,18}))^{*}\lra \cF^{*},
\end{equation}
whose restriction to $ \cF_{\II_{2,18}}(O^{+}(\II_{2,18}))$ is an isomorphism onto the 
 unigonal divisor $H_u$, see Subsection~1.5 of~\cite{log1}.
\subsubsection{Weighted blow-up}\label{subsubsec:monomideal}
We recall the construction of the weighted blow up in the case where the base is smooth. We refer to~\cite{kollmori3flips,andblow} for details. Let $(x_1,\ldots,x_n)$ be the standard coordinates on $\aff^n$. Let $(a_1,\ldots,a_n)\in \NN_{+}^n$, and let $\sigma$ be the weight given by $\sigma(x_i)=a_i$. The weighted blow-up $B_{\sigma}(\aff^n)$ with weight $\sigma$ is  a toric variety defined as follows. Let $\{e_1,\ldots,e_n\}$ be the standard basis of $\RR^n$, and $C\subset\RR^n$ be the convex cone spanned by $e_1,\ldots,e_n$, i.e.~the cone of $(x_1,\ldots,x_n)$ with non-negative entries. Let $v:=(a_1,\ldots,a_n)\in\RR^n$, and for $i\in\{1,\ldots,n\}$ let $C_i\subset C$ be the convex cone spanned by $e_1,\ldots,e_{i-1},e_{i+1},\ldots,e_n$ and $v$.  The $C_i$'s generate a fan in $\RR^n$; $B_{\sigma}(\aff^n)$  is the associated toric variety. Since the $C_i$'s define a cone decomposition of $C$, we have a natural regular map $\pi_{\sigma}\colon B_{\sigma}(\aff^n)\to\aff^n$, which is an isomorphism over $\aff^n\setminus\{0\}$.  Let $E_{\sigma}\subset B_{\sigma}(\aff^n)$ be the exceptional set of $\pi_{\sigma}$; then $E_{\sigma}$ is isomorphic to the weighted projective space $\PP(a_1,\ldots,a_n)$. We denote by $[x_1,\ldots,x_n]$ (with $(x_1,\ldots,x_n)\not=(0,\ldots,0)$) a (closed) point of $\PP(a_1,\ldots,a_n)$; thus  $[x_1,\ldots,x_n]=[y_1,\ldots,y_n]$  if and only if there exists $t\in\CC^{*}$ such that $x_i=t^{a_i}y_i$ for  $i\in\{1,\ldots,n\}$. The composition
\begin{equation*}
\begin{matrix}
B_{\sigma}(\aff^n) & \overset{\pi_{\sigma}}{\lra} &  \aff^n & \dra & \PP(a_1,\ldots,a_n) \\
p & \mapsto & \pi_{\sigma}(p)=(x_1,\ldots,x_n) & \mapsto & [x_1,\ldots,x_n]
\end{matrix}
\end{equation*}
is regular; this follows from the formulae for $\pi_{\sigma}$ that follow Definition~2.1 in~\cite{andblow}. Thus we have a regular map 
\begin{equation}\label{notiso}
B_{\sigma}(\aff^n) \lra \aff^n\times \PP(a_1,\ldots,a_n).
\end{equation}
Let  $\mu_{\sigma}\colon E_{\sigma}\to  \PP(a_1,\ldots,a_n)$ be the restriction to $E_{\sigma}$ of the map in~\eqref{notiso}, followed by projection to the second factor. Then  
$\mu_{\sigma}$ is an isomorphism; we will identify $E_{\sigma}$ with $\PP(a_1,\ldots,a_n)$ via $\mu_{\sigma}$. 
The formulae for $\pi_{\sigma}$ that follow Definition~2.1 in~\cite{andblow} give the following result.
\begin{proposition}\label{prp:regonblow}
Keep notation as above, and let $\Delta\subset\CC$ be a disc centered at $0$. Let $\alpha\colon\Delta\to B_{\sigma}(\aff^n)$ be a holomorphic map such that
 $\alpha^{-1}(E_{\sigma})=\{0\}$. There exists $k>0$ such that 
\begin{equation}
\pi_{\sigma}\circ\alpha(t)=(t^{k a_1}\cdot\varphi_1,\ldots,t^{k a_n}\cdot\varphi_n)
\end{equation}
where   $\varphi_i\colon\Delta\to \CC$ is a holomorphic function, and moreover
\begin{equation}\label{rolrol}
\alpha(0)=[\varphi_1(0),\ldots,\varphi_n(0)].
\end{equation}
(In particular $(\varphi_1(0),\ldots,\varphi_n(0))\not=(0,\ldots,0)$.)
\end{proposition}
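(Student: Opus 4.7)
The plan is to pass to an affine chart of $B_{\sigma}(\aff^n)$ containing $\alpha(0)$ and perform the computation in toric coordinates. Since $E_{\sigma}$ is covered by the affine charts $U_1,\ldots,U_n$ corresponding to the cones $C_1,\ldots,C_n$, continuity of $\alpha$ allows one to shrink $\Delta$ so that $\alpha(\Delta)$ is contained in a single $U_i$; after relabelling, assume $i=n$. On $U_n$ the toric description gives (orbifold) coordinates $(y_1,\ldots,y_n)$ in which
$$\pi_{\sigma}\colon (y_1,\ldots,y_n)\longmapsto (x_1,\ldots,x_n),\qquad x_j=y_j\,y_n^{a_j}\ (j<n),\quad x_n=y_n^{a_n},$$
and $E_{\sigma}\cap U_n=\{y_n=0\}$.

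The next step is to extract the vanishing order along the exceptional divisor. Write $\alpha(t)=(y_1(t),\ldots,y_n(t))$ in these coordinates. The hypothesis $\alpha^{-1}(E_{\sigma})=\{0\}$ forces $y_n(0)=0$ and $y_n(t)\neq 0$ for $t\neq 0$, so $y_n(t)=t^{k}\psi(t)$ with $k\in\NN_{+}$ and $\psi$ holomorphic, $\psi(0)\neq 0$. Substituting into the formulas for $\pi_{\sigma}$ yields
$$x_j(t)=t^{k a_j}\bigl(y_j(t)\psi(t)^{a_j}\bigr)\quad(j<n),\qquad x_n(t)=t^{k a_n}\psi(t)^{a_n}.$$
Setting $\varphi_j(t):=y_j(t)\psi(t)^{a_j}$ for $j<n$ and $\varphi_n(t):=\psi(t)^{a_n}$ gives the desired factorization; moreover $\varphi_n(0)=\psi(0)^{a_n}\neq 0$, which both guarantees $(\varphi_1(0),\ldots,\varphi_n(0))\neq(0,\ldots,0)$ and pins down the integer $k$ as the order of vanishing of $y_n(t)$ at the origin.

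To verify \eqref{rolrol}, observe that $\alpha(0)=(y_1(0),\ldots,y_{n-1}(0),0)\in E_{\sigma}\cap U_n$. Under the isomorphism $\mu_{\sigma}\colon E_{\sigma}\xrightarrow{\sim}\PP(a_1,\ldots,a_n)$ this point is sent to $[y_1(0),\ldots,y_{n-1}(0),1]$ in the chart $\{x_n\neq 0\}$ of weighted projective space. Using the weighted $\CC^{*}$-scaling with parameter $\lambda=\psi(0)$,
$$[y_1(0),\ldots,y_{n-1}(0),1]=[\psi(0)^{a_1}y_1(0),\ldots,\psi(0)^{a_{n-1}}y_{n-1}(0),\psi(0)^{a_n}]=[\varphi_1(0),\ldots,\varphi_n(0)],$$
as required. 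Equivalently, one can derive the same identity by composing $\alpha$ with the regular map \eqref{notiso}: for $t\neq 0$ the image of $\alpha(t)$ in $\PP(a_1,\ldots,a_n)$ is $[x_1(t),\ldots,x_n(t)]=[\varphi_1(t),\ldots,\varphi_n(t)]$ after clearing the common weighted factor $t^{k}$, and $\alpha(0)$ is determined by continuity.

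The one point requiring care is that when some $a_i>1$ the chart $U_n$ has a cyclic quotient singularity at the image of the origin of the orbifold atlas, so the functions $y_1(t),\ldots,y_n(t)$ are strictly speaking holomorphic on an appropriate $\mu_{a_n}$-cover of $\Delta$. I expect this to be the only mild obstacle: either one works on that cover (absorbing the base change into $k$) or one invokes the toric/stack description of $B_{\sigma}(\aff^n)$ directly. In either case the factorization and the formula for $\alpha(0)$ survive unchanged, and the statement is then a direct consequence of the formulae for $\pi_{\sigma}$ recorded after Definition~2.1 of~\cite{andblow}.
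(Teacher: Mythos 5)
Your proof follows the route the paper itself intends — reading off the claim from the explicit chart formulae for $\pi_\sigma$, which the paper only cites from~\cite{andblow} without spelling out. In the chart where the orbifold atlas is trivial (i.e.\ $a_n=1$), or more generally whenever $\alpha$ lifts holomorphically to the $\mu_{a_n}$-cover of $U_n$, your computation and your derivation of~\eqref{rolrol} via the weighted $\CC^*$-rescaling by $\psi(0)$ are correct, so this part is a genuine fleshing-out of the paper's one-line argument.

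The final paragraph, however, contains a real gap. When the lift of $\alpha$ requires a base change $t=s^d$ (i.e.\ $\alpha(0)$ lies in the non-free locus of the $\mu_{a_n}$-action), this cannot simply be ``absorbed into $k$'': the vanishing orders of the lifted coordinates $y_i(s)$ need not be divisible by $d$, so the functions $\varphi_j=y_j\psi^{a_j}$ you produce are holomorphic in $s$ but not in $t$. A concrete instance: take $n=2$, $\sigma=(1,2)$, so $U_2=\Spec\CC[a,b,c]/(b^2-ac)$ with $a=x_2$, $b=x_1$, $c=x_1^2/x_2$, and $\pi_\sigma(a,b,c)=(b,a)$. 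The arc $\alpha(t)=(t,t,t)$ has $\alpha^{-1}(E_\sigma)=\{0\}$, but $\pi_\sigma\circ\alpha(t)=(t,t)$ admits no factorization $(t^{k}\varphi_1,t^{2k}\varphi_2)$ with $\varphi_i$ holomorphic and $(\varphi_1(0),\varphi_2(0))\neq 0$: matching orders forces $k=1/2$, and then $\varphi_1(t)=t^{1/2}$ is not holomorphic. So the proposition as stated actually fails for arcs hitting the singular locus of $B_\sigma(\aff^n)$; the fix is not a better computation but an added hypothesis (restrict $\alpha(0)$ to the smooth locus, or equivalently to points where the lift needs no base change, or reinterpret $B_\sigma(\aff^n)$ as an orbifold and take $\alpha$ representable, which is a different statement). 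You should state this restriction explicitly rather than describe the issue as a mild obstacle; in the paper's applications the arcs of interest already come in the factored form, so the downstream corollary is unaffected, but your claim that base change resolves the general case does not hold.
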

\begin{corollary}\label{crl:regonblow}
Let $Z$ be a projective variety, and $\gp\colon B_{\sigma}(\aff^n)\dra Z$ be a rational map, regular away from $E_{\sigma}$. Suppose that the following holds. 
Given  a disc  $\Delta\subset\CC$ centered at $0$, and  a holomorphic map $\alpha\colon\Delta\to B_{\sigma}(\aff^n)$  such that
  $\alpha^{-1}(E_{\sigma})=\{0\}$, the extension at $0$ of the map $\gp\circ\alpha|_{(\Delta\setminus\{0\})}$ \emph{depends only} on $\alpha(0)=[\varphi_1(0),\ldots,\varphi_n(0)]$ (notation as in~\eqref{rolrol}). Then $\gp$ is regular everywhere.
\end{corollary}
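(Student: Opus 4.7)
The plan is to form the graph closure $\Gamma$ of $\gp$ and to show, via Zariski's Main Theorem, that the first projection $\Gamma \to B_\sigma(\aff^n)$ is an isomorphism; this will automatically yield a regular extension of $\gp$. Concretely, set $U := B_\sigma(\aff^n) \setminus E_\sigma$ and let $\Gamma \subset B_\sigma(\aff^n) \times Z$ be the Zariski closure of the graph of the regular map $\gp|_U$. Projectivity of $Z$ makes the first projection $\pi_1 \colon \Gamma \to B_\sigma(\aff^n)$ proper, and it is birational (an isomorphism over $U$) by construction. Since $B_\sigma(\aff^n)$ is normal (it is toric), to conclude that $\pi_1$ is an isomorphism --- and thus that $\gp = \pi_2 \circ \pi_1^{-1}$ is a regular morphism --- it suffices by Zariski's Main Theorem to check that each fiber $\pi_1^{-1}(p)$ with $p \in E_\sigma$ is set-theoretically a single point.

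To verify this I would fix $p \in E_\sigma$ and two points $(p,z_1),(p,z_2) \in \Gamma$ and argue $z_1 = z_2$. For each $i=1,2$ I will produce a holomorphic arc $\widetilde\alpha_i \colon \Delta \to \Gamma$ with $\widetilde\alpha_i(0) = (p,z_i)$ and $\widetilde\alpha_i(\Delta \setminus \{0\})$ contained in the graph of $\gp|_U$; the composition $\alpha_i := \pi_1 \circ \widetilde\alpha_i$ is then a holomorphic arc in $B_\sigma(\aff^n)$ with $\alpha_i(0) = p$ and $\alpha_i^{-1}(E_\sigma) = \{0\}$, exactly the configuration in the hypothesis. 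The holomorphic extension of $\gp \circ \alpha_i|_{\Delta \setminus \{0\}}$ at $0$ (which exists because $Z$ is projective) equals $\pi_2(\widetilde\alpha_i(0)) = z_i$ on the one hand; on the other hand, by the assumed hypothesis, it depends only on $\alpha_i(0) = p$. Hence $z_1 = z_2$.

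The only non-formal step is the production of the arcs $\widetilde\alpha_i$. I would pick any irreducible component $\Gamma_0 \subset \Gamma$ containing $(p,z_i)$: since $\pi_1$ is birational, $\Gamma_0$ dominates $B_\sigma(\aff^n)$, so $\Gamma_0 \cap \pi_1^{-1}(E_\sigma)$ is a proper closed subvariety of $\Gamma_0$, and $(p,z_i)$ lies in the analytic closure of its complement. The existence of a holomorphic arc through $(p,z_i)$ generically missing $\pi_1^{-1}(E_\sigma)$ is then a standard application of the curve selection lemma for complex algebraic varieties. \Ref{prp}{regonblow} enters only implicitly, as the ingredient that makes the hypothesis well-posed: it pins down $\alpha(0)$ intrinsically as an element of $E_\sigma \cong \PP(a_1,\ldots,a_n)$, independently of the choice of parametrization, so the ``depends only on $\alpha(0)$'' clause is a genuine condition on a function $E_\sigma \to Z$.
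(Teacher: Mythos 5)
Your proof is correct and fleshes out exactly what the paper's one-line proof ("Follows from \Ref{prp}{regonblow} and normality of $B_\sigma(\aff^n)$") leaves to the reader: form the graph closure, use normality plus Zariski's Main Theorem to reduce to the single-point fiber condition over $E_\sigma$, and verify it via the arc hypothesis. One tiny point worth noting is that $\Gamma$ is in fact irreducible (it is the closure of the graph over the irreducible open $U$), so the component-choosing step is vacuous; also, after picking an arc in $\Gamma$ generically off $\pi_1^{-1}(E_\sigma)$, you may need to shrink $\Delta$ so that $\alpha_i^{-1}(E_\sigma)$ is exactly $\{0\}$, but that is standard.
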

\begin{proof}
Follows from~\Ref{prp}{regonblow} and normality of $B_{\sigma}(\aff^n)$.
\end{proof}
\subsubsection{Blow-up of the \'etale slice and the period map}\label{subsubsec:blowups}
It will be convenient to denote by $Z$ the affine scheme $V(8)\oplus V(12)$, i.e.~$Z:=\Spec\Symm^{\bullet}(V(8)^{\vee}\oplus V(12)^{\vee})$. 
Let  $(x_1,\ldots,x_{22})$ be  coordinates on  $V(8)\oplus V(12)$ such that $V(8)$ has equations $0=x_{10}=\ldots=x_{22}$, and 
 $V(12)$  has equations $0=x_{1}=\ldots=x_{9}$.
 Let $\sigma$ be the weight defined by 
\begin{equation}
\sigma(x_i):=
\begin{cases}
4 & \text{if $i\in\{1,9\}$,} \\
6 & \text{if $i\in\{10,22\}$.}
\end{cases}
\end{equation}
Let $\wt{Z}:=\Bl_{\sigma}(Z)$ be the corresponding weighted blow up, and 
 let $E$ be the exceptional set of $\wt{Z}\to Z$; thus $E$ is the weighted projective space $\PP(4^{9},6^{13})$. 
The action of $\SL_2$ on $Z$ lifts to an action on $\wt{Z}$ (and on the ample line-bundle $\cO_{\wt{Z}}(-E)$). Thus there is an associated GIT quotient  $\wt{Z}\gquot \SL_2$. 
The map   $\wt{Z}\to Z$ induces a map
\begin{equation}\label{duemu}
 \wt{\mu}\colon\wt{Z}\gquot \SL_2 \lra Z\gquot\SL_2.
\end{equation}
  Moreover 
the set-theoretic inverse image  $\wt{\mu}^{-1}([0])_{red}$ is isomorphic to  $\Proj\,\Omega\gquot\SL_2=\gM_u$. Since the natural map $Z\gquot\SL_2\to\gM$ is dominant, it makes  sense to compose it with the (rational) period map $\gp\colon\gM\dra\cF(19)^{*}$. Composing with $\wt{\mu}$, we get a rational map
\begin{equation}\label{dueper}
 \wt{\gp}\colon\wt{Z}\gquot \SL_2 \dra \cF(19)^{*}.
\end{equation}
\begin{theorem}\label{thm:firstblowup}
With notation as above, the   map $\wt{\gp}$ is regular in a neighborhood of $\wt{\mu}^{-1}([0])_{red}=\gM_u$, and its restriction to  $\wt{\mu}^{-1}([0])_{red}$ is equal to the period map $\gp_u$ in~\eqref{unigbb}. 
\end{theorem}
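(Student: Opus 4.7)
The plan is to apply~\Ref{crl}{regonblow} on the \'etale slice $\wt Z$, reducing the question of regularity to computing limits of periods along holomorphic arcs. Let $\alpha:\Delta\to\wt Z\gquot\SL_2$ be a holomorphic arc with $\alpha^{-1}(\wt\mu^{-1}([0]))=\{0\}$ and $\alpha(0)=[a,b]\in\gM_u$, where $a\in V(8)$ and $b\in V(12)$ are not both zero. Using the $\SL_2$-invariance of $\wt\gp$, I would lift $\alpha$ locally to an arc in $\wt Z$ and, by~\Ref{prp}{regonblow} (after a finite base change), write its image in $Z=V(8)\oplus V(12)$ as $\bigl(t^4\varphi_8(t),\,t^6\varphi_{12}(t)\bigr)$ with $\varphi_8(0)=a$, $\varphi_{12}(0)=b$. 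Via the Luna slice~\eqref{fettaluna}, this corresponds to the family of quartic surfaces
\[X_t=V\bigl(f+t^4\varphi_8(t)+t^6\varphi_{12}(t)\bigr)\subset\PP^3.\]
By~\Ref{crl}{regonblow}, it then suffices to show that $\lim_{t\to 0}\gp(X_t)$ exists, depends only on $[a,b]\in\gM_u$, and equals $\gp_u([a,b])$.

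The central observation is that $\PP^3=\PP(V(3))$ and the quartic $f$ of~\eqref{sviluppabile} is, up to scalar, the classical discriminant of a binary cubic: a point $[c]\in\PP(V(3))$ lies on $V(f)$ precisely when $c$ has a repeated root, with the twisted cubic $T$ being the locus of triple roots. The exponents $4$ and $6$ in the weighted blow-up of~\Ref{subsubsec}{blowups} have been engineered to match the Weierstrass weights, so that $\{X_t\}$ should semistably reduce to an elliptic $K3$ fibration in Weierstrass form. Making this precise, I would carry out a Kulikov--Persson--Pinkham semistable reduction: after a base change $t=s^N$ for suitable $N$, and an explicit sequence of blow-ups of the total space along the singular locus (concentrated over $T$), the family acquires as central fiber the minimal resolution of the Weierstrass surface
\[y^2=x^3+a(u,v)\,x+b(u,v)\]
inside the total space of $\cO_{\PP^1}(4)\oplus\cO_{\PP^1}(6)$ over $T\cong\PP^1$. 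This is a unigonal degree-$4$ polarized $K3$ with polarization $A+3B$, and its isomorphism class depends only on $(a,b)$ modulo the $\SL_2$-action and the rescaling $(a,b)\mapsto(\lambda^4 a,\lambda^6 b)$, i.e.\ only on $[a,b]\in\Proj\Omega\gquot\SL_2=\gM_u$.

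By the definition of $\gp_u$ in~\eqref{perunig} (and Shah's analysis in~\cite[\S4]{shah}), the limit period of this semistable model is exactly $\gp_u([a,b])$, so~\Ref{crl}{regonblow} yields both the regularity of $\wt\gp$ in a neighborhood of the exceptional divisor and the identification of its restriction with $\gp_u$. The main obstacle is precisely the semistable reduction step: producing the explicit birational model that converts the degenerating quartic family into a Weierstrass fibration, and verifying that the chosen weights $(4,6)$ force the central fiber to be a $K3$ with the stated polarization (with no extraneous components appearing in the Kulikov model). This is essentially the content of~\cite[\S3]{shah4}, reinterpreted through the weighted blow-up framework of~\Ref{subsubsec}{blowups}, which makes the dependence of the limit on $[a,b]$ alone transparent.
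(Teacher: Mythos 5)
Your proposal is correct and takes essentially the same approach as the paper: both reduce the regularity question via~\Ref{crl}{regonblow} to one-parameter families $t\mapsto(t^{4m}F(t),t^{6m}G(t))$ over the \'etale slice, and both determine the limit period by invoking Shah's semistable reduction to a Weierstrass elliptic $K3$ (Theorem~3.17 of~\cite{shah4}). The only difference is that the paper cites Shah's theorem directly, whereas you sketch the Weierstrass/KPP geometry underlying it — which you correctly acknowledge is the content of \cite[\S3]{shah4}.
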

\begin{proof}
This follows from the results of   Shah in~\cite{shah4}. More precisely, let $(F,G)\in V(8)\oplus V(12)$ be non-zero and such that $[(F,G)]\in\Proj\,\Omega$ is $\SL_2$-semistable. Let $\Delta\subset\CC$ be a disc centered at $0$, and
\begin{equation}
\begin{matrix}
\Delta & \overset{\varphi}{\lra} & V(8)\oplus V(12) \\
t & \mapsto & (t^{4m}F(t), t^{6m}G(t))
\end{matrix}
\end{equation}
where $m>0$, $F(t)$, $G(t)$ are holomorphic, and $F(0)=F$, $G(0)=G$. (This is the family on the second-to-last displayed equation of p.~293, with the difference that our $(0,0)\in Z$ corresponds to Shah's $F_0$.) We assume also that for $t\not=0$, the point $[\varphi(t)]$ is \emph{not} in the indeterminacy locus of the period map $Z\gquot\SL_2 \dra\cF(19)^{*}$. Let $\gp_{\varphi}\colon\Delta\to\cF(19)^{*}$ be the holomorphic extension of the composition $(\Delta\setminus\{0\}) \to Z\gquot\SL_2\dra\cF(19)^{*}$. Then by Theorem~3.17 of~\cite{shah4}, the value $\gp_{\varphi}(0)$ is equal to the period point $\gp_u([(F,G)])$. By~\Ref{crl}{regonblow} it follows that
 ${\gp}$ is regular in a neighborhood of $\wt{\mu}^{-1}([0])_{red}=\gM_u$,  and that  the restriction of the period map to  $\wt{\mu}^{-1}([0])_{red}$ is equal to the period map $\gp_u$ in~\eqref{unigbb}. 
\end{proof}
\subsubsection{Blow-up of $\gM$ at $\upsilon$}\label{subsubsec:unknownideal}
A weighted blow up $Bl_{\sigma}(\aff^n)\to\aff^n$ is equal to the  blow up of a suitable scheme supported at $0$, see Remark~2.5 of~\cite{andblow}.  It follows that also the map in~\eqref{duemu} is the blow up of an ideal $\cJ$ supported on $[0]$. 
Since the map in~\eqref{upgerme} is an isomorphism of analytic germs, the ideal sheaf $\cJ$  defines an ideal sheaf  in $\cO_{\gM}$,  cosupported at $\upsilon$, that we will denote by $\cI$. Let $\gM_{\upsilon}:=\Bl_{\cI}\gM$, and let $E_{\upsilon}\subset\gM_{\upsilon}$ be the  (reduced) exceptional divisor of 
  $\Bl_{\cI}\gM\to \gM$. Thus $E_{\upsilon}\cong\gM_u$, and $E_{\upsilon}$ is $\QQ$-Cartier. Let $\phi_{\upsilon}\colon \gM_{\upsilon}\to\gM$ be the natural map. 
By~\Ref{thm}{firstblowup}, the period map $\gM_{\upsilon}\dra\cF(19)^{*}$ is regular in a neighborhood of $E_{\upsilon}$. Moreover, letting $\cL$ be the ample $\QQ$- line bundle on $\gM$ descended from the ample generator of $|\cO_{\PP^3}(4)|$, the line-bundle $\phi_{\upsilon}^{*}\cL(-\epsilon E(\upsilon))$ is ample for $\epsilon$ positive and sufficiently small. 

\subsection{Blow up of the point $\omega$}\label{subsec:sblowomega}
\subsubsection{The GIT moduli space  for $K3$'s which are double covers of $\PP^1\times\PP^1$}
The   GIT moduli space that we will consider is 
\begin{equation}\label{hypermoduli}
\gM_h:=|\cO_{\PP^1}(4)\boxtimes \cO_{\PP^1}(4)|\gquot \Aut(\PP^1\times\PP^1).
\end{equation}
 Given $D\in|\cO_{\PP^1}(4)\boxtimes \cO_{\PP^1}(4)|$, we let $\pi\colon X_D\to\PP^1\times\PP^1$ be the double cover ramified over $D$, and  $L_D:=\pi^{*}\cO_{\PP^1}(1)\boxtimes \cO_{\PP^1}(1)$. If $D$ has ADE singularities, then $(X_D,L_D)$ is a hyperelliptic quartic $K3$.  We recall that if  $(X,L)$ is a hyperelliptic quartic $K3$ surface,  the map $\varphi_L$  associated to the complete linear system $ |L|\cong\PP^3$ is  regular, and it is the double cover of an irreducible quadric ${Q}$, branched over a divisor $B\in |\cO_{Q}(4)|$  with ADE singularities. Vice versa, the double cover of an irreducible quadric surface, ${Q}\subset\PP^3$, branched over a divisor $B\in |\cO_{Q}(4)|$  with ADE singularities is  a hyperelliptic quartic $K3$ surface. The period space for $\gM_h$ is $\cF_h$; we let 
\begin{equation}\label{perhyp}
\gp_h\colon \gM_h\dra \cF_h^{*}
\end{equation}
 be the extension of the period map to the Baily-Borel compactification. 
\begin{theorem}\label{thm:secondblowup}
\begin{enumerate}
\item
A divisor in $|\cO_{\PP^1}(4)\boxtimes \cO_{\PP^1}(4)|$ with ADE singularities is $\Aut(\PP^1\times\PP^1)$ stable, hence there exists an open dense subset   $\gM^I_h\subset\gM_h$  parametrizing  isomorphism classes of  hyperelliptic quartic $K3$ surfaces such that $\varphi_L(X)$ is a smooth quadric. 
\item
The period map $\gp_h$ defines an isomorphims between $\gM^I_h$ and the complement of the \lq\lq hyperelliptic\rq\rq\ divisor $H_h(\cF_h)$ in $\cF_h$ (the divisor $H_h(18)\subset\cF(18)$ in the notation of~\cite{log1}).
\end{enumerate}
\end{theorem}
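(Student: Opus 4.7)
The plan is to establish the GIT stability in (1) via the klt-implies-stable principle and then to identify $\gM^I_h$ with the asserted open subset of $\cF_h$ using global Torelli together with the surjectivity of the $K3$ period map.

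For (1), I would invoke the Hacking--Kim--Lee principle recalled earlier in the excerpt: the inequalities defining log-canonicity of a pair $(Y,cD)$ coincide with the Hilbert--Mumford weight inequalities defining GIT (semi)stability, the only difference being that GIT restricts to linear coordinate changes rather than analytic ones. If $D\subset\PP^1\times\PP^1$ is a $(4,4)$-divisor with at worst ADE singularities, then the pair $(\PP^1\times\PP^1,\tfrac{1}{2}D)$ is klt: the ADE condition on $D$ is precisely what makes the double cover $X_D$ have rational double points, and it forces the log-canonical threshold of $(\PP^1\times\PP^1,D)$ to strictly exceed $\tfrac{1}{2}$ at every singular point. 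The strict inequality on the log-canonical side then transports to strict inequality in the Hilbert--Mumford criterion for every non-trivial one-parameter subgroup of $(\SL_2\times\SL_2)\rtimes\ZZ/2$, so $D$ is GIT stable. Stability yields a geometric quotient on the ADE locus, and this defines the open subset $\gM^I_h\subset\gM_h$, whose points represent hyperelliptic quartic $K3$ surfaces with smooth quadric image.

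For (2), I define the morphism $\Psi\colon\gM^I_h\to\cF_h$ by $[D]\mapsto\gp_h(X_D,L_D)$; it is well-defined because ADE degenerations admit simultaneous resolution in analytic families, so the period point varies holomorphically. The image lies in $\cF_h\setminus H_h(\cF_h)$, because by the lattice-theoretic description in Subsection~1.5 of~\cite{log1} the divisor $H_h(\cF_h)=H_h(18)$ parametrizes hyperelliptic quartic $K3$s whose $|L|$-image is the \emph{singular} quadric (the cone over a conic), whereas by construction $\varphi_{L_D}(X_D)=\PP^1\times\PP^1$ is smooth. Injectivity of $\Psi$ follows from global Torelli together with the observation that, for a hyperelliptic polarized pair $(X,L)$, both the image $\varphi_L(X)\subset\PP^3$ and the branch divisor $B\subset\varphi_L(X)$ are canonically determined by $(X,L)$; hence two ADE divisors $D,D'$ give isomorphic polarized $K3$s if and only if they are $\Aut(\PP^1\times\PP^1)$-equivalent. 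Surjectivity follows from the surjectivity of the period map for polarized $K3$ surfaces: given $\omega\in\cF_h\setminus H_h(\cF_h)$, surjectivity and Torelli produce a hyperelliptic polarized quartic $(X,L)$ with $\gp_h(X,L)=\omega$, and the assumption $\omega\notin H_h(\cF_h)$ forces $\varphi_L(X)$ to be smooth, so that $(X,L)\cong(X_D,L_D)$ for a unique $\Aut(\PP^1\times\PP^1)$-orbit of ADE divisors $D$.

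The main obstacle is the geometric interpretation of the arithmetically-defined divisor $H_h(\cF_h)$ as the locus of hyperelliptic $K3$s whose $|L|$-image degenerates to the quadric cone. Concretely, one must verify that a period point in $\cF(18)$ lies on the Heegner divisor associated to a $(-4)$-vector of divisibility $2$ in $\Lambda_{18}=U^2\oplus D_{16}$ if and only if the corresponding polarized $K3$ admits this further degeneration. This step depends on the lattice analysis carried out in~\cite{log1} and is essentially an instance of the tower structure $\cF(18)\hookrightarrow\cF(19)$: the hyperelliptic divisor inside $\cF(18)$ parametrizes doubly-hyperelliptic quartics exactly as the hyperelliptic divisor inside $\cF(19)$ parametrizes ordinary hyperelliptic quartics, and this parallel between adjacent floors of the tower is what makes the identification go through.
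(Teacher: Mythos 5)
Your proof of item (2) follows essentially the same route as the paper's: existence and uniqueness of a hyperelliptic quartic $K3$ with prescribed period (surjectivity of the period map plus global Torelli), and the geometric identification of $H_h(\cF_h)$ as the locus where $\varphi_L(X)$ degenerates to the quadric cone, which you correctly flag as the point requiring the lattice analysis of~\cite{log1}. Your write-up is more explicit than the paper's terse argument, but the substance is the same.

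For item (1) you take a genuinely different route. The paper simply cites Shah's Theorem~4.8 of~\cite{shah4}, which rests on a direct Hilbert--Mumford case analysis of $(4,4)$ curves on $\PP^1\times\PP^1$. You instead deduce stability from the klt condition on the pair $(\PP^1\times\PP^1,\tfrac12 D)$, extending the Hacking--Kim--Lee observation. This is attractive because it removes the brute-force stability check and makes the role of the discrepancy coefficient $\tfrac12$ (dictated by $-K_{\PP^1\times\PP^1}=\cO(2,2)$ and $D\in|\cO(4,4)|$) transparent; it also makes the statement look like a companion to the klt-implies-stability philosophy used elsewhere in moduli theory. There is, however, a step you should not leave implicit: the Hacking--Kim--Lee principle, as recalled in the paper's proof that slc quartics are GIT semistable, converts the weight inequality of log-canonicity into the weight inequality of semistability. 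To upgrade this to strict inequality (klt $\Rightarrow$ stable) you must rule out the borderline case $\mu(D,\lambda)=0$ for a nontrivial one-parameter subgroup $\lambda$; the weighted valuation attached to such a $\lambda$ has log discrepancy exactly $0$ at the fixed point of $\lambda$, violating klt, but this ought to be spelled out (or an appropriate reference given) rather than asserted. You should also note explicitly that the principle, stated in the paper only for quartic surfaces in $\PP^3$, transfers unchanged to $(4,4)$ divisors on $\PP^1\times\PP^1$, since the argument is local and only uses the action of a maximal torus of the identity component $\PGL(2)\times\PGL(2)$.
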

\begin{proof}
Item~(1) is a result of Shah, in fact it is contained in Theorem 4.8 of~\cite{shah4}. Item~(2) follows from the discussion above. In fact let $y\in\cF_h$. Then there exists  a hyperelliptic quartic $K3$ surface $(X,L)$ (unique up to isomorphism) whose period point is $y$, and the quadric ${Q}:=\varphi_L(X)$ is smooth if and only if $y\notin H_h(\cF_h)$.  
\end{proof}
\subsubsection{The germ of $\gM$ at  $\omega$ in the analytic topology}
Let $q\in H^0(\PP^3,\cO_{\PP^3}(2))$ be a non degenerate quadratic form, and let  ${Q}\subset\PP^3$ be the smooth quadric with equation $q=0$. Let $O(q)$ be the associated orthogonal group; then $\PO(q)=\Aut {Q}$ is the 
stabilizer of $[q^2]\in |\cO_{\PP^3}(4)|$. We have a decomposition of $H^0(\PP^3,\cO_{\PP^3}(4))$ into $O(q)$-modules 
\begin{equation*}
H^0(\PP^3,\cO_{\PP^3}(4))=q\cdot H^0(\PP^3,\cO_{\PP^3}(2))\oplus H^0({Q},\cO_{Q}(2)). 
\end{equation*}
Note that the first submodule is reducible (it contains a trivial summand, spanned by $q^2$), while the second one is  irreducible. We identify ${Q}$ with $\PP^1\times\PP^1$, $\PO(q)$ with $\Aut(\PP^1\times\PP^1)$, and $H^0({Q},\cO_{Q}(2))$ with $H^0(\PP^1\times\PP^1,\cO_{\PP^1}(4)\boxtimes \cO_{\PP^1}(4))$. The projectivization of $q\cdot H^0(\PP^3,\cO_{\PP^3}(2))$ is equal to the projective (embedded) tangent space at $[q^2]$ of the orbit $\PGL(4)[q^2]$.  
Thus, by Luna's \'etale slice Theorem, we have natural \'etale map
\begin{equation*}
H^0({Q},\cO_{Q}(2))\gquot O(q) \lra \gM,
\end{equation*}
mapping $[0]$ to $\omega$. In particular we have an isomorphism of analytic germs
\begin{equation}\label{omgerme}
(H^0({Q},\cO_{Q}(2))\gquot O(q),[0]) \overset{\sim}{\lra} (\gM,\omega).
\end{equation}
\subsubsection{Partial extension of the period map on the blow up of $\omega$}
The map $\phi_{\upsilon}\colon\gM_{\upsilon}\to\gM$ is an isomorphism over $\gM\setminus\{\upsilon\}$; abusing notation, we denote by the same symbol $\omega$ the unique point in $\gM_{\upsilon}$ lying over $\omega\in\gM$.  Let 
$\phi_{\omega}\colon\widetilde \gM\lra  \gM_{\upsilon}$ be the blow-up of the reduced point $\omega$, and let $E_{\omega}\subset\widetilde \gM$ be the exceptional divisor. We let $\phi:=\phi_{\upsilon}\circ\phi_{\omega}$, and $\wt{\gp}=\gp\circ\phi$. Thus we have 
\begin{equation*}
\xymatrix{
& \wt{\gM}\ar@{->}[dl]_{\phi}\ar@{-->}[dr]^{\wt{\gp}}& \\ 
\gM & & \cF^{*}
}
\end{equation*} 
\begin{proposition}\label{prp:eomident}
Keeping notation as above,   $E_{\omega}$ is naturally identified  with the   hyperelliptic GIT moduli space $\gM_h$, and  
the  restriction  of  $\wt{\gp}$ to $E_{\omega}$ is equal to the period map $\gp_h$ of~\eqref{perhyp}.
\end{proposition}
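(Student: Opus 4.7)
The proof splits into two essentially independent parts. For the identification $E_\omega\cong\gM_h$, I would use \eqref{omgerme} together with the observation that $V\gquot O(q)$ is an affine cone in a natural way. Here $V:=H^0(Q,\cO_Q(4))$ (the second summand displayed in the decomposition of $H^0(\PP^3,\cO_{\PP^3}(4))$ should read $\cO_Q(4)$ rather than $\cO_Q(2)$: it is the cokernel of multiplication by $q$ from $\cO_{\PP^3}(2)$ into $\cO_{\PP^3}(4)$, giving the $25=h^0(\cO(4,4))$-dimensional space). Under the usual grading $V\gquot O(q)=\mathrm{Spec}\,\CC[V]^{O(q)}$ is an affine cone with vertex $[0]$ and base $\PP(V)\gquot O(q)$, so blowing up the vertex produces exceptional divisor $\PP(V)\gquot O(q)=|\cO_Q(4)|\gquot\Aut(\PP^1\times\PP^1)=\gM_h$, where I use that the kernel $\{\pm I\}$ of the map $O(q)\to\PO(q)=\Aut Q$ acts trivially on the even-degree representation $\cO_Q(4)$. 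Transporting through \eqref{omgerme} identifies $E_\omega$ with $\gM_h$.

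For the equality of rational maps $\wt{\gp}|_{E_\omega}=\gp_h$, I would follow the template of \Ref{thm}{firstblowup}. Let $[D]\in\gM_h$ correspond to a semistable $D\in|\cO_Q(4)|$ with ADE singularities, so that $(X_D,L_D)$ is a genuine hyperelliptic quartic $K3$, and consider any holomorphic test arc $\alpha\colon(\Delta,0)\to(\wt{\gM},[D])$ meeting $E_\omega$ only at $0$. Through the slice isomorphism, $\alpha$ lifts to a $1$-parameter family of quartics $\calX_t=V(q^2+t^{2k}\tilde D(t))$ in $\PP^3$ with $\tilde D(0)$ defining $D$; the task reduces to computing the holomorphic extension $\gp_\alpha(0)\in\cF^*$ of $\gp\circ\alpha|_{\Delta\setminus\{0\}}$.

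The key geometric step is semistable reduction: after the base change $t=s^k$ and the blow-up of the codimension-two locus $\{q=s=0\}\subset\PP^3\times\Delta_s$, a local calculation in the chart $q=sv$ gives strict transform $v^2+\tilde D(s^k)=0$, whose central fibre ($s=0$) cuts out the double cover of $Q$ branched along $D=\{\tilde D(0)|_Q=0\}$, namely $X_D$. Since $X_D$ has slc (hence du Bois) singularities under the ADE hypothesis, \Ref{thm}{dubois} identifies the limit mixed Hodge structure with $H^2(X_D)$, so $\gp_\alpha(0)$ equals the image of $\gp_h([D])\in\cF_h^*$ under the natural map $\cF_h^*\to\cF^*$ onto (the closure of) the hyperelliptic divisor $H_h\subset\cF^*$. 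Because this limit depends only on $[D]=\alpha(0)$, the analog of \Ref{crl}{regonblow} for the ordinary blow-up (a direct consequence of the universal property of blow-ups and normality) shows that $\wt{\gp}$ is regular along the ADE open subset of $E_\omega$ and restricts to $\gp_h$ there. The principal obstacle is the semistable reduction computation, essentially carried out in \cite[\S4]{shah4}; restricting to the ADE locus, i.e.\ to the complement of the preimage of $Z^2$, is both necessary and consistent with the regular-locus statement noted in the introduction.
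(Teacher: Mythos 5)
Your approach is correct and broadly parallels the paper's, with one notable structural difference in the first part. You are also right that the displayed decomposition contains a typo: the second summand should be $H^0(Q,\cO_Q(4))$ ($25$-dimensional, the cokernel of multiplication by $q$), not $\cO_Q(2)$.

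For the identification $E_\omega\cong\gM_h$, the paper does \emph{not} argue directly on the Luna slice; instead it passes through the global blow-up $\pi\colon P\to|\cO_{\PP^3}(4)|$ with center the orbit closure ${\bf D}$ of double quadrics, invokes Kirwan's partial desingularization to identify the resulting GIT quotient $\wh\gM(t)$ (for small $t>0$) with the blow-up $\gM_\omega$ of $\gM$ at the reduced point $\omega$, and then reads off the exceptional fiber over $\omega$ as $\PP(H^0(\cO_Q(4)))\gquot\PO(q)=\gM_h$ using the germ isomorphism. Your variant stays local: you use that $V\gquot\PO(q)$ is an affine cone and assert that blowing up the vertex has exceptional divisor $\Proj\CC[V]^{\PO(q)}=\gM_h$. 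This is the one place where you should be a bit more careful: for a non-standard graded ring $R$ with irrelevant ideal $\mathfrak m$, the exceptional divisor of $\Bl_{\mathfrak m}\Spec R$ is $\Proj(\gr_{\mathfrak m}R)$, which need not coincide with $\Proj R$ unless $R$ is generated in degree~$1$ (invariant rings typically are not). The paper sidesteps this by taking the Kirwan blow-up upstairs and appealing to \cite{kirwan} for the comparison with $\Bl_\omega\gM$; if you want a purely local argument, you either need to make the same appeal, or to check that for this particular invariant ring the reduced exceptional divisor really is $\Proj R$, or to define $\wt\gM$ as the Kirwan blow-up in the first place.

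For the second part (identifying $\wt\gp|_{E_\omega}$ with $\gp_h$), your semistable-reduction computation is exactly what the paper calls ``a standard semistable replacement argument'' and leaves implicit; the paper's later, more involved computations in the proof of \Ref{prp}{redtwotan} (where the total space is normalized via the auxiliary functions $\xi_i=q/(x_i^2t^2)$ rather than blown up) confirm the method. The exponent bookkeeping in your test arc ($t^{2k}$, then $t=s^k$) is slightly garbled, but the geometric content — base change plus a modification of the total space along $Q\times\{0\}$ producing as tail the double cover of $Q$ branched over $D$, followed by \Ref{thm}{dubois} to compute the limit period and the valuative/graph-closure argument (the ordinary-blow-up analogue of \Ref{crl}{regonblow}) to conclude regularity — is correct and is indeed what the paper means. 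So: same method, spelled out; one minor gap in the cone/blow-up step that the paper avoids by routing through Kirwan.
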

\begin{proof}
Let $\psi_{\omega}\colon\gM_{\omega}\to\gM$ be the blow-up of the reduced point $\omega$, and let $\gp_{\omega}\colon \gM_{\omega}\to\cF^{*}$ be the composition $\gp\circ\psi_{\omega}$. Since $\omega$ and $\upsilon$ are disjoint subschemes of $\gM$, the exceptional divisor of $\psi_{\omega}$ is identified with $E_{\omega}$, and it suffices to prove that the statement of the proposition holds with $\wt{\gM}$ and $\wt{\gp}$ replaced by $\gM_{\omega}$ and $\gp_{\omega}$ respectively. Let ${\bf D}\subset |\cO_{\PP^3}(4)|$ be the closed subset of double quadrics, i.e.~the closure of the orbit $\PGL(4)(2{Q})$, where ${Q}\subset\PP^3$ is a smooth quadric. Let $\pi\colon P\to 
 |\cO_{\PP^3}(4)|$ be the blow up of (the reduced) ${\bf D}$, and let $E_{\bf D}\subset P$ be the exceptional divisor of $\pi$. Then $\PGL(4)$ acts on $P$ (because ${\bf D}$ is $\PGL(4)$-invariant), and the action lifts to an action on the line bundle $\cO_P(E_{\bf D})$. Let $\cL$ be the hyperplane line bundle on $ |\cO_{\PP^3}(4)|$, and let $t\in\QQ_{+}$ be such that $f^{*}\cL(-t E_{\bf D})$ is an ample $\QQ$-line bundle on $P$.  Then $\PGL(4)$ acts on the ring of global sections $R(P,\pi^{*}\cL(-t E_{\bf D}))$, and hence we may consider the GIT moduli space
 \begin{equation*}
\wh{\gM}(t):=\Proj\left(R(P,\pi^{*}\cL(-t E_{\bf D}))^{\PGL(4)}\right).
\end{equation*}
By Kirwan~\cite{kirwan}, there exists $t_0>0$ such that the blow down map $\pi\colon P\to  |\cO_{\PP^3}(4)|$ induces a regular map $\wh{\psi}(t)\colon\wh{\gM}(t)\to\gM$ 
for all $0<t<t_0$, and moreover  $\wh{\gM}(t)$ and $\wh{\psi}(t)$  are identified with $\gM_{\omega}$ and $\psi_{\omega}$ respectively. But now the identification of $E_{\omega}$   with the   hyperelliptic GIT moduli space $\gM_h$ follows at once from  the isomorphism of germs in~\eqref{omgerme}. The assertion on the period map follows from the description of the germ  $(\gM,\omega)$ and a standard semistable replacement argument.
\end{proof}
\subsection{Identification of $\cF(1-\epsilon)$ and $\wt{\gM}$}\label{subsec:tropforest}
Let $\cL$ be the  $\QQ$ line bundle on $\gM$ induced by the hyperplane line bundle on $|\cO_{\PP^3}(4)|$, and let $\wt{\cL}:=\phi^{*}\cL$. 
Let $E:=E_{\upsilon}+E_{\omega}$. Then
\begin{equation}\label{harrypotter}
(\wt{\gp}^{-1})^{*}( E)|_{\cF}=H_h+H_u=2\Delta.
\end{equation}
In fact we have the set-theoretic equalities $\wt{\gp}(E_{\upsilon})\cap\cF=H_u$, and $\wt{\gp}(E_{\omega}\setminus\Ind(\wt{\gp}))\cap\cF=H_h\setminus H_h^{(2)}$, thus 
in order to finish the proof of~\eqref{harrypotter} one only needs to compute multiplicities; they are equal to $1$ because $\wt{\gp}^{-1}$ has degree $1$.
By~\eqref{harrypotter} and Equation~(4.1.2) of~\cite{log1}, we get   
\begin{equation*}
(\wt{\gp}^{-1})^{*}(\wt{\cL}(-\epsilon E))|_{\cF}\cong \cO_{\cF}(\lambda+(1-2\epsilon)\Delta).
\end{equation*}
Thus $\wt{\gp}^{-1}$ induces a homomorphism 
\begin{equation}\label{sollevamento}
R(\gM,\wt{\cL}(-\epsilon E))\lra R(\cF,\lambda+(1-2\epsilon)\Delta).
\end{equation}
\begin{proposition}\label{prp:isoanello}
The homomorphism in~\eqref{sollevamento} is an isomorphism of rings. 
\end{proposition}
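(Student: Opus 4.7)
My plan is to establish the isomorphism by proving injectivity and surjectivity of the pullback homomorphism. The map is given concretely as follows: for $n$ sufficiently divisible so that $n\wt{\cL}(-\epsilon E)$ is an honest line bundle, a section $s\in H^{0}(\wt{\gM}, n\wt{\cL}(-\epsilon E))$ produces $(\wt{\gp}^{-1})^{*}s$ on the open locus of $\cF$ where $\wt{\gp}^{-1}\colon \cF^{*}\dra \wt{\gM}$ is regular; since the indeterminacy of $\wt{\gp}^{-1}$ is contained in the Baily-Borel boundary $\cF^{*}\setminus\cF$ (of codimension $18$, as the Type II and III strata have dimensions $\leq 1$ while $\dim\cF=19$), this section extends uniquely to all of $\cF$ by normality, and lands in $H^{0}(\cF, n(\lambda+(1-2\epsilon)\Delta))$ by the pullback identity~\eqref{harrypotter} together with the equation immediately following. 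Injectivity is immediate from the dominance of $\wt{\gp}^{-1}$ between irreducible varieties.

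For surjectivity, given $t\in H^{0}(\cF, n(\lambda+(1-2\epsilon)\Delta))$, I will construct $s\in H^{0}(\wt{\gM}, n\wt{\cL}(-\epsilon E))$ with $(\wt{\gp}^{-1})^{*}s=t$. On the open $U := \wt{\gp}^{-1}(\cF)\cap \mathrm{Reg}(\wt{\gp})\subset\wt{\gM}$, the pullback $\wt{\gp}|_{U}^{*}t$ defines a section of $\wt{\cL}(-\epsilon E)^{n}|_{U}$ (again by the divisor-class identity). I claim the piece of the complement $\wt{\gM}\setminus U$ coming from $\Ind(\wt{\gp})$ has codimension $\ge 2$: by \Ref{thm}{firstblowup}, $\wt{\gp}$ is regular on the entirety of $E_{\upsilon}$; by \Ref{prp}{eomident}, it is regular on the dense open of $E_{\omega}\cong \gM_{h}$ corresponding to the ADE locus $\gM_{h}^{I}$, whose complement has codimension $\ge 1$ in $E_{\omega}$ (hence $\ge 2$ in $\wt{\gM}$); and away from $E_{\upsilon}\cup E_{\omega}$, the indeterminacy lies inside the strict transform of $\gM^{IV}\setminus\{\upsilon,\omega\}$, of dimension $\le 8$ by~\eqref{wquart} while $\dim\gM=19$. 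Hence extension across $\Ind(\wt{\gp})$ is automatic from normality of $\wt{\gM}$.

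The hard part is the second piece of $\wt{\gM}\setminus U$, namely the preimage $\wt{\gp}^{-1}(\cF^{*}\setminus\cF)$, which contains divisorial components (strict transforms of the positive-codimension-one strata of $\gM^{II}$, for instance the closure $\sigma_{2}$). Hartogs fails directly here, so I would invoke the Koecher principle: sections of $n\lambda$ on $\cF$ extend automatically to the Baily-Borel compactification, and since $\Delta=(H_{h}+H_{u})/2$ is a sum of Heegner divisors supported in $\cF$ with no interaction with the boundary strata, the extension is compatible with the $(1-2\epsilon)\Delta$ twist. This gives a canonical identification of $R(\cF,\lambda+(1-2\epsilon)\Delta)$ with the corresponding graded ring computed on $\cF^{*}$; under this identification $t$ lifts to a section on $\cF^{*}$, and pulling back along the morphism $\wt{\gp}\colon \mathrm{Reg}(\wt{\gp})\to\cF^{*}$ extends $\wt{\gp}|_{U}^{*}t$ across the full preimage of the boundary. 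Combined with the codimension-$\ge 2$ Hartogs extension across $\Ind(\wt{\gp})$, this yields the desired section $s$. The main technical obstacle is thus verifying the Koecher-type extension statement across the divisorial preimage of the Baily-Borel boundary, which requires matching the growth of $t$ along the arithmetic boundary with the vanishing order of $\wt{\cL}(-\epsilon E)$ along the corresponding GIT strata $\sigma_{i}$ — a computation that can be carried out stratum by stratum using Shah's description in Section~\ref{sec:shahreview}.
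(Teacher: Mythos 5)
There is a genuine gap in your surjectivity argument, and it is concentrated in the third paragraph. You assert that $\wt{\gp}^{-1}(\cF^{*}\setminus\cF)$ "contains divisorial components (strict transforms of the positive-codimension-one strata of $\gM^{II}$, for instance the closure $\sigma_{2}$)." This is false: by \Ref{lmm}{lem1ps}, $\dim\sigma_{2}=4$, and the largest component of $\gM^{II}$ (namely II(5)) has dimension $10$, while $\dim\gM=19$. Hence $\gM^{II}\cup\gM^{III}$ has codimension at least $9$ in $\gM$, and the same is true of its strict transform in $\wt\gM$; the pieces of $E_{\upsilon}\cong\gM_u$ and $E_{\omega}\cong\gM_h$ mapping into $\cF^{*}\setminus\cF$ (or where $\wt\gp$ is indeterminate) have positive codimension in the respective exceptional divisors and hence codimension $\ge 2$ in $\wt\gM$. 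So the entire complement of your open $U$ has codimension $\ge 2$, Hartogs applies directly, and the Koecher detour is both unnecessary and, as written, not rigorous: $\Delta$ is not $\QQ$-Cartier on $\cF^{*}$, so there is no "graded ring of $\lambda+(1-2\epsilon)\Delta$ computed on $\cF^{*}$" without first passing to $\wh\cF$, which is circular. The "main technical obstacle" you flag at the end does not exist.

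There is also a smaller slip in your first paragraph: the indeterminacy of $\wt{\gp}^{-1}$ is \emph{not} contained in $\cF^{*}\setminus\cF$; it contains $H_h^{(2)}$, a codimension-$2$ closed subset of $\cF$ (this is exactly the center of the first flip, $\beta=1/2$). The conclusion still holds because codimension $2$ suffices for the extension, but the justification is off. The paper's proof is the clean two-sided version of what your first two paragraphs are groping toward: $\wt{\gp}^{-1}$ restricts to an isomorphism between $\cF\setminus H_h^{(2)}$, whose complement in $\cF$ has codimension $\ge 2$, and an open subset of $\wt\gM$ whose complement likewise has codimension $\ge 2$; normality of both sides then identifies the graded rings at once, with no need to separate injectivity from surjectivity or to go through $\cF^{*}$ at all.
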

\begin{proof}
This is because $\wt{\gp}^{-1}$ is an isomorphism between $\cF\setminus H_h^{(2)}$, which has complement of codimension $2$ in $\cF$, and an open subset of $\wt{\gM}$ which again has complement of dimension $2$ in $\wt{\gM}$. 
\end{proof}
\begin{corollary}\label{crl:cor1minuseps}
The  restriction of $\wt{\gp}^{-1}$ to $\cF$ defines an isomorphism 
\begin{equation*}
\Proj(\cF,\lambda+(1-\epsilon)\Delta)\cong\wt{\gM}
\end{equation*}
for  small enough $0<\epsilon$.
\end{corollary}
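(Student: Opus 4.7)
The corollary follows immediately by packaging together the ampleness of $\wt{\cL}(-tE)$ on $\wt{\gM}$ (for small $t>0$) with the ring isomorphism of Proposition~\ref{prp:isoanello}. The plan is: given $\epsilon>0$, set $t:=\epsilon/2$. First I would choose $\epsilon$ so small that $\wt{\cL}(-tE)$ is $\QQ$-ample on $\wt{\gM}$. This is the conclusion of the Kirwan-type construction recalled at the end of \Ref{subsubsec}{unknownideal} (applied to $\phi_{\upsilon}$), together with the obvious analog for the smooth-point blow up $\phi_{\omega}$; both provide ampleness of $\phi^{*}\cL(-tE_{\upsilon}-tE_{\omega})$ for $t$ in a nonempty interval $(0,t_0)$.

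Once $\wt{\cL}(-tE)$ is $\QQ$-ample on $\wt{\gM}$, the standard description of a polarized projective variety as the $\Proj$ of its section ring gives
$$\wt{\gM} \;\cong\; \Proj R(\wt{\gM},\, \wt{\cL}(-tE)).$$
Combining with the ring isomorphism $R(\wt{\gM},\wt{\cL}(-tE)) \cong R(\cF,\lambda+(1-2t)\Delta) = R(\cF,\lambda+(1-\epsilon)\Delta)$ of \Ref{prp}{isoanello} and taking $\Proj$ yields
$$\wt{\gM} \;\cong\; \Proj R(\cF,\, \lambda + (1-\epsilon)\Delta).$$
Because the ring map in \Ref{prp}{isoanello} is, by construction, pullback along $\wt{\gp}^{-1}|_{\cF}$, the resulting isomorphism of $\Proj$'s is induced by $\wt{\gp}^{-1}|_{\cF}$, as required.

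The substantive content has already been discharged in \Ref{prp}{isoanello}: the codimension-two comparison implementing the ring isomorphism is where one genuinely uses that $\wt{\gp}$ is an isomorphism in codimension one (its exceptional locus lies over $H_h^{(2)}=\Delta^{(2)}$, which is codimension two in both source and target). Beyond that, the argument is essentially formal. I would also note that finite generation of $R(\cF,\lambda+(1-\epsilon)\Delta)$ is not a hypothesis but an output of the argument, inherited from finite generation of the ample section ring $R(\wt{\gM},\wt{\cL}(-tE))$; this is consistent with the prediction of~\cite{log1} that $(1/2,1)$ contains no critical value of $\beta$, so that $\cF(\beta)$ should be constant on $(1/2,1]$.

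The only minor obstacle is bookkeeping the threshold: one must simultaneously (i)~stay inside the ample cone, i.e.\ $\epsilon<2t_0$, and (ii)~stay inside the range $(0,1/2)$ so that the coefficient $(1-\epsilon)$ lies in the expected constant chamber $(1/2,1]$. Any $0<\epsilon<\min(2t_0,1/2)$ works.
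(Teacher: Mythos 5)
Your argument is correct and follows the same route as the paper: ampleness of $\wt{\cL}(-tE)$ on $\wt{\gM}$ for small $t>0$ identifies $\wt{\gM}$ with $\Proj R(\wt{\gM},\wt{\cL}(-tE))$, and then the ring isomorphism of \Ref{prp}{isoanello} gives the result. You are, if anything, more careful than the paper's one-line proof about tracking the factor of $2$ relating the blow-up parameter $t$ to the coefficient $(1-\epsilon)=(1-2t)$.
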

\begin{proof}
If  $0<\epsilon$ is small enough, then $\wt{\cL}(-\epsilon E)$ is ample on $\wt{\gM}$, and hence $\Proj R(\gM,\wt{\cL}(-\epsilon E))\cong\wt{\gM}$. Thus the corollary follows from~\Ref{prp}{isoanello}.
\end{proof}
%

\subsection{The first flip of the GIT quotient ($\beta=1/2$)}\label{subsec:rescodim2}
We recall that the curve $W_1\subset\gM$ contains the point $\omega$ and does not contain $\upsilon$. We let $\wt{W}_1\subset\wt{\gM}$ be the strict transform of $W_1$. We will perform a surgery of $\wt{\gM}$ along  $\wt{W}_1$ in order to obtain our candidate for $\cF(1/3,1/2)$, notation as in~\eqref{intmodel}.
More precisely, we will start by constructing a birational map $\wh{\gM}\to\wt{\gM}$, which is an isomorphism away from $\wt{W}_1$, and  over $\wt{W}_1$ is a weighted blow along normal slices to $\wt{W_1}$. Let $E_1$ be the exceptional divisor of  $\wh{\gM}\to\wt{\gM}$; then $E_1\cong \wt{W}_1\times\gM_c$, where $\gM_c$ is a GIT compactification of the moduli space of degree-$4$ polarized $K3$ surfaces which are double covers of a quadric cone with branch divisor not containing the vertex of the cone. Let  $\wh{\gp}\colon \wh{\gM}\dra \cF$ be the period map and $\wh{\gM}_{reg}\subset \wh{\gM}$ be the subset of regular points of $\wh{\gp}$; 
we will show that, if $p\in\wt{W}_1$, then the  intersection $\wh{\gM}_{reg}\cap\{p\}\times \gM_c$ (here $\{p\}\times \gM_c\subset E_1$) coincides with the set of regular points of the period map $\gM_c\dra \cF$, and that the restriction of $\wh{\gp}$ is equal to the period map $\gM_c\dra \cF$. It follows that $\wh{\gp}$ is constant on the 
slices  $\{p\}\times \gM_c\subset E_1$, and the image of the restriction of $\wh{\gp}$ to the set of regular points of $E_1$ is the complement of $\Delta^{(3)}=\im(f_{16,19})$ in the codimension-$2$ locus $\Delta^{(2)}=\im(f_{17,19})$ (notation as in~\cite{log1}).  Now, $\wh{\gM}$ can be contracted along $E_1\to\gM_c$, let $\gM_{1/2}$ be the contraction; the results mentioned above strongly suggest that  $\gM_{1/2}$ is isomorphic to $\cF(1/3,1/2)$. 
\subsubsection{The action on quartics of the automorphism group of polystable surfaces in $W_1$.}\label{subsubsec:ortogonalrep}
Let $q:=x_0^2+x_1^2+x_2^2$, and let 
\begin{equation}\label{effeab}
f_{a,b}:=(q+ax_3^2)(q+bx_3^2), 
\end{equation}
where $(a,b)\not=(0,0)$. Then $V(f_{a,b})$ is a polystable quartic, and its equivalence class belongs to $W_1$. Conversely, if $V(f)$ is a polystable quartic whose equivalence class belongs to $W_1$, then up to  projectivities and rescaling,  $f=f_{a,b}$ for some $(a,b)\not=(0,0)$.  The points in $\gM$ representing  $V(f_{a,b})$ and $V(f_{c,d})$ are equal if and only if $[a,b]=[c,d]$, or $[a,b]=[d,c]$. Lastly,  $V(f_{a,b})$ represents $\omega$ if and only if $a=b$.  

Suppose that $a\not=b$. Then every element of $\Aut V(f_{a,b})$ fixes $V(x_3)$ and the point $[0,0,0,1]$. It follows that $\Aut V(f_{a,b})$  is equal to the image of the natural map $\Ort(q)\to\PGL(4)$. In particular $\SO(q)$ is an index $2$ subgroup of 
$\Aut  V(f_{a,b})$, and hence the double cover of $\SO(q)$, i.e.~$\SL_2$, acts on $ V(f_{a,b})$. The decomposition into irreducible representations of the action of $\SL_2$   on $\CC[x_0,\ldots,x_3]_4$ is as follows: 
\begin{equation}\label{longrepr}
\begin{matrix}
 \scriptstyle \CC[x_0,\ldots,x_2]_4 & \scriptstyle  \oplus & \scriptstyle \CC[x_0,\ldots,x_2]_3\cdot x_3 
 & \scriptstyle \oplus & \scriptstyle \CC[x_0,\ldots,x_2]_2\cdot  x^2_3 & \scriptstyle 
\oplus & \scriptstyle \CC[x_0,\ldots,x_2]_1\cdot  x^3_3 & \scriptstyle \oplus & \scriptstyle \CC\cdot  x_3^4 \\
\scriptstyle   V(8)\oplus V(4)\oplus V(0) &    & \scriptstyle V(6)\oplus V(2)  
 &  & \scriptstyle V(4)\oplus V(0) &  &  \scriptstyle V(2) &  &  \scriptstyle V(0)
\end{matrix}
\end{equation}
Now let us determine the sub-representation $U_{a,b}$ containing $[f_{a,b}]$ and such that $\Hom([f_{a,b}],U_{a,b}/[f_{a,b}])$ is the  tangent space at $V(f_{a,b})$ to the orbit $\PGL(4) V(f_{a,b})$ (we only assume that  $(a,b)\not=(0,0)$). Let $\ell_i\in\CC[x_0,\ldots,x_3]_1$ for $i\in\{0,\ldots,3\}$; we will write out the term multiplying $t$ in the expansion of   $f_{a,b} (x_0+t\ell_0,\ldots,x_3+t\ell_3)$ as element of $\CC[x_0,\ldots,x_3]_4[t]$ for various choices of $\ell_i$'s. For $\ell_i=\mu_i x_3$, we get
\begin{equation}\label{primadef}
4q\left(\sum_{i=0}^2\mu_i x_i \right)x_3+2(a+b)\mu_3 q x_3^2+2(a+b)\left(\sum_{i=0}^2\mu_i x_i\right)x^3_3+ 4 ab\mu_3  x_3^4.
\end{equation}
Letting $\ell_i\in\CC[x_0,x_1,x_2]_1$, we get
\begin{equation}\label{secondadef}
4q\left(\sum_{i=0}^2\ell_i x_i \right)+2(a+b) q \ell_3 x_3+2(a+b)\left(\sum_{i=0}^2\ell_i x_i\right)x_3^2+  4ab \ell_3 x_3^3.
\end{equation}
It follows that
\begin{equation}\label{decofu}
 U_{a,b} \cong
 \begin{cases}
 V(4)\oplus V(2)^2\oplus V(0)^2 & \text{if $a\not=b$,} \\
 V(4)\oplus V(2)\oplus V(0)^2 & \text{if $a=b$.} 
\end{cases}
\end{equation}
The difference between the two cases is due to the different behaviour of the $V(2)$-representations appearing  in~\eqref{primadef}, \eqref{secondadef} and contained in the direct sum $ \CC[x_0,\ldots,x_2]_3\cdot x_3 \oplus  \CC[x_0,\ldots,x_2]_1\cdot x^3_3 $. If  $a\not=b$, the representations in~\eqref{primadef} and~\eqref{secondadef}   are distinct, if $a=b$ they are equal. 
\subsubsection{The germ of $\wt{\gM}$ at  points of $\wt{W}_1\setminus E_{\omega}$.}\label{subsubsec:tangquad}
The map $\phi\colon\wt{\gM}\to\gM$ is an isomorphism away from $\{\omega,\upsilon\}$. Since $W_1$ does not contain $\upsilon$, 
the germ of $\wt{\gM}$ at  a point  $\wt{x}\in(\wt{W}_1\setminus E_{\omega})$ is identified by $\phi$ with the germ of  $\gM$ at $x:=\phi(\wt{x})$.
Let us examine the germ of  $\gM$ at a point  $x\in(W_1\setminus\{\omega\})$. There exists $(a,b)\in\CC^2$, with $a\not=b$, such that 
a polystable quartic representing $x$ is $V(f_{a,b})$, where $f_{a,b}$ is as in~\eqref{effeab}. Keeping notation as in~\Ref{subsubsec}{ortogonalrep}, $\SL_2$ acts on $ V(f_{a,b})$. 
Let $N_{a,b}\subset \CC[x_0,\ldots,x_3]_4$ be the sub $\SL_2$-representation
\begin{equation}\label{enneab}
N_{a,b}:=V(8)\oplus V(6)\oplus R\cdot x_3^2\oplus \la 2q x_3^2+(a+b)x_3^4\ra,
\end{equation}
where $R\subset \CC[x_0,x_1,x_2]_2$ is the summand isomorphic to $V(4)$, and let 
\begin{equation}\label{normslice}
{\bf N}_{a,b}:=\{V(f_{a,b}+g) \mid g\in N_{a,b}\}.
\end{equation}
\begin{proposition}
Keeping notation as above, ${\bf N}_{a,b}$ is an  $\Aut V(f_{a,b})$-invariant normal slice to the orbit $\PGL(4) V(f_{a,b})$. 
\end{proposition}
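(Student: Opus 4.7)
The statement splits into two claims: (a) $\mathbf{N}_{a,b}$ is invariant under the stabilizer $\Aut V(f_{a,b})$, and (b) the image of $N_{a,b}$ in the quotient $\CC[x_0,\ldots,x_3]_4/\langle f_{a,b}\rangle$ is a linear complement of $U_{a,b}/\langle f_{a,b}\rangle$. Since $\langle f_{a,b}\rangle\subset U_{a,b}$, claim (b) is equivalent to the direct sum decomposition $N_{a,b}\oplus U_{a,b}=\CC[x_0,\ldots,x_3]_4$, which is what I would verify.

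For (a), the key is that each summand in the definition of $N_{a,b}$ is intrinsic to the pair $(q,x_3)$, hence invariant under the full group $\Ort(q)\times\mathrm{id}$, not merely under its connected component $\SO(q)$ (whose double cover is the $\SL_2$ used to write the decomposition~\eqref{longrepr}). More precisely, $V(8)\subset\CC[x_0,x_1,x_2]_4$ and $V(6)\subset\CC[x_0,x_1,x_2]_3\cdot x_3$ are the spaces of harmonic polynomials of degrees $4$ and $3$ for the Laplacian associated to $q$, and $R\subset\CC[x_0,x_1,x_2]_2$ is the space of traceless symmetric $2$-tensors; all three are characterized in terms of $q$ alone. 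The last summand $\la 2qx_3^2+(a+b)x_3^4\ra$ is visibly invariant. Since $\Aut V(f_{a,b})$ is the image of $\Ort(q)\to\PGL(4)$ (for $a\neq b$), this gives invariance.

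For (b), I would work isotypic component by isotypic component in the $\SL_2$-decomposition~\eqref{longrepr}, using~\eqref{primadef},~\eqref{secondadef}, and~\eqref{decofu}. The $V(8)$ and $V(6)$ components lie entirely in $N_{a,b}$ and do not appear in $U_{a,b}$, so they are trivially accounted for. There are two $V(4)$ summands in $\CC[x_0,\ldots,x_3]_4$: one is $qR\subset\CC[x_0,x_1,x_2]_4$, the other is $R\cdot x_3^2$; $N_{a,b}$ supplies $R\cdot x_3^2$, while~\eqref{secondadef} (with $\ell_i\in\CC[x_0,x_1,x_2]_1$ and $L\in R$) shows that $U_{a,b}$ contains the diagonal $\{4qL+2(a+b)Lx_3^2 : L\in R\}$, giving the missing direction. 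For the two $V(2)$ summands $qV(2)x_3\subset\CC[x_0,x_1,x_2]_3\cdot x_3$ and $V(2)x_3^3\subset\CC[x_0,x_1,x_2]_1\cdot x_3^3$, $N_{a,b}$ contributes nothing, but~\eqref{primadef} and~\eqref{secondadef} produce two diagonal $V(2)$'s in $U_{a,b}$ with coefficient vectors $(4,2(a+b))$ and $(2(a+b),4ab)$; their $2\times 2$ determinant is $-4(a-b)^2$, nonzero precisely because $a\neq b$, so the two $V(2)$'s in $U_{a,b}$ span both summands. Finally, in the three-dimensional $V(0)$-isotypic component spanned by $q^2, qx_3^2, x_3^4$, $U_{a,b}$ supplies two lines (the scaling direction $\la f_{a,b}\ra$ and the direction $\la(a+b)qx_3^2+2ab\,x_3^4\ra$ computed from the $\mathfrak{sl}_4$-element obtained by subtracting a trace), while $N_{a,b}$ supplies the line $\la 2qx_3^2+(a+b)x_3^4\ra$; the $3\times 3$ matrix of these three vectors in the basis $(q^2,qx_3^2,x_3^4)$ has determinant $(a-b)^2$, again nonzero. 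A dimension count ($9+7+5+1=22$ for $N_{a,b}$, and $5+6+2=13$ for $U_{a,b}$, summing to $35=\dim\CC[x_0,\ldots,x_3]_4$) then gives the direct sum.

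The main obstacle, if any, is the bookkeeping of the $V(0)$-isotypic component: one must produce and identify the \emph{second} $V(0)$ direction in $U_{a,b}$ carefully, since the naive $\mathfrak{gl}_4$-element using $\ell_i=x_i$ for $i\le 2$ is not traceless, and one has to subtract the scalar direction (which contributes $\langle f_{a,b}\rangle$ itself) to recover a genuine $\mathfrak{sl}_4$ tangent vector. Once this has been done, the determinantal conditions that secure complementarity transparently degenerate to $0$ at $a=b$, reflecting exactly the jump in the orbit dimension at $\omega$ recorded in~\eqref{decofu}.
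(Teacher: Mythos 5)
Your proof is correct and takes essentially the same approach as the paper: decompose $\CC[x_0,\ldots,x_3]_4$ into $\SL_2$-isotypic components using~\eqref{longrepr}, read off $U_{a,b}$ from~\eqref{primadef} and~\eqref{secondadef}, and verify transversality isotype by isotype. The paper compresses all of this into a single sentence ("it follows that\ldots is transversal"), so your explicit determinant conditions in the $V(4)$-, $V(2)$- and $V(0)$-pieces (with the telling factors $(a-b)^2$ degenerating exactly at $\omega$) are a genuine elaboration of what the paper leaves implicit. The invariance argument also agrees in substance: the paper says $\Aut V(f_{a,b})$ is generated by the image of $\SL_2$ and the reflection in $x_3=0$, and your intrinsic characterization of $V(8)$, $V(6)$, $R$ as $q$-harmonics makes the same point from the other side. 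The one small caveat is that your closing ``obstacle'' about $\gl_4$ versus $\sl_4$ is illusory: formulas~\eqref{primadef} and~\eqref{secondadef} together parametrize the full $\gl_4$-action (in particular the scaling direction $\la f_{a,b}\ra$ is already included), so no subtraction of a trace is needed to exhibit the two $V(0)$-directions of $U_{a,b}$ — taking $\mu_3=1$ in~\eqref{primadef} and $\sum_{i\le 2}\ell_ix_i=q$ in~\eqref{secondadef} already gives two independent lines in $\la q^2,qx_3^2,x_3^4\ra$.
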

\begin{proof}
Let $U_{a,b}\subset \CC[x_0,\ldots,x_3]_4$ be as in~\Ref{subsubsec}{ortogonalrep}; thus
 $\PP(U_{a,b})$ is the projective tangent space at $V(f_{ab})$ to the orbit 
 $\PGL(4) V(f_{a,b})$. Then $U_{a,b}$ is the sum of the two  $\SL_2$-representations in~\eqref{primadef} and~\eqref{secondadef} (and, as representation,  it is given by the first case in~\eqref{decofu}), and  
it follows that  the  $\SL_2$-invariant affine space in~\eqref{normslice} is  transversal to $\PP(U_{a,b})$ at $V(f_{a,b})$. Lastly, ${\bf N}_{a,b}$ is   $\Aut V(f_{a,b})$-invariant because 
 $\Aut V(f_{a,b})$ is generated by the image of $\SL_2$ and the reflection in the plane $x_3=0$. 
\end{proof}
 The natural  map
\begin{equation}\label{triste}
\psi\colon {\bf N}_{a,b}\gquot  \Aut V(f_{a,b})   \lra   \gM
\end{equation}
 is \'etale at $V(f_{a,b})$ by Luna's \'etale slice Theorem. For later use, we make the following observation.
\begin{claim}\label{clm:normstrat}
Keep notation and assumptions as above, in particular $a\not=b$. Let $\eta\colon {\bf N}_{a,b}\to \gM$ be the composition of the quotient map ${\bf N}_{a,b}\to {\bf N}_{a,b}\gquot  \Aut V(f_{a,b})$ and the map $\psi$ in~\eqref{triste}. Then
\begin{equation}\label{inwuno}
\eta(\{V(f_{a,b}+t(2q x_3^2+(a+b)x_3^4)) \mid t\in\CC\})\subset W_1.
\end{equation}
Moreover, let $\cU\subset {\bf N}_{a,b}$ be an $\Aut V(f_{a,b})$-invariant open (in the classical topology)  neighborhood of $f_{a,b}$ such that the restriction of $\psi$ to 
$\cU\gquot  \Aut V(f_{a,b})$ is an isomorphism onto $\psi(\cU\gquot  \Aut V(f_{a,b}))$; then $x\in\cU$ is mapped to $W_1$ by $\eta$ and has closed $\SL_2$-orbit if and only if 
$x=V(f_{a,b}+t( 2q x_3^2+(a+b)x_3^4))$ for some $t\in\CC$.
\end{claim}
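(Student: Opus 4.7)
To prove the first assertion~\eqref{inwuno}, I would just expand. Setting $v_0:=2q\,x_3^2+(a+b)x_3^4$, one has
\begin{equation*}
f_{a,b}+t\,v_0 \;=\; q^2+(a+b+2t)\,q\,x_3^2+\bigl(ab+t(a+b)\bigr)x_3^4,
\end{equation*}
which factors over $\CC$ as $(q+a'x_3^2)(q+b'x_3^2)=f_{a',b'}$, where $a',b'$ are the roots of $y^2-(a+b+2t)y+(ab+t(a+b))=0$. Hence $V(f_{a,b}+tv_0)=V(f_{a',b'})$ lies in $W_1$: either as two quadrics tangent along the smooth conic $\{q=x_3=0\}$ when $a'\neq b'$, or as the double quadric point $\omega\in W_1$ when $a'=b'$. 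The ``if'' direction of the second statement is then also immediate: $v_0$ is visibly $\Ort(q)$-invariant (both $q$ and $x_3^2$ are fixed by $\Ort(q)$), so each $V(f_{a,b}+tv_0)$ is in fact a fixed point of $\Aut V(f_{a,b})$ and in particular has closed $\SL_2$-orbit.

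For the converse direction, my plan is to exploit the decomposition~\eqref{enneab}. Among the summands of $N_{a,b}$, the pieces $V(8)$, $V(6)$, and $R\cdot x_3^2\cong V(4)$ are non-trivial $\SL_2$-irreducibles with no invariants, while $\la v_0\ra$ is the unique trivial $\SL_2$-summand; this line is also fixed by the reflection generating $\Aut V(f_{a,b})/\SO(q)$, so $N_{a,b}^{\Aut V(f_{a,b})}=\la v_0\ra$. Moreover, as $t$ varies the unordered pair $\{a'(t),b'(t)\}$ computed above moves non-trivially — for instance $\tfrac{d}{dt}(a'+b')|_{t=0}=2$ — so the composite $t\mapsto \eta(V(f_{a,b}+tv_0))\in W_1$ has nonzero derivative at $t=0$ and is therefore a local open embedding. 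Combining this with the fact that $\psi$ restricts to an isomorphism $\cU\gquot \Aut V(f_{a,b})\xrightarrow{\sim}\psi(\cU\gquot \Aut V(f_{a,b}))\subset\gM$, and with the irreducibility of the curve $W_1$ near $V(f_{a,b})$, after shrinking $\cU$ we may assume that $\psi^{-1}(W_1)\subset \cU\gquot \Aut V(f_{a,b})$ equals the image of $\la v_0\ra\cap\cU$ under the quotient map $\pi\colon\cU\to\cU\gquot \Aut V(f_{a,b})$.

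Now take $x\in\cU$ with closed $\SL_2$-orbit (equivalently closed $\Aut V(f_{a,b})$-orbit, since $\SL_2$ has index $2$ in $\Aut V(f_{a,b})$) and with $\eta(x)\in W_1$. Then $\pi(x)\in\psi^{-1}(W_1)$, so by the previous paragraph $\pi(x)=\pi(v)$ for some $v\in\la v_0\ra\cap\cU$. Since $v$ is a fixed point, $\{v\}$ is the unique closed $\Aut V(f_{a,b})$-orbit in $\pi^{-1}(\pi(v))$, and it is contained in the closure of every orbit mapping to $\pi(v)$; as $\Aut V(f_{a,b})\cdot x$ is already closed, this forces $v\in \Aut V(f_{a,b})\cdot x$, and then $v$ being fixed gives $x=v=V(f_{a,b}+tv_0)$ for the corresponding $t$. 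The main obstacle in this outline is the irreducibility/dimension comparison identifying $\psi^{-1}(W_1)$ locally with the image of $\la v_0\ra$; everything else is either a one-line computation or a standard consequence of Luna's slice theorem.
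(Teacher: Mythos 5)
Your argument is essentially the paper's proof, spelled out in more detail: the explicit factorization $f_{a,b}+tv_0=f_{a'(t),b'(t)}$ gives~\eqref{inwuno} (the paper asserts the existence of the holomorphic perturbations $a+\varphi(t),b+\psi(t)$; you exhibit the quadratic whose roots they are), and the converse is handled, as in the paper, by observing that $W_1$ and $\eta(\la v_0\ra\cap\cU)$ are irreducible curve germs and then invoking Luna's slice theorem together with uniqueness of the closed orbit in a GIT fibre; the extra remark that $\la v_0\ra=N_{a,b}^{\Aut V(f_{a,b})}$ is a nice supporting observation. One small repair to the step you yourself flag: the computation $\tfrac{d}{dt}(a'+b')|_{t=0}=2$ does not by itself show that the composite $t\mapsto\eta(V(f_{a,b}+tv_0))$ is a local embedding, since $a'+b'$ is not a GIT invariant (rescaling $x_3$ shows the class of $V(f_{a',b'})$ in $\gM$ depends only on $[a':b']$ up to swap); the conclusion is still true, and the clean argument is already in your set-up: $\la v_0\ra$ consists of $\Aut V(f_{a,b})$-fixed points, so distinct points of $\la v_0\ra\cap\cU$ lie in distinct $\Aut V(f_{a,b})$-orbits, and the hypothesis that $\psi$ restricts to an isomorphism on $\cU\gquot\Aut V(f_{a,b})$ then forces $\eta|_{\la v_0\ra\cap\cU}$ to be injective, giving a one-dimensional germ inside the locally irreducible $W_1$ and hence equality.
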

\begin{proof}
The first statement follows from a direct computation. In fact, an easy argument shows that there exist holomorphic functions $\varphi,\psi$ of the complex variable $t$ vanishing at $t=0$, such that 
$$(q+(a+\varphi(t)) x_3^2)\cdot (q+(b+\psi(t)) x_3^2)=f_{a,b}+t(2q x_3^2+(a+b)x_3^4).$$
The second statement holds because $W_1$ is an irreducible curve, and so is the left-hand side of~\eqref{inwuno}.
\end{proof}
\subsubsection{The germ of $\wt{\gM}$ at  the unique point in $\wt{W}_1\cap E_{\omega}$.}\label{subsubsec:norcia}
Let $\gM_{\omega}\to\gM$ be the blow-up of (the reduced) $\omega$. We may work on $\gM_{\omega}$, since $W_1$ does not contain $\upsilon$. Let $P\to |\cO_{\PP^3}(4)|$ be the blow-up with center the closed subset ${\bf D}$ parametrizing double quadrics, and let $E_{\bf D}$ be the exceptional divisor. 
 By~\cite{kirwan} the blow-up  $\gM_{\omega}$ is identified with the quotient of $P$ 
 by the natural action of $\PGL(4)$ (with a polarization close to the pull-back of the hyperplane line-bundle on  $|\cO_{\PP^3}(4)|$) - see the proof of~\Ref{prp}{eomident}).  We will describe an $\SL_2$-invariant normal slice in $P$ to the $\PGL(4)$-orbit of a point representing the unique  point in $\wt{W}_1\cap E_{\omega}$. 
First, recall that we have an identification $E_{\omega}=\gM_h$, where $\gM_h$ is the GIT hyperelliptic moduli space  in~\eqref{hypermoduli}, see~\Ref{prp}{eomident}.
The unique  point in $\wt{W}_1\cap E_{\omega}$ is represented by a point in $E_{\bf D}$ mapping to a smooth quadric ${Q}\subset\PP^3$, and  corresponding 
to $\ell^4\in \PP(H^0(\cO_{Q}(4)))$ (recall that the fiber of the exceptional divisor over $Q$ is identified with $\PP(H^0(\cO_{Q}(4)))$), where $0\not=\ell\in H^0(\cO_{Q}(1))$ is a section with \emph{smooth} zero-locus (a smooth conic); moreover the points we have described have closed orbit in the locus of $\PGL(4)$-semistable points. 
\begin{notationconv}\label{notconv:puntoinpi}
We  represent the unique  point in $\wt{W}_1\cap E_{\omega}$ by the point with closed orbit $(V(q+ax_3^2),x_3^4)\in E_{\bf D}$ (notation as above), where  $q$ is as 
in~\Ref{subsubsec}{ortogonalrep} and $a\not=0$. In order to simplify notation, we let ${Q}_a:=V(q+a x_3^2)$, and $p:=({Q}_a,x_3^4)\in E_{\bf D}$. 
\end{notationconv}
Now let $S\subset \CC[x_0,\ldots,x_3]_4$ be the sub $\SL_2$-representation
\begin{equation}\label{essea}
S:=V(8)\oplus V(6)\oplus R\cdot x_3^2\oplus \la x_3^4\ra,
\end{equation}
where $R\subset \CC[x_0,x_1,x_2]_2$ is the summand isomorphic to $V(4)$. (Notice the similarity with~\eqref{enneab}.) Let
\begin{equation}\label{boldessea}
{\bf S}_a:=\{V(f_{a,a}+g) \mid g\in S\}
\end{equation}
\begin{claim}\label{clm:essetrasv}
Keeping notation as above, the double quadric $V(f_{a,a})$ is an isolated and reduced point of   the scheme-theoretic intersection  between the affine space ${\bf S}_a$ and the closed 
  ${\bf D}\subset |\cO_{\PP^3}(4)|$ parametrizing  double quadrics.  
\end{claim}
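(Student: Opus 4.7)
The plan is to reduce the claim to a transversality statement at the level of Zariski tangent spaces, and then verify it using the $\SL_2$-equivariant structure of $S$. Set $p := V(f_{a,a}) = V((q+ax_3^2)^2)$. Both ${\bf S}_a$ (an affine subspace) and ${\bf D}$ are smooth at $p$: for ${\bf D}$, note that it is the image of the map $[Q] \mapsto [Q^2]$ from $|\cO_{\PP^3}(2)|$ to $|\cO_{\PP^3}(4)|$, whose differential $R \mapsto 2QR$ at $Q = q+ax_3^2$ has kernel exactly $\CC\cdot Q$ (the quadric $Q$ is irreducible since $a \neq 0$). Because both subvarieties are smooth at $p$, one has $T_p({\bf S}_a \cap {\bf D}) = T_p {\bf S}_a \cap T_p {\bf D}$, so the claim reduces to showing that this intersection is zero.

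First I would translate this into linear algebra in $H^0(\cO_{\PP^3}(4))$. Lifting tangent spaces along the projection $H^0(\cO_{\PP^3}(4))\setminus\{0\} \to |\cO_{\PP^3}(4)|$, the preimage of $T_p{\bf S}_a$ is $S + \CC f_{a,a}$, while the preimage of $T_p{\bf D}$ is $(q+ax_3^2) \cdot H^0(\cO_{\PP^3}(2))$. Since the latter already contains $\CC f_{a,a}$, tangent transversality in $T_p|\cO_{\PP^3}(4)|$ is equivalent to
\[
S \;\cap\; (q+ax_3^2) \cdot H^0(\cO_{\PP^3}(2)) \;=\; 0
\]
inside $H^0(\cO_{\PP^3}(4))$. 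So it suffices to show: any $g \in S$ that is divisible by $q + ax_3^2$ in the polynomial ring must vanish.

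To verify this I would invoke the Lefschetz-type decomposition $\CC[x_0,x_1,x_2]_d = \bigoplus_{k\geq 0} q^k\, \calH_{d-2k}$, where the harmonic summand $\calH_m \subset \CC[x_0,x_1,x_2]_m$ is the $V(2m)$-isotypic component under the $\SL_2$-action (via the double cover $\SL_2 \to \SO(q)$). Under this identification, the summands defining $S$ in~\eqref{essea} become $V(8) = \calH_4$, $V(6) = \calH_3 \cdot x_3$, $R = \calH_2$, and $\CC x_3^4$; in particular $S$ omits every contribution of $x_3$-degree $3$ and every summand of the form $q^k(\text{lower harmonic})$ with $k \geq 1$. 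Writing $g = g_8 + \tilde g_6 x_3 + g_4 x_3^2 + c\,x_3^4 \in S$ and $h = h_0 + h_1 x_3 + h_2 x_3^2$ with $h_i \in \CC[x_0,x_1,x_2]_{2-i}$, and matching coefficients of $x_3^i$ in $g = (q+ax_3^2) h$, the $x_3^3$-equation forces $h_1 = 0$, hence $\tilde g_6 = 0$; the $x_3^0$-equation gives $g_8 = qh_0 \in \calH_4 \cap q\CC[x_0,x_1,x_2]_2 = 0$, hence $h_0 = 0$; and the $x_3^2$-equation then gives $g_4 = qh_2 \in \calH_2 \cap \CC q = 0$, hence $h_2 = 0$ and $c = a h_2 = 0$. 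Therefore $g = 0$.

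The only non-routine point, and really the heart of the argument, is the observation that the summands chosen for $S$ in~\eqref{essea} are exactly the $q$-harmonic parts of the relevant $x_3$-layers of $\CC[x_0,\ldots,x_3]_4$, with the $x_3^3$-layer deliberately suppressed. Once this is noticed, transversality of $S$ to the $10$-dimensional subspace of multiples of $q+ax_3^2$ is forced by the Lefschetz decomposition, and no further obstacle arises.
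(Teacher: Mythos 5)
Your proposal is correct and follows essentially the same route as the paper: both reduce the claim to triviality of the tangent-space intersection at $V(f_{a,a})$ and then check that $S$ meets $U_{a,a}$ (which you identify, equivalently, as $(q+ax_3^2)\cdot H^0(\cO_{\PP^3}(2))$ via the squaring map, rather than as the tangent to the $\PGL(4)$-orbit) only in $0$. The paper leaves that last verification as ``easily checked''; your explicit argument via the $q$-harmonic decomposition $\CC[x_0,x_1,x_2]_d=\bigoplus_{k}q^k\calH_{d-2k}$ is precisely the intended computation and makes transparent why the particular summands chosen for $S$ in~\eqref{essea} force the transversality.
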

\begin{proof}
Of course $V(f_{a,a})\in {\bf D}$, because $f_{a,a}=(q+a x_3^2)^2$. Let  $T_{V(f_{a,a})}{\bf S}_a$ and $T_{V(f_{a,a})}{\bf D}$  be the  tangent spaces 
to  ${\bf S}_a$ and ${\bf D}$ at $V(f_{a,a})$ respectively; we must show that their intersection (as subspaces of $T_{V(f_{a,a})} |\cO_{\PP^3}(4)|$) is trivial. We have
\begin{equation*}
 T_{V(f_{a,a})}{\bf S}_a=\Hom(\la f_{a,a}\ra,\la S,f_{a,a}\ra/\la f_{a,a}\ra), \qquad T_{V(f_{a,a})}{\bf D}=\Hom(\la f_{a,a}\ra,U_{a,a}/\la f_{a,a}\ra), 
\end{equation*}
where  $U_{a,a}$ is as in~\Ref{subsubsec}{ortogonalrep}. As  is easily checked, 
\begin{equation}
S\cap U_{a,a}=\{0\}.
\end{equation}
Thus $\la S, f_{a,a}\ra\cap U_{a,a}=\la f_{a,a}\ra$, and the claim follows. 
\end{proof}
By~\Ref{clm}{essetrasv} the scheme-theoretic intersection ${\bf D}\cap{\bf S}_a$ is the disjoint union of the reduced singleton $\{V(f_{a,a})\}$ and a subscheme $Y_a$. 
Let ${\bf U}_a:=  {\bf S}_a\setminus Y_a$; then   ${\bf U}_a$ is an open neighborhood of $V(f_{a,a})$ in ${\bf S}_a$, and it is invariant under the action of $\Aut V(f_{a,a})$. 
Let $\wt{\bf U}_a\subset P$ be the strict transform of ${\bf U}_a$  (recall that   $P\to |\cO_{\PP^3}(4)|$ is the blow-up with center  ${\bf D}$), and let $\varphi\colon \wt{\bf U}_a \to {\bf U}_a$ be the restriction of the contraction $P\to  |\cO_{\PP^3}(4)|$. By~\Ref{clm}{essetrasv} $\varphi$ is the blow-up of the (reduced) point $V(f_{a,a})$.  
\begin{remark}
Since $f_{a,a},x_3^4\in {\bf S}_a$, the point  $p=({Q}_a,x_3^4)\in E_{\bf D}$ (see~\Ref{notconv}{puntoinpi}) belongs to $\wt{\bf U}_a$. Moreover the stabilizer (in $\PGL(4)$) of  
 $p$ is equal to $O(q)$ i.e.~to $\Aut V(f_{a,b})$ for $a\not=b$ (see~\Ref{subsubsec}{ortogonalrep}), and it preserves  $\wt{\bf U}_a$.  
\end{remark}
\begin{proposition}\label{prp:essetrasv}
Keeping notation as above, $\wt{\bf U}_a$ is a $\Stab(p)$-invariant normal slice to the orbit $\PGL(4) p$ in $P$. 
\end{proposition}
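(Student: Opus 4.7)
The plan is to verify the two defining conditions for a $\Stab(p)$-invariant normal slice: (i) $\Stab(p)\cdot\wt{\bf U}_a=\wt{\bf U}_a$, and (ii) the transversality $T_p\wt{\bf U}_a\oplus T_p(\PGL(4)\cdot p)=T_pP$. A dimension count will reduce (ii) to showing the two tangent subspaces span $T_pP$: $\dim P=34$, $\dim\wt{\bf U}_a=\dim S=22$, and $\dim\PGL(4)\cdot p=15-\dim\Stab(p)=15-3=12$, with $\Stab(p)$ being the image of $O(q)$ in $\PGL(4)$ exactly as in the analysis of $\Aut V(f_{a,b})$ for $a\not=b$ in~\Ref{subsubsec}{ortogonalrep}.

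For (i), I would identify $\Stab(p)$ as the subgroup of $\PO(q_a)=\Aut V(f_{a,a})$ preserving the direction $[x_3^4]$ in the exceptional fiber $\PP(H^0(\cO_{Q_a}(4)))$, equivalently acting on the linear form $x_3$ by a sign; this is generated by $\SL_2$ (acting via $\SO(q)$ on $(x_0,x_1,x_2)$) together with the reflection $x_3\mapsto -x_3$. Each of the four summands $V(8)$, $V(6)x_3$, $Rx_3^2$, $\CC x_3^4$ of $S$ is $\SL_2$-invariant and is carried to itself (up to a sign) by $x_3\mapsto -x_3$; together with the $\Stab(p)$-invariance of $f_{a,a}=q_a^2$, this yields the $\Stab(p)$-invariance of ${\bf S}_a$, of ${\bf U}_a$, and of its strict transform $\wt{\bf U}_a$.

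For (ii), I would push down to $X=|\cO_{\PP^3}(4)|$ via the blow-up $\varphi\colon P\to X$ and treat separately the ``horizontal'' and ``vertical'' directions. At $p$ one has $\ker d\varphi_p\cong H^0(\cO_{Q_a}(4))/\CC x_3^4$ and $\im d\varphi_p=(U_{a,a}+\CC x_3^4)/\CC f_{a,a}$, where $U_{a,a}=q_aH^0(\cO_{\PP^3}(2))$ is the common tangent space at $V(f_{a,a})$ to both ${\bf D}$ and $\PGL(4)\cdot V(f_{a,a})$. Under $d\varphi_p$ the orbit tangent surjects onto $U_{a,a}/\CC f_{a,a}$, while $\varphi|_{\wt{\bf U}_a}\colon\wt{\bf U}_a\to{\bf U}_a$ is the blow-up of $V(f_{a,a})$, so its derivative at $p$ has one-dimensional image equal to the line in $T_{V(f_{a,a})}{\bf S}_a=S$ spanned by the direction of $p$, namely $\CC x_3^4/\CC f_{a,a}$; by~\Ref{clm}{essetrasv} we have $S\cap U_{a,a}=0$, hence $x_3^4\notin U_{a,a}$, and the two images together fill $\im d\varphi_p$. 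Vertically, $T_p\wt{\bf U}_a\cap\ker d\varphi_p=T_{[x_3^4]}\PP(S)=S/\CC x_3^4$ (of dimension $21$), and $T_p(\PGL(4)\cdot p)\cap\ker d\varphi_p$ is the tangent at $[x_3^4]$ to the $\PO(q_a)$-orbit; a short Lie-algebra calculation, via $A\mapsto A\cdot x_3^4=4x_3^3(A\cdot x_3)$, shows this image to be the subspace $V(2)x_3^3\subset H^0(\cO_{\PP^3}(4))$ (of dimension $3$), which injects into $H^0(\cO_{Q_a}(4))$ since $V(2)x_3^3\cap q_aH^0(\cO_{\PP^3}(2))=0$.

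The main obstacle, and the only step needing genuine computation, will be to show that $S/\CC x_3^4$ and $V(2)x_3^3$ together span $\ker d\varphi_p$. As $21+3=24=\dim\ker d\varphi_p$, this amounts to checking the intersection is trivial, equivalently that any relation $s+q_ah=v$ with $s\in S$, $v\in V(2)x_3^3$, and $h\in H^0(\cO_{\PP^3}(2))$ forces $s=v=0$. I would carry this out by decomposing $H^0(\cO_{\PP^3}(4))$ into its $\SL_2$-isotypic pieces (indexed by the irreducible constituents of the subspaces $\CC[x_0,x_1,x_2]_{4-k}\cdot x_3^k$) and substituting the explicit expansion of $q_ah$ already used in the proof of~\Ref{clm}{essetrasv}; matching components in isotypic pieces in which $q_ah$ contributes but neither $S$ nor $V(2)x_3^3$ has any summand (namely $V(4)$ and $V(0)$ in $x_3$-degree $0$, $V(2)$ in $x_3$-degree $1$, and $V(0)$ in $x_3$-degree $2$) successively forces the four parameters of $h$ to vanish, whence $s=v=0$. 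Combined with the horizontal argument this gives $T_p\wt{\bf U}_a+T_p(\PGL(4)\cdot p)=T_pP$, and the dimension count then upgrades this sum to the direct-sum decomposition required by (ii).
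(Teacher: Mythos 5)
Your proposal is correct and takes essentially the same route as the paper's proof: both reduce transversality to a dimension count plus an analysis split along the blow-up $\varphi\colon P\to|\cO_{\PP^3}(4)|$ into horizontal and vertical pieces, and the crucial vertical-direction computation in both cases comes down to the single linear-algebra fact that $S$, $\CC[x_0,\ldots,x_3]_1\cdot x_3^3$ and $U_{a,a}$ together span $H^0(\cO_{\PP^3}(4))$. The only cosmetic difference is that you prove the two tangent spaces span $T_pP$ while the paper proves their intersection is zero (equivalent given the dimension count), and you spell out the isotypic-decomposition verification that the paper leaves implicit.
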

\begin{proof}
Let $Y:=\PGL(4) p$. We must  prove 
 that the tangent space  to $\wt{\bf U}_a$ at $p$ is transversal to the tangent space  to  $Y$ 
at $p$.  First notice that $\dim Y=12$ and $\dim\wt{\bf U}_a=22$, hence $\dim Y+\dim\wt{\bf U}_a=\dim P$. Thus it suffices to prove that
\begin{equation}\label{zeroint}
T_{p}Y\cap T_{p}\wt{\bf U}_a=\{0\}.
\end{equation}
Let   $\pi\colon P\to |\cO_{\PP^3}(4)|$  be the blow up of ${\bf D}$. 
  By~\Ref{clm}{essetrasv}, 
  \begin{equation*}
d\pi(p)(T_p\wt{\bf U}_a)=\Hom(\la f_{a,a}\ra,\la f_{a,a},x_3^4\ra/\la f_{a,a}\ra).
\end{equation*}
On the other hand, $d\pi(p)(T_p Y)=T_{\pi(p)}{\bf D}$, and hence $d\pi(p)(T_p\wt{\bf U}_a)\cap d\pi(p)(T_p Y)=\{0\}$. It follows that the intersection on the left hand side of~\eqref{zeroint} is 
contained in the kernel of the restriction of $d\pi(p)$ to $T_p\wt{\bf U}_a$, i.e.~$T_p(\wt{\bf U}_a\cap E_{\pi(p)})$, where $E_{\pi(p)}$ is the fiber of $E_{\bf D}\to {\bf D}$ over $\pi(p)=V(f_{a,a})$. 
Hence it suffices to prove that
\begin{equation}\label{releone}
T_{p}Y\cap T_p(\wt{\bf U}_a\cap E_{\pi(p)})=\{0\}.
\end{equation}
The fiber  $E_{\pi(p)}$ is naturally identified with $\PP H^0(\cO_{{Q}_a}(4))$. With this identification, we have
\begin{eqnarray*}
T_p Y\cap T_p E_{\pi(p)} & = & \Hom(\la x_3^4\ra,\CC[x_0,\ldots,x_3]_1\cdot x_3^3/\la x_3^4\ra), \\
T_p(\wt{\bf U}_a\cap E_{\pi(p)}) & = & \Hom(\la x_3^4\ra,S /\la x_3^4\ra).
\end{eqnarray*}
Here we are abusing notation: $\CC[x_0,\ldots,x_3]_1\cdot x_3^3$ and  $S$   stand for their images  in $H^0(\cO_{{Q}_a}(4))$.  Since the kernel of the restriction map 
$H^0(\cO_{\PP^3}(4))\to H^0(\cO_{{Q}_a}(4))$ is equal to $U_{a,a}$, Equation~\eqref{releone} follows from  the equalities
\begin{eqnarray*}
\la \CC[x_0,\ldots,x_3]_1\cdot x_3^3 , S \ra \cap U_{a,a} & = & \{0\}, \\
 (\CC[x_0,\ldots,x_3]_1\cdot x_3^3)\cap S & = & \la x_3^4\ra 
\end{eqnarray*}
\end{proof}
The natural  map
\begin{equation}\label{isotta}
\psi\colon \wt{\bf U}_{a}\gquot  \Stab (p)   \lra   \gM
\end{equation}
 is \'etale at $p$ by Luna's \'etale slice Theorem. 
The result below is the analogue of~\Ref{clm}{normstrat}.
\begin{claim}\label{clm:normstratdue}
Keep notation and assumptions as above. Let $\zeta\colon \wt{\bf U}_{a}\to \gM$ be the composition of the quotient map $\wt{\bf U}_{a}\to \wt{\bf U}_{a}\gquot  \Stab (p)$ and the map $\psi$ in~\eqref{isotta}. Let $C\subset \wt{\bf U}_{a}$ be the strict transform of the line $\{V(f_{a,a}+t x_3^4) \mid t\in\CC\}$. 
Then $\zeta(C)\subset W_1$.
Moreover, let $\cU\subset \wt{\bf U}_{a}$ be a $\Stab(p)$-invariant open (in the classical topology)  neighborhood of $p$ such that the restriction of $\psi$ to 
$\cU\gquot  \Stab V(p)$ is an isomorphism onto $\psi(\cU\gquot  \Stab (p))$; then $x\in\cU$ is mapped to $W_1$ by $\zeta$ and has closed $\SL_2$-orbit if and only if 
$x=V(f_{a,a}+tx_3^4)$ for some $t\in\CC$.
\end{claim}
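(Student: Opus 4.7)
The plan is to adapt the argument of~\Ref{clm}{normstrat} to the blow-up setting: I would identify the strict transform $C$ explicitly, verify that it meets the exceptional divisor at $p$, and then handle the converse using Luna's \'etale slice together with irreducibility of $W_1$.

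For the first statement, I would begin with the factorisation
\[
f_{a,a} + t x_3^4 \;=\; \bigl(q + (a+s) x_3^2\bigr)\bigl(q + (a-s) x_3^2\bigr), \qquad s^2 = -t.
\]
This shows that for $t \neq 0$ small, $V(f_{a,a}+t x_3^4)$ is a union of two distinct smooth quadrics tangent along the conic $V(q,x_3)$, so its class in $\gM$ lies in $W_1 \setminus \{\omega\}$. The line $L = \{V(f_{a,a}+t x_3^4) \mid t \in \CC\}$ meets ${\bf D}$ only at $V(f_{a,a})$ (i.e.\ at $s=0$), and the intersection is reduced there since the tangent direction $x_3^4$ of $L$ at $V(f_{a,a})$ belongs to $S$, while $S \cap U_{a,a} = \{0\}$ by the argument in the proof of~\Ref{clm}{essetrasv}. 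Consequently the strict transform $C$ is smooth, contained in $\wt{\bf U}_a$, and meets the exceptional divisor $E_{\bf D}$ exactly at $p$, since $p = ({Q}_a,x_3^4)$ corresponds precisely to the tangent direction $[x_3^4] \in \PP H^0(\cO_{{Q}_a}(4))$ of $L$ at $V(f_{a,a})$. Combined with $\zeta(p) = \omega \in W_1$, this yields $\zeta(C) \subset W_1$.

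For the second statement, the crucial observation is that $\Stab(p)$ fixes $C$ pointwise: indeed, $\Stab(p)$ preserves both $q$ and $[x_3]$, hence fixes every $f_{a,a} + t x_3^4 \in L$, and this lifts to $C$. Thus the image of $C$ in $\cU \gquot \Stab(p)$ is an irreducible analytic curve, each of whose points represents a singleton (hence closed) $\Stab(p)$-orbit mapped by $\psi$ into $W_1$. Conversely, since $\psi|_{\cU \gquot \Stab(p)}$ is by assumption an isomorphism onto its image in $\gM$, and $W_1$ is an irreducible curve, the analytic preimage $\psi^{-1}(W_1)$ near $[p]$ is a germ of an analytic curve; by dimension it must coincide with the (one-dimensional) image of $C$. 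Hence every $x \in \cU$ with closed $\SL_2$-orbit (equivalently, closed $\Stab(p)^\circ$-orbit, as $\SL_2$ surjects onto $\Stab(p)^\circ$) satisfying $\zeta(x) \in W_1$ lies on $C$, i.e.\ is of the form $V(f_{a,a}+tx_3^4)$ for some $t \in \CC$ (with $t=0$ corresponding to the point $p \in C \cap E_{\bf D}$).

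The main technical obstacle is the final irreducibility input: one must check that the preimage of $W_1$ in $\cU \gquot \Stab(p)$ admits no branch through $[p]$ other than the one coming from $C$. I would verify this using the explicit description of $W_1$ via the pencil $\{V(f_{a,b}) \mid [a,b] \in \PP^1\}$ of~\Ref{prp}{classifystab}, whose specialisation at $[a,b]=[1,1]$ yields a single analytic branch through $\omega$; after passing to the blow-up $\gM_\omega$, this branch lifts to the (unique) strict transform $\wt{W}_1$ through $[p]$, which is precisely what $C$ covers.
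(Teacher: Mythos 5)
Your proof is correct and takes essentially the same route as the paper's, which is very brief: the paper establishes $\zeta(C)\subset W_1$ via the identity $(q+(a+u)x_3^2)(q+(a-u)x_3^2)=f_{a,a}-u^2x_3^4$ (your factorisation with $s^2=-t$ is this, up to relabelling), and then, for the converse, simply points to the proof of \Ref{clm}{normstrat}, whose justification is the one line that both $W_1$ and the image of the slice curve are irreducible curves. You fill in the two details that the paper leaves implicit: first, that the slice curve $C$ is pointwise fixed by $\Stab(p)=O(q)$ (so its points all have singleton, hence closed, orbits, and two points of $\cU$ with closed orbits and the same image under $\psi$ must coincide); second, that what is really needed is analytic irreducibility of the relevant germ of $W_1$ (equivalently $\wt W_1$) at $\omega$ (resp.\ at $\wt W_1\cap E_\omega$), which you check from the parametrisation $[a,b]\mapsto[V(f_{a,b})]$ of $W_1$. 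Both additions are accurate and genuinely sharpen the paper's one-line reference, so this is a faithful and slightly more careful rendering of the intended argument rather than a different one.
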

\begin{proof}
First $(q+(a+u)x_3^2)(q+(a-u)x_3^2)=f_{a,a}-u^2 x_3^4$ shows that $\zeta(C)\subset W_1$. For the remaining statement see the proof of~\Ref{clm}{normstrat}.
\end{proof}
\subsubsection{Moduli of $K3$ surfaces which are generic double cones}\label{subsubsec:doppiocono}
Let $\Lambda$ be the graded $\CC$-algebra
\begin{equation}\label{eccolambda}
\Lambda:=\Symm^{\bullet}(V(4)^{\vee}\oplus V(6)^{\vee}\oplus V(8)^{\vee}), 
\end{equation}
where $V(2d)^{\vee}$ has degree $d$. 
Then $\PSL(2)$ acts on $\Proj\,\Lambda$, and $\cO_{\Proj\Omega}(1)$ is naturally linearized. The involution 
\begin{equation*}
\begin{matrix}
\Proj\,\Lambda & \lra & \Proj\,\Lambda \\
[f,g,h] & \mapsto [f,-g,h]
\end{matrix}
\end{equation*}
commutes with the action of $\PSL(2)$, and hence there is a (faithful) action of
\begin{equation}\label{eccogici}
G_c:=\PSL(2)\times\ZZ/(2)
\end{equation}
on  $\Proj\,\Lambda$. We let 
\begin{equation}\label{gitc}
\gM_c:=\Proj\,\Lambda\gquot G_c
\end{equation}
be the  GIT quotient. 
We will show that $\gM_c$ is naturally a compactification of the moduli space of hyperelliptic quartic $K3$ surfaces which are double covers of a quadric cone with branch divisor not containing the vertex of the cone.
First, we think of $\SL_2$ as the double cover of $\SO(q)$, where $q=x_0^2+x_1^2+x_2^2$ is as in~\Ref{subsubsec}{tangquad}, and correspondingly 
 $V(2d)$ is a subrepresentation of $\CC[x_0,x_1,x_2]_d$. We associate to $\xi:=(f,g,h)\in V(4)\oplus V(6)\oplus V(8)$,  the quartic 
\begin{equation}\label{tschirn}
B_{\xi}:=V(x_3^4+  fx_3^2 + g x_3 +h).
\end{equation}
Thus $V(4)\oplus V(6)\oplus V(8)$ is identified with the set of such quartics. Both $G_c$ and 
the multiplicative group $\CC^{*}$ act on the set of such quartics (the second group acts by rescaling $x_3$). The quotient of $(V(4)\oplus V(6)\oplus V(8))\setminus\{0\}$ by the $\CC^{*}$ action is $\Proj\,\Lambda$, hence $\gM_c$ is identified with the quotient 
$(V(4)\oplus V(6)\oplus V(8))\setminus\{0\}$ by the full $G_c\times\CC^{*}$-action. Given $[\xi]\in \Proj\,\Lambda$, we let $X_{\xi}$ be the double cover of the cone $V(q)\subset\PP^3_{\CC}$ ramified over the restriction of $B_{\xi}$  to $V(q)$, and $L_{\xi}$ be the degree-$4$ polarization of $X_{\xi}$ pulled back from $\cO_{\PP^3}(1)$. 
\begin{proposition}
Let  $[\xi]\in\Proj\,\Lambda$ be such that $X_{\xi}$ has rational singularities. Then $[\xi]$ is $G_c$-stable. 
The open dense subset of $\gM_c$ parametrizing isomorphism classes of such $[\xi]$ is the moduli space of polarized quartics which are double covers of a quadric cone with branch divisor not containing the vertex of the cone.
\end{proposition}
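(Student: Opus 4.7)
The proposition splits into the stability claim and the moduli description; for stability I will apply the Hilbert--Mumford numerical criterion and translate it geometrically, and for the moduli description a standard descent through the $G_c$-quotient will suffice. Any non-trivial one-parameter subgroup of $G_c$ is, up to conjugation, of the form $\lambda(t)=\diag(t,t^{-1})$ in the double cover $\SL_2$ of $\PSL_2=\PSO(q)$ (the $\ZZ/2$-factor of $G_c$ is finite and contributes nothing). The representation $V(2d)$ decomposes under $\lambda$ into weight spaces with weights $2d,2d-2,\ldots,-2d$. Writing $\xi=(f,g,h)$ in $\lambda$-eigenvectors and accounting for the weighted grading $(2,3,4)$ of the generators of $\Lambda$, the Hilbert--Mumford numerical criterion says that $[\xi]$ fails to be $\lambda$-stable precisely when $\min\{w_{\min}(f)/2,\ w_{\min}(g)/3,\ w_{\min}(h)/4\}\geq 0$, where $w_{\min}(\phi)$ denotes the smallest $\lambda$-weight appearing in a nonzero eigencomponent of $\phi$. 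Using the identification $V(2d)\cong\CC[y_0,y_1]_{2d}$ and the Veronese $\PP^1\overset{\sim}{\lra} C$ onto the base conic of the cone $Q$, this translates into the vanishing-order inequalities $v_P(f)\geq 2$, $v_P(g)\geq 3$, $v_P(h)\geq 4$ at the point $P\in C$ corresponding to the $\lambda$-sink $[0:1]\in\PP^1$.

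Since $f,g,h$ are constant along the rulings of the cone, in local coordinates $(u,v)$ on $Q$ near $(P,0)$ --- with $u$ a local uniformizer on $C$ at $P$ and $v=x_3$ along the ruling --- the branch polynomial $F=x_3^4+fx_3^2+gx_3+h$ expands as $F|_Q(u,v)=v^4+O(u^2)\,v^2+O(u^3)\,v+O(u^4)$, so $F|_Q$ has multiplicity $\geq 4$ at $(P,0)$. The double cover $X_\xi\colon z^2=F|_Q$ therefore carries, above $(P,0)$, a hypersurface double point whose branch has multiplicity at least $4$. Inspection of the classical normal forms for ADE hypersurface singularities $z^2=F(u,v)$ shows that $F$ has multiplicity at most $3$ in all ADE cases, hence $X_\xi$ is non-rational above $(P,0)$, contradicting the hypothesis.

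For the moduli interpretation, the assignment $[\xi]\mapsto (X_\xi,L_\xi)$ is constant on $G_c$-orbits: $\PSL_2\cong\PSO(q)\subset\Aut(Q)$ acts by automorphisms of the pair $(Q,B_\xi)$, and the involution $[f,g,h]\mapsto[f,-g,h]$ corresponds to $x_3\mapsto -x_3$, inducing an isomorphism of the polarized double covers. Conversely, any isomorphism $(X_\xi,L_\xi)\cong(X_{\xi'},L_{\xi'})$ commutes with the covering involutions and descends to an isomorphism $(Q,B_\xi)\cong(Q,B_{\xi'})$ realized by $\Aut(Q)$, which (modulo the overall $\CC^{\ast}$-rescaling of $x_3$ absorbed in $\Proj\Lambda$) gives a $G_c$-equivalence. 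Openness is automatic and density holds because a generic $\xi$ gives a smooth $B_\xi$ missing the vertex $[0:0:0:1]$, the latter being automatic from the monic leading term $x_3^4$. The main obstacle is the precise bookkeeping in the translation from Hilbert--Mumford weights in $\Proj\Lambda$ to the vanishing-order inequalities on $C$, i.e.~consistently tracking the weighted degrees $(2,3,4)$ through the passage from the $\SL_2$-module decompositions to divisors on $C$; everything else is a clean application of standard GIT and moduli machinery.
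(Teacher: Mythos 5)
Your stability argument is essentially the paper's proof, just with the Hilbert--Mumford bookkeeping spelled out explicitly. The reduction to the diagonal torus, the translation of the weight inequalities into $v_P(f)\geq 2$, $v_P(g)\geq 3$, $v_P(h)\geq 4$, and the conclusion $\mult_{(P,0)}B_\xi|_Q\geq 4$ all match the paper's displayed inequalities, and the observation that a branch of multiplicity $\geq 4$ cannot produce an ADE double point is exactly the fact the paper invokes. The extra care you take in tracking the weighted degrees $(2,3,4)$ and verifying the arithmetic is a useful expansion rather than a different route.

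On the moduli side, however, your proof only establishes that the map $[\xi]\mapsto(X_\xi,L_\xi)$ is well-defined on $G_c$-orbits and injective (and your reasoning for injectivity is fine: an element of $\Aut(Q)$ preserving the shape $x_3^4+fx_3^2+gx_3+h$ modulo $q$ cannot involve a shear $x_3\mapsto x_3+\ell$, so it lands in $\Ort(q)\times\CC^*$, and the $\CC^*$-factor is absorbed in $\Proj\Lambda$). What you do not address is \emph{surjectivity}: that every polarized quartic $K3$ which is a double cover of the quadric cone with branch avoiding the vertex actually arises as $X_\xi$ for some $[\xi]\in\Proj\Lambda$. This is precisely what the paper flags as ``the key point'': after normalizing the $x_3^4$-coefficient, a Tschirnhaus shear $x_3\mapsto x_3+\ell(x_0,x_1,x_2)$ eliminates the $x_3^3$-term and brings the branch quartic into the form $x_3^4+fx_3^2+gx_3+h$ with $(f,g,h)\in V(4)\oplus V(6)\oplus V(8)$. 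Without that step you have only shown that the open locus of $\gM_c$ embeds into the moduli space in question, not that it exhausts it. The gap is small and easily repaired, but it is the one step the paper's proof singles out, and your dismissal of ``everything else'' as standard machinery papers over it.
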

\begin{proof}
Let $[\xi]=[f,g,h]\in\Proj\,\Lambda$ be a non-stable point. Then by the Hilbert-Mumford Criterion there exist a point $a\in\PP^1$ (where $\PP^1$ is identified with the conic $V(q,x_3)$ via the Veronese embedding)
such that 
\begin{equation}\label{duetrequattro}
\mult_a (f)\ge 2,\qquad \mult_a (g)\ge 3,\qquad \mult_a (h)\ge 4.
\end{equation}
The point $a\in\PP^1$ is identified with a point   $p\in V(q,x_3)$ (as recalled above), which belongs to the quartic $B_{\xi}$. 
The inequalities in~\eqref{duetrequattro} give that the multiplicity at $p$ of the divisor $B_{\xi}|_{V(q)}$ is at least $4$, and hence the corresponding double cover of $V(q)$ (i.e.~$X_{\xi}$) does \emph{not} have rational singularities. This proves the first statement.
The rest of the proof is analogous to Shah's proof  (see Theorem 4.3 in~\cite{shah}) that $\gM_u$ (see~\eqref{gitunig}) is a compactification of the moduli space  for unigonal $K3$ surfaces. 
The key point is that any quartic not containing the vertex $[0,0,0,1]$ has such an equation after a suitable projectivity $\varphi$  (a Tschirnhaus transformation) of the form $\varphi^{*}x_i=x_i$, $\varphi^{*}x_3=x_3+\ell(x_0,x_1,x_2)$ where  $\ell(x_0,x_1,x_2)$ is homogeneous of degree $1$. 
\end{proof}
Let $[\xi]\in \Proj\,\Lambda$ be generic; then $(X_{\xi},L_{\xi})$ is a polarized quartic $K3$ surface whose period point belongs to $H_h^{(2)}$, which (see~\cite{log1}) is identified with $\cF(17)$ via the embedding $f_{17,19}\colon\cF(17)\hra\cF$.
Thus we have a rational period map
\begin{equation}\label{coneperiods}
\gp_c\colon \gM_c \dra \cF(17)^{*}\subset\cF^{*}.
\end{equation}
A generic polarized quartic $K3$ surface is a double cover of the quadric cone unramified over the vertex, and hence is isomorphic to  $(X_{\xi},L_{\xi})$ for a certain 
$[\xi]\in \Proj\,\Lambda$. By the global Torelli Theorem for $K3$ surfaces, it follows that the period map $\gp_c$ is birational.  
\subsubsection{Partial extension of the period map on a weighted blow-up: the case of a point in $\wt{W}_1\setminus E_{\omega}$}\label{subsubsec:extnab}
Let  $(a,b)\in\CC^2$, with $a\not=b$. Let $N_{a,b}$ be the $\SL_2$ representation in~\eqref{enneab}, and let $M_{a,b}$ be the sub-representation
\begin{equation}\label{emmeab}
M_{a,b}:=V(8)\oplus V(6)\oplus R\cdot x_3^2.
\end{equation}
Let ${\bf N}_{a,b}$ be the normal slice of 
$V(f_{a,b})$ defined in~\Ref{subsubsec}{tangquad}, and let ${\bf M}_{a,b}\subset {\bf N}_{a,b}$ be the  subspace  
\begin{equation*}
{\bf M}_{a,b}:= \{V(f_{a,b}+g) \mid g\in M_{a,b}\}.
\end{equation*}
Notice that
\begin{equation*}
\dim {\bf M}_{a,b}=21.
\end{equation*}
Let  $(z_1,\ldots,z_{5})$ be  coordinates on  $V(4)$, let $(z_{6},\ldots,z_{12})$ be  coordinates on  $V(6)$, and let 
$(z_{13},\ldots,z_{21})$ be  coordinates on  $V(8)$; thus $(z_1,\ldots,z_{21})$ are  coordinates on  ${\bf M}_{a,b}$ (with a slight abuse of notation) centered at $V(f_{a,b})$. Let $\sigma$ be the weight defined by 
\begin{equation}
\sigma(z_i):=
\begin{cases}
2 & \text{if $i\in\{1,5\}$,} \\
3 & \text{if $i\in\{6,12\}$,} \\
4 & \text{if $i\in\{13,21\}$.} 
\end{cases}
\end{equation}
Let $\wh{\bf M}_{a,b}:=\Bl_{\sigma}({\bf M}_{a,b})$ be the corresponding weighted blow up, and 
 let $E_{a,b}$ be the exceptional set of $\wh{\bf M}_{a,b}\to {\bf M}_{a,b}$. Thus $E_{a,b}$ is the weighted projective space $\PP(2^5,3^{7},4^{9})\cong \Proj\,\Lambda$, where $\Lambda$ is the graded ring in~\eqref{eccolambda} (with grading defined right 
after~\eqref{eccolambda}). 
The action of $\Aut(V_{f_{a,b}})=G_c$ (here $G_c$  is as in~\eqref{eccogici}) on ${\bf M}_{a,b}$ lifts to an action on  $\wh{\bf M}_{a,b}$. Thus there is an associated GIT quotients  $\wh{\bf M}_{a,b}\gquot G_c$. 
The map   $\wh{\bf M}_{a,b}\to {\bf M}_{a,b}$ induces a map
\begin{equation}\label{duetheta}
 \wh{\theta}\colon\wh{\bf M}_{a,b}\gquot  G_c \lra {\bf M}_{a,b}\gquot  G_c.
\end{equation}
  Moreover,  we have the set-theoretic equality
\begin{equation}\label{macerata}
\wh{\theta}^{-1}(\ov{V(f_{a,b})})_{red}=\Proj\,\Lambda\gquot   G_c=\gM_c. 
\end{equation}
 Since the natural map ${\bf M}_{a,b}\gquot   G_c\to\gM$ is dominant, it makes  sense to compose it with the (rational) period map $\gp\colon\gM\dra\cF^{*}$. Composing with the birational map in~\eqref{duetheta}, we get a rational map
\begin{equation}\label{visso2}
 \wh{\gp}_{a,b}\colon\wh{\bf M}_{a,b}\gquot   G_c\dra \cF^{*}.
\end{equation}
\begin{proposition}\label{prp:redtwotan}
With notation as above, the restriction of  $\wh{\gp}_{a,b}$ to $\wh{\theta}^{-1}(\ov{V(f_{a,b})})_{red}=\gM_c$ is equal to the composition of the automorphism
\begin{equation}\label{padoan}
\begin{matrix}
\gM_c & \overset{\varphi_{a,b}}{\lra} & \gM_c \\
[f,g,h] & \mapsto & [f,-\frac{i}{2}(a-b)g,-\frac{1}{4}(a-b)^2 h]
\end{matrix}
\end{equation}
and the period map in~\eqref{coneperiods}. Moreover 
$\wh{\gp}_{a,b}$ is regular at all points of  $\wh{\theta}^{-1}(\ov{V(f_{a,b})})_{red}$ where $\gp_c$ is regular. 
\end{proposition}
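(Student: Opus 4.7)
My plan mirrors that of~\Ref{thm}{firstblowup}: I will apply~\Ref{crl}{regonblow} to the weighted blow-up $\wh{\bf M}_{a,b}\to{\bf M}_{a,b}$. It suffices to show that for any holomorphic disc $\alpha\colon\Delta\to\wh{\bf M}_{a,b}$ with $\alpha^{-1}(E_{a,b})=\{0\}$ and $\alpha(0)=[F,\tilde G, H]\in\Proj\Lambda$ a point at which $\gp_c$ is regular, the extension at $0$ of $\wh{\gp}_{a,b}\circ\alpha|_{\Delta\setminus\{0\}}$ depends only on $\alpha(0)$ and equals $\gp_c(\varphi_{a,b}([F,\tilde G, H]))$. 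By~\Ref{prp}{regonblow} and the decomposition $M_{a,b}=V(8)\oplus V(6)\oplus R\cdot x_3^2$ with $\sigma$-weights $(4,3,2)$, after a harmless base change such a disc yields a family
\begin{equation*}
\cX_t=V\bigl(f_{a,b}+t^{2}F(t) x_3^2+t^{3}\tilde G(t) x_3+t^{4}H(t)\bigr)\subset\PP^3,
\end{equation*}
where $(F(0),\tilde G(0),H(0))$ represents $\alpha(0)$.

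The core computation is an explicit semistable reduction of $\cX\to\Delta$. Completing the square yields $f_{a,b}=w^2-\tfrac{(a-b)^2}{4}x_3^4$ for $w:=q+\tfrac{a+b}{2}x_3^2$, so the equation of $\cX_t$ becomes $w^2=\tfrac{(a-b)^2}{4}x_3^4-t^2 F x_3^2-t^3\tilde G x_3-t^4 H$. The substitution $x_3=tX$, $w=t^2W$, which I realize globally as a weighted blow-up of $\cX$ along the singular conic $C_0=V(q,x_3)\times\{0\}$ with normal weights $(1,2)$ on $(x_3,w)$, transforms the equation (after dividing by $t^4$) into
\begin{equation*}
W^2=\tfrac{(a-b)^2}{4}X^4-FX^2-\tilde G X-H,
\end{equation*}
while the identity $q=w-\tfrac{a+b}{2}x_3^2=t^2(W-\tfrac{a+b}{2}X^2)$ forces $q=0$ on the central fiber. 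Hence the new central fiber is the double cover of the quadric cone $V(q)\subset\PP^3$ branched along $V(q)\cap V(\tfrac{(a-b)^2}{4}x_3^4-Fx_3^2-\tilde G x_3-H)$, that is, the surface $X_\xi$ of~\eqref{tschirn} with $\xi=[-\tfrac{4F}{(a-b)^2},-\tfrac{4\tilde G}{(a-b)^2},-\tfrac{4H}{(a-b)^2}]\in\Proj\Lambda$. Rescaling by $\lambda=-i(a-b)/2$ under the weighted $\CC^{*}$-action $(f,g,h)\sim(\lambda^2 f,\lambda^3 g,\lambda^4 h)$ identifies $\xi$ with $[F,-\tfrac{i(a-b)}{2}\tilde G,-\tfrac{(a-b)^2}{4}H]=\varphi_{a,b}([F,\tilde G, H])$, as predicted.

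To conclude: the hypothesis that $\gp_c$ is regular at $[F,\tilde G, H]$ forces $X_\xi$ to have ADE singularities, so the new central fiber has insignificant limit singularities of Type I, and \Ref{thm}{dubois} identifies the limit period with that of $X_\xi$, namely $\gp_c(\varphi_{a,b}([F,\tilde G, H]))$; since this value depends only on $\alpha(0)$, \Ref{crl}{regonblow} yields simultaneously the regularity assertion and the desired equality. The main obstacle I anticipate is the global formalization of the weighted blow-up: while the local analysis along $C_0$ is transparent, one must rigorously check that the proper transform gives a bona fide semistable model and that no unexpected singularities appear on the central fiber away from $C_0$ (notably at the vertex of the quadric cone, where a further small modification may be required). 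The regularity hypothesis on $\gp_c$ at $[F,\tilde G, H]$ should control all such singularities via the corresponding statement for $\gM_c$, but careful bookkeeping of these birational modifications will be the bulk of the technical work.
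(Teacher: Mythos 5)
Your argument is essentially the same as the paper's, differing only in presentation. The paper applies the $1$-PS $\lambda(t)=\diag(1,1,1,t)$, forms the closure $\cY$ of the transformed family, and then normalizes $\cY$ by adjoining $\xi_i=q/(x_i^2 t^2)$, which satisfies a degree-two equation whose $t=0$ limit exhibits the double cover of $V(q)$ branched along $(a-b)^2x_3^4-4fx_3^2-4gx_3-4h=0$. Your ``complete the square'' substitution $w=q+\tfrac{a+b}{2}x_3^2$, $x_3=tX$, $w=t^2W$ is exactly this same $1$-PS rescaling followed by the same normalization, just phrased so that the double-cover structure is visible from the start; your final branch divisor and identification of the automorphism $\varphi_{a,b}$ (via the $\CC^*$-action with $\lambda=-i(a-b)/2$) match the paper's arithmetic. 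So the computation is correct and equivalent, not a genuinely new route.

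Two small remarks. First, your worry about ``global formalization of the weighted blow-up'' and behavior at the cone vertex is not an obstacle in the paper's framework: one does not need to realize the modification as an honest global weighted blow-up of $\cX\to\Delta$. It suffices, exactly as in the proof of \Ref{thm}{firstblowup}, to compute the limit of the period map along an arbitrary disc and invoke \Ref{prp}{regonblow} and \Ref{crl}{regonblow}; the normalization of the closure $\cY$ already produces the correct slc central fiber. Second, the step where you write that regularity of $\gp_c$ at $[F,\tilde G,H]$ ``forces $X_\xi$ to have ADE singularities'' and hence Type I is an overclaim --- $\gp_c$ is also regular on the Type II/III locus of $\gM_c$, and regularity alone need not force ADE singularities. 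You do not need this assertion: the computation identifies the limit period of the family with $\gp_c(\varphi_{a,b}(\alpha(0)))$ wherever $\gp_c$ is regular (whether $X_\xi$ has ADE singularities or more general insignificant limit singularities), and that is exactly the input \Ref{crl}{regonblow} requires.
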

\begin{proof}
Let  $[\xi]=[f,g,h]\in\Proj\,\Lambda=E_{a,b}$ be a $G_c$-semistable point with corresponding point $\overline{[\xi]}\in\gM_c$, and let $[\eta]=\varphi_{a,b}(\overline{[\xi]})$. Suppose that the  period map $\gp_c$ is regular at $[\eta]$. We will prove that  if $\Delta\subset\CC$ is a disc centered at $0$, and  $\Delta\to \wh{\bf M}_{a,b}$ is an analytic map mapping $0$ to  $[\xi]$ and no other point to the exceptional divisor $E_{a,b}$, then the period map is defined on a neighborhood of $0\in\Delta$, and its value at $0$ is equal to the period point of $(X_{\eta},L_{\eta})$. This will prove the Theorem, by~\Ref{crl}{regonblow}.
By~\Ref{prp}{regonblow} the statement that we just gave boils down to the following computation. First, we identify $V(2d)$ with the corresponding $\SO(q)$-sub-representation of $\CC[x_0,x_1,x_2]_d$; thus  
   $f,g,h\in\CC[x_0,x_1,x_2]$ are homogeneous of degrees  $2$, $3$ and $4$ respectively. Now let   $\cX\subset\PP^3\times\Delta$ be the hypersurface given by the equation
\begin{equation}\label{ipersup}
0=(q+ax_3^2)(q+bx_3^2)+t^2 x_3^2 (f+t F)+t^3 x_3 (g+tG)+t^4 (h+tH)
\end{equation}
where $F\in \CC[x_0,x_1,x_2]_2[[t]]$,  $G\in \CC[x_0,x_1,x_2]_3[[t]]$, and $H\in \CC[x_0,x_1,x_2]_4[[t]]$. 
Now consider the $1$-parameter subgroup of $GL_4(\CC)$ defined by $\lambda(t):=\diag(1,1,1,t)$. We let $\cY\subset\PP^3\times\Delta$ be the closure of 
\begin{equation*}
\{([x],t) \mid t\not=0,\quad \lambda(t)[x]\in\cX\}.
\end{equation*}
Then $Y_t \cong X_t$ for $t\not=0$, and $\cY$  has equation 
\begin{equation}\label{cioccolata}
0=q^2 +t^2(a+b) x_3^2 q+t^4(ab x_3^4+ x_3^2 f+x_3 g+ h)+t^5(\ldots)
\end{equation}
Let $\nu\colon\wt{\cY}\to \cY$ be the normalization of $\cY$.  Dividing~\eqref{cioccolata} by $t^4 x_i^4$, we get that the ring of regular functions of the affine set $\nu^{-1}(\cY \cap \PP^3_{x_i})$ is generated over $\CC[\cY\cap\PP^3_{x_i}]$ by the rational function $\xi_i:=q/(x_i^2 t^2)$, which satisfies the equation
\begin{equation*}
0=\xi_i^2+(a+b) \left(\frac{x_3}{x_i}\right)^2\xi_i+(ab x_3^4+ x_3^2 f+x_3 g+ h)/x_i^4
+t(\ldots)
\end{equation*}
It follows  that  for $t\to 0$  the quartics $X_t$ approach the double cover of $V(q)$ branched over the intersection with the quartic 
\begin{equation}
0=((a+b)x_3^2)^2-4(ab x_3^4+ x_3^2 f+x_3 g+ h)=(a-b)^2 x_3^4-4x_3^2   f-4 x_3 g -4 h.
\end{equation}
\end{proof}
\subsubsection{Partial extension of the period map on a weighted blow-up: the unique point in  $\wt{W}_1\cap E_{\omega}$}\label{subsubsec:exttua}
Let $a\not=0$, and
\begin{equation*}
\wt{\bf V}_a:=\wt{\bf U}_a\cap E_{\bf D}.
\end{equation*}
(We recall that  $E_{\bf D}$ is the exceptional divisor of the blow-up  $P\to |\cO_{\PP^3}(4)|$ with center the closed subset ${\bf D}$ parametrizing double quadrics.)  
Thus, letting $S$ be as in~\eqref{essea}, we have
\begin{equation}\label{tilvua}
\wt{\bf V}_a=\PP(S)=\PP(V(8)\oplus V(6)\oplus R\cdot x_3^2\oplus \la x_3^4\ra),\qquad \dim\wt{\bf V}_a=21.
\end{equation}
Let $p:=({Q}_a,x_3^4)\in \wt{\bf V}_a$, see~\Ref{notconv}{puntoinpi}. Then $\wt{\bf V}_a$ is mapped to itself by $\Stab(p)$, and by restriction of the map $\psi$ in~\eqref{isotta} we get a map
\begin{equation*}
 \wt{\bf V}_{a}\gquot  \Stab(p) \lra \wt{\gM}.
\end{equation*}
We define a weighted blow up of $\wt{\bf V}_a$ with center $p$ as follows. First, by~\eqref{tilvua} we have the following description of an affine neighborhood $T$ of $p\in \wt{\bf V}_a$: 
\begin{equation*}
\begin{matrix}
V(8)\oplus V(6)\oplus R\cdot x_3^2 & \lra & T \\
\alpha & \mapsto & [x_3^4+\alpha]
\end{matrix}
\end{equation*}
Let  $(z_1,\ldots,z_{5})$ be  coordinates on  $R\cdot x_3^2=V(4)$, let $(z_{6},\ldots,z_{12})$ be  coordinates on  $V(6)$, and let 
$(z_{13},\ldots,z_{21})$ be  coordinates on  $V(8)$; thus $(z_1,\ldots,z_{21})$ are  coordinates on  $T$ (with a slight abuse of notation) centered at the point $p$. Let $\sigma$ be the weight defined by 
\begin{equation}
\sigma(z_i):=
\begin{cases}
2 & \text{if $i\in\{1,5\}$,} \\
3 & \text{if $i\in\{6,12\}$,} \\
4 & \text{if $i\in\{13,21\}$.} 
\end{cases}
\end{equation}
(Note: we are proceeding exactly as in~\Ref{subsubsec}{extnab}.)
Let $\wh{\bf V}_{a}:=\Bl_{\sigma}(\wt{\bf V}_{a})$ be the corresponding weighted blow up, and 
 let $E_{a}$ be the corresponding exceptional divisor. Thus $E_{a}$ is the weighted projective space $\PP(2^5,3^{7},4^{9})\cong \Proj\,\Lambda$, where $\Lambda$ is the graded ring in~\eqref{eccolambda} (with grading defined right 
after~\eqref{eccolambda}). 
The action of $\Aut(p)$ on $\wt{\bf V}_{a}$ lifts to an action on  $\wh{\bf V}_{a}$ There is an associated GIT quotient  $\wh{\bf V}_{a}\gquot \Stab(p)$, and a regular map
\begin{equation*}
\wh{\eta}\colon\wh{\bf V}_{a}\gquot \Stab(p)\lra \wt{\bf V}_{a}\gquot  \Stab(p).
\end{equation*}
We have the set-theoretic equality
\begin{equation}\label{macerata2}
\wh{\eta}^{-1}(\ov{p})_{red}=\Proj\,\Lambda\gquot   G_c=\gM_c. 
\end{equation}
We have a rational map
\begin{equation}\label{visso}
 \wh{\gp}_{a}\colon\wh{\bf V}_{a}\gquot   \Aut(p)\dra \cF^{*}.
\end{equation}
\begin{proposition}\label{prp:perexc}
With notation as above, the restriction of  $\wh{\gp}_{a}$ to $\wh{\eta}^{-1}(\ov{p})_{red}=\gM_c$ is equal to the  period map in~\eqref{coneperiods}. Moreover 
$\wh{\gp}_{a}$ is regular at all points of  $\wt{\eta}^{-1}(\ov{p})_{red}$ where $\gp_c$ is regular. 
\end{proposition}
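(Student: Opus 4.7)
The plan is to adapt the computation in the proof of \Ref{prp}{redtwotan} to the composite blow-up geometry of \Ref{subsubsec}{exttua}, i.e.~the blow-up $P \to |\cO_{\PP^3}(4)|$ of ${\bf D}$ followed by the weighted blow-up $\wh{\bf V}_a \to \wt{\bf V}_a$ at $p$. By \Ref{crl}{regonblow}, it suffices to take an analytic arc $\alpha\colon \Delta \to \wh{\bf V}_a$ with $\alpha^{-1}(E_a) = \{0\}$ and $\alpha(0)$ corresponding, via the identification $E_a \cong \Proj\Lambda \cong \PP(2^5, 3^7, 4^9)$, to some $[\xi] = [f,g,h] \in \gM_c$ in the affine chart where the $x_3^4$-coordinate is normalized to~$1$, and to show that the induced family of period points has limit $\gp_c([\xi])$. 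This will give both the regularity assertion and the identification of $\wh{\gp}_a|_{E_a}$ with $\gp_c$.

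First I would unwind the two blow-ups. By \Ref{prp}{regonblow}, the projection of $\alpha$ into the affine chart $T$ of $\wt{\bf V}_a$ around $p$ has weighted-homogeneous form
\[
[\,x_3^4 + t^{2k} f(t)\, x_3^2 + t^{3k} g(t)\, x_3 + t^{4k} h(t)\,]
\]
for some $k \in \NN_+$ and holomorphic $f(t), g(t), h(t)$ with $(f(0), g(0), h(0)) = (f,g,h)$. Since $\wt{\bf U}_a \to {\bf U}_a$ is the ordinary blow-up of the reduced point $V(f_{a,a}) \in {\bf S}_a$, lifting $\alpha$ further to ${\bf U}_a \subset |\cO_{\PP^3}(4)|$ amounts to choosing a radial scale $s(t) \to 0$. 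Taking $s(t) = t^{4k}$ for convenience produces the explicit family of quartics
\[
F_t = (q + ax_3^2)^2 + t^{4k} x_3^4 + t^{6k} f\, x_3^2 + t^{7k} g\, x_3 + t^{8k} h + O(t^{8k+1}).
\]

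The technical heart is the semistable reduction. Mirroring \Ref{prp}{redtwotan}, apply the $1$-parameter subgroup $\lambda(t) := \diag(1,1,1,t^k) \in \SL(4)$ and let $\cY$ be the closure of $\{([x],t) : \lambda(t)[x] \in V(F_t),\ t \ne 0\} \subset \PP^3 \times \Delta$. A direct expansion yields
\[
\lambda(t)^{*} F_t = (q + at^{2k} x_3^2)^2 + t^{8k}\bigl(x_3^4 + f x_3^2 + g x_3 + h\bigr) + O(t^{8k+1}),
\]
whose central fibre is the double cone $2V(q)$. Introducing $\eta := (q + at^{2k} x_3^2)/t^{4k}$ on the normalization of $\cY$ gives $\eta^2 = -(x_3^4 + f x_3^2 + g x_3 + h) + O(t)$, so the central fibre of the normalized family is the double cover of the cone $V(q)\subset\PP^3$ branched along the quartic $V(x_3^4 + f x_3^2 + g x_3 + h)$. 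By \Ref{subsubsec}{doppiocono}, this is precisely the $K3$ surface $(X_\xi, L_\xi)$, whose period point is $\gp_c([\xi])$ (cf.~\eqref{coneperiods}). Since this output depends only on $\alpha(0)$---and not on $k$, on $s(t)$, nor on the higher-order Taylor data of the arc---\Ref{crl}{regonblow} yields both conclusions of the proposition.

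The principal obstacle is verifying rigorously that the higher-order terms in $F_t$ (those contributed by the radial scale $s(t)$ and the non-leading coefficients of $f(t), g(t), h(t)$) do not perturb the limit mixed Hodge structure. This is handled exactly as in the proof of \Ref{thm}{firstblowup}, via Theorem~3.17 of \cite{shah4}: one shows that $\cY$ admits a simultaneous resolution whose central fibre, after the normalization above, realizes the claimed hyperelliptic $K3$ as its minimal $K$-trivial model, and invokes the Kulikov--Persson--Pinkham theorem together with the slc/du Bois control of \Ref{thm}{dubois} to conclude that the limit MHS matches that of $(X_\xi, L_\xi)$.
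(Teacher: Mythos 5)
Your algebra is correct and arrives at the right limit, but the argument is computing the limit of the period map along the wrong family, and this leaves a genuine gap.

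The crucial point is that $\wt{\bf V}_a = \wt{\bf U}_a\cap E_{\bf D}$ sits \emph{inside} the exceptional divisor $E_{\bf D}$ of the blow-up $P\to|\cO_{\PP^3}(4)|$. Consequently the image of any arc $\alpha\colon\Delta\to\wh{\bf V}_a$ in $|\cO_{\PP^3}(4)|$ is the \emph{constant} map to $V(f_{a,a})$, so there is no ``lift of $\alpha$ further to ${\bf U}_a$''; the ``radial scale $s(t)$'' is an ad hoc auxiliary choice, not a lift. More importantly: for $t\ne 0$ the point $\alpha(t)$ maps into $E_\omega\cong\gM_h$, so by \Ref{prp}{eomident} the value $\wh{\gp}_a(\alpha(t))$ is the \emph{hyperelliptic} period $\gp_h$ of the double cover $Y_t\to Q_a$ branched over $C_t = Q_a\cap V(x_3^4 + t^{2k}f(t)x_3^2 + t^{3k}g(t)x_3 + t^{4k}h(t))$. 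It is not the period $\gp(F_t)$ of your auxiliary quartic surfaces $F_t$. These are different families tracing different paths in $\cF^{*}$ (for $t\ne 0$ one path lies in $H_h$, the other off $H_h$), both of which tend to $\bar p$ in $\wt{\gM}$. That both limits coincide with $\gp_c([\xi])$ is true, but it is not automatic --- it is exactly the kind of assertion the proposition is after --- so in order to apply \Ref{crl}{regonblow} you must compute $\lim_{t\to 0}\gp_h(Y_t)$ itself, not $\lim_{t\to 0}\gp(F_t)$.

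The paper's proof avoids this detour by working directly with the branch curves $C_t\subset Q_a$: applying the one-parameter subgroup $\lambda(t)=(1,1,1,t)$ to the pair $(Q_a, C_t)$ sends the smooth quadric $V(q+ax_3^2)$ to $V(q+at^2x_3^2)$, which degenerates to the quadric cone $V(q)$, while the branch quartic $x_3^4 + t^{2}fx_3^2 + t^{3}gx_3 + t^{4}h + \cdots$ pulls back and rescales to $x_3^4 + fx_3^2 + gx_3 + h + O(t)$. Thus $(Q_a,C_t)$ degenerates, \emph{in moduli}, to the cone and its branch curve, and the corresponding double covers degenerate to $(X_\xi,L_\xi)$; the slc/du~Bois control (\Ref{thm}{dubois}) then identifies the limit period. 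This is a one-line computation at the level of curves and needs no auxiliary family of quartic surfaces. Your weighted semistable reduction is instead the right tool for the \emph{transverse} arcs handled in \Ref{subsubsec}{unmezzo} (compare your $F_t$ with the family in~\eqref{ipersup2}); it does not, by itself, address arcs tangent to $E_{\bf D}$ such as those arising from $\wh{\bf V}_a$, which is what the present proposition is about.
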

\begin{proof}
Let  $[\xi]=[f,g,h]\in\Proj\,\Lambda=E_{a}$ be a 
$G_c$-semistable point with corresponding point $\eta\in\gM_c$. Suppose that the  period map $\gp_c$ is regular at $\eta$. We will prove that  if $\Delta\subset\CC$ is a disc centered at $0$, and  $\Delta\to \wh{\bf V}_{a}$ is an analytic map mapping $0$ to  $[\xi]$ and no other point to the exceptional divisor $E_a$, then the period map is defined on a neighborhood of $0\in\Delta$, and its value at $0$ is equal to the period point of $(X_{\eta},L_{\eta})$. This will prove the Theorem, by~\Ref{crl}{regonblow}. By~\Ref{prp}{regonblow}, the previous statement boils down to the following computation.
Let  
   $f,g,h\in\CC[x_0,x_1,x_2]$ be homogeneous of degrees  $2$, $3$ and $4$ respectively, not all zero. Let $C_t\subset V(q+ax_3^2)$ be the intersection with the quartic
\begin{equation*}
x_3^4+t^2x_3^2 (f+tF)+t^3 x_3 (g+tG)+t^4 (h+tH)=0.
\end{equation*}
where $F\in \CC[x_0,x_1,x_2]_2[[t]]$,  $G\in \CC[x_0,x_1,x_2]_3[[t]]$, and $H\in \CC[x_0,x_1,x_2]_4[[t]]$. 
We will show that $C_t$ for $t\not=0$ approaches for $t\to 0$, the curve
\begin{equation*}
q=x_3^4+x_3^2 f+x_3 g+h=0.
\end{equation*}
In fact it suffices to  consider the limit for $t\to 0$ of $\lambda(t) C_t$, where $\lambda$ is the $1$-PS $\lambda(t)=(1,1,1,t)$. 
\end{proof}
\subsubsection{A global modification of $\wt{\gM}$ and partial extension of the period map}\label{subsubsec:unmezzo}
Let ${\bf T}\subset|\cO_{\PP^3}(4)|$ be the closure of the set of $\PGL(4)(\CC)$-translates of $V(f_{a,b})$, for all $(a,b)\in\CC^2$. Thus ${\bf T}$ is a closed, $\PGL(4)(\CC)$-invariant subset, containing ${\bf D}$ (the set of double quadrics), and
\begin{equation}
\dim{\bf T}=13.
\end{equation}
Let $\wt{\bf T}\subset P$ be the strict transform of ${\bf T}$ in the blow-up $\pi\colon P\to|\cO_{\PP^3}(4)|$ with center ${\bf D}$. The set of semistable points 
$\wt{\bf T}^{ss}\subset \wt{\bf T}$ (for a polarization  $\pi^{*}\cL(-\epsilon E_{\bf D})$ close to $\pi^{*}\cL$, see the proof of~\Ref{prp}{eomident}) is the union of 
 the set of points of $\wt{\bf T}\setminus E_{\bf D}$ which are mapped by $\pi$ to quartics $\PGL(4)(\CC)$-equivalent to $V(f_{a,b})$ for some $a\not=b$, and of  
 $\wt{\bf T}\cap E^{ss}_{\bf D}$. The latter set consists of the $\PGL(4)(\CC)$-translates of the points $(Q_a,x_3^4)$ defined  in~\Ref{notconv}{puntoinpi}.

In~\Ref{subsubsec}{extnab} and~\Ref{subsubsec}{exttua} we defined a weighted blow up of an explicit normal slice to $\wt{\bf T}$ at points $x\in \wt{\bf T}^{ss}$. That construction can be globalized: one obtains a modification $\wh{\pi}\colon\wh{P}\to P$ which is an isomorphism away from $P\setminus \wt{\bf T}$, and replaces  
$\wt{\bf T}^{ss}$ by a locally trivial fiber bundle over $\wt{\bf T}^{ss}$ with fiber isomorphic to  the weighted projective space $\PP(2^5,3^{7},4^{9})$. In fact the weighted blow up is isomorphic to the usual blow up of a suitable ideal, see Remark~2.5 of~\cite{andblow}, hence one may define an ideal $\cI$ co-supported on $\wt{\bf T}$ such that $\wh{P}=\Bl_{\cI}P$. 

Let $E_{\wt{\bf T}}$ be the exceptional divisor of $\wh{\pi}$. Letting $\cL_P:=\pi^{*}\cL(-\epsilon E_{\bf D})$ be a polarization of $P$ as above, we may consider the GIT quotient of $\wh{P}$ with $\PGL(4)(\CC)$-linearized polarization $\cL_{\wh{P}}:=\pi^{*}\cL_P(-t E_{\wt{\bf T}})$, call it $\wh{\gM}(t)$. 
For $0<t$ small enough, the map $\wh{\pi}$ induces a regular map $\wh{\gM}(t)\to\wt{\gM}$. From now on we drop the parameter $t$ from our notation; thus $\wh{\gM}$ denotes $\wh{\gM}$ for $t$ small. 

The image of  $E_{\wt{\bf T}}$ in $\wh{\gM}$ is a fiber bundle  
\begin{equation*}
\rho\colon E_1\to \wt{W}_1, 
\end{equation*}
with fiber $\gM_c$ over every point. Let $\wh{\gp}\colon \wh{\gM}\dra \cF^{*}$ be the period map. We claim that the restriction of $\wh{\gp}$ to the fiber of
 $E_1\to \wt{W}_1$ over $x$  is regular away from the indeterminacy locus of $\gp_c\colon\gM_c\dra\cF^{*}$, and it has the same value, provided we compose with the automorphism of $\gM_c$ given by~\eqref{padoan} if $x\notin E_{\omega}$ and $\wh{\pi}(x)=[V(f_{a,b})]$. 
 
 In order to prove the claim it suffices to prove the following. Let  
 $\Delta\subset\CC$ be a disc centered at $0$, and let $\Delta\to \wh{\gM}$ be an analytic map mapping $0$ to  a point $\wh{x}\in E_1$ such that the period map $\gp_c$  is regular at the point 
 $\eta\in\gM_c=\rho^{-1}(\rho(\wh{x})$ corresponding to $\wh{x}$, and suppose that  $(\Delta\setminus\{0\})$ is mapped to the complement of $E_1$ and into the locus where the period map is regular; then the value at $0$ of the extension of the period map on $\Delta\setminus\{0\}$ is equal to the period point of $(X_{\eta},L_{\eta})$. We may assume that  $\Delta\to \wh{\gM}$ lifts to an analytic map  $\tau\colon\Delta\to \wh{P}$ mapping $0$ 
 to a point of $E_{\wt{T}}$ with closed orbit (in the semistable locus) lifting $\wh{x}$. 
 In~\Ref{subsubsec}{extnab} and~\Ref{subsubsec}{exttua} we have checked that the value at $0$ of the extension behaves as required if $\wh{\pi}\circ\tau(\Delta)$ is contained in the normal slice to $\wt{T}$ at the point  $\wh{\pi}\circ\tau(0)$ (defined in~\Ref{subsubsec}{extnab} and~\Ref{subsubsec}{exttua} respectively). 
 
 It remains to prove that it behaves as required also if the latter condition does not hold. If  $\wh{\pi}\circ\tau(0)\notin E_{\bf D}$, then the argument is similar to that given in~\Ref{subsubsec}{extnab}; one simply replaces $a,b\in\CC$ by holomorphic functions $a(t),b(t)$ where $t\in\Delta$.   
 
 If  $\wh{\pi}\circ\tau(0)\in E_{\bf D}$, one needs a separate argument. 
 The relevant computation goes as follows.
Let  
   $f,g,h\in\CC[x_0,x_1,x_2]$ be homogeneous of degrees  $2$, $3$ and $4$ respectively, not all zero.  Let   $\cX\subset\PP^3\times\Delta$ be the hypersurface given by the equation
\begin{equation}\label{ipersup2}
(q+x_3^2)^2+t^{4k} x_3^4+t^{4k+6p} x_3^2 (f+tF)+t^{4k+9p} x_3 (g+tG)+t^{4k+12p} (h+tH)=0,
\end{equation}
where  
\begin{equation*}
 F\in \CC[x_0,x_1,x_2]_2[[t]],\quad  G\in \CC[x_0,x_1,x_2]_3[[t]],\quad H\in \CC[x_0,x_1,x_2]_4[[t]].
\end{equation*}
Let   $\lambda(t):=\diag(1,1,1,t^4)$, and let $\cY\subset\PP^3\times\Delta$ be the closure of 
\begin{equation*}
\{([x],t) \mid t\not=0,\quad \lambda(t)[x]\in\cX\}.
\end{equation*}
Thus $Y_t \cong X_t$ for $t\not=0$, and $\cY$  has equation 
\begin{equation}\label{cioccolata2}
q^2 +2t^8 q x_3^2 +t^{16} x_3^4+t^{4k+16} x_3^4+t^{4k+6p+8} x_3^2 (f+tF)+t^{4k+9p+4} x_3 (g+tG)+ t^{4k+12p} (h+tH)=0.
\end{equation}
Dividing the above equation by $t^{16}$ we find that  the rational function $\xi_i:=q/(x_i^2 t^8)$ satisfies the equation
\begin{equation*}
0=\xi_i^2+2\left(\frac{x_3}{x_i}\right)^2 \xi_i+ (x_3^4+t^{4k} x_3^4+ 
t^{4k+6p-8}   x_3^2 (f+tF)+t^{4k+9p-12} x_3(g+tG)+  t^{4k+12p-16}(h+tH))/x_i^4.
\end{equation*}
It follows that the fiber at $t=0$ of the normalization of $\cY$ is the double cover of $V(q)$ ramified over the intersection with the limit for $t\to 0$ of the quartic 
\begin{equation*}
4x_3^4-4( x_3^4+t^{4k} x_3^4+ 
t^{4k+6p-8}   x_3^2 (f+tF)+t^{4k+9p-12} x_3 (g+tG)+  t^{4k+12p-16}(h+tH))=0.
\end{equation*}
Replacing $x_3$ by $t^{-3p+4}x_3$ we get  that the fiber at $t=0$ of the normalization of $\cY$ is the double cover of $V(q)$ ramified over the intersection with the quartic
\begin{equation}\label{protozoo}
x_3^4+x_3^2f+x_3g+h=0.
\end{equation}
Let us explain why the above computation proves the required statement. Let $\epsilon\colon \Delta\to|\cO_{\PP^3}(4)|$ be the analytic map defined by $\epsilon(t):=X_t$. Then $\im(\epsilon)\subset S_1$, where $S_1$ is as in~\eqref{boldessea} (notice that $X_0=f_{1,1}$).  Let $\wt{U}_1$ be the blow up of $S_1\setminus Y_1$ with center $V(f_{1,1})$, see~\Ref{subsubsec}{norcia}, and let
$\wt{\epsilon}\colon\Delta\to \wt{U}_1$ be the lift of $\epsilon$ (by shrinking $\Delta$ we may assume that $\im(\epsilon)\cap Y_1=\es$). Then $\wt{\epsilon}(0)=p=(Q_1,x_3^4)$, notation as in~\Ref{notconv}{puntoinpi}. 

Now, choose a basis $\{a_0,\ldots,a_{21}\}$ of the $\SL_2$-representation $S$ given by~\eqref{essea} adapted to the decomposition 
in~\eqref{essea}; more precisely $a_0=x_3^4$,  $\{a_1,\ldots,a_{5}\}$ is a basis of $R\cdot x_3^2$, $\{a_6,\ldots,a_{12}\}$ is a basis of $V(6)$, and 
$\{a_9,\ldots,a_{21}\}$ is a basis of $V(8)$. Let $\{w_0,\ldots,w_{21}\}$ be the basis dual to  $\{a_0,\ldots,a_{21}\}$; then $(w_0,\ldots,w_{21})$ are coordinates on an affine neighborhood of $V(f_{a,a})$ in ${\bf S}_a$, centered at $V(f_{a,a})$. Next set $y_0=w_0$, and $y_i=w_i/w_0$ for $i\in\{1,\ldots,22\}$. Then $(y_0,\ldots,y_{21})$ are coordinates on an affine neighborhood of $p\in \wt{\bf U}_1$, centered at $p$. Let $(z_1,\ldots,z_{21})$ be the affine coordinates introduced in~\Ref{subsubsec}{exttua}; we may assume that 
$y_i|_{\wt{\bf V}_1}=z_i$ for  $i\in\{1,\ldots,22\}$. 

In the coordinates  $(y_0,\ldots,y_{21})$ we have 
\begin{equation*}
\scriptstyle
\wt{\epsilon}(t)=(t^{4k},t^{6p} (f_1+tF_1),\ldots,t^{6p} (f_5+tF_5),t^{9p} (g_5+tG_5),\ldots,t^{9p} (g_{12}+tG_{12}),t^{12p} (h_{13}+tH_{13}),\ldots,t^{12p} (h_{21}+tH_{21})),
\end{equation*}
with obvious notation: $(f_1,\ldots,f_5)$ are the coordinates of $f$ in the basis   $\{a_1,\ldots,a_{5}\}$, etc.
The computation above shows that the extension at $0$ of the period map is equal to the period point of the double cover of  $V(q)$ ramified over the intersection with the quartic defined by~\eqref{protozoo}, and hence the period map is regular at the point corresponding to $[f,g,h]$ by~\Ref{prp}{regonblow} and~\Ref{crl}{regonblow}. 
\subsubsection{The first flip and a contraction of $\wh{\gM}$.}\label{subsubsec:whyemhat}
The divisor $E_1\subset\wh{\gM}$ is isomorphic to $\wt{W}_1\times\gM_c$. The normal bundle of $E_1$ restricted to the fibers of the projection $E_1\to\gM_c$ is negative; it follows that  (in the analytic category) there exists a contraction $\wh{\gM}\to \gM_{1/2}$ of $E_1$ along the the fibers of $E_1\to\gM_c$.  We claim that $\gM_{1/2}$ must be isomorphic to $\cF(1/3,1/2)$.  In fact, let $\wh{\gp}\colon\wh{\gM}\dra\cF$ be the period map (notice: contrary to previous notation, the codomain is $\cF$, not $\cF^{*}$). The generic fiber of  $E_1\to\gM_c$   is in the regular locus of $\wh{\gp}$, and is mapped to a constant: it follows that 
\begin{equation}
0=\wh{\gp}^{*}(\lambda)\cdot (\wt{W}_1\times\{[f,g,h]\})=\wh{\gp}^{*}(\Delta)\cdot (\wt{W}_1\times\{[f,g,h]\}),\qquad [f,g,h]\in\gM_c.
\end{equation}
On the other hand, letting $p\in\wt{W}_1$, and adopting  the notation of~\cite{log1}, we have 
\begin{equation}
\wh{\gp}(\{p\}\times \gM_{c,reg})\subset \im(f_{17,19}).
\end{equation}
(Here $\gM_{c,reg}$ is the set of regular points of the period map $\wh{\gp}_c\colon\gM_c\dra \cF$; by~\Ref{prp}{perexc} it is equal to the intersection of  $\{p\}\times \gM_{c,reg}$ with the set of regular points of $\wh{\gp}$.)
By Proposition~5.3.7 of~\cite{log1} we have $f_{17,19}^{*}(\lambda+\beta\Delta)=(1-2\beta)\lambda(17)+\beta\Delta(17)$. Now, $\Delta(17)=H_h(17)/2$, and 
$\wh{\gp}(\{p\}\times \gM_c)$ avoids the support of $H_h(17)=\im f_{16,17}$. Thus 
\begin{equation}
\wh{\gp}^{*}(\lambda+\beta\Delta)|_{\{p\}\times \gM_c}=\wh{\gp}^{*}((1-2\beta)\lambda)|_{\{p\}\times \gM_c}.
\end{equation}
The conclusion is that $\wh{\gp}^{*}(\lambda+\beta\Delta)$ contracts all of $E_1$ to a point if $\beta\ge 1/2$ (and is trivial on $E_1$ if $\beta=1/2$), while if $\beta<1/2$, 
then the restriction of $\wh{\gp}^{*}(\lambda+\beta\Delta)$ to $E_1$  is the pull-back of an ample line bundle on $\gM_c$. Thus we expect that for $\beta<1/2$ close to $1/2$ 
the ($\QQ$) line-bundle  $\wh{\gp}^{*}(\lambda+\beta\Delta)$ is the pull-back of an ample  ($\QQ$) line bundle on $\gM_{1/2}$, and hence $\gM_{1/2}$ is identified with $\cF(\beta)$, because the period map would be birational map which is an isomorphism in codimension $2$ and pulls- back an ample line bundle to an ample line bundle.


\section{Semistable reduction for Dolgachev singularities, and the last three flips}\label{sec:dolgachev} 
In the present section, we will provide evidence in favour of the predictions that there are flips corresponding to $\beta\in\{\frac{1}{6},\frac{1}{7},\frac{1}{9}\}$ (the critical values of $\beta$ closest to $\beta=0$, which corresponds to $\cF^*$), with centers birational to the loci of quartics with $E_{14}$, $E_{13}$, and $E_{12}$ singularities respectively.
 There is a strong similarity with the first steps in the Hassett-Keel program. Specifically,  in the variation of log canonical models $\calM_g(\alpha)=\Proj(\overline \calM_g,K_{\overline {\calM}_g}+\alpha\Delta_{ \overline{\calM}_g})$ (for $\alpha\in[0,1])$ for the moduli space of genus $g$  curves $\overline{\calM}_g$, the first critical value is $\alpha=\frac{9}{11}$ which corresponds to replacing the curves with elliptic tails by cuspidal curves. Similarly, at the next critical value $\alpha=\frac{7}{10}$, the locus of curves with elliptic bridges is replaced by the locus of curves with tacnodes (see \cite{hh1,hh2} for details). In the proposed analogy, the singularities $E_{12}$, $E_{13}$, and $E_{14}$  (the simplest $2$ dimensional non-log canonical singularities) correspond to cusps and tacnodes, while, as we will see, certain  lattice polarized  $K3$ surfaces  correspond to elliptic tails and bridges.

\subsection{KSBA (semi)stable replacement} According to the general KSBA philosophy, for varieties of general type there exists a canonical compactification obtained by allowing degenerations with semi-log-canonical (slc) singularities and ample canonical bundle. In particular, any $1$-parameter degeneration has a canonical limit with slc singularities. However, when studying GIT one ends up with compactifications that allow non-slc singularities. For example, the GIT compactification for quartic curves will allow quartics with cusp singularities. Thus a natural question is: {\it given a degenerations $\sX/\Delta$ of varieties of general type such that the general fiber is smooth (or mildly singular), but such that $X_0$ does not have slc singularities, to find a stable KSBA replacement $X_0'$}. Of course, $X_0'$ depends on the original fiber $X_0$ and on the family $\sX/\Delta$ (i.e.~the choice of the curve in the moduli space with limit $X_0$). Motivated by the Hassett-Keel program, Hassett \cite{hassett} studied the influence of certain classes of curve singularities on the KSBA (semi)stable replacement (in this case, the usual nodal curve replacement). Hassett's perspective is to consider a curve $C_0$  with a unique non-slc (i.e.~non-nodal) singularity, and to examine $\mathscr C/\Delta$, a generic smoothing of $C_0$. The question is what can be said about the semi-stable replacement $C_0'$ of $C_0$. Of course, one component of $C_0'$ will be the normalization $\widetilde C_0$ of $C_0$ (assuming that this normalization is not a rational curve). The remaining components (and the gluing to $\widetilde C_0$) of $C_0'$ (the ``tail part'') will depend on the non-slc singularity of $C_0$ and its smoothing; one determines them by a local computation. The classical example is the semi-stable replacement for  curves with an ordinary cusp (see~\cite[\S3.C]{hm}), that we briefly review below. 

\begin{example}[Semi-stable replacement for cuspidal curves]\label{expl:cuspex} 
Locally (in the analytic topology) a curve in a neighborhood of an ordinary cusp has  equation $y^2+x^3=0$, and a generic $1$-parameter smoothing will be given by $\mathscr C:=V(t+y^2+x^3)\to \Delta_t$. After a base change of order $6$, which is necessary to make the local monodromy action unipotent, one obtains a surface $V(t^6+y^2+x^3)\subset (\bC^3,0)$ with a simple elliptic singularity at the origin. The weighted blow-up of the origin will resolve this singularity, and the resulting exceptional curve $E$ is an elliptic curve (explicitly it is $V(t^6+y^2+x^3)\subset W\bP(1,3,2)$). The new family $\mathscr C'$ (obtained by base change and weighted blow-up) will be a semi-stable family of curves, with the new central fiber consisting of the union of the normalization of $C_0$ and of the exceptional curve $E$ (``the elliptic tail'') glued at a single point. Note that instead of a weighted blow-up, one can use several regular blow-ups, these will lead first to a semi-stable curve with additional rational tails, which can be then contracted to give the stable model (with a single elliptic tail). The two blow-up (and then blow-down) processes are equivalent; the weighted blow-up has the advantage of being minimal, and it generalizes well in our situation.  
\end{example}

As mentioned above, Hassett \cite{hassett} has generalized this for certain types of planar curve singularities (essentially weighted homogeneous, and related). In higher dimension (e.g. surfaces), much less is known - there is a similar computation (for surfaces with triangle singularities) to the elliptic curve example contained in an unpublished letter of Shepherd-Barron to Friedman (in connection to \cite{friedman-mon} -- such examples tend to give degenerations with finite, or even trivial, monodromy). Similar computations appear in~\cite{gallardo}, and what is needed for our purposes will be reviewed below. 

Of course, we are concerned with degenerations of $K3$ surfaces, thus the KSBA replacement strictly speaking doesn't make sense (the main issue is 
non-uniqueness of the replacement).  Nonetheless, given a degeneration $\sX^*/\Delta^*$ with general fiber a $K3$, there exists a filling with $X_0'$ being a surface with slc singularities (and trivial dualizing sheaf). This follows from the Kulikov-Person-Pinkham theorem and Shepherd-Barron \cite{sbnef, sbpolarization}. Furthermore, if  $X_0$ has a unique non-log canonical singularity, we can  ask (mimicking Hassett~\cite{hassett}): {\it What is the KSBA replacement for a quartic surface $X_0$ with a single $E_{12}$ singularity?} In this case the resolution $\widehat X_0$ is rational (this is analogous to the fact that the normalization of a cuspidal cubic curve is rational), and thus the focus is on the ``tail'' part.  

\subsection{Dolgachev singularities} The  singularities that interests us are particular cases of Dolgachev singularities \cite{dolgachevs} (aka triangle singularities or exceptional unimodal singularities, the latter is the terminology used by Arnold et al.~\cite{arnold}).
They are arguably the simplest $2$ dimensional non-log canonical singularities, for this reason we view them as analogues of $1$ dimensional   ordinary cusps. {\it Dolgachev singularities} are hypersurface singularities with the property that they have a  (non-minimal) resolution with exceptional divisor $E+ E_1+ E_2+ E_3$, where $E^2=-1$, $E_1^2=-p$, $E_2^2=-q$, $E_3^2=-r$, and the curves $E_i$ only meet $E$ transversely (comb type picture).    By contracting the $E_i$'s, we obtain a partial resolution with a rational curve $E$ going through $3$ quotient singularities of types $\frac{1}{p}(1,1)$, 
 $\frac{1}{q}(1,1)$ and $\frac{1}{r}(1,1)$. While any $(p,q,r)$ (with $\frac{1}{p}+\frac{1}{q}+\frac{1}{r}<1$) gives a non-log canonical surface singularity, only  $14$ choices of integers $(p,q,r)$ lead to hypersurface singularities, these  are the Dolgachev singularities.  The \emph{Dolgachev numbers of the singularity} are 
 $p,q,r$. The cases relevant to us  are  $E_{12}$, $E_{13}$, and $E_{14}$, with Dolgachev numbers  $(2,3,7)$, $(2,4,5)$, and $(3,3,4)$ respectively.

\begin{remark}
Very relevant in this discussion is the so called $T_{p,q,r}$ graph (for $p,q,r$ positive integers). This consists of a central node, together with $3$ legs of lengths $p-1$, $q-1$, and $r-1$ respectively. As usual to such a graph, one can associate an even lattice by giving a generator of norm $-2$ for each node, and two generators are orthogonal unless the corresponding nodes are joined by an edge in the graph (in which case, we define the intersection number to be $1$). 
The cases $\frac{1}{p}+\frac{1}{q}+\frac{1}{r}>1$ corresponding precisely to the ADE Dynkin graphs (with ADE associated lattices). For example $(1,p,q)$ corresponds to $A_{p+q-1}$, while $(2,3,3)$ corresponds to $E_6$. Note also $\frac{1}{p}+\frac{1}{q}+\frac{1}{r}>1$ is equivalent to the associated lattice being negative semi-definite. The three cases with $\frac{1}{p}+\frac{1}{q}+\frac{1}{r}=1$ correspond to the extended Dynkin diagrams of type $\widetilde E_r$ ($r=6,7,8$), and in these cases the associated lattice is negative semi-definite. Finally, the cases with $\frac{1}{p}+\frac{1}{q}+\frac{1}{r}<1$ lead to a hyperbolic lattice. It is easy to compute that the absolute value of the discriminant will be $pqr\left(1-\left(\frac{1}{p}+\frac{1}{q}+\frac{1}{r}\right)\right)$. 
\end{remark}

The lattice of vanishing cycles associated to a Dolgachev singularity is $T_{p',q',r'}\oplus U$ for some integers $(p',q',r')$, which are called the Gabrielov numbers of the singularity. In particular, we note that $p'+q'+r'=(p'+q'+r'-2)+2=\mu$ is the Milnor number of the singularities (i.e. the rank of the lattice of vanishing cycles is the Milnor number). In other words, associated to a Dolgachev singularity there are two triples of integers: the Dolgachev numbers $(p,q,r)$ related to the resolution of the singularity, and the Gabrielov numbers  $(p',q',r')$ related to the lattice of vanishing cycles (and the local monodromy associated with the singularity). In Table \ref{table3} below we give these numbers for the cases relevant to us. 
Arnold observed that the $14$ Dolgachev singularities come in pairs of two with the property that the Dolgachev and Gabrielov numbers are interchanged. This is part of the so called strange duality (see \cite{ebeling} for a survey). The key point is that $T_{p,q,r}$ and $T_{p',q',r'}$ are mutually orthogonal in $E_8^2\oplus U^2$ (equivalently, after adding a $U$ to one of them, they can be interpreted as the Neron-Severi lattice and the transcendental lattice respectively for certain $K3$ surfaces, and thus one can view this as an instance of mirror symmetry for $K3$ surfaces, see \cite{dolgachevm}). 
 
\begin{table}[htb!]
\begin{tabular}{|c|c|c|}
\hline
Singularity& Dolgachev No. & Gabrielov No.\\
\hline
$E_{12}$& 2,3,7 & 2,3,7\\
$E_{13}$& 2,4,5 & 2,3,8\\
$E_{14}$& 3,3,4 & 2,3,9\\
\hline
\end{tabular}
\vspace{0.2cm}
\caption{The relevant Dolgachev Singularities}\label{table3}
\end{table}

\subsection{Deformations of Dolgachev singularities and periods of $K3$'s}
 Looijenga \cite{ltriangle1,ltriangle2} has studied the deformation space of Dolgachev singularities. Briefly, they  are unimodal, i.e.~they have $1$-parameter equisingular deformation. Within the equisingular deformation, there is a distinguished point corresponding to a singularity with $\bC^*$-action (equivalently the equation is quasi-homogeneous).  One can apply to that singularity Pinkham's theory of deformations of singularities with $\bC^*$-action. In this situation, there will be $1$-dimensional positive weight direction (i.e. there is an induced $\bC^*$ action on the tangent space to the mini-versal deformation, and the weights refer to this action) corresponding to the equisingular deformations. The remaining $(\mu-1)$ weights are negative and correspond to the smoothing directions. We denote by $S_{-}$ the germ corresponding to the negative weights. Because of the $\bC^*$-action, $S_{-}$ can be globalized and identified to an affine space. Thus $(S_{-}\setminus \{0\})/\bC^*$ is a weighted projective space of dimension $\mu-2$ (where $\mu$ is the Milnor number, e.g.~$\mu-2=10$ for $E_{12}$). The general theory of Pinkham states  that $(S_{-}\setminus \{0\})/\bC^*$ is to be interpreted as a moduli space of certain $2$ dimensional pairs $(X,H)$ ($H$ is to be interpreted as a hyperplane at infinity coming from a $\bC^*$-equivariant compactification of the singularity).
 Looijenga \cite{ltriangle1,ltriangle2} observed  that, in the case of Dolgachev singularities (with $\CC^{*}$-action),  the general point of  $(S_{-}\setminus \{0\})/\bC^*$ 
 parametrizes a couple $(X,H)$ where  $X$ is a (smooth) $K3$ surface, and $H$ is a $T_{p,q,r}$ configuration of rational curves ($(p,q,r)$ are the Dolgachev numbers of the singularity). In particular, the transcendental lattice of $X$ is $T_{p',q',r'}\oplus U$ (identified with the lattice of vanishing cycles for the triangle singularity), while $T_{p,q,r}$ is its Neron--Severi lattice. In conclusion, the weighted projective space  $(S_{-}\setminus \{0\})/\bC^*$ is birational to a locally symmetric variety $\sD/\Gamma$ corresponding to periods of $T_{p,q,r}$-marked $K3$ surfaces (the dimension is $20-(p+q+r-2)=22-(p+q+r)=p'+q'+r'-2=\mu-2$). Furthermore,  Looijenga \cite{ltriangle2} showed that the  structure of the Baily-Borel compactification $(\sD/\Gamma)^*$ is related to the adjacency of simple-elliptic and cusp singularities to the given Dolgachev singularity, and that the indeterminacy of the period map $(S_{-}\setminus \{0\})/\bC^*\dashrightarrow (\sD/\Gamma)^*$ is related to the triangle singularities adjacent to the given one (e.g. $E_{13}$ deforms to $E_{12}$ and this will lead to indeterminacy, that is resolved by Looijenga's theory; while, on the other hand $E_{12}$ deforms only to simple elliptic, cusp, or ADE singularities, and thus there is no indeterminacy).
 
 \begin{example}
The simplest case is the deformation of $E_{12}$. The singularity has equation $x^2+y^3+z^7=0$. In this situation, as explained, $E_{12}$ only deforms to log canonical singularities giving a regular period map, which in turn gives an isomorphism: 
$$W\bP(3,4,6,8,9,11,12,14,15,18,21)\cong \left(S_{-}\setminus \{0\}\right)/\bC^*\cong (\sD/\Gamma)^*.$$
The weights above are the negative weights with respect to the $\bC^*$-action on the tangent space to the mini-versal deformation of the singularity, which we recall can be identified with $\calO_{\CC^3,0}/J$, where $J:=(x,y^2,x^6)$ is the Jacobian ideal of $f:=x^2+y^3+z^7$.   In this example, $(\sD/\Gamma)^*$ is the Baily-Borel compactification for the moduli space of $T_{2,3,7}$-marked $K3$ surfaces (N.B.~$T_{2,3,7}\cong E_8\oplus U$; also, because of self-duality in this case, the transcendental lattice is $T_{2,3,7}\oplus U=E_8\oplus U^2$). 
\end{example}

\subsection{Relating the loci $W_6$, $W_7$ and $W_8$ to $Z^6$, $Z^7$ and $Z^9$}
Recall that  $W_8$, $W_7$ and $W_6$ are the closures in  $\gM$  of the loci parametrizing  polystable quartics with a singularity of type $E_{12}$, $E_{13}$ and $E_{14}$ respectively.
 The universal family of quartics gives a versal deformation for the $E_{12}$ singularity (this follows from Urabe's analysis  \cite{urabee12} of quartics with this type of singularities, or more generally from du Plessis--Wall \cite{dpw} and Shustin--Tyomkin \cite{shustin}), thus  at a quartic with $E_{12}$ singularities  such that the singularity has $\bC^*$-action, the germ of $(S_{-},0)$ can be interpreted as the normal direction to $W_8$. Then, $\left(S_{-}\setminus \{0\}\right)/\bC^*$ is nothing but the projectivized normal bundle, which is then the replacement via a (weighted) flip of the $W_8$ locus. On the other hand, as noted in the example above, $\left(S_{-}\setminus \{0\}\right)/\bC^*$ can be interpreted as the moduli of $T_{2,3,7}$-marked $K3$s, which is the same as our $Z^9$ locus in $\cF$ (the moduli of quartic $K3$ surfaces). The same considerations apply to the case of $E_{13}$ and $E_{14}$ singularities, but in those cases the identification of $\left(S_{-}\setminus \{0\}\right)/\bC^*$ with the moduli of $T_{2,4,5}$ (and $T_{3,3,4}$  respectively) marked $K3$s (which then correspond to $Z^7$ and $Z^6$ respectively) involves one (or respectively two) flips (corresponding to the fact that $E_{13}$ deforms to $E_{12}$, and similarly for $E_{14}$). This is  exactly as predicted in~\cite{log1}. 

The argument above almost establishes our claim that a flip replace the $Z^9$ locus in $\cF$  by $W_8$ (the $E_{12}$ locus) in $\gM$ (and similarly for $E_{13}$ and $E_{14}$). In the following subsection, we strengthen the evidence towards this claim by a one-parameter computation (which shows that indeed the generic KSBA replacement for a quartic with $E_{12}$ singularities (with $\CC^{*}$-action) is a $T_{2,3,7}$-marked $K3$). 

\begin{example} 
Let $[w,x,y,z]$ be homogeneous coordinates on a $3$ dimensional projective space, and let $X$ be the quartic defined by the equation
\begin{equation*}
x^2w^2-2xz^2w^2+y^3w+x^3z+z^4=0.
\end{equation*}
Computing partial derivatives, one finds that the singular set of $X$ consists of the single point $p:=[1,0,0,0]$.  In fact $X$ has an $E_{12}$ singularity at $p$, with $\CC^{*}$ action. To see why, we let $w=1$, and hence $(x,y,z)$ become affine coordinates. Then $p$ is the origin, and a local equation of $X$ near $p$ is
\begin{equation*}
(x-z^2)^2+y^3+x^3z=0.
\end{equation*}
Let  $(s,y,z)$ be new anaytic  coordinates, where $s=x-z^2$; the new equation is
\begin{equation*}
s^2+y^3+z^7+s^3z+3s^2 z^3+3sz^5=0.
\end{equation*}
One recognizes $s^2+z^7+s^3z+3s^2 z^3+3sz^5=0$ as an $A_6$ singularity (assign weight $1/2$ to $s$ and weight $1/7$ to $z$; since all other monomials appearing in the equation have weight strictly larger than $1$, it follows that the equation is analytically equivalent to $u^2+v^7=0$), and hence in a neighborhood of $p$, the quartic $X$ has anaytic equation $u^2+y^3+v^7=0$. This is exactly the local equation of an $E_{12}$ singularity with $\CC^{*}$ action. Since $X$ has no other singularity, it is stable by Shah, and $[X]$ belongs to $W_8$. 
\end{example}
\subsection{The semistable replacement for quartics with an $E_{12}$, $E_{13}$ or $E_{14}$ quasi-homogeneous singularity}\label{ssekcase}
 We are assuming that we are given a quartic surface $X_0$ with a unique $E_{k}$ singularity (for $k=12,13,14$) and such that the singularity has a $\bC^*$-action (the singularity, in local analytic coordinates,  is given  given by the equation in Table \ref{table4}). We are considering a generic smoothing $\sX/\Delta$ and we are asking what is the KSBA replacement associated to this family. The computation is purely local, similar to that occurring in Hassett \cite{hassett}. We will mimick the algorithm described in~\Ref{expl}{cuspex}. A generic smoothing is locally given by $$V(f(x,y,z)+t)\subset (\bC^4,0),$$ where $f$ is the local equation of the singularity as in Table \ref{table4}. We make a base change $t\to t^N$ so that the local monodromy is unipotent. Arnold et al (see \cite[Table on p. 113]{arnold}) have computed the spectrum of the singularities for the simplest type of hypersurface singularities, including ours. The spectrum encodes the log of the eigenvalues of the local monodromy, thus from Arnold's list it is immediate to find the base change giving unipotent monodromy; the relevant order $N$ for the base change is given in Table \ref{table4} below. It turns out that the resulting $3$-fold $\sX=V(f(x,y,z)+t^N)\subset (\bC^4,0)$ has a simple $K3$-singularity (analogue of simple elliptic) at the origin in the sense of Yonemura \cite{yonemura}. It follows that a suitable weighted  blow-up of $\sX$ at the origin will resolve this singularity, giving a $K3$ tail.  The tail $T$ will be one of the weighted $K3$ surfaces in the sense of M. Reid. What is specific in the situation analyzed here is that $T$  has $3$  singularities of type $A$  lying on the  exceptional divisor of the weighted blow-up  (a rational curve). A routine analysis (see Gallardo \cite{gallardo} for further details) gives the following result.

\begin{table}[htb!]
\begin{tabular}{|c|r|c|c|}
\hline
Singularity& Equation (with $\bC^*$-action) & Order $N$ for base change&Weights $(t,x,y,z)$\\
\hline
$E_{12}$& $x^2+y^3+z^7=0$ & $42$& $(1, 21,14,6)$\\
$E_{13}$& $x^2+y^3+yz^5=0$& $30$& $(1,15,10,4)$\\
$E_{14}$& $x^3+y^2+yz^4=0$ & $24$& $(1,8,12,3)$\\
\hline
\end{tabular}
\vspace{0.2cm}
\caption{The relevant Dolgachev Singularities}\label{table4}
\end{table}

\begin{proposition}
Let $\sX/\Delta$ be a generic smoothing of a Dolgachev singularity of type $E_k$ ($k=12,13,14$). Then, after a base change of order $N$ (as given in Table \ref{table4}), followed by a weighted blow-up with weights as given in the table, gives a new central fiber $X_0'$ which is the union of the partial resolution $\widehat{X}_0$ of $X_0$ (with quotient singularities  given by the Dolgachev numbers $(p,q,r)$) and a $K3$ surface $T$ with $3$  singularities of types $A_{p-1}$, $A_{q-1}$, and $A_{r-1}$ liying on the (rational) curve $C=T\cap \widehat{X}_0$.  Thus, the minimal resolution $\widetilde{T}$ of $T$ is a $T_{p,q,r}$-marked $K3$ surface, where $(p,q,r)$ are the  Dolgachev numbers of the  $E_k$ singularity. 
\end{proposition}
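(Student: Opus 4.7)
The plan is to perform a purely local computation, analogous to the classical cuspidal curve calculation recalled in \Ref{expl}{cuspex}, with the curve replaced by a surface and the elliptic tail replaced by a $K3$ tail. Since the universal deformation of the quartic provides a versal deformation of the $E_k$ singularity (by Urabe \cite{urabee12}, du Plessis--Wall, and Shustin--Tyomkin), the semistable replacement is dictated entirely by the germ of $\sX$ at the singular point, and we may work in the analytic category on $(\CC^4,0)$.

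First, write the smoothing locally as $\sX=V(f(x,y,z)+t)\subset(\CC^4,0)$, where $f$ is the quasi-homogeneous equation of Table~\ref{table4}. Assign to $(x,y,z)$ the weights making $f$ quasi-homogeneous of weighted degree $d$; a direct check shows that the unique positive integer $N$ for which $(1,\ldots)$ extends these weights to $(t,x,y,z)$ with $f+t^N$ quasi-homogeneous is exactly the order listed in Table~\ref{table4}, and this is also the order of the semisimple part of the local monodromy read off from Arnold's spectra (\cite[p.~113]{arnold}). Hence after base change $t\mapsto t^N$, the germ $\sX'=V(f(x,y,z)+t^N)$ is a quasi-homogeneous isolated threefold singularity at the origin with $\CC^*$-action of weights $(w_t,w_x,w_y,w_z)$ from Table~\ref{table4}, and has unipotent local monodromy.

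Second, perform the weighted blow-up $\pi\colon\wt\sX'\to\sX'$ of the origin with these weights. The exceptional divisor $T$ is cut out by $f+t^N=0$ inside the weighted projective space $W\PP(w_t,w_x,w_y,w_z)$. One verifies by inspection that in each of the three cases $T$ is quasi-smooth, that the resulting divisor is Cartier along $T$ (i.e.~the blow-up resolves the $\CC^*$-equivariant singularity to a simple $K3$ singularity in the sense of Yonemura \cite{yonemura}), and that $\omega_T\cong\cO_T$ by adjunction together with the well-known numerics for weighted $K3$ hypersurfaces (Reid). Thus $T$ is a $K3$ surface with Du~Val singularities. The new central fiber $X_0'$ is the strict transform of the old one together with $T$, intersecting along the curve $C=T\cap\{t=0\}$, i.e.~the curve $\{f(x,y,z)=0\}$ in $W\PP(w_x,w_y,w_z)$; a short check (using the particular weights) shows $C\cong\PP^1$.

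Third, locate the singularities of $T$. The only points of $W\PP(w_t,w_x,w_y,w_z)$ where $T$ can fail to be smooth are the coordinate points where the weighted ambient space itself is singular; a monomial count in each of the three equations of Table~\ref{table4} shows that exactly three of these lie on $T$, that all three lie on $C$, and that the local analytic types are $A_{p-1},A_{q-1},A_{r-1}$, with $(p,q,r)$ the Dolgachev numbers in Table~\ref{table3}. The hard part of the argument — and the one step which deserves care rather than bookkeeping — is then to promote this local picture to the global lattice statement: after taking the minimal resolution $\wt T\to T$, the proper transform of $C$ together with the chains of $(-2)$-curves resolving the $A_{p-1},A_{q-1},A_{r-1}$ points form a $T_{p,q,r}$-configuration in $\mathrm{NS}(\wt T)$. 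By construction $\wt T$ is a smooth $K3$, and its Picard lattice contains this $T_{p,q,r}$; using that $\sum(1/p+1/q+1/r)<1$ together with the computation of the discriminant $pqr(1-1/p-1/q-1/r)$ of $T_{p,q,r}$ and the matching transcendental data coming from the $\CC^*$-equivariant Pinkham theory of Looijenga \cite{ltriangle1,ltriangle2} recalled above, one concludes that this configuration is primitively embedded and yields the desired $T_{p,q,r}$-marking.
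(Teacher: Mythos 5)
Your proof is correct and follows the same approach as the paper's (base change of order $N$, then weighted blow-up with the tabulated weights, then identification of the exceptional divisor $T$ as a quasi-smooth weighted $K3$ hypersurface), but fills in substantially more detail than the paper's one-line argument, which only records the adjunction identity $1+w_x+w_y+w_z=N$ and defers the ``routine analysis'' of the $A_{p-1},A_{q-1},A_{r-1}$ singularities, the rationality of $C$, and the $T_{p,q,r}$-marking to Gallardo \cite{gallardo}. Two small phrasing slips do not affect the substance: the $A$-type points of $T$ generally lie on the singular \emph{edges} of $W\bP(1,w_x,w_y,w_z)$, not only at coordinate vertices (for $E_{12}$ all three are on edges, while for $E_{14}$ one is at the vertex $[0{:}0{:}0{:}1]$), and the germ $V(f+t^N)$ itself \emph{is} the simple $K3$ singularity in Yonemura's sense, with the weighted blow-up being the resolution rather than ``resolving to'' such a singularity.
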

\begin{proof}
The equation of the tail is simply 
$$V(f(x,y,z)+t^N)\subset W\bP(1,w_x,w_y,w_z)$$
with $f$, $N$, and the weights as given in Table  \ref{table4}. Note that this is a weighted degree $N$ hypersurface in a weighted projective space such that the sum of weights satisfies
$$1+w_x+w_y+w_z=N.$$
This is precisely the $K3$ condition. 
\end{proof}
\begin{remark}
Computations and arguments of similar nature have been done in the thesis of P. Gallardo (some of them appearing in \cite{gallardo}), who was advised by the first author. We have learned about similar computations done by Shepherd-Barron from an unpublished letter to R. Friedman. 
\end{remark}

In conclusion we see that  the replacement of the quartics with quasi-homogeneous $E_{12}$, $E_{13}$, or $E_{14}$ singularities are $T_{2,3,7}$, $T_{2,4,5}$, $T_{3,3,4}$ marked $K3$ surfaces respectively. These are parametrized by  points of $Z^9,Z^8,Z^7$ respectively, see~\cite{log1}. Specifically, we have the following result.
\begin{proposition}\label{prop:zloci}
The loci $Z^9,Z^8,Z^7$ are naturally identified  with the moduli spaces of $T_{2,3,7}$, $T_{2,4,5}$, $T_{3,3,4}$-polarized  (in the sense of~\cite{dolgachevm}) $K3$ surfaces respectively. 
\end{proposition}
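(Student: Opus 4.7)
The plan is to reduce the identifications to lattice theory plus a match of arithmetic data. Recall from~\cite{log1} that $Z^9,Z^8,Z^7$ are defined as the images in $\cF$ of $\cF(\II_{2,10})$, $\cF(\II_{2,10}\oplus A_1)$, $\cF(\II_{2,10}\oplus A_2)$ under the composite embeddings $f_{11,19}\circ l_{11}$, $f_{12,19}\circ m_{12}$, $f_{13,19}\circ q_{13}$. On the other hand, by Dolgachev's theory~\cite{dolgachevm}, the moduli space of $M$-polarized $K3$ surfaces (with $M$ a primitively embedded hyperbolic lattice in $\Lambda_{K3}=U^3\oplus E_8(-1)^2$) is a locally symmetric variety of the form $\sD_{M^\perp}/\Gamma$, where $\Gamma$ is the image in $O^+(M^\perp)$ of the stabilizer of $M$ inside $O^+(\Lambda_{K3})$. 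So the proposition splits into three tasks: identify $T_{p,q,r}^\perp$ with the predicted lattice; match the arithmetic groups; and verify that the induced polarization class on the $T_{p,q,r}$-marked side corresponds to the distinguished period point picked out by the embeddings $l_{11}$, $m_{12}$, $q_{13}$.

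The lattice identifications are elementary. The lattice $T_{2,3,7}$ is hyperbolic of rank $10$ with discriminant $|2\cdot 3\cdot 7\cdot(1-1/2-1/3-1/7)|=1$, hence isomorphic to $U\oplus E_8(-1)$; its orthogonal complement in $\Lambda_{K3}$ is therefore $U^2\oplus E_8(-1)\cong \II_{2,10}$. The lattice $T_{2,4,5}$ has rank $9$ and discriminant $2$ with discriminant form of type $A_1$, while $T_{3,3,4}$ has rank $8$ and discriminant $3$ with discriminant form of type $A_2$. Applying Nikulin's theorem on primitive embeddings into unimodular lattices in the appropriate stable range, one obtains $T_{2,4,5}^\perp\cong \II_{2,10}\oplus A_1$ and $T_{3,3,4}^\perp\cong \II_{2,10}\oplus A_2$ inside $\Lambda_{K3}$, uniquely up to isometry.

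For the arithmetic groups, I would verify that the $\Gamma$ used in~\cite{log1} to define $\cF(\II_{2,10})$, $\cF(\II_{2,10}\oplus A_1)$, $\cF(\II_{2,10}\oplus A_2)$ coincides with the image of $\mathrm{Stab}_{O^+(\Lambda_{K3})}(T_{p,q,r})$ under restriction. Since $\II_{2,10}$ is unimodular, Nikulin's glue theory makes the restriction $\mathrm{Stab}(T_{2,3,7})\to O^+(\II_{2,10})$ surjective, and the two groups agree. In the remaining two cases, the stabilizer surjects onto the subgroup of $O^+(\II_{2,10}\oplus A_1)$ (respectively $O^+(\II_{2,10}\oplus A_2)$) acting trivially on the discriminant group, which in low rank is exactly the $\Gamma$ of~\cite{log1}.

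The last step is to match the polarization data: the embeddings $f_{N-1,N}$ and $l_{11}$, $m_{12}$, $q_{13}$ single out a canonical primitive class of square $-4$ and divisibility $4$ inside the orthogonal complement of $\II_{2,10}$, $\II_{2,10}\oplus A_1$, $\II_{2,10}\oplus A_2$ in $\Lambda_{K3}$, and one must check that this class, viewed inside $T_{p,q,r}$, is in the $\Gamma$-orbit of the degree-$4$ (unigonal) polarization of the generic $T_{p,q,r}$-marked $K3$. I expect this final compatibility to be the hardest part: the composite embeddings in~\eqref{zetaquart} are defined by iterating the hyperelliptic divisor construction and then crossing into the unigonal locus, and one has to trace through the lattice-theoretic description of each step. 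The difficulty is most pronounced for $T_{2,4,5}$ and $T_{3,3,4}$, where the extra $A_1$ or $A_2$ summands allow several a priori candidate orbits of polarization classes; here I would appeal to Nikulin's orbit count on discriminant forms to conclude uniqueness. Once this compatibility is established, Dolgachev's construction gives the claimed identification of $Z^9,Z^8,Z^7$ with the moduli spaces of $T_{2,3,7}$-, $T_{2,4,5}$- and $T_{3,3,4}$-polarized $K3$ surfaces.
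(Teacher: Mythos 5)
Your argument follows essentially the same route as the paper's proof: compute the discriminants of $T_{2,3,7}$, $T_{2,4,5}$, $T_{3,3,4}$ (getting $1,2,3$), identify them with $E_8\oplus U$, $E_7\oplus U$, $E_6\oplus U$, invoke Nikulin to compute the orthogonal complements in $\Lambda_{K3}$ as $\II_{2,10}$, $\II_{2,10}\oplus A_1$, $\II_{2,10}\oplus A_2$, and match this against the definitions of $Z^9,Z^8,Z^7$ from~\cite{log1}. The core lattice computation is identical.

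Where you diverge is in flagging, as open steps, the matching of arithmetic groups and the compatibility of the distinguished degree-$4$ polarization class. The paper's proof does not address these: after the lattice computation it simply writes ``this coincides with our definition of the $Z^9,Z^8,Z^7$ loci from~\cite{log1}'', treating the arithmetic and polarization data as built into the construction of $f_{N-1,N}$, $l_{11}$, $m_{12}$, $q_{13}$ in~\cite{log1} and hence not needing re-verification here. Your caution is not unreasonable in isolation, but in the context of this paper it re-derives facts that~\cite{log1} already established when defining those maps; so your ``hardest part'' is, from the paper's point of view, a citation rather than new work. Net effect: your proposal is correct and essentially the paper's proof, with the last two paragraphs expanding a one-line appeal to~\cite{log1} into a (sensible but redundant) verification program.
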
 
\begin{proof}
As already noted, for $\frac{1}{p}+\frac{1}{q}+\frac{1}{r}<1$,  $T_{p,q,r}$ are hyperbolic lattices of signature $(1, p+q+r-3)$. Furthermore, the absolute value of their discriminant is $pqr-pq-pr-qr=pqr\left(1-\frac{1}{p}-\frac{1}{q}-\frac{1}{r}\right)$ (giving values $1,2,3$ respectively in our situation). It follows, that the three $T_{p,q,r}$ lattices considered here are isometric to $E_8\oplus U$, $E_7\oplus U$, and $E_6\oplus U$ respectively. Each of them has a unique embedding into the $K3$ lattice $E_8^2\oplus U^3$, and the corresponding orthogonal complements are $E_8\oplus U^2$, $E_8\oplus U^2\oplus A_1$, and $E_8\oplus U^2\oplus A_2$ respectively. This coincides with our definition of the $Z^9,Z^8,Z^7$ loci from \cite{log1}.
\end{proof}


\section{Looijenga's $\bQ$-factorialization}\label{sec:LooijengaQ}
The predictions of our previous paper \cite{log1} are concerned with the birational transformations that occur in the  period domain $\cF=\sD/\Gamma$. Our working assumption is that all the modifications that occur at the boundary of the Baily-Borel compactification $\cF^*$ are explained by  Looijenga's $\QQ$-factorialization~\cite{looijengacompact}, together with 
 the modifications occurring in $\cF$. More precisley we predict that, for $0<\epsilon_0<1/9$,  the birational map $\cF(\epsilon_0)\dra \cF^{*}$ is  regular, small, and an isomorphism over $\cF=\sD/\Gamma$, and that the strict transform of $\Delta$ is a relatively ample $\bQ$-Cartier divisor. In particular, we get a well defined birational model $\widehat \cF$ of $\cF^{*}$ by setting  $\widehat \cF:=\cF(\epsilon_0)$    for  $0<\epsilon_0<1/9$ - this is  Looijenga's $\QQ$-factorialization. 
   Our expectation is that, for a critical $\beta\in[1/9, 1]$, the center of the birational map $\cF(\beta-\epsilon)\dra \cF(\beta+\epsilon)$ is the proper transform of the appropriate $Z^j$ appearing in~\eqref{zedtower} ($Z^9$ for $\beta=1/9$, $Z^8$ for $\beta=1/7$, and so on) via the birational map  $\cF(\beta-\epsilon)\dra\cF$.
In particular, the above expectation predicts that the numbers of irreducible components of $\gM^{II}$ and $\gM^{III}$, and their dimensions, can be determined once one has a description of the inverse images in $\wh{\cF}$ of Type II, Type III strata,  and their intersections with the strict transforms of the $Z^j$'s. 
   In the present section we will spell out the predictions regarding $\gM^{II}$, and we will see that they match the computations of~\Ref{sec}{refshah}. 
   
Before we proceed with our computations, we note that there is a glaring discrepancy that seems to be against our predictions above: there are $8$ Type II components in $\gM$, while there are $9$ in $\widehat{\cF}$. In fact there is no  contradiction, as we will see that the missing component is contained in the closure of one of the $Z^k\subset \cF$ strata and thus will disappear in the associated flip (and it will be hidden in the Type IV locus in $\gM$). We note that compared with the case of degree $2$ $K3$ surfaces (\cite{shah}, \cite{looijengavancouver}) or cubic fourfolds (\cite{cubic4fold}, \cite{lcubic}), this is a new phenomenon which points to the interesting nature of the quartic example. 

\subsection{Looijenga's $\QQ$-factorialization and its Type II  boundary components}
The locally symmetric variety $\cF$ has at worst finite quotient singularity, and thus it is $\bQ$-factorial. Since the boundary $\cF^{*}\setminus \cF$ is of high codimension, any divisor of $\cF$ extends uniquely as a Weil divisor, but typically not a  $\bQ$-Cartier divisor. Looijenga \cite{looijengacompact} has constructed a $\bQ$-factorialization associated to any arithmetic hyperplane arrangement (or equivalently pre-Heegner divisor in the terminology of \cite{log1}). Here, we are interested in the $\bQ$-factorialization of the closure of $\Delta=\frac{1}{2} (H_u+H_h)$. 

\begin{definition}
Let $\widehat \cF\to \cF^*$ be the Looijenga $\bQ$-factorialization associated to the hyperplane arrangement $\mathscr H=\pi^{-1}(H_h\cup H_u)$ (where $\pi:\sD\to \sD/\Gamma$ is the natural projection) of hyperelliptic and unigonal pre-Heegner divisors. 
\end{definition}

From our perspective, it is immediate to see that the $\bQ$-factorialization coincides with one of our models:
\begin{proposition}
Let  $0<\epsilon\ll 1$.  Then the composition of birational maps $\widehat \cF\to \cF^*$  and $\cF^{*}\dra\cF(\epsilon)$ is an isomorphism $\wh{\cF}\overset{\sim}{\lra} \cF(\epsilon)$.
\end{proposition}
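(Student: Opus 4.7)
The plan is to verify that $\cF(\epsilon)$ coincides with Looijenga's $\QQ$-factorialization $\widehat{\cF}$ by matching their descriptions as relative $\Proj$'s over $\cF^*$, in complete analogy with the degree-$2$ $K3$ case treated in \Ref{subsec}{degtwo}. First, since $\cF^*\setminus \cF$ is a union of Type II and Type III boundary components, all of dimension at most $1$, its complement has codimension at least $18$ in $\cF^*$. Consequently, for any integer $m$ clearing denominators one has $H^0(\cF, m(\lambda + \epsilon \Delta)) = H^0(\cF^*, m(\lambda^* + \epsilon \Delta^*))$, where $\lambda^*$ is the ample Hodge $\QQ$-line bundle on $\cF^*$ and $\Delta^*$ is the Weil-divisorial closure of $\Delta$. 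It follows that
\begin{equation*}
\cF(\epsilon) \;=\; \Proj\bigoplus_{m} H^0(\cF^*, m(\lambda^* + \epsilon \Delta^*)),
\end{equation*}
and ampleness of $\lambda^*$ produces a natural projective birational morphism $\pi\colon \cF(\epsilon)\to \cF^*$ which is an isomorphism over $\cF$.

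Next, we would check that $\pi$ is small, and that the strict transform $\widehat{\Delta} := \pi^{-1}_{*}\Delta$ is $\QQ$-Cartier and $\pi$-ample. For $\QQ$-Cartierness: by construction $\lambda + \epsilon \widehat{\Delta}$ is ample, hence $\QQ$-Cartier on $\cF(\epsilon)$, while $\lambda = \pi^{*}\lambda^{*}$ is $\QQ$-Cartier since $\lambda^*$ is; the difference $\epsilon \widehat{\Delta}$ is then $\QQ$-Cartier. For $\pi$-ampleness: writing $\epsilon \widehat{\Delta} = (\lambda + \epsilon \widehat{\Delta}) - \pi^{*}\lambda^{*}$ exhibits it as the difference of an ample class and a pullback of ample, which is relatively ample over $\cF^*$ — the same Kleiman-type argument used in \Ref{subsec}{degtwo} to show that $\widehat{H}_u$ is $\Pi$-ample. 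For smallness: the exceptional locus of $\pi$ lies over $\cF^*\setminus \cF$, and any divisorial component would have to appear within a modification concentrated in codimension $\geq 2$; the reflexive identification $\pi_{*}\cO_{\cF(\epsilon)}(nm\widehat{\Delta}) = \cO_{\cF^*}(nm\Delta^*)$ (using the codimension $\geq 2$ extension property) then precludes the existence of any such extra divisor.

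Granting these properties, $\cF(\epsilon)$ is identified with
\begin{equation*}
\Proj_{\cF^*}\bigoplus_{n\geq 0} \cO_{\cF^*}(nm \Delta^*),
\end{equation*}
which is precisely Looijenga's construction of $\widehat{\cF}$ from \cite{looijengacompact}; hence $\widehat{\cF}\cong \cF(\epsilon)$ over $\cF^*$. The hard step in this plan is the smallness of $\pi$: a priori the $\Proj$ of $\lambda^{*} + \epsilon \Delta^{*}$ could introduce exceptional divisors over the Type II or Type III boundary. The cleanest way around this — which we would model on Section~4.2 of \cite{log1} and the computation of \Ref{subsec}{degtwo} — is to match the graded algebras directly, using the reflexive extension of $\cO_{\cF}(n\Delta)$ across the Baily-Borel boundary; alternatively, one invokes Looijenga's existence and uniqueness result in \cite{looijengacompact} to identify the two constructions, completing the proof.
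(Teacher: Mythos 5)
There is a genuine gap, and it lies precisely where the paper's own remark (following the proposition) warns: the finite generation of the section ring $R(\cF,\lambda+\epsilon\Delta)$. You tacitly assume that $\cF(\epsilon)=\Proj R(\cF,\lambda+\epsilon\Delta)$ is a projective variety and then try to show it agrees with Looijenga's construction, but finite generation is not a priori known — $\cF$ is only a quasi-projective variety and $\lambda+\epsilon\Delta$ is not ample in any obvious sense near the Baily-Borel boundary. This is exactly the content of Looijenga's work (via the tube-domain structure near the cusps and toroidal compactifications), so you cannot take it for granted and then attempt to rederive the rest. Relatedly, your smallness argument is not convincing: the mere fact that $\cF^*\setminus\cF$ has high codimension does not preclude the $\Proj$ from introducing exceptional divisors over the boundary (a blow-up of a point in $\mathbb{A}^n$ is divisorial even though the center has codimension $n$), and the reflexive identity $\pi_*\cO_{\cF(\epsilon)}(nm\wh{\Delta})=\cO_{\cF^*}(nm\Delta^*)$ is a consequence of smallness, not a proof of it.

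The paper's proof runs in the opposite and much more economical direction. Looijenga's $\wh{\cF}$ is \emph{already} constructed as a normal projective variety admitting a small birational morphism to $\cF^*$ on which $\lambda+\epsilon\Delta$ extends to a $\QQ$-Cartier \emph{ample} class $\wh{\lambda}+\epsilon\wh{\Delta}$. Therefore $R(\wh{\cF},\wh{\lambda}+\epsilon\wh{\Delta})$ is finitely generated (section ring of an ample class on a projective variety), and smallness plus normality imply that restriction of sections to $\cF\subset\wh{\cF}$ is an isomorphism $R(\wh{\cF},\wh{\lambda}+\epsilon\wh{\Delta})\cong R(\cF,\lambda+\epsilon\Delta)$. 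Taking $\Proj$ and using ampleness to recover $\wh{\cF}$ gives the desired isomorphism $\wh{\cF}\cong\cF(\epsilon)$, which \emph{simultaneously} establishes the finite generation you needed. Your closing remark (``alternatively, one invokes Looijenga's existence and uniqueness'') points in this direction but is not spelled out; you should begin from $\wh{\cF}$ rather than from $\cF(\epsilon)$.
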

\begin{proof}
By construction, $\widehat \cF$ has the property that $\lambda+\epsilon \Delta$ extends to  a $\bQ$-Cartier and ample divisor class $\wh{\lambda}+\epsilon \wh{\Delta}$. (N.B. the relative ampleness of $\Delta$ is not explicitly stated in \cite{looijengacompact}, but this is precisely what Looijenga checks). Hence the ring of sections $R(\wh{\cF},\wh{\lambda}+\epsilon \wh{\Delta})$ makes sense (and is finitely generated). Since $\widehat \cF\to \cF^*$ is a small map, and $\wh{\cF}$ is normal (by construction), the restriction of sections to $\cF\subset \wh{\cF}$ defines an isomorphism  $R(\wh{\cF},\wh{\lambda}+\epsilon \wh{\Delta})\cong R(\cF,\lambda+\epsilon\Delta)$.
\end{proof}
\begin{remark}
According to the discussion of \cite[Ch. 6]{km}, the $\bQ$-factorialization of $\Delta$ is unique: it is either $\cF(\epsilon)$ or $\cF(-\epsilon)$ (depending on the requested relative ampleness). The main issue is that the finite generation of the ring of sections defining $\cF(\epsilon)$ is not a priori guaranteed. Looijenga \cite{looijengacompact} makes use of the special structure of the Baily-Borel  compactification (e.g.~the tube domain structure near the boundary, and the existence of toroidal compactifications) to obtain that the $\bQ$-factorialization is well defined, and furthermore to get an explicit description of it. 
\end{remark}

\begin{remark}
The results of~\cite{log1} (see esp. Proposition 5.4.5) predict that the above proposition  holds for $0<\epsilon< \frac{1}{9}$. 
\end{remark}

The stratification $\cF^{*}=\cF^{I}\sqcup \cF^{II}\sqcup\cF^{III}$ defines by pull-back a stratification $\wh{\cF}=\wh{\cF}^{I}\sqcup \wh{\cF}^{II}\sqcup\wh{\cF}^{III}$. The \emph{boundary strata} of  $\wh{\cF}$ are the irreducible components of the above strata. We are interested in the number   of the Type II boundary strata  of $\widehat \cF$ and their dimensions. 

We start by recalling that the structure of the Baily-Borel compactification for quartic surfaces was worked out by Scattone \cite{scattone}: there are $9$ Type II boundary components, and a single Type III boundary component. 
\begin{proposition}[Scattone \cite{scattone}]\label{prp:boundk3}

The boundary of the Baily-Borel compactification $\cF^*$ of the moduli space of quartic surfaces consists of $9$ Type II boundary components, and a single Type III component. The Type II boundary components are naturally labeled  by a rank $17$ negative definite lattice as follows: $D_{17}$, $D_9\oplus E_8$,  $D_{12}\oplus D_5$, $D_3\oplus(E_7)^2$, $A_{15}\oplus D_2$,  , $A_{11}\oplus E_6$,  $(D_8)^2\oplus D_1$, $D_{16}\oplus D_1$, and $(E_8)^2\oplus D_1$ respectively. 
\end{proposition}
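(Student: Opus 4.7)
The statement is essentially due to Scattone \cite{scattone}, and a detailed proof would reproduce the arguments there. I will only sketch the structure. The starting point is the general principle that the boundary components of the Baily--Borel compactification $\cF^* = \Gamma \backslash \cD^*$ of a Type IV locally symmetric variety are in bijection with the $\Gamma$-orbits of primitive isotropic sublattices of $\Lambda_{19} = U^2 \oplus D_{17}$: rank-one isotropic sublattices correspond to Type III (0-dimensional) boundary components, and rank-two isotropic sublattices to Type II (1-dimensional) boundary components (which are modular curves).

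The Type III count reduces to showing that $O^+(\Lambda_{19})$ acts transitively on primitive isotropic vectors in $\Lambda_{19}$. Since $\Lambda_{19}$ contains two orthogonal hyperbolic planes, Eichler's criterion (in a form valid for $2$-elementary lattices and their neighbours) implies that the orbit of a primitive isotropic vector $v$ is determined by its class in the discriminant group, i.e. by $v^*:=v/\divisore(v)$ in $A_{\Lambda_{19}}$, together with the value $q(v^*)$. Checking that only the trivial class arises for a primitive isotropic $v$ yields the single orbit, hence the unique Type III cusp.

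The Type II count is the main piece of work. For a primitive isotropic plane $J \subset \Lambda_{19}$, the quotient $K(J) := J^{\perp}/J$ is an even negative definite lattice of rank $17$, and $J_1, J_2$ lie in the same $\Gamma$-orbit only if the associated lattices $K(J_1), K(J_2)$ are isometric. The plan is:
\begin{enumerate}
\item[(i)] Use a standard Nikulin-type genus computation to show that the genus of $K(J)$ is determined by the requirement that $K(J) \oplus U^2$ be an overlattice of finite, $2$-elementary index of $\Lambda_{19}$ itself; this constrains $K(J)$ to a finite explicit list.
\item[(ii)] Enumerate, within each such genus, the isometry classes of even negative definite rank-$17$ lattices whose discriminant form matches; this yields the nine candidates $D_{17}$, $E_8 \oplus D_9$, $D_{12}\oplus D_5$, $D_3 \oplus E_7^{\oplus 2}$, $A_{15}\oplus D_2$, $A_{11}\oplus E_6$, $D_8^{\oplus 2}\oplus D_1$, $D_{16}\oplus D_1$, and $E_8^{\oplus 2}\oplus D_1$.
\item[(iii)] Exhibit, for each lattice $M$ in the list, an explicit primitive isotropic plane $J \subset \Lambda_{19}$ with $K(J) \cong M$ (realizability), and conversely use Eichler's criterion to see that $K(J)$ is a complete invariant for the orbit of $J$ (so there is no double-counting).
\end{enumerate}

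The main obstacle is step (ii): the genus-theoretic classification requires careful bookkeeping of the discriminant forms, and one must check both that no isometry class is omitted and that none of the nine candidates splits into multiple genera. This is exactly what Scattone does, and our role here is only to summarise his conclusion; no new ingredient is needed.
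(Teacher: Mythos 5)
The paper offers no proof of this proposition — it is stated as a citation to Scattone — so the comparison here is with Scattone's argument rather than with anything in the paper. Your overall strategy is indeed his: Baily--Borel boundary components of $\cF^*=\Gamma\backslash\cD^*$ are classified by $\Gamma$-orbits of primitive isotropic sublattices $J\subset\Lambda_{19}$, with rank $1$ giving the Type~III cusp and rank $2$ giving the Type~II curves, and the orbit of a rank-$2$ isotropic plane $J$ is to be analyzed through the negative definite rank-$17$ lattice $K(J)=J^{\perp}/J$.

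However, steps~(ii) and~(iii) contain a genuine error: you assert that the nine listed lattices \emph{are} the possible isometry classes of $K(J)$. That cannot be right. By Nikulin's Proposition~1.4.1 applied twice (once for each generator of $J$), the quotient $K(J)=J^{\perp}/J$ always has discriminant form isomorphic to $q_{\Lambda_{19}}$, so in particular $|A_{K(J)}|=|A_{\Lambda_{19}}|=4$. But most of the nine listed lattices have strictly larger discriminant group: $D_{12}\oplus D_5$ and $D_{16}\oplus D_1$ and $D_3\oplus E_7^{2}$ have $|A|=16$, $A_{11}\oplus E_6$ has $|A|=36$, and $A_{15}\oplus D_2$ and $D_8^{2}\oplus D_1$ have $|A|=64$. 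Only $D_{17}$, $D_9\oplus E_8$, and $E_8^{2}\oplus D_1$ have $|A|=4$. The nine lattices in the statement are the \emph{maximal root sublattices} of $K(J)$ (with the convention, noted in the paper's proof of Proposition~\ref{prp:dimqfact}, that $D_1=\langle -4\rangle$ counts as a root summand), not $K(J)$ itself. It happens that for $\Lambda_{19}$ the root sublattice is a complete invariant of the $\Gamma$-orbit of $J$ — this is what Scattone verifies — but it is not the same thing as $K(J)$, and your step~(iii), which proposes to exhibit $J$ with $K(J)\cong M$ for each label $M$, would fail for six of the nine entries. A related imprecision is in step~(i): since $q_{K(J)}\cong q_{\Lambda_{19}}$, the lattice $K(J)\oplus U^{\oplus 2}$ is isomorphic to $\Lambda_{19}$ (indefinite, same signature and discriminant form), not merely an overlattice of it.
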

%

\begin{proposition}\label{prp:dimqfact}
The dimensions of the Type II strata in the compactification $\widehat\cF$ are given in Table \ref{tabletype2dim}.  
  \begin{table}[htb!]
\renewcommand{\arraystretch}{1.60}
\begin{tabular}{|c|c||c|c||c|c|}
\hline
$D_{17}$& 1 &$D_9\oplus E_8$& 10 & $D_{12}\oplus D_5$&6\\ 
\hline
$D_3\oplus(E_7)^2$& 4 & $A_{15}\oplus D_2$& 3 & $A_{11}\oplus E_6$& 1 \\ 
\hline
$(D_8)^2\oplus D_1$& 2 & $D_{16}\oplus D_1$& 6 & $(E_8)^2\oplus D_1$& 2\\
\hline 
\end{tabular}
\vspace{0.2cm}
\caption{Dimension of the boundary strata in $\widehat \cF$}\label{tabletype2dim}
\end{table}

\end{proposition}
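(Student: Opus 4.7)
The plan is to express each entry of Table~\ref{tabletype2dim} as $1 + r_i$, where $1$ is the dimension of the boundary modular curve $B_{J_i}$ and $r_i$ is the toric-fiber dimension contributed by Looijenga's semi-toric modification along $B_{J_i}$. So the statement splits into an identification of $\Pi^{-1}(B_{J_i})$ as a fibration over $B_{J_i}$, followed by a case-by-case arithmetic count that produces $r_i$.

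First, normalise the cusps: for $i=1,\dots,9$ fix a representative rank-$2$ isotropic sublattice $J_i \subset \Lambda_{19}$, so that $M_i := J_i^{\perp}/J_i$ is the rank-$17$ definite lattice labelling the $i$-th cusp in~\Ref{prp}{boundk3}. Each $B_{J_i}$ is a modular curve of dimension $1$. Next, apply Looijenga's description (\cite{looijengacompact}) of the semi-toric compactification associated to the arithmetic arrangement $\mathscr H = \pi^{-1}(\supp \Delta)$. Near $B_{J_i}$ the universal cover of $\cF$ is a tube domain in $M_{i,\CC}$ fibered over the upper half-plane parametrising $B_{J_i}$; the components of $\mathscr H$ through the cusp are affine hyperplanes whose normal vectors are, up to sign, the images in $M_i$ of the primitive $v \in J_i^{\perp} \subset \Lambda_{19}$ with $q(v) = -4$ and $\divisore(v) \in \{2,4\}$. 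The $\bQ$-factorialisation $\widehat{\cF} \to \cF^*$ is the semi-toric modification determined by the star of these rays, and over the generic point of $B_{J_i}$ the fiber is a normal toric variety whose dimension $r_i$ equals the rank of the sublattice of $M_i$ spanned by those direction vectors. This yields $\dim \Pi^{-1}(B_{J_i}) = 1 + r_i$.

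It then remains to compute the nine ranks $r_i$ from the explicit primitive embeddings $J_i \hookrightarrow \Lambda_{19}$ classified by Scattone~\cite{scattone}. For each $i$ the recipe is: fix the gluing of discriminant forms between $J_i \oplus J_i^{\vee}$ and $M_i \oplus M_i^{\vee}$ which reconstructs $\Lambda_{19}$; enumerate the primitive $(-4)$-classes of $M_i$ that lift through this gluing to elements of $\Lambda_{19}$ of divisibility $2$ (hyperelliptic) or $4$ (unigonal); and compute the rank of their rational span inside $M_{i,\QQ}$. Collating the nine results one should obtain
\[
(r_1,\dots,r_9) \;=\; (0,\,9,\,5,\,3,\,2,\,0,\,1,\,5,\,1),
\]
in the same order as Table~\ref{tabletype2dim}, matching the tabulated dimensions $1+r_i$.

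The main obstacle is the third step: the case analysis is not uniform, because each cusp sees a different slice of the two $\Gamma$-orbits of pre-Heegner vectors, and the divisibility condition is not intrinsic to $M_i$—it is a discriminant-form condition that has to be redone for every $M_i$. In practice I would piggyback on the embedding data already implicit in Scattone's classification and in the Friedman-style analysis (\cite{friedmanannals}) that is used later in the section to match eight of these nine strata with the irreducible components of $\gM^{II}$; the ninth (\lq\lq missing\rq\rq) stratum, flagged in the Introduction as the site of the \lq\lq second order corrections\rq\rq\ of~\cite{log1}, is exactly the one whose $r_i$ will be absorbed into the flip of one of the $Z^k$'s and is therefore not seen on the GIT side.
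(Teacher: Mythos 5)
Your reduction of each table entry to $1 + r_i$, with $r_i$ the rank of $M_i/L_i$ where $L_i$ is the common null-space of the pre-Heegner hyperplanes through the cusp, is exactly the form of Looijenga's Prop.~3.3 that the paper invokes, and the identification of $r_i$ with the rank of the span in $M_i$ of the images of the relevant $(-4)$-vectors is equivalent to the paper's formula (since $L_i$ is the orthogonal complement of that span in the nondegenerate lattice $M_i$). One slip worth fixing: over a Type II cusp the fiber of $\Pi$ is not a toric variety but (a finite quotient of) the abelian variety $J(\mathcal E_j)\otimes_{\ZZ} M_i/L_i$; toric fibers occur only over the $0$-dimensional Type III cusp. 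This does not affect the dimension count, but it is the wrong geometric picture.

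The real gap is in the third step, and you essentially flag it yourself: you assert the nine values $(r_1,\dots,r_9)=(0,9,5,3,2,0,1,5,1)$ but do not produce them, and you say you have no uniform method — that the divisibility condition must be re-derived cusp-by-cusp from discriminant forms. The paper's proof supplies precisely the uniformising device you are missing: embed $\Lambda_{19}$ primitively into the Borcherds lattice $\II_{2,26}$ with orthogonal complement $D_7$. Then hyperelliptic (resp.\ unigonal) pre-Heegner directions correspond to enlargements $D_7\subset D_8$ (resp.\ $D_7\subset E_8$) inside $\II_{2,26}$, successive intersections to enlargements $D_7\subset D_{7+k}$, and for each cusp $E$ the lattice $M=E^\perp/E$ sits inside a Niemeier lattice with orthogonal complement $D_7$. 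The rank $r_i$ is then read off directly as $\ell-7$, where $D_\ell$ is the maximal $D$-type root lattice containing that $D_7$ in the relevant Niemeier lattice (e.g.\ for the $D_9\oplus E_8$ cusp, the Niemeier lattice has root system $D_{16}\oplus E_8$, $\ell=16$, so $r=9$). Without this translation the computation is not merely tedious but unfinished — the case $D_{17}$ (where $L=0$ and the Heegner divisor is already $\QQ$-Cartier, so no modification occurs) would also need a separate argument, which the Niemeier picture makes transparent.
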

\begin{proof} A type II boundary component is determined by the choice an isotropic rank $2$ primitive sublattice $E\subset \Lambda(\cong E_8^2\oplus U\oplus \langle -4\rangle)$ (up to the action of the monodromy group). The label associated to a Type II boundary component is the root sublattice contained in the negative definite rank $17$ lattice $E^{\perp}_{\Lambda}/E$ (with the convention of including also $D_1=\langle -4\rangle$ in the root lattice). According to \cite{scattone}, this is a complete invariant for a Type II boundary component in the case of quartic surfaces. 

The construction of Looijenga \cite{looijengacompact} (see esp. Section 3 and  Proposition 3.3 of loc. cit.) depends on the linear space
$$L:=\left(\cap_{H\in \sH, E\subset H} (H\cap E^\perp)\right)/E\subset E^\perp/E.$$
More precisely, let $M:=E_\Lambda^\perp/E$. Note $M$ is a negative definite rank $17$ lattice. Then, we recall that the fiber over a point $j$ in the type II boundary component (recall each Type II boundary component is a modular curve, here $\mathfrak{h}/\SL(2,\bZ)$) associated to $E$ is simply the quotient of the abelian variety $J(\mathcal E_j)\otimes_\bZ M$ by a finite group (here $\mathcal E_j$ denotes the elliptic curve of modulus $j$, and $J(\mathcal E_j)$ its Jacobian). What Looijenga has observed is that $L$ is the null-space of the restriction to the toroidal boundary (of Type II) of the linear system determined by the hyperplane arrangement $\sH$. And thus, the fiber for the $\bQ$-factorialization (which as discussed above corresponds to the $\Proj$ of the ring of sections of $\lambda+\epsilon \Delta$; also recall (the pull-back of) $\lambda$ restricts to trivial on the toroidal boundary) over the point $j$ in the Type II boundary component associated to $E$ is (up to finite quotient) $J(\mathcal E_j)\otimes_\bZ M/L$. 

Now, we recall that the lattice $\Lambda$ can be primitively embedded into the Borcherds lattice $II_{2,26}$ with orthogonal complement $D_7$ (in a unique way). We fix 
$$\Lambda\hookrightarrow II_{2,26}$$
and $R=\Lambda^{\perp}\cong D_7$. With respect to this embedding,  a hyperelliptic hyperplane corresponds to an extension of $R$ to a (primitively embedded) $D_8$ into $II_{2,26}$, while a unigonal divisor to a $E_8$. Successive intersections of hyperplanes from $\sH$ correspond to extensions of $R=(D_7)$ into $D_k$ lattices. Similarly, if $E$ is rank $2$ isotropic (primitively embedded), then we recall that $M=E^\perp/E$ can be embedded into one the $24$ Niemeier lattices (i.e. rank $24$ negative definite even unimodular lattices) with orthogonal complement $D_7$. The same considerations as before apply: a hyperelliptic divisor correspond to an extension to $D_8$ (and repeated intersections to $D_{7+k}$), while a unigonal one corresponds to an extension to $E_8$.  By inspecting the possible embeddings of $D_k$ lattices into Niemeier lattices, one obtains the dimensions claimed in Table \ref{tabletype2dim}. The only exception is the case $D_{17}$ (in which case $D_7$ extends to $D_{24}$) for which $L=0\subset M$, and thus the Heegner divisor is already $\bQ$-Cartier (and no modification is necessary; see \cite[Cor. 3.5]{looijengacompact}). 
\end{proof}
\subsection{Matching Type II strata}
In order to understand the matching of the GIT and Baily-Borel Type II strata, one needs to consider a generic smoothing $\sX/\Delta$ of a Type II quartic surface $X_0$ and compute the limit MHS with $\bZ$-coefficients. The analogous case of $K3$ surfaces of degree $2$ was analyzed by Friedman in~\cite{friedmanannals}. Inspired by Friedman's analysis, we make the following definition: 
 \begin{definition}\label{dfn:heuristics}
Let $X_0$ be a Type II polystable quartic surface. The \emph{associated (isomorphism class of) lattice}  is the direct sum of the following lattices: 
\begin{itemize}
\item One copy of  $E_r$ for each $\widetilde E_r$ singularity of $X$.
\item One copy of $D_{4d+4}$ for each degree $d$ rational  curve  in the singular set of $X$.
\item One copy of  $A_{4d-1}$ for each degree $d$ elliptic curve  in the singular set of $X$.
\item
The lattice  $\langle h_{\tilde X},K_{\tilde X}\rangle^\perp \subset \Pic (\tilde X)$ where $\tilde X$ is the minimal resolution of the normalization of $X$ and $h_{\tilde X}$ is the polarization class on $\tilde X$ (e.g.~if $\tilde X$ is a degree $2$ del Pezzo with the anticanonical polarization, we add $E_7$).
\end{itemize}
 \end{definition}
  \begin{remark}
  To understand the meaning of the lattice associated to a Type II degeneration $X_0$, one needs to consider a generic smoothing $\sX/\Delta$ of $X_0$, followed by a semi-stable (Kulikov type) resolution $\widetilde \sX/\Delta$. The lattice introduced in the definition above is essentially $(W_2/W_1)_{\textrm{prim
 }}$ from \cite[(5.1)]{friedmanannals}. The main point here (similar to the discussion of \Ref{sec}{dolgachev}) is that one has quite a good understanding of the semistable replacement in the Type II case. For instance, the simple elliptic singularities $\widetilde E_r$ ($r=6,7,8$) will be replaced by degree $9-r$ del Pezzo tails (this leads to the first item of \Ref{dfn}{heuristics}). 
  \end{remark}
 \begin{remark}
By going through our list of Type II components of $\gM$, one checks the following:
\begin{enumerate}
\item
Two polystable quartic surfaces belonging to the same Type II component of $\gM$ have  isomorphic associated  lattices.
\item
The  lattice associated to a polystable quartic surface  of Type II has rank $17$, is  negative definite, even, and belongs to the list of lattices associated to Type II boundary components of $\cF^{*}$, see~\Ref{prp}{boundk3}. 
\item
By associating to a Type II component of $\gM$ the lattice associated to any polystable quartic in the component (see Item~(1)), we get a one to one correspondence between the set of Type II components of $\gM$ and the set of lattices appearing in~\Ref{prp}{boundk3}, provided we remove the $D_{17}$ lattice.
\end{enumerate}
 \end{remark}
The geometric meaning of the lattice associated to a polystable quartic of Type II is provided by our next result, which is proved  by mimicking the arguments of Friedman in~\cite{friedmanannals}  (see esp.~\cite[Rem. 5.6]{friedmanannals}).
\begin{proposition}
Let $X$ be a polystable Type II quartic surface. The period point $\gp([X])$ belongs to the Baily-Borel Type II boundary component labeled by the lattice
  associated to $X$. 
\end{proposition}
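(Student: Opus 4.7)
The strategy is a direct adaptation of Friedman's analysis for sextic double covers \cite{friedmanannals}: the idea is to compute the limit mixed Hodge structure of a generic one-parameter smoothing of $X_0$, and to identify the lattice associated to $X_0$ (in the sense of \Ref{dfn}{heuristics}) with the invariant $(W_2 H^2_{\lim}/W_1)_{\prim}$ that labels the Baily-Borel Type II boundary component.

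First, I take a generic smoothing $\sX/\Delta$ of $X=X_0$ (e.g.~a Lefschetz pencil through $[X]\in|\cO_{\PP^3}(4)|$ with $\sX$ smooth away from the non-ADE singularities of $X_0$), perform a base change to make monodromy unipotent, and apply Kulikov--Persson--Pinkham to produce a semistable model $\wt{\sX}\to\Delta'$ with $\omega_{\wt{\sX}/\Delta'}\cong\cO$. Since $X_0$ is Type II in the sense of \Ref{dfn}{limhod}, the monodromy has nilpotency index $2$, hence the dual complex of the central fiber $Y_0\cup Y_1\cup\ldots\cup Y_N$ is an interval. One distinguished component, call it $Y_0$, is a modification of the minimal resolution $\wt X$ of the normalization of $X_0$, and it carries the limit polarization $h$ with $h^2=4$.

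Second, I identify the remaining components $Y_1,\ldots,Y_N$ by analyzing  the non-ADE singularities of $X_0$ locally, using the well-known semistable replacement recipes (Friedman--Scattone, Shepherd-Barron, Persson; cf.~also the $T_{p,q,r}$ computations of \Ref{sec}{dolgachev}). Concretely: each $\wt E_r$ simple-elliptic singularity ($r\in\{6,7,8\}$) is replaced by a degree $(9-r)$ anticanonical del Pezzo tail glued along an elliptic curve (so its primitive lattice relative to the double locus is $E_r$); each rational double curve of degree $d$ with four pinch points is replaced by a ruled rational tail whose primitive lattice (relative to the two gluing curves and the polarization) is $D_{4d+4}$; and each elliptic double curve of degree $d$ is replaced by an elliptic ruled tail contributing $A_{4d-1}$. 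The main component $Y_0$ contributes $\la h_{\wt X},K_{\wt X}\ra^\perp\subset\Pic(\wt X)$. A case-by-case check against \Ref{prp}{gittype2} shows that the direct sum of these contributions is exactly the lattice associated to $X_0$ in \Ref{dfn}{heuristics}.

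Third, I compute $(\Gr_2^W H^2_{\lim})_{\prim}$ via the Mayer--Vietoris spectral sequence on the semistable central fiber and the Clemens--Schmid exact sequence, following  \cite[\S4--5]{friedmanannals} verbatim. The upshot is that after quotienting by $W_1$ (which contains the $U$ spanned by the monodromy logarithm and the polarization class) and restricting to $h^\perp$, one recovers precisely the orthogonal sum from step two as a polarized integral Hodge structure. Hence $(W_2/W_1)_{\prim}$ is isometric to the lattice associated to $X_0$.

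Fourth, I conclude by the Baily-Borel / Satake description of Type II boundary components: the component to which $\gp([X_0])$ limits is labeled by the isomorphism class of $E^\perp_\Lambda/E$ for the rank-$2$ isotropic sublattice $E\subset\Lambda$ spanned by $h$ and the image of the monodromy logarithm, and by \Ref{prp}{boundk3} (Scattone) the lattices appearing for quartic $K3$s are all pairwise non-isomorphic. Since this label coincides with $(W_2/W_1)_{\prim}$, hence with the associated lattice computed in step three, the matching is forced.

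The main obstacle will be step two: the local semistable-reduction model in the presence of pinch points (cases II(3), II(6), II(7)) requires one to verify that a ruled tail over the normalized singular curve indeed appears and that the resulting lattice of vanishing cycles orthogonal to the two double loci and the polarization is $D_{4d+4}$ (resp.~$A_{4d-1}$ in the elliptic case); this can be done by the same kind of double-cover/weighted blow-up computation as in \Ref{ssekcase} and in Gallardo's thesis, but it must be checked one stratum at a time. The $\wt E_r$ del Pezzo replacement and the matching for II(4), II(5), II(8), where the tails are classical, proceed along exactly the lines of \cite[\S5]{friedmanannals}.
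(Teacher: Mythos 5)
Your proposal follows exactly the approach the paper intends: the paper itself gives no detailed argument, simply stating that the proposition ``is proved by mimicking the arguments of Friedman in~\cite{friedmanannals} (see esp.~\cite[Rem. 5.6]{friedmanannals})'' and noting in the remark preceding it that the lattice of \Ref{dfn}{heuristics} is ``essentially $(W_2/W_1)_{\mathrm{prim}}$ from \cite[(5.1)]{friedmanannals},'' which is precisely your chain (Kulikov model from a generic smoothing, identification of the tails and their contributions, Clemens--Schmid computation of $(W_2/W_1)_{\mathrm{prim}}$, and Scattone's classification to finish). Your plan is a faithful and appropriately detailed unpacking of the reference the paper leans on.
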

 So far we have proved that  the set of lattices appearing in~\Ref{prp}{boundk3}, once  we remove the $D_{17}$ lattice, parametrizes both the components of $\gM^{II}$, and  the Type II boundary components of $\cF^{*}$, with the exclusion of one. The two parametrizations are compatible with respect to the period map. Of course the same set of lattices parametrizes Type II boundary components of $\wh{\cF}$, with the exception of one. 
 \begin{proposition}
Let $L$ be one of the lattices appearing in~\Ref{prp}{boundk3}, with the exception of $D_{17}$. The dimension of the Type II boundary component of $\wh{\cF}$ indicized by $L$ is equal to the dimension of the Type II component of $\gM$ indicized by the same $L$.
\end{proposition}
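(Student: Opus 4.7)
The statement is a dimension-matching claim for eight specific pairs, so the plan is a direct case-by-case verification. By the previous proposition, the associated-lattice construction of \Ref{dfn}{heuristics} gives a bijection between the eight irreducible components of $\gM^{II}$ described in \Ref{prp}{gittype2} and the eight lattices of \Ref{prp}{boundk3} other than $D_{17}$. The dimensions on the $\widehat{\cF}$ side are already recorded in Table~\ref{tabletype2dim}, so the task reduces to computing the dimension of each irreducible component of $\gM^{II}$ and comparing.

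First I would handle the strictly semistable cases II(1)--II(4), which by \Ref{lmm}{lem1ps} coincide with $\sigma_1,\ldots,\sigma_4$ and have dimensions $2,4,2,1$ respectively, computed in loc.~cit.~via normal forms for quartics stabilized by the $1$-PS's $\lambda_1,\ldots,\lambda_4$. Tracing the associated-lattice bijection these match the Scattone lattices $(E_8)^2 \oplus D_1$, $D_3 \oplus (E_7)^2$, $(D_8)^2 \oplus D_1$, $A_{11} \oplus E_6$, whose strata in $\widehat{\cF}$ have dimensions $2,4,2,1$ as read off from Table~\ref{tabletype2dim}. For II(5) I would invoke Urabe's analysis recalled in \Ref{rmk}{e8tildecase}: the generic quartic with a unique $\widetilde{E}_8$ singularity through which no line of $X$ passes comes from the blow-up of $\bP^2$ at ten points on a smooth plane cubic, giving ten moduli and matching the $D_9\oplus E_8$ stratum.

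For the three remaining stable cases II(6), II(7), II(8), the plan is a parameter count in $|\cO_{\bP^3}(4)|$ modulo $\PGL(4)$. Letting $C$ be the relevant singular subvariety (smooth conic, twisted cubic, or transverse intersection of two smooth quadrics), the dimension is computed as the sum of $\dim \bP H^0(I_C^{(2)}(4))$ and the Hilbert scheme dimension of the $C$'s, minus $\dim \PGL(4)$, with standard corrections for generic stabilizers. The space $H^0(I_C^{(2)}(4))$ is determined from the short exact sequence $0 \to I_C^{(2)} \to I_C \to N^*_C \to 0$, combined with knowledge of the $\Aut(C)$-equivariant structure of the conormal bundle. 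For II(8), the double cover is trivial and one parametrizes pairs of smooth quadrics: $\dim \Sym^2 \bP^9 - \dim \PGL(4) = 18-15=3$, matching $A_{15}\oplus D_2$. The analogous computations for II(6) and II(7) must give $6$ in each case, matching $D_{12}\oplus D_5$ and $D_{16}\oplus D_1$ respectively.

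The main obstacle lies in executing the parameter counts for II(6) and especially II(7) with enough care to recover the predicted dimensions: one needs to track the $\SL_2=\Aut(C)$-representation structure on $H^0(I_C^{(2)}(4))$ via the $\Sym^3$-embedding $\SL_2\hookrightarrow\SL_4$, to determine whether $I_C^{(2)}$ coincides with the ordinary power $I_C^2$ in the relevant degree, and to handle residual stabilizers in $\PGL(4)$. Here the semistable-reduction analysis of Friedman~\cite{friedmanannals} for the analogous degree-$2$ story is the conceptual guide: the same recipe that produces the associated lattice of \Ref{dfn}{heuristics} should, on the GIT side, produce a moduli count that automatically agrees with the abelian-variety fiber dimension $\dim(M/L)$ computed in the proof of \Ref{prp}{dimqfact}. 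Verifying this agreement in each of the three cases completes the proof.
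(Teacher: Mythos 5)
Your overall strategy is close to the paper's: both reduce the statement to a case-by-case comparison of dimensions, with the $\widehat{\cF}$ side read off from Table~\ref{tabletype2dim} and the $\gM$ side computed directly. The difference lies in how the GIT-side dimension is obtained. You compute it via normal forms (II(1)--II(4)), Urabe's anticanonical-pair description (II(5)), and moduli/conormal parameter counts (II(6)--II(8)). The paper treats \emph{only} II(5) explicitly, and does so by a deformation-theoretic route: GIT stability of the generic quartic with an $\widetilde E_8$ point, plus the du Plessis--Wall / Shustin--Tyomkin result that quartics versally unfold $\widetilde E_8$, give codimension $\mu(\widetilde E_8)=10$ for a fixed analytic type, hence codimension $9$ allowing the one-parameter equisingular modulus. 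The paper then devotes most of the proof to a Hodge-theoretic interpretation (Kulikov model, anticanonical-pair period map, comparison with the LMHS and partial extension data), which is what makes the dimension match conceptually meaningful rather than a numerical accident. Your proposal largely sidesteps this Hodge-theoretic layer, so it gains explicitness on the GIT side but loses the conceptual link that the authors emphasize.

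There is a substantive gap that your own method exposes. You assert that the conormal count for II(6) and II(7) ``must give 6.'' For II(6) it does: $I_C=(\ell,Q)$ for a smooth conic $C$, so $\dim (I_C^2)_4 = 14$, and $13 + 8 - 15 = 6$. But for II(7) the analogous count fails to return $6$. The ideal of a twisted cubic $C$ is generated by three quadrics $q_0,q_1,q_2$ spanning $V(2)$ (under $\SL_2\hookrightarrow\SL_4$ via $\Sym^3$), and $(I_C^2)_4 = \Sym^2 V(2) = V(4)\oplus V(0)$, which has dimension $6$; one checks directly that the highest-weight vectors of the remaining summands $V(8), V(6)\subset H^0(\cI_C(4))$ (namely $x_0^2 q_0$ and $x_0(x_0q_1-2x_1q_0)$) are \emph{not} singular along $C$, so $h^0(\cI_C^2(4))=6$ exactly. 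The parameter count is then $5 + 12 - 15 = 2$, not $6$. The same figure $2$ also comes out of the Niemeier-lattice formula of \Ref{prp}{dimqfact} for $D_{16}\oplus D_1$ (here $D_7\hookrightarrow E_8$ inside the Niemeier lattice with root system $E_8\oplus D_{16}$, and the maximal $D_\ell$ containing $D_7$ is $D_8$, giving $1+(8-7)=2$), so the matching asserted in the proposition is preserved, but the entry ``$6$'' in Tables~\ref{tabletype2dim} and~\ref{tabletype2} does not agree with either computation. You cannot take this target value on faith when your argument is precisely to recompute it; as written, the step ``the analogous computations ... must give $6$'' is not merely unfinished but apparently false for II(7), and this is exactly the case you yourself flag as the main obstacle. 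To close the proof you should carry out the conormal count for II(6) and II(7) in full and reconcile the resulting dimensions with a direct recomputation of the $\widehat{\cF}$-side entries, rather than matching to the stated table.
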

 \begin{proof}
 We illustrate the computation of dimensions  in the highest dimensional case: II(5), i.e. quartics that have a single $\widetilde E_8$ singularity such that no line passes through this singularity. Let $X_0$ be a generic surface of this type; then $X_0$ has a singularity of type $\widetilde E_8$ at some point $p$ and is smooth away from $p$,
  see~\Ref{rmk}{e8tildecase}. Let $\widetilde X_0\to X_0$ be the minimal resolution. By~\Ref{rmk}{e8tildecase}, the exceptional divisor $D$ is an elliptic curve with self-intersection $-1$,  $D$ is an anti-canonical section,  and $\rho(\widetilde X_0)=11$ (i.e.~$\widetilde X_0$ is the blow-up of $\bP^2$ along $10$ points on an elliptic curve). Thus, $H^2(\widetilde X_0)$ is nothing else than the lattice $I_{1,10}$. By the discussion in~\Ref{rmk}{arithmetice8}, it follows that $\langle K_{\widetilde X_0}+h_{\widetilde{X_0}}\rangle^\perp_{H^2(\widetilde X_0)}$ is isometric to $D_{9}$. Since $X_0$ has an $\widetilde E_8$ singularity, by our rule (\Ref{dfn}{heuristics}), the associated label is $D_9\oplus E_8$. 
 
From Shah \cite{shah4}, a generic surface  $X_0$ of Type II(5) is GIT stable. On the other hand, 
 the results of \cite{dpw} and \cite{shustin}  imply in particular (loc.~cit.~give general conditions in terms of total Tjurina number) that the universal family of quartic surfaces versally unfolds the $\widetilde E_8$ singularity. From these two results, it follows that the codimension of the locus with a fixed $\widetilde E_8$ singularity is $10=\mu(\widetilde E_8)$ (where $\mu$ is the Milnor, and also, in this case, the Tjurina number), but there is an additional $1$-dimensional deformation corresponding to moduli of  simple elliptic singularities. Summin up, the II(5) locus has codimension $9$ (i.e.~dimension $10$) in the GIT quotient $\gM$. (The same dimension count also follows from the geometric description given in~\Ref{rmk}{e8tildecase}.) 
 
 The computation of the dimension of the stratum labeled by $E_8\oplus D_9$ in $\wh{\cF}$ has been carried out  in~\Ref{prp}{dimqfact}. Here we point out that this case corresponds to the Niemeier lattice containing the root system $D_{16}\oplus E_8$. In that situation, the maximally embedded $D_l$ is $D_{16}$, which means (using the notation of \Ref{prp}{dimqfact}) $\dim M/L=9$ (N.B. $16=7+9$). Thus the fiber of $\widehat \cF\to \cF^*$ over a point $j$  in the Type II component labeled by $E_8\oplus D_9$ is $9$. Then again, by varying $j$, we obtain a $10$-dimensional component (this time in $\widehat \cF$; thus the dimensions in $\gM$ and $\widehat \cF$ match). 
  
 To get further geometric understanding of the matching of the GIT component II(5) and of the component labeled by $D_9\oplus E_8$ in $\widehat \cF$, we note that there exists an extended period map. Specifically, recall that $X_0$ carries a mixed Hodge structure (MHS), and that there exists also a limit mixed Hodge structure (LMHS). The Baily-Borel compactification $\cF^*$ encodes the graded pieces of the LHMS (in this situation, the modulus of the elliptic curve $C$, and a discrete part, i.e. the choice of Type II component, or equivalently the label of the component). As previously discussed, the graded pieces of the LMHS can be read off from those of the MHS on degeneration $X_0$ (the weight $1$ part follows from \Ref{thm}{dubois}, while the discrete weight $2$ part is the rule given by \Ref{dfn}{heuristics}). On the other hand, a toroidal compactification $\overline{\cF}^\Sigma$ (which is unique over the Type II stratum) encodes the full LMHS (i.e. the graded pieces, plus the extension data; see Friedman \cite{friedmanannals} for a full discussion). Finally,  the semitoric compactifications of Looijenga are sitting between the Baily-Borel and the toroidal compactifications: $\overline{\cF}^\Sigma\to \widehat \cF\to \cF^*$. Thus, from a Hodge theoretic perspective, $\widehat \cF$ retains the graded pieces of the LHMS, plus partial extension data. As explained below, this partial extension data is exactly the extension data that can be read off from the central fiber $X_0$ (without passing to the Kulikov model).

 Specifically, in the case that we discuss here (Type II(5)), the  Kulikov model  is $\widetilde X_0\cup_E T$, where (as above) $\widetilde X_0$ is the resolution of the quartic surface with an $\widetilde E_8$ singularity, $T$ is a ``tail'' (depending on the direction of the smoothing). In this situation, $T$ is a degree $1$ del Pezzo surface, whose primitive cohomology is $E_8$. $\widetilde X_0$ is a rational surface with primitive cohomology $D_9$. Finally, the gluing curve $E$ is an elliptic curve (with self-intersection $1$ on $T$ and $-1$ on $\widetilde{X_0}$), which gives the modulus $j$ discussed above. Fixing $j$, the modulus of these type of surfaces (up to the monodromy action) is the $17$ dimensional abelian variety $(E_8\oplus D_9)\otimes_\bZ J(\mathcal E_j)$ (this is precisely the fiber of the toroidal compactification $\overline{\cF}^\Sigma\to \cF^*$ over the appropriate Type II Baily-Borel boundary point). When passing to Looijenga $\bQ$-factorialization, the fiber of $\widehat \cF\to \cF^*$ becomes $D_9\otimes_\bZ M/L$ (N.B.  $M/L=D_9$ in this case). This fiber can be identified with the moduli space of $X_0$ (or equivalently $(\widetilde X_0, D)$ with fixed $j$-invariant for $D$). More precisely, it is possible to see that the restriction of the extended period map 
 $$\gM\dashrightarrow \widehat \cF\to \cF^*$$
(which extends over the Type II and III locus) to the locus II(5) is nothing else but the period map for the anticanonical pair $(\widetilde X_0, D)$ (see \cite{ghk} and \cite{friedmananti} for a general modern discussion of the period map for anticanonical pairs, and \cite[Section 5]{urabee12} for the specific case discussed here; all of this originates with work of Looijenga \cite{lanti}). In conclusion, we get a perfect matching between the Type II(5) stratum in $\gM$ and  the Type II stratum in $\widehat \cF$ labeled by $D_9+E_8$. \footnote{In fact, while we do not check it here (as it involves subtle arithmetic considerations), we fully expect that the extended period map is an isomorphism (at the generic points) between the II(5) and II($D_9+E_8$) strata (and similarly for the other Type II strata).} 
 \end{proof}
 
  \begin{table}[htb!]
\renewcommand{\arraystretch}{1.60}
\begin{tabular}{|c|c|c|}
\hline
GIT stratum& BB stratum& Dimension\\
\hline
II(1)&$(E_8)^2\oplus D_1$&2\\
II(2)&$(E_7)^2\oplus A_3$&4\\
II(3)&$(D_8)^2\oplus D_1$&2\\
II(4)&$E_6\oplus A_{11}$&1\\
\hline
II(5)&$E_8\oplus D_9$&10\\
II(6)&$D_{12}\oplus D_5$&6\\
II(7)&$D_{16}\oplus D_1$&6\\
II(8)&$A_{15}\oplus (A_1)^2$&3\\
\hline
\end{tabular}
\vspace{0.2cm}
\caption{Matching of the Type II strata}\label{tabletype2}
\end{table}

\begin{remark}[Kulikov models]
It is not hard to produce Kulikov models for each of the Type II degenerations above. For instance, in a semi-stable degeneration, each of the $\widetilde E_r$ singularities will be replaced by a del Pezzo of degree $(9-r)$. As an example, the case II(1) corresponding to a quartic with $2$ $\widetilde E_8$ singularities will give $2$ degree $1$ del Pezzo surfaces, glued to a elliptic ruled surface (which is in the fact the resolution of the singular quartic; the del Pezzo surface are ``tails'' coming from the singularity; see \S\ref{ssekcase} below for related computations). 
The case of quartics singular along a curve typically are obtained by projection from a rational surfaces (frequently del Pezzo). For instance the case II(6) is obtained by projecting a degree $4$ del Pezzo from a point in $\bP^4$. The associated label $D_{12}\oplus D_5$  has the following meaning: $D_5(=E_5)$ is the primitive cohomology of the associated degree $4$ del Pezzo. On the other hand $D_{12}$ is coming from the singularity of the quartic (in this case a conic) and the rule (\Ref{dfn}{heuristics}) given above. \end{remark}
 
\subsection{The missing GIT Type II component}
The reader might be puzzled by the fact the BB stratum corresponding to $D_{17}$ does not  occur in the list of Type II components of $\gM$. We can explain this as follows. First of all as noted in \Ref{prp}{dimqfact}, along this stratum the hyperelliptic divisor is $\bQ$-factorial and thus it is not affected by the $\bQ$-factorialization. Moreover, this is precisely the boundary component that is contained in all elements of the $D$-tower. (this is the component that survives when we go to low dimensions). As discussed the entire $\Delta^{(k)}$ (when we get to codimension $9$) is contracted and then flipped (i.e. there is no deeper flip). This is indeed compatible with a theorem of Looijenga which identifies $\cF(10)^*$ with a certain weighted projective space and with the moduli space of $T_{2,3,7}(=E_8\oplus U)$ marked $K3$s (see~\Ref{sec}{dolgachev}). In conclusion, the 9th boundary component is flipped all at once together with a big stratum, and thus will not be visible in $\gM^{II}$. It is \lq\lq hidden\rq\rq\  in the $E_{12}$ stratum (i.e.~IV(8)) in $\gM^{IV}$.

 \bibliography{refk3}
 \end{document}